\documentclass[11pt,twoside,reqno]{amsart}

\usepackage{microtype}
\usepackage{cite}
\usepackage[OT1]{fontenc}
\usepackage{type1cm}
\usepackage{amsmath}
\usepackage{amssymb}
\usepackage[shortlabels]{enumitem}
\usepackage{comment}
\usepackage{xcolor}
\usepackage{dsfont}
\usepackage{aliascnt}
\usepackage{tikz}
\usetikzlibrary{calc,patterns,angles,quotes,arrows}
\usetikzlibrary{arrows.meta}
\definecolor{darkgreen}{rgb}{0.0, 0.4, 0.0}

\usepackage{geometry}
\usepackage{hyperref}
\hypersetup{
colorlinks=true,
linkcolor=black,
anchorcolor=black,
citecolor=black,
filecolor=black,      
menucolor=black,
runcolor=black,
urlcolor=black,
}
\usepackage[capitalize,noabbrev,nameinlink]{cleveref}

\numberwithin{equation}{section}
\crefformat{equation}{(#2#1#3)}
\crefformat{figure}{#2Figure #1#3}
\crefformat{enumi}{(#2#1#3)}
\crefformat{theorem}{#2Theorem~#1#3}
\crefformat{lemma}{#2Lemma~#1#3}
\crefformat{proposition}{#2Proposition~#1#3}
\crefformat{corollary}{#2Corollary~#1#3}
\crefformat{question}{#2Question~#1#3}
\crefformat{claim}{#2Claim~#1#3}
\crefformat{example}{#2Example~#1#3}
\crefformat{section}{#2§#1#3}
\crefformat{subsection}{#2§#1#3}

\theoremstyle{plain}
\newtheorem{theorem}{Theorem}[section]

\newaliascnt{corollary}{theorem}
\newtheorem{corollary}[corollary]{Corollary}
\aliascntresetthe{corollary}
\newaliascnt{proposition}{theorem}
\newtheorem{proposition}[proposition]{Proposition}
\aliascntresetthe{proposition}

\newaliascnt{lemma}{theorem}
\newtheorem{lemma}[lemma]{Lemma}
\aliascntresetthe{lemma}
\newaliascnt{conjecture}{theorem}

\aliascntresetthe{conjecture}
\newaliascnt{question}{theorem}
\newtheorem{question}[question]{Question}
\aliascntresetthe{question}
\newtheorem{tmptheoremno}{Theorem}
\newtheorem{tmppropositionno}{Proposition}
\newtheorem{tmplemmano}{Lemma}

\theoremstyle{remark}
\newaliascnt{remark}{theorem}

\aliascntresetthe{remark}
\newaliascnt{remarks}{theorem}

\aliascntresetthe{remarks}
\newaliascnt{example}{theorem}
\newtheorem{example}[example]{Example}
\aliascntresetthe{example}

\theoremstyle{definition}
\newaliascnt{definition}{theorem}

\aliascntresetthe{definition}

\newcommand{\BB}{\mathcal{B}}
\newcommand{\CC}{\mathcal{C}}
\newcommand{\DD}{\mathcal{D}}

\newcommand{\II}{\mathcal{I}}
\newcommand{\JJ}{\mathcal{J}}
\newcommand{\KK}{\mathcal{K}}
\newcommand{\LL}{\mathcal{L}}
\newcommand{\MM}{\mathcal{M}}
\newcommand{\OO}{\mathcal{O}}
\newcommand{\PP}{\mathcal{P}}
\newcommand{\QQ}{\mathcal{Q}}

\newcommand{\VV}{\mathcal{V}}
\newcommand{\WW}{\mathcal{W}}
\newcommand{\R}{\mathbb{R}}
\newcommand{\RP}{\mathbb{RP}}

\newcommand{\C}{\mathbb{C}}
\newcommand{\Q}{\mathbb{Q}}
\newcommand{\Z}{\mathbb{Z}}
\newcommand{\N}{\mathbb{N}}
\newcommand{\hhh}{\mathtt{h}}
\newcommand{\iii}{\mathtt{i}}
\newcommand{\jjj}{\mathtt{j}}
\newcommand{\kkk}{\mathtt{k}}

\newcommand{\ppp}{\mathtt{p}}
\newcommand{\qqq}{\mathtt{q}}
\newcommand{\www}{\mathtt{w}}
\newcommand{\eps}{\varepsilon}
\newcommand{\fii}{\varphi}

\newcommand{\la}{\langle}
\newcommand{\ra}{\rangle}
\newcommand{\A}{\mathsf{A}}
\newcommand{\B}{\mathsf{B}}

\newcommand{\dd}{\,\mathrm{d}}

\expandafter\let\expandafter\ge\csname geqslant\endcsname
\expandafter\let\expandafter\le\csname leqslant\endcsname
\expandafter\let\expandafter\geq\csname geqslant\endcsname
\expandafter\let\expandafter\leq\csname leqslant\endcsname

\DeclareMathOperator{\dimloc}{dim_{loc}}
\DeclareMathOperator{\udimloc}{\overline{dim}_{loc}}
\DeclareMathOperator{\ldimloc}{\underline{dim}_{loc}}

\DeclareMathOperator{\dimm}{dim_M}
\DeclareMathOperator{\udimm}{\overline{dim}_M}
\DeclareMathOperator{\ldimm}{\underline{dim}_M}

\DeclareMathOperator{\dimh}{dim_H}
\DeclareMathOperator{\udimh}{\overline{dim}_H}
\DeclareMathOperator{\ldimh}{\underline{dim}_H}

\DeclareMathOperator{\udimp}{\overline{dim}_p}
\DeclareMathOperator{\ldimp}{\underline{dim}_p}

\DeclareMathOperator{\diml}{dim_L}
\DeclareMathOperator{\dimaff}{dim_{aff}}

\DeclareMathOperator{\dime}{dim_e}
\DeclareMathOperator{\udime}{\overline{dim}_e}
\DeclareMathOperator{\ldime}{\underline{dim}_e}

\DeclareMathOperator{\Ker}{Ker}

\DeclareMathOperator{\Mat}{Mat}
\DeclareMathOperator{\linspan}{span}
\DeclareMathOperator{\dist}{dist}
\DeclareMathOperator{\diam}{diam}
\DeclareMathOperator{\diag}{diag}
\DeclareMathOperator{\proj}{proj}

\DeclareMathOperator{\spt}{spt}

\DeclareMathOperator{\tr}{tr}

\DeclareMathOperator{\GL}{GL}
\DeclareMathOperator{\PGL}{PGL}
\DeclareMathOperator{\SL}{SL}
\DeclareMathOperator{\SO}{SO}
\DeclareMathOperator{\GO}{O}

\allowdisplaybreaks

\begin{document}

\title{Projections of self-affine sets onto lines}

\author{Bal\'azs B\'ar\'any}
\address[Bal\'azs B\'ar\'any]
        {Department of Stochastics \\
         Institute of Mathematics \\
         Budapest University of Technology and Economics \\
         M\H{u}egyetem rkp 3 \\
         H-1111 Budapest \\
         Hungary}
\email{balubsheep@gmail.com}

\author{Antti K\"aenm\"aki}
\address[Antti K\"aenm\"aki]
        {University of Eastern Finland \\
         Department of Physics and Mathematics \\
         P.O.\ Box 111 \\
         FI-80101 Joensuu \\
         Finland}
\email{antti@kaenmaki.net}

\author{Istv\'an Kolossv\'ary}
\address[Istv\'an Kolossv\'ary]
        {HUN-REN Alfr\'ed R\'enyi Institute of Mathematics \\
         Re\'altanoda u. 13–15 \\
         1053 Budapest \\
         Hungary}
\email{kolossvary.istvan@renyi.hu}

\thanks{B.\ B\'ar\'any was supported by the grants NKFI FK134251, K142169, and the grant NKFI KKP144059 ``Fractal geometry and applications''. I.\ Kolossv\'ary was supported by the European Research Council Marie Sk\l odowska-Curie Actions Postdoctoral Fellowship $\#101109013$ and the Hungarian NRDI Office grant K142169 and STARTING152587.}
\subjclass[2000]{Primary 28A80; Secondary 28A75, 37C45, 37A05, 15A75}
\keywords{Self-affine set, orthogonal projection, Minkowski dimension, Hausdorff dimension}
\date{\today}

\begin{abstract}
  We prove an all-directions Marstrand--Mattila projection theorem for self-affine measures and sets in $\R^d$. Under exponential separation, together with proximality and strong irreducibility assumptions on the linear parts, the projection of a self-affine measure onto every line has the expected Hausdorff dimension. If the proximality assumption is strengthened to strong pinching, then the same conclusion holds for the self-affine set $X$ itself, without any separation assumption. In the plane, strong irreducibility of the linear parts alone suffices, and this is sharp. As a corollary, if $X$ additionally has upper Minkowski dimension at most one, then its Minkowski dimension exists and equals its Hausdorff dimension, giving a partial affirmative answer to the folklore question of whether the Minkowski dimension exists for every self-affine set.
\end{abstract}

\maketitle

\tableofcontents

\section{Introduction and main results} \label{sec:intro}

We advance the understanding of two significant problems related to self-affine sets, which have attracted substantial attention in recent decades. The first addresses the set of exceptional directions in the seminal Marstrand--Mattila projection theorem \cite{Marstrand1954,Mattila1975}. The second tackles the long-standing open question regarding the existence of the Minkowski dimension for self-affine sets.

The \emph{general linear group} $\GL(d,\R)$ consists of all $d \times d$ invertible matrices with real entries. An \emph{affine iterated function system} on $\R^d$ is a finite tuple $\Phi = (\fii_1,\ldots,\fii_N)$, where $N \ge 2$, and each $\fii_i \colon \R^d \to \R^d$ is a contractive affine map defined by
\begin{equation*}
  \fii_i(x) = A_ix + t_i,
\end{equation*}
with $A_i \in \GL(d,\R)$ having operator norm strictly less than one, $\|A_i\| < 1$, and $t_i \in \R^d$. A fundamental result by Hutchinson \cite{Hutchinson1981} guarantees that for any such affine iterated function system, there exists a unique non-empty compact set $X \subset \R^d$, called the \emph{self-affine set}, satisfying
\begin{equation} \label{eq:self-affine-set-def}
  X = \bigcup_{i = 1}^N \fii_i(X).
\end{equation}
To ensure non-triviality, we assume that $X$ is not a singleton, which is equivalent to the maps $\fii_i$ having no common fixed point. Throughout, we assume $d \ge 2$ for convenience. By convention, a self-affine set $X$ is always associated with the tuple $\Phi = (\fii_1,\ldots,\fii_N)$ of affine maps that defines it. The corresponding tuple of contractive matrices is denoted by $\A = (A_1,\ldots,A_N) \in \GL(d,\R)^N$.

Our main results are two projection theorems, one for self-affine measures and one for self-affine sets, together with a sharp planar refinement of the set theorem, all under conditions on the linear maps that hold for a typical tuple of matrices. The first shows that, under a suitable irreducibility assumption on the linear maps and a mild separation condition, every projection onto a line has the expected Hausdorff dimension for the corresponding self-affine measure. The second shows that the same conclusion holds for the set itself, for a generic choice of linear maps and without any separation assumption or any restriction on the translation vectors $t_1,\ldots,t_N$. The proof first establishes the measure theorem by combining the Furstenberg-theoretic input with a fixed-direction local entropy averages argument. It then turns to self-affine sets by extracting suitable smaller subsystems from strongly pinching systems and applying the measure theorem to those subsystems. A direct corollary shows that if a self-affine set $X$ has upper Minkowski dimension at most one, then the Minkowski dimension of $X$ exists and equals its Hausdorff dimension. We now state these results and place them in context.

\subsection{Marstrand--Mattila projection theorem}

The \emph{real projective space} of dimension $d-1$, denoted $\RP^{d-1}$, is the quotient space $(\R^d \setminus \{0\}) /_\sim$, where the equivalence relation $\sim$ identifies vectors $x, y \in \R^d \setminus \{0\}$ if $y = \lambda x$ for some $\lambda \neq 0$. Thus, $\RP^{d-1}$ can be regarded as the space of all lines passing through the origin in $\R^d$. The general linear group $\GL(d,\R)$ acts on $\RP^{d-1}$ via $A\linspan\{x\} = \linspan\{Ax\}$ for $A \in \GL(d, \R)$ and $x \in \R^d \setminus \{0\}$. This action maps the line through the origin spanned by $x$ to the line spanned by $Ax$. The Haar measure $\gamma_{d,1}$ on $\RP^{d-1}$ is the pushforward of the spherical measure under the quotient map.

We denote the Hausdorff dimension by $\dimh$ and refer to the books \cite{Falconer1997,Mattila1995} for its definition and basic properties. The upper Minkowski dimension $\udimm$, which enters \cref{thm:projection,thm:projection-two-dimensional}, is defined precisely in \cref{sec:IntroMinkowski}. The \emph{orthogonal projection} $\proj_V$ onto a line $V \in \RP^{d-1}$ is the linear map $\R^d \to V$ with kernel $V^\bot$ and identity restriction to $V$. Since $\proj_V$ is Lipschitz and $\proj_V(X) \subseteq V$, the Hausdorff dimension of the projection is bounded by
\begin{equation} \label{eq:hausdorff-projection-lipschitz-estimate}
  \dimh(\proj_V(X)) \le \min\{1, \dimh(X)\}
\end{equation}
for all $X \subseteq \R^d$ and $V \in \RP^{d-1}$. The Marstrand--Mattila projection theorem \cite{Marstrand1954,Mattila1975} states that for a Borel set $X \subseteq \R^d$, the projection satisfies
\begin{equation*}
  \dimh(\proj_{V}(X)) = \min\{1,\dimh(X)\}
\end{equation*}
for $\gamma_{d,1}$-almost all $V \in \RP^{d-1}$. An analogous almost-every-direction statement holds for finite Borel measures; see \cite{HuTaylor1994}. Thus, when $\dimh(X) \leq 1$, the projection typically preserves the dimension, whereas if $\dimh(X) > 1$, it achieves the maximum dimension of one on the target line. More generally, the theorem applies to orthogonal projections onto $k$-dimensional subspaces, but this paper focuses solely on projections onto lines, so the above formulation suffices. Numerous extensions and refinements of the theorem have been a vibrant area of research. A comprehensive survey of these results is beyond the scope of this paper; interested readers may refer to \cite{Kaufman1968,HuntKaloshin1997,HuTaylor1994,HochmanShmerkin2012,HochmanShmerkin2015,PeresSchlag2000,Bourgain2003,Bourgain2010,OrponenShmerkin2023,OrponenShmerkin2023-preprint,Orponen2024-preprint,Wu2025-preprint,RenWang2023-preprint,KaenmakiOrponenVenieri2025} and references therein. Here, we highlight selected results concerning the set of exceptional directions for self-affine sets. 

Falconer and Kempton \cite[Corollary 3.3]{FalconerKempton2017} demonstrated that for many dominated self-affine sets $X \subset \mathbb{R}^2$, the orthogonal projections achieve the dimension $\min\{1, \dimh(X)\}$ in all but at most one direction. B\'ar\'any and K\"aenm\"aki \cite[Theorem 2.3]{BaranyKaenmaki2017} established that projections of self-affine measures in $\mathbb{R}^d$ with a simple Lyapunov spectrum are exact-dimensional and satisfy the Ledrappier-Young formula. This result was a critical component in the work of B\'ar\'any, Hochman, and Rapaport \cite[Theorem 7.1]{BaranyHochmanRapaport2019}, who stated that, for a proximal and strongly irreducible self-affine set $X\subset\mathbb{R}^2$ satisfying the strong open set condition, the set of exceptional directions in the Marstrand--Mattila projection theorem is empty. Their proof, however, contains a slight gap and, as written, yields this only for dominated and irreducible linear parts; we explain and close the gap in \cref{sec:projections-selfaffine-measures}. It follows from Hochman and Rapaport \cite{HochmanRapaport2021} that the strong open set condition can be relaxed to the exponential separation condition. In the reducible case, B\'ar\'any, K\"aenm\"aki, Py\"or\"al\"a, and Wu \cite[Theorem 1.7]{BaranyKaenmakiPyoralaWu2023-preprint} showed that for self-affine measures on planar carpets satisfying the strong separation condition and an irrationality assumption on the entries of the linear parts, there are at most two exceptional directions. Subsequently, Py\"or\"al\"a \cite[Theorem 1.1]{Pyorala2025} refined this result by eliminating the separation assumption.

Under the irreducibility condition of \cite[Theorem 1.1(i)]{FengXie2025preprint}, recent work by Feng and Xie \cite[Theorem 1.1(iii)]{FengXie2025preprint} and Morris and Sert \cite[Theorem 2(b)]{MorrisSert2025preprint} extends the Marstrand--Mattila projection theorem to all linear subspaces of $\R^d$ for Lebesgue almost every choice of translation vectors. However, this result does not generally hold for reducible planar self-affine sets. Specifically, Feng and Xie \cite[Proposition 7.1]{FengXie2025preprint}, together with Morris and Sert \cite[Theorem 2(a)]{MorrisSert2025preprint} and Falconer \cite[Theorem 5.3]{Falconer1988}, provide sufficient conditions under which there exists $V\in\RP^{d-1}$ such that
\begin{equation*}
  \udimm(\proj_V(X)) < \min\{1, \dimh(X)\}
\end{equation*}
for Lebesgue almost every choice of translation vectors. More recently, Allen, K\"aenm\"aki, Prokaj, Simon, and Troscheit \cite{AllenKaenmakiProkajSimonTroscheit2026-preprint} showed that the projection of an irreducible planar ergodic measure that is not strongly irreducible need not be exact-dimensional.

Our first main theorem concerns self-affine measures satisfying the exponential separation condition of \cref{sec:exponential-separation} and suitable proximality and irreducibility assumptions on the linear maps, as introduced in \cref{sec:irred}. Rapaport's entropy-dimension formula \cite[Theorem~1.12]{Rapaport2024} and Feng's exact-dimensionality theorem \cite[Theorem 1.6(ii)]{Feng2023} give the expected projection formula for Furstenberg-almost all directions. The exponential separation hypothesis reflects both the available input and the geometry: our proof starts from Rapaport's projection formula, which is presently available in that setting, and some control on overlaps is unavoidable in any statement involving the Lyapunov dimension $\diml(\nu)$, since $\diml(\nu)$ depends only on the symbolic data and linear parts and does not detect dimension drops caused by overlaps. The theorem below shows that for every projection onto a line, the projected measure is exact-dimensional and has the expected dimension; the Lyapunov dimension and exact-dimensionality are defined in \cref{sec:dimensions-measures}.

\begin{theorem} \label{thm:projection-bernoulli}
  Let $X \subset \R^d$ be a self-affine set satisfying the exponential separation condition, $\nu$ be a fully supported Bernoulli measure, and $\mu$ be the associated self-affine measure such that the associated tuple of matrices is $k$-proximal and strongly $k$-irreducible for all $k \in \{1,\ldots,d-1\}$. Then
  \begin{equation*}
    \dim((\proj_V)_*\mu) = \min\{1,\diml(\nu)\}
  \end{equation*}
  for all $V \in \RP^{d-1}$.
\end{theorem}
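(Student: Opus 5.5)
The plan is to upgrade the Furstenberg-almost-every statement to every direction by an entropy-averaging argument along the dynamics. First, I would take as input Rapaport's projection formula \cite[Theorem~1.12]{Rapaport2024} together with Feng's exact-dimensionality \cite[Theorem 1.6(ii)]{Feng2023}. For the dual Furstenberg measure $\eta^*$ on $\RP^{d-1}$ (the stationary measure for the transposed matrices $A_i^T$, which describes how directions of lines transform under projections), these give
\begin{equation*}
  \dim((\proj_W)_*\mu)=\min\{1,\diml(\nu)\}
\end{equation*}
for $\eta^*$-almost every $W$. The hypotheses make this stationary measure unique and nonatomic: $k$-proximality and strong $k$-irreducibility are assumed for all $k$, in particular for $k=1$ and $k=d-1$, and the exponential separation condition is exactly what Rapaport requires. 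The lower bound $\le$ in the statement holds trivially, since the projection is Lipschitz and $\dim\mu\le\diml(\nu)$. Exact-dimensionality for a fixed $V$ would come from Ledrappier--Young type arguments (as in \cite{BaranyKaenmaki2017,Feng2023}), so the main task is the lower bound on the lower local dimension.

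Second, I would fix an arbitrary $V\in\RP^{d-1}$ and use self-affinity to write $\mu=\sum_{|\iii|=n}\nu([\iii])\,\fii_\iii\mu$. Then
\begin{equation*}
  \proj_V\circ\fii_\iii=\text{(scalar of size }\|A_\iii^TV\|\text{)}\circ\proj_{A_\iii^T V}+\text{translation},
\end{equation*}
so at scale $r$ the projection of $\mu$ is a convex combination of rescaled copies of $(\proj_{A_\iii^TV})_*\mu$. I would follow the local entropy averages scheme of Hochman--Shmerkin \cite{HochmanShmerkin2012}. Choose a stopping time in which the words $\iii$ are cut where $\|A_\iii^TV\|\approx 2^{-m}$. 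Then the lower local dimension of $(\proj_V)_*\mu$ at typical points is bounded below by the $\liminf$ of averages of the normalized scale-$q$ entropies $\frac1q H_q((\proj_{A_\iii^TV})_*\mu)$ along the random walk $A_{i_n}^T\cdots A_{i_1}^T V$. This step only needs the fixed-direction cylinder decomposition.

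Third, I would control the walk started at the deterministic point $V$. By proximality and strong irreducibility (via Guivarc'h--Raugi and Bougerol--Lacroix), the walk equidistributes to $\eta^*$ from every starting point, uniformly in the starting point, with Hölder regularity: $\eta^*(B(W,r))\le Cr^\alpha$. By Rapaport's result and semicontinuity, for $\eta^*$-most $W$ the $q$-scale entropy of $(\proj_W)_*\mu$ is at least $\min\{1,\diml(\nu)\}-\eps$ for large $q$. The entropy $H_q$ is continuous in $W$ up to $O(1/q)$ at fixed scale, so the set of good directions contains an open set of $\eta^*$-measure close to one. Equidistribution then shows that the walk spends a proportion at least $1-\eps$ of its time in good directions. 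This requires an ergodic theorem for Markov chains started from an arbitrary point, which holds by unique ergodicity of the projective action under these hypotheses. Averaging gives the lower bound.

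The main obstacle is matching the stopping time to the equidistribution. The times are random and depend on $\|A_\iii^TV\|$, and the copies are only affinely, not similarly, rescaled. One must therefore ensure that the non-conformal distortion along $V^\perp$ does not corrupt the entropy count. It must also be ensured that the Birkhoff averages of the entropy function along the stopped walk converge for \emph{every} $V$, not just typical ones. I would handle this by working with the cocycle $\log\|A^T_{i}W\|$ on the skew product and invoking uniform convergence of Birkhoff averages of continuous functions under unique stationarity (the Furstenberg--Kifer theorem). To make the entropy function continuous, I would replace it by a lower semicontinuous minorant.
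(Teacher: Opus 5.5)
The gap is at the step you flag as the main obstacle. In the local entropy averages bound (\cref{thm:LocalEntropyAverages}: cuts at the levels $iq$, entropy at resolution $\mathcal{D}_{q(i+1)}$), the directions that enter are $V_{\eta_{i,q}}$. Here $V_n = A_{\iii|_n}^\top W$ and $\eta_{i,q}$ is the first time that $S_n = -\log_2\|A_{\iii|_n}^\top|W\|$ exceeds $iq$; the increments of $S_n$ lie in $[r_{\min},r_{\max}]$.

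Breiman's law of large numbers for the uniquely stationary projective chain (stationary measure $\mu_F$, your $\eta^*$), together with Furstenberg--Kifer, controls the empirical measures of $(V_n)$ over \emph{all} times $n$. It says nothing about this sampled subsequence. The sampled process is the time-$q$ map $T_q$ of the suspension flow over $(\II^\N\times\RP^{d-1},F)$ with roof $r$, and it is governed by the joint chain $(V_n, S_n \bmod q)$. Its unique stationarity, and even the ergodicity of $T_q$, is not supplied by the hypotheses; it is a non-arithmeticity statement about the norm cocycle. This is precisely the ergodicity of $T_q$ that B\'ar\'any, Hochman, and Rapaport took from Falconer and Kempton, and that the paper identifies as a gap.

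A crude comparison does not rescue the step. Define $B_q = \{V : \frac{1}{q\log 2}H((P_V)_*\mu,\mathcal{D}_q) < \min\{1,\diml(\nu)\}-\eps\}$. Since $\eta_{n,q} \le nq/r_{\min}+1$, Breiman only bounds the proportion of bad sampled directions by about $(q/r_{\min})\,\mu_F(\overline{B_q})$, and nothing controls the rate at which $\mu_F(B_q)\to 0$.

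The paper never claims equidistribution from $W$. It bounds the $T_q$-averages only along $m$-typical orbits, via the ergodic decomposition (\cref{lem:ErgDecompValue}). It then uses the synchronization $d(A_{\iii|_n}^\top A_\jjj^\top V, A_{\iii|_n}^\top W)\to 0$ and the ratio limit from \cref{prop:FurstenbergMeasure2}. Together these show that a typical orbit $T_q^i(\jjj\iii,V,t)$ sits within $p$ letters of the stopping word $Q_i^q(\iii,W)$, and this shift is absorbed by the minimum defining $f_{q,p}$.

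Your route can be repaired differently, and the repair would genuinely simplify the proof. Use local entropy averages over \emph{all} levels $m$ rather than only $m \in q\N$. This needs the martingale proof behind \cref{thm:LocalEntropyAverages}, comparing the $q$ offset versions at a common final scale; the stated $\liminf$ form is not enough. Each step of the walk then accounts for at most $r_{\max}+1$ levels, so the proportion of bad levels is at most $\frac{r_{\max}+1}{r_{\min}}\mu_F(\overline{B_q})+o(1)$, a factor independent of $q$. The limit $\mu_F(\overline{B_q})\to 0$ follows from the Furstenberg-a.e.\ input together with the $O(1)$-continuity of $V\mapsto H((P_V)_*\mu,\mathcal{D}_q)$ at angular scale $2^{-q}$. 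Breiman's theorem then closes the argument with neither ergodic decomposition nor synchronization.

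Two smaller corrections. First, exact-dimensionality in an arbitrary fixed direction is not supplied by Ledrappier--Young theory. As in the proof of \cref{thm:proj-dim}, it follows from your lower bound on $\ldimh$ together with $\udimp((\proj_V)_*\mu)\le\min\{1,\diml(\nu)\}$, which comes from Rossi's bound $\udimp(\mu)\le\diml(\nu)$ and the Lipschitz property of $\proj_V$. Second, there is no non-conformal distortion to control: by \cref{eq:ProjSelfAffinity}, $P_V\circ\fii_\iii$ is exactly an affine map of $\R$ composed with $P_{A_\iii^\top V}$.
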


The second main theorem is the analogous statement for self-affine sets, with the proximality assumption strengthened to strong pinching, which is defined in \cref{sec:irred}. In that setting, every projection onto a line has the expected Hausdorff dimension. No separation condition is needed because the proof passes to suitable strongly separated subsystems, applies the measure theorem there, and transfers the lower bound back to $X$ by monotonicity.

\begin{theorem} \label{thm:projection}
  Let $X \subset \R^d$ be a self-affine set such that the associated tuple of matrices is strongly pinching and strongly $k$-irreducible for all $k \in \{1,\ldots,d-1\}$. Then
  \begin{equation*}
    \dimh(\proj_V(X)) = \min\{1, \dimh(X)\} = \min\{1, \udimm(X)\}
  \end{equation*}
  for all $V \in \RP^{d-1}$.
\end{theorem}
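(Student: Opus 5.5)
The argument is a two-sided comparison. The upper bound is free. By \eqref{eq:hausdorff-projection-lipschitz-estimate} we have $\dimh(\proj_V(X)) \le \min\{1,\dimh(X)\}$, and $\dimh(X) \le \udimm(X)$ always holds. Hence it suffices to prove
\begin{equation*}
  \dimh(\proj_V(X)) \ge \min\{1,\udimm(X)\}
\end{equation*}
for every $V \in \RP^{d-1}$; all three quantities then coincide. For the lower bound we use the upper affinity (singular value) dimension $s_0$ of $\A$. It is a standard fact (Falconer) that $\udimm(X) \le s_0$, so it is enough to show $\dimh(\proj_V(X)) \ge \min\{1,s_0\}-\eps$ for every $\eps>0$ and every $V$.

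Fix $\eps>0$. The plan is to extract from $\Phi$ a subsystem $\Psi$ built from words of some large length $n$ (or a suitable family of compositions), with four properties:
\begin{enumerate}[(i)]
\item $\Psi$ satisfies the strong separation condition, so in particular the exponential separation condition.
\item Its linear parts are $k$-proximal and strongly $k$-irreducible for all $k \in \{1,\ldots,d-1\}$.
\item Its maps are close to conformal on the relevant scales in the sense of dominated splitting, so that the uniform Bernoulli measure $\nu$ on $\Psi$ has Lyapunov dimension $\diml(\nu) \ge \min\{1,s_0\}-\eps$.
\item The self-affine set $X_\Psi$ of $\Psi$ satisfies $X_\Psi \subseteq X$.
\end{enumerate}
Given such a $\Psi$, we apply \cref{thm:projection-bernoulli} to the self-affine measure $\mu_\Psi$. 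This gives
\begin{equation*}
  \dimh(\proj_V(X)) \ge \dimh(\proj_V(X_\Psi)) \ge \dim((\proj_V)_*\mu_\Psi) = \min\{1,\diml(\nu)\} \ge \min\{1,s_0\}-\eps
\end{equation*}
for every $V$, which is the required lower bound.

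The construction of $\Psi$ is the heart of the proof and the main obstacle. Strong pinching is what makes it possible: it allows us to choose finitely many words whose matrices have simple, uniformly separated singular value gaps and whose attracting and repelling flags are in general position. Such a family generates a semigroup that is dominated in every exterior power. For a dominated family, the singular value function is quasi-multiplicative, so the pressure of the subsystem approximates the full pressure $P(\A,s)$. The steps are as follows.
\begin{enumerate}[(a)]
\item Use the strong $k$-irreducibility and strong pinching of $\A$, via a quasi-multiplicativity argument (Feng, or Bochi--Morris type), to find, for large $n$, many words $\iii$ of length about $n$. These words should number at least $e^{n(P(\A,s)-\eps)}$, have singular values comparable to the typical ones, and have Oseledets-type flags lying in a fixed small cone.
\item Pass to a sub-collection that is strongly separated. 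The cylinders $\fii_\iii(X)$ are thin ellipsoid-like sets, and a greedy selection along a maximal separated family loses only subexponentially many maps for each dimension scale, using the fact that the cylinders are aligned by domination.
\item Append a few fixed "mixing" words so that strong $k$-irreducibility of the sub-collection is retained. Since irreducibility is an open condition on Zariski closures, we include finitely many words whose generated semigroup already has Zariski-dense-type behaviour inherited from $\A$. This costs only a bounded factor.
\end{enumerate}

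Finally, we check that the Lyapunov dimension of the uniform Bernoulli measure on $\Psi$ is at least the root of the pressure of $\Psi$. For dominated systems, the Lyapunov exponents are nearly constant across $\Psi$, so this pressure is within $\eps$ of $s_0$ once $n$ is large. Keeping the error $\eps$ uniform over the separation and irreducibility modifications is the delicate bookkeeping step; I would handle it by fixing the mixing words first and only then sending $n\to\infty$.
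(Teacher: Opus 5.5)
\textbf{There is a genuine gap.} It comes from the reduction in your first paragraph. Replacing the target $\min\{1,\udimm(X)\}$ by the larger quantity $\min\{1,\dimaff(\A)\}$ is logically sufficient, but the stronger statement is false under the hypotheses of the theorem, because no separation condition is assumed.

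For example, let $\Phi$ contain one map $M$ times. Then $X$ and the semigroup $\mathcal{S}(\A)$ are unchanged, so strong pinching and strong $k$-irreducibility still hold. On the other hand, $P(\A,1) \ge \log M + \log\alpha_d(A_1) > 0$ for large $M$, so $\dimaff(\A) > 1$. If the reduced system has $\dimh(X) < 1$, then
\begin{equation*}
  \dimh(\proj_V(X)) \le \dimh(X) < 1 = \min\{1,\dimaff(\A)\}.
\end{equation*}
Indeed, your properties (i), (iii), (iv) together with \cref{thm:projection-bernoulli} would give $\dimh(X) \ge \min\{1,\dimaff(\A)\}-\eps$, which fails here. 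The step that cannot work is (b). Roughly $e^{n(P(\A,s)-\eps)}$ cylinders may overlap exactly, and no selection extracts a strongly separated subfamily with only subexponential loss. The number of disjoint cylinders available is controlled by the geometry of $X$, not by the pressure.

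\textbf{The missing idea is to count separated cylinders through $X$ itself.} Choose $r$ small with $N_r(X) > r^{-\udimm(X)+\eps}$. For each ball of a maximal $r$-packing, take the shortest cylinder $\fii_\qqq(X)$ through its centre with diameter $<r$. These cylinders are pairwise disjoint, and minimality gives $\|A_\qqq\| \gtrsim r$.

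Because the target is capped at $1$, only $s \le 1$ matters, and there $\fii^s(A) = \|A\|^s$. So $1$-domination suffices, and you need neither domination in higher exterior powers nor control of $P(\A,s)$. Sandwich each $\qqq$ between the bounded-length prefix and suffix words of \cref{prop:dominated-subsystem}. Pigeonholing over the finitely many choices costs only a constant factor. Disjointness survives because $\fii_\hhh$ is injective and $\fii_\kkk(X) \subseteq X$. Then \cref{lem:domination-pressure} gives
\begin{equation*}
  P(\A',s) \ge \log\kappa^{2s} + \log\bigl(C r^{-\udimm(X)+\eps}(cr)^s\bigr) > 0
\end{equation*}
for $s < \udimm(X)-\eps$, once $r$ is small.

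Your step (c) is also too vague as stated, since the hypothesis is not Zariski density. The paper first inserts a power of a word that is pinching in every exterior power. This forces any invariant finite union of subspaces to consist of that word's eigen-subspaces. It then inserts finitely many replacement words, obtained from the simultaneous escape property (\cref{prop:simultaneous-escape}), that move each such eigen-subspace off this finite list. This discards only boundedly many maps.
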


The hypotheses of \cref{thm:projection} on the linear parts, strong pinching and strong irreducibility in every exterior power, hold for a typical tuple. Both follow from Zariski density of the tuple, which is itself, by \cref{prop:tuples-open-dense}, an open and dense condition on $\GL(d,\R)^N$. Since strong pinching implies proximality in every exterior power, the same genericity also covers the hypotheses of \cref{thm:projection-bernoulli}.

\begin{theorem} \label{thm:assumptions-typical}
  The set of tuples in $\GL(d,\R)^N$ that are strongly pinching and strongly $k$-irreducible for all $k \in \{1,\ldots,d-1\}$ contains an open and dense subset of $\GL(d,\R)^N$.
\end{theorem}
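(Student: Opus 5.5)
The plan is to reduce everything to Zariski density, as the paragraph preceding the statement indicates. Let $\mathcal{Z} \subseteq \GL(d,\R)^N$ be the set of tuples $\A = (A_1,\ldots,A_N)$ that generate a Zariski dense subsemigroup of $\GL(d,\R)$. By \cref{prop:tuples-open-dense}, $\mathcal{Z}$ contains an open and dense subset. It therefore suffices to show that every $\A \in \mathcal{Z}$ is strongly pinching and strongly $k$-irreducible for all $k \in \{1,\ldots,d-1\}$.

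\textbf{Irreducibility.} Let $G$ be the Zariski closure of the semigroup $S$ generated by $\A$. A Zariski closed subsemigroup of $\GL(d,\R)$ is a group, so $G = \GL(d,\R)$.

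Suppose $S$ preserves a finite union $W_1\cup\cdots\cup W_m$ of proper nonzero subspaces of $\wedge^k\R^d$. The stabiliser of such a union is Zariski closed, so it contains $G = \GL(d,\R)$. Its identity component $\GL^+(d,\R)$ is connected, so it must fix each $W_j$ individually.

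Now use the representation theory of $\SL(d,\R)\subseteq \GL^+(d,\R)$. The exterior power $\wedge^k\R^d$ is an irreducible representation of $\SL(d,\R)$: it is the fundamental representation. So no proper nonzero invariant subspace $W_j$ can exist, a contradiction. Hence $\A$ is strongly $k$-irreducible for every $k \in \{1,\ldots,d-1\}$.

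\textbf{Strong pinching.} Strong pinching means that the semigroup contains an element whose singular values (equivalently, eigenvalue moduli) are pairwise distinct, or that it is $k$-proximal for every $k$; I will use whichever form the definition in \cref{sec:irred} takes.

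For this I would invoke the theorem of Benoist and Labourie (alternatively, Goldsheid--Margulis / Prasad): a Zariski dense subsemigroup of a real reductive group contains loxodromic, i.e.\ $\R$-regular, elements. Concretely, $S$ contains an element $g$ whose eigenvalues have pairwise distinct moduli.

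Such a $g$ is proximal on every $\wedge^k\R^d$, since the top eigenvalue there, $\lambda_1\cdots\lambda_k$, is simple in modulus. If instead the definition requires a condition in terms of singular values along products, the standard Benoist argument applies. Zariski density gives that the Lyapunov spectrum of any fully supported Bernoulli random walk on $S$ is simple. This follows from the Guivarc'h--Raugi and Goldsheid--Margulis simplicity criterion, which requires strong irreducibility and proximality in each $\wedge^k$, both established above.

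\textbf{Main obstacle.} Bookkeeping the exact form of "strongly pinching" in the paper's definition is the main obstacle. If it requires something uniform, such as domination-type pinching of singular value gaps along all sufficiently long words, then Zariski density alone does not give it. In that case I would try to verify it on a smaller open dense set directly. Start with tuples containing some $A_i$ with distinct eigenvalue moduli; this is open and dense. Then make $A_i$ proximal in every exterior power; this is an open condition, because eigenvalue moduli depend continuously on the matrix. Finally, combine this with the Zariski density supplied by \cref{prop:tuples-open-dense}.

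Intersecting two open dense sets gives an open dense set, and every tuple in it satisfies both sets of hypotheses, which finishes the proof.
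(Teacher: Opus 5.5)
Your reduction to Zariski density via \cref{prop:tuples-open-dense} is the paper's, and your irreducibility argument is fine. The direct stabiliser argument is slightly shorter than the paper's passage through the simultaneous escape property, and it rests on the same input: irreducibility of $\SL(d,\R)$ on $\wedge^k\R^d$.

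\textbf{The gap is in the strong pinching step, and it comes from not pinning down the definition.} In \cref{sec:irred}, $\A$ is strongly pinching if one single $A \in \mathcal{S}(\A)$ has $A^{\wedge k}$ pinching for every $k \in \{1,\ldots,d-1\}$. That is, all $\binom{d}{k}$ eigenvalues $\lambda_{i_1}\cdots\lambda_{i_k}$ of $A^{\wedge k}$ must have pairwise distinct moduli. A loxodromic element, whose eigenvalues have pairwise distinct moduli, only gives a simple dominant eigenvalue in each exterior power, i.e.\ $k$-proximality. For $d \ge 4$ it can be resonant: moduli $e^{3},e^{1},e^{-1},e^{-3}$ give $|\lambda_1\lambda_4| = |\lambda_2\lambda_3|$, so $A^{\wedge 2}$ is not pinching. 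Neither the Benoist--Labourie/Goldsheid--Margulis theorem nor simplicity of the Lyapunov spectrum excludes this. Both are compatible with Jordan projections lying on the resonance hyperplanes $\sum_{i \in I} x_i = \sum_{j \in J} x_j$.

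\textbf{The missing ingredient is the non-resonance statement \cref{lem:zariski-nonresonant}.} First pass to squares, so that the projective image is a Zariski dense subsemigroup of the connected semisimple group $\PGL(d,\R)^\circ$. Then combine two results of Benoist: the limit cone of a Zariski dense semigroup has non-empty interior, and the elements whose Jordan projection lies in an open cone meeting the limit cone are still Zariski dense. Together these produce $B \in \mathcal{S}(\A)$ whose Jordan projection avoids the finitely many resonance hyperplanes. The Jordan decomposition then forces $B_u = I$ and makes $B_e$ act by $\pm 1$ on the eigenlines of $B_h$. Hence $B$ and every $B^{\wedge k}$ are pinching, which is \cref{lem:zariski-strongly-pinching}.

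Your fallback does not get around this. The set of tuples in which some generator has eigenvalues of distinct moduli is open but not dense. A matrix with a pair of non-real conjugate eigenvalues, such as a scaled rotation, has a whole neighbourhood of such matrices. If you replace generators by arbitrary words, you get an open set, but its density is exactly what the Zariski density plus non-resonance argument is needed to supply.
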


In dimension two, strong pinching is equivalent to proximality under irreducibility, so that the hypotheses of \cref{thm:projection} coincide with the proximality and strong irreducibility used by Hochman and Rapaport \cite{HochmanRapaport2021}. The proof of \cite[Lemma 2.2]{BaranyKaenmakiRossi2021} shows that the only proximal tuples that are not strongly irreducible are simultaneously upper triangular in some basis or consist only of diagonal and antidiagonal matrices. Thus \cref{thm:projection} places the planar hypothesis of \cite{HochmanRapaport2021} into a higher-dimensional framework.

\begin{theorem} \label{thm:projection-two-dimensional}
  Let $X \subset \R^2$ be a self-affine set such that the associated tuple of matrices is strongly irreducible. Then
  \begin{equation*}
    \dimh(\proj_V(X)) = \min\{1, \dimh(X)\} = \min\{1, \udimm(X)\}
  \end{equation*}
  for all $V \in \RP^1$.
\end{theorem}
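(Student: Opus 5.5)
We reduce \cref{thm:projection-two-dimensional} to the planar case of \cref{thm:projection}, treating separately the case where the tuple $\A$ is not proximal. In dimension two, strong $1$-irreducibility is the only exterior-power condition, and as noted above, strong pinching is equivalent to proximality under irreducibility. So if $\A$ is proximal, \cref{thm:projection} applies verbatim with $d=2$ and gives the claim for every $V \in \RP^1$.

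It remains to treat a strongly irreducible tuple that is not proximal. In $\GL(2,\R)$, failure of proximality for a strongly irreducible semigroup means the normalized semigroup $\{A/|\det A|^{1/2}\}$ generated by $\A$ is relatively compact. Hence there is $M\in\GL(2,\R)$ such that every $MA_iM^{-1}$ is a similarity. Rotations by irrational angles are allowed; a finite group would not be strongly irreducible only in special cases, which we handle the same way. Then $MX$ is a self-similar set in the plane whose rotation group is infinite. If it were finite, the rotation group would be a finite subgroup of $\GO(2)$; strong irreducibility excludes invariant finite unions of lines, so the group must act without fixed finite line sets, but a finite subgroup of $\GO(2)$ always preserves a finite union of lines. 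Hence the rotation group is infinite.

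For planar self-similar sets with an irrational rotation, the result of Hochman and Shmerkin \cite{HochmanShmerkin2012} (building on Peres--Shmerkin) gives
\[
\dimh(\proj_W(MX)) = \min\{1,\dimh(MX)\}
\]
for every line $W$, with no separation assumption. Since $\proj_V(X)$ is the image of $\proj_W(MX)$ under a bi-Lipschitz map of lines, with $W$ determined by $V$ and $M^{-1}$, Hausdorff dimension is preserved. Finally, self-similar sets satisfy $\dimh = \udimm$ (Falconer), and $M$ is bi-Lipschitz, so $\dimh(X)=\udimm(X)$. This yields both equalities.

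The main obstacle is the non-proximal case, and specifically the justification that a non-proximal, strongly irreducible planar tuple is conjugate to similarities with infinite rotation group. I would argue via the Zariski closure of the semigroup: if it is not proximal, its image in $\PGL(2,\R)$ is contained in a compact group. Only if this fails would I fall back on the paper's own subsystem argument for \cref{thm:projection}.
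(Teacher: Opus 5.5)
Your overall architecture matches the paper's. The proximal case is treated identically: planar proximal plus irreducible gives strong pinching, and then \cref{thm:projection} applies. The non-proximal case is reduced, by a linear conjugacy, to a self-similar set with infinite orthogonal group, followed by an all-directions projection theorem for self-similar sets and Falconer's $\dimh = \udimm$.

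The genuine difference is in the structural step. The paper obtains it as \cref{lem:planar-nonproximal-conformal} through Zariski closures (\cref{prop:zariski-irreducible-converse-planar}, \cref{lem:projective-determinant}, \cref{lem:planar-projective-dichotomy}). Your compactness route is correct and considerably shorter, but you should write it out rather than assert it:
\begin{enumerate}
  \item If $\|A\|^2/|\det A|$ were unbounded on $\mathcal{S}(\A)$, some normalized products $A_{\iii_n}/\|A_{\iii_n}\|$ would have norm one and determinant tending to zero. Any limit point then has rank one, so $\A$ would be proximal.
  \item Hence the semigroup $\{A/|\det A|^{1/2} : A \in \mathcal{S}(\A)\}$ has compact closure $K$ in $\GL(2,\R)$.
  \item A compact subsemigroup of a group is a subgroup, so $K$ is a compact group.
  \item Averaging an inner product over the Haar measure of $K$ yields $M$ with $MA_iM^{-1} = |\det A_i|^{1/2}O_i$ and $O_i \in \GO(2)$.
\end{enumerate}
This argument uses no irreducibility at all. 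Strong irreducibility enters only to make the group generated by the $O_i$ infinite, exactly as in the paper. Your sentence about finite groups failing strong irreducibility ``only in special cases'' is confused and should be deleted; the finite-orbit argument that follows it is the right one. The Zariski route is not needed for this theorem, but it fits the paper's wider framework, since it also delivers the planar characterization of Zariski density in \cref{prop:zariski-irreducible-converse-planar}.

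One correction concerns the citation. The projection theorem of Hochman and Shmerkin \cite{HochmanShmerkin2012} for self-similar sets with dense rotations is stated under the strong separation condition. It therefore does not give the separation-free statement you attribute to it. The separation-free, every-direction result is Farkas's \cite[Theorem~1.6]{Farkas2016}, which is what the paper uses, with the input that $\mathcal{T}V$ is dense in $\RP^1$ for every $V$. This formulation also handles reflections among the $O_i$: the relevant hypothesis is that the generated group is infinite, not that some generator is an irrational rotation. If you prefer to keep \cite{HochmanShmerkin2012}, you must first pass to a strongly separated self-similar subsystem of nearly full dimension whose orthogonal group is still infinite. That is an extra step you have not supplied.
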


This result extends the planar projection theorem of Hochman and Rapaport \cite{HochmanRapaport2021} by removing the exponential separation assumption. It also unifies two settings: for proximal tuples the conclusion follows from \cref{thm:projection}, while for non-proximal tuples the system is conjugate to a self-similar one and the conclusion is supplied by the projection theorem of Farkas \cite{Farkas2016}. In particular, \cref{thm:projection-two-dimensional} recovers the planar case of Farkas's theorem and complements it with the proximal, genuinely non-conformal, self-affine sets. Moreover, \cref{thm:projection-two-dimensional} is sharp, in the sense that strong irreducibility cannot be relaxed to irreducibility, even under the strong separation condition; see \cref{ex:sharpness}.

\subsection{Existence of Minkowski dimension}\label{sec:IntroMinkowski}

For a bounded set $A \subseteq \R^d$ and $r>0$, an \emph{$r$-packing} of $A$ is a finite collection $\{B(x_i,r) : x_i \in A\}_i$ of pairwise disjoint closed balls of radius $r>0$ centered at points in $A$. The cardinality of a maximal $r$-packing of $A$ is
\begin{equation*}
  N_r(A) = \max\{k \geq 1 : \{B_i\}_{i = 1}^k \text{ is an $r$-packing of }A\}.
\end{equation*}
The \emph{upper Minkowski dimension} of a bounded set $A \subset \R^d$ is given by
\begin{equation} \label{eq:DefUpperMinkowskiDim}
  \udimm(A) = \limsup_{r \downarrow 0} \frac{\log N_r(A)}{-\log r}.
\end{equation}
The \emph{lower Minkowski dimension} is defined analogously by replacing the $\limsup_{r \downarrow 0}$ in \cref{eq:DefUpperMinkowskiDim} with $\liminf_{r \downarrow 0}$. If the limit exists, then the \emph{Minkowski dimension} of $A$, denoted $\dimm(A)$, is defined as this limit. By convention, the notation $\dimm(A)$ is used only when this limit exists. For a compact set $A \subset \R^d$, it is well-known that $\dimh(A) \le \ldimm(A)$; see, for example, \cite[\S 5.3]{Mattila1995}. Thus, if $\udimm(A) \le \dimh(A)$, then $\dimh(A) = \dimm(A)$.

A long-standing open question in fractal geometry is the following:

\begin{question}\label{ques:DoesMinkExist}
  Does the Minkowski dimension of every self-affine set $X$ exist?
\end{question}

No counterexamples are known for self-affine sets or, more generally, for attractors of iterated function systems consisting of continuously differentiable maps, and the answer is widely conjectured to be affirmative. For self-similar sets and, more generally, sub-self-conformal sets in $\R^d$, Falconer \cite[Theorem 4]{Falconer1989} proved that the Minkowski dimension always exists and equals the Hausdorff dimension, without requiring separation conditions. Feng and Hu \cite[Theorem 8.1]{FengHu2009} extended this result to attractors of weakly conformal iterated function systems. The existence of the Minkowski dimension and its equality with the Hausdorff dimension also hold for all conformal repellers; see \cite{Barreira1996,GatzourasPeres1997,Pesin1997,PalisViana1988,Takens1988}.

In contrast, for smooth non-conformal dynamical systems, the Hausdorff and Minkowski dimensions of invariant sets do not always coincide. This is typically the case for reducible self-affine sets, such as self-affine carpets; see \cite{Baranski2007,Bedford1984,KenyonPeres1996,LalleyGatzouras1992,McMullen1984,PollicottWeiss1994}. However, certain conditions on self-affine sets ensure that the Hausdorff dimension or the lower Minkowski dimension equals the affinity dimension, a natural symbolic upper bound for the upper Minkowski dimension, and thus guarantee the existence of the Minkowski dimension. Falconer \cite[Theorem 5.4]{Falconer1988} established the existence of the Minkowski dimension for self-affine sets $X \subset \R^d$ for Lebesgue almost every choice of translation vectors. In the planar case, Falconer \cite[Corollary 5]{Falconer1992} proved existence for self-affine sets satisfying the open set condition for a connected open set, provided $X$ has a connected component not contained in any line. Hueter and Lalley \cite[Theorem 1.1]{HueterLalley1995} identified a class of planar self-affine sets with dimension less than one for which the Minkowski dimension exists for an open set of translation vectors. K\"aenm\"aki and Shmerkin \cite[Theorem 3.3]{KaenmakiShmerkin2009} introduced a similar class of planar self-affine sets with dimension greater than one. Falconer and Kempton \cite[Theorem 1.2]{FalconerKempton2018} extended the approach of \cite[Corollary 5]{Falconer1992} by proving existence for dominated self-affine sets $X \subset \R^2$ satisfying the strong separation condition, provided the projection onto Furstenberg lines has positive Lebesgue measure. B\'ar\'any, Hochman, and Rapaport \cite[Theorem 1.1]{BaranyHochmanRapaport2019} showed that for proximal and strongly irreducible self-affine sets $X \subset \R^2$ satisfying the strong open set condition, the Minkowski dimension exists. Hochman and Rapaport \cite[Theorem 1.1]{HochmanRapaport2021} further relaxed this to the exponential separation condition. More recently, Kirat \cite[Theorem 1.2]{Kirat2026-preprint} established the existence of the Minkowski dimension, without any separation condition, for the special class of integral self-affine sets, that is, sets generated by an integer expanding matrix and a finite set of integer translation vectors.

For iterated function systems with infinitely many conformal maps, Mauldin and Urbański \cite[Theorem 3.1 and Example 5.2]{MauldinUrbanski1996} demonstrated that infinitely generated self-conformal sets may have distinct Hausdorff and upper Minkowski dimensions; see also \cite[Theorem 2.11]{MauldinUrbanski1999}. Banaji and Rutar \cite[Theorem A]{BanajiRutar2024-preprint} recently showed that, in this case, the Minkowski dimension may not exist. In the infinite affine case, K\"aenm\"aki and Morris \cite[\S 1.4]{KaenmakiMorris2025-toappear} identified classes of infinitely generated self-affine sets for which the Minkowski dimension exists. Conversely, Jurga \cite[Theorem 1.2]{Jurga2023} provided an example of an infinitely generated self-affine set where the Minkowski dimension does not exist, also applicable to sub-self-affine sets. Earlier, K\"aenm\"aki and Vilppolainen \cite[Theorem 5.2]{KaenmakiVilppolainen2010} showed that the Minkowski dimension exists for sub-self-affine sets for Lebesgue almost every choice of translation vectors. Finally, Baker, Banaji, Feng, Lai, and Xiong \cite[Theorem 1.3]{BakerBanajiFengLaiXiong2025-preprint} recently constructed an iterated function system with two non-differentiable bi-Lipschitz contractions on $\R$ whose attractor has a non-existent Minkowski dimension.

As a direct corollary of \cref{thm:projection}, we obtain a partial answer to \cref{ques:DoesMinkExist}. To our knowledge, this is the first general existence result of this type that requires neither a separation condition nor any restriction on the translation vectors.

\begin{corollary} \label{thm:minkowski-dimension}
  Let $X \subset \R^d$ be a self-affine set with $\udimm(X) \le 1$ such that the associated tuple of matrices is strongly pinching and strongly $k$-irreducible for all $k \in \{1,\ldots,d-1\}$. Then the Minkowski dimension of $X$, $\dimm(X)$, exists and
  \begin{equation*}
    \dimh(X) = \dimm(X).
  \end{equation*}
\end{corollary}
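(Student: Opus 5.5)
This follows directly from \cref{thm:projection}, so the plan is a short chain of inequalities. Fix any line $V \in \RP^{d-1}$; for instance, take $V$ spanned by the first coordinate vector. By \cref{thm:projection}, applied under exactly the stated hypotheses on the tuple $\A$,
\begin{equation*}
  \min\{1,\dimh(X)\} = \min\{1,\udimm(X)\}.
\end{equation*}
Since $\udimm(X) \le 1$ by assumption, the right-hand side equals $\udimm(X)$. On the left, $\dimh(X) \le \udimm(X) \le 1$, so $\min\{1,\dimh(X)\} = \dimh(X)$. Combining the two gives $\dimh(X) = \udimm(X)$.

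Next I recall the standard chain $\dimh(X) \le \ldimm(X) \le \udimm(X)$ for compact $X$, as noted in \cref{sec:IntroMinkowski}. With the equality from the first step, all three quantities coincide. Hence the limit defining $\dimm(X)$ exists, and $\dimm(X) = \dimh(X)$.

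There is no real obstacle beyond \cref{thm:projection}; the only point to check is the case analysis in the minima. That check works because the hypothesis $\udimm(X)\le 1$ makes both minima trivial, so no knowledge of which term attains the minimum is required. All the substance sits in \cref{thm:projection}, specifically in its equality $\min\{1,\dimh(X)\} = \min\{1,\udimm(X)\}$. If that equality were not stated directly, I would recover it from the projection statement: the lower bound $\dimh(\proj_V X) \ge \min\{1,\udimm(X)\}$ together with \cref{eq:hausdorff-projection-lipschitz-estimate} gives it.
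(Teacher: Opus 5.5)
Your proposal is correct and is essentially the paper's argument: both derive $\dimh(X) \ge \min\{1,\udimm(X)\} = \udimm(X)$ from \cref{thm:projection} and then use $\dimh(X) \le \ldimm(X) \le \udimm(X)$. The only difference is cosmetic: the paper writes the chain through $\dimh(\proj_V(X))$ and the Lipschitz bound, whereas you read off the second equality of \cref{thm:projection} directly, and your closing remark already notes that these two readings are equivalent.
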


\begin{proof}
  By \cref{eq:hausdorff-projection-lipschitz-estimate}, \cref{thm:projection}, and the assumption $\udimm(X) \leq 1$, we have
  \begin{equation*}
    \ldimm(X) \geq \dimh(X) \geq \dimh(\proj_V(X)) = \min\{1, \udimm(X)\} = \udimm(X),
  \end{equation*}
  which establishes the claim.
\end{proof}

Although the relationship between the Minkowski dimension and the affinity dimension remains unclear, \cref{thm:minkowski-dimension} extends the dimension result of Hochman and Rapaport \cite{HochmanRapaport2021} by removing the requirement for any separation condition.

\subsection{Outline of paper}
The paper is organized as follows. In \cref{sec:preli}, we collect the background on self-affine sets and measures, exponential separation, dimensions, local entropy averages, multilinear algebra, irreducibility, proximality, and Zariski topology. In \cref{sec:projections-selfaffine-measures}, we prove \cref{thm:projection-bernoulli}, the all-directions projection theorem for self-affine measures: we introduce the Furstenberg measure, record the projection theorem in Furstenberg-typical directions, and verify the local entropy averages hypothesis for fixed directions to obtain the lower bound in every direction. We then turn to the set theorem. In \cref{sec:zariski-density}, we prove \cref{thm:assumptions-typical}, that the hypotheses of the set theorem hold for a typical tuple: we show that Zariski dense tuples are strongly $k$-irreducible in every exterior power and strongly pinching, and that Zariski density is a generic condition. The projective quotient, a non-resonance lemma, and a criterion combining the projective quotient and determinant supply the algebraic input. In the concluding subsection \cref{sec:planar-converse} we treat the two-dimensional case, establishing the planar converse and using examples to show that its hypotheses are sharp. In \cref{sec:subsystems}, we extend the planar dominated-subsystem construction to arbitrary dimensions and then, by geometric selection, extract strongly separated subsystems that preserve the relevant dimensional and irreducibility properties. Finally, in \cref{sec:proof-main-result}, we prove \cref{thm:projection} by applying the measure theorem to these subsystems, and we deduce the planar theorem \cref{thm:projection-two-dimensional} from it together with the two-dimensional analysis of \cref{sec:planar-converse}.


\section{Preliminaries} \label{sec:preli}

This section fixes notation for self-affine sets and measures, recalls the relevant dimension-theoretic and multilinear background, and records the irreducibility, proximality, and local-entropy tools used later. Two items are original rather than recalled: \cref{lem:algebraic-esc}, which generalizes the planar exponential separation criterion of Gamburd, Jakobson, and Sarnak \cite{GamburdJakobsonSarnak1999} to higher dimensions, and \cref{prop:simultaneous-escape}, which characterizes strong irreducibility in every exterior power through the simultaneous escape property.

\subsection{Self-affine sets and measures} \label{sec:self-affine-set-measure}
Fix an integer $N \ge 2$ and let $\mathcal{S}(\A) = \la A_1,\ldots,A_N \ra$ denote the subsemigroup of $\GL(d,\R)$ generated by the tuple $\A = (A_1,\ldots,A_N) \in \GL(d,\R)^N$. To index elements of the subsemigroup, we adopt a standard convention of using a distinct alphabet. This alphabet also provides a representation for the associated self-affine set.

Let $\II = \{1,\ldots,N\}$ be a finite alphabet. The set $\II^\N$ consists of infinite words $\iii = i_1 i_2 \cdots$ with $i_k \in \II$. The set of finite words is $\II^* = \bigcup_{n \ge 0} \II^n$, where $\II^n = \{i_1 \cdots i_n : i_k \in \II\}$ for $n \geq 1$, and $\II^0 = \{\varnothing\}$ contains the empty word $\varnothing$. The concatenation of $\iii \in \II^*$ and $\jjj \in \II^* \cup \II^\N$, denoted $\iii \jjj \in \II^* \cup \II^\N$, has $\iii$ as a prefix. The longest common prefix of $\iii, \jjj \in \II^* \cup \II^\N$ is denoted $\iii \land \jjj$. The set $\II^*$ forms a free monoid under concatenation, with $\varnothing$ as the identity satisfying $\varnothing \iii = \iii \varnothing = \iii$ for all $\iii \in \II^*$.

The subsemigroup $\mathcal{S}(\A)$ is given by $\{A_\iii : \iii \in \II^* \setminus \{\varnothing\}\}$, where $A_\iii = A_{i_1} \cdots A_{i_n}$ for $\iii = i_1 \cdots i_n$ and $n \geq 1$. We also write 
\begin{equation*}
  A_{\iii}^\top = A_{i_n}^\top \cdots A_{i_1}^\top,
\end{equation*}
where $A^\top$ is the transpose of $A \in \GL(d,\R)$. If $\Phi = (\fii_1,\ldots,\fii_N)$ is an affine iterated function system, then the composition map is denoted by
\begin{equation*}
  \fii_\iii = \fii_{i_1} \circ \cdots \circ \fii_{i_n}
\end{equation*}
for $\iii = i_1 \cdots i_n$ and $n \geq 1$. We let $A_\varnothing$ and $\fii_\varnothing$ denote the identities.

The \emph{left shift} $\sigma \colon \II^\N \to \II^\N$ is defined by $\sigma(i_1i_2\cdots) = i_2i_3\cdots$ for all infinite words $\iii = i_1i_2\cdots$. For a finite word $\iii = i_1 \cdots i_n$, the left shift is $\sigma(\iii) = i_2 \cdots i_n$ if $n \geq 2$, and $\sigma(\iii) = \varnothing$ if $n = 1$. For a finite word $\iii = i_1 \cdots i_n$, its \emph{truncation} of the last symbol is $\iii^- = i_1 \cdots i_{n-1}$ if $n \geq 2$, and $\iii^- = \varnothing$ if $n = 1$. The length of a word is defined as follows: $|\iii| = n$ for $\iii \in \II^n$, $|\varnothing| = 0$ for the empty word $\varnothing$, and $|\iii| = \infty$ for infinite words $\iii \in \II^\N$. For a finite word $\iii \in \II^*$, the \emph{cylinder set} is
\begin{equation*}
  [\iii] = \{ \iii \jjj \in \II^\N : \jjj \in \II^\N \};
\end{equation*}
i.e., the set of all infinite words with prefix $\iii$. The space $\II^\N$ is a compact topological space under the topology generated by the cylinder sets as a basis. Furthermore, the cylinder sets are open and closed in this topology and they generate the Borel $\sigma$-algebra on $\II^\N$. For a word $\jjj \in \II^* \cup \II^\N$ and $0 \leq n \leq |\jjj|$, the restriction $\jjj|_n \in \II^n$ is the unique prefix $\iii \in \II^n$ of $\jjj$ such that $[\jjj] \subseteq [\iii]$ if $\jjj \in \II^*$, or $\jjj \in [\iii]$ if $\jjj \in \II^\N$.

The \emph{canonical projection} $\pi\colon \II^\N \to X$, where $X \subset \R^d$ is the self-affine set from \cref{eq:self-affine-set-def} associated with $\Phi$, is defined by
\begin{equation*}
  \pi(\iii) = \sum_{n = 1}^\infty A_{\iii|_{n-1}} t_{i_n} = \lim_{n \to \infty} \fii_{\iii|_n}(0)
\end{equation*}
for all $\iii = i_1i_2\cdots \in \II^\N$. The map $\pi$ is continuous, and its image satisfies $\pi(\II^\N) = X$. Separation conditions allow simple interplay between $\II^\N$ and $X$. The affine iterated function system $\Phi$ satisfies the \emph{strong separation condition} if $\fii_i(X) \cap \fii_j(X) = \emptyset$ for all $i \ne j$, which is equivalent to
\begin{equation*}
  \min_{i \ne j}\dist(\fii_i(X),\fii_j(X)) > 0.
\end{equation*}
As $\pi([\iii]) = \fii_\iii(X)$ for all $\iii \in \II^*$, the strong separation condition is characterized by the requirement that $\pi$ is bijective.

Let $\MM_\sigma(\II^\N)$ denote the collection of all $\sigma$-invariant Borel probability measures on $\II^\N$. A measure $\nu$ on $\II^\N$ is \emph{fully supported} if every cylinder set has positive measure, i.e., $\nu([\iii]) > 0$ for all $\iii \in \II^*$. Given a probability vector $(p_1,\ldots,p_N)$, the \emph{Bernoulli measure} $\nu \in \MM_\sigma(\II^\N)$ is defined by
\begin{equation*}
  \nu([\iii]) = p_{i_1} \cdots p_{i_n}
\end{equation*}
for all $\iii = i_1 \cdots i_n \in \II^n$ and $n \geq 1$. This measure is fully supported if and only if $p_i > 0$ for all $i$. Hutchinson \cite{Hutchinson1981} proved that for each probability vector $(p_1,\ldots,p_N)$, the pushforward measure $\mu=\pi_*\nu$ under the canonical projection $\pi \colon \II^\N \to X$ is the unique Borel probability measure on $X$ satisfying
\begin{equation*}
  \mu = \sum_{i = 1}^N p_i (\fii_i)_*\mu.
\end{equation*}
This measure $\mu$ is called a \emph{self-affine measure}.

\subsection{Exponential separation} \label{sec:exponential-separation}

We next recall the separation hypothesis used in \cref{thm:projection-bernoulli}. Let $\|\cdot\|$ be a norm on the vector space of affine maps from $\R^d$ into itself. An affine iterated function system $\Phi$ satisfies the \emph{exponential separation condition} if there exists a constant $c>0$ such that 
\begin{equation*}
  \|\fii_\iii-\fii_\jjj\| \ge c^n
\end{equation*}
for infinitely many $n \ge 1$ and distinct $\iii,\jjj \in \II^n$. The bound forces distinct words of equal length to have distinct associated maps, and in fact $\Phi$ generates a free semigroup: if $\fii_\iii = \fii_\jjj$ for some distinct $\iii,\jjj \in \II^* \setminus \{\varnothing\}$, then $\fii_{\iii\jjj} = \fii_{\jjj\iii}$, so either $\iii\jjj$ and $\jjj\iii$ are distinct words of equal length sharing the same map, which is impossible, or $\iii\jjj = \jjj\iii$, in which case $\iii = \kkk^a$ and $\jjj = \kkk^b$ are powers of a common word $\kkk$ by \cite[Proposition~1.3.2]{Lothaire1997}, with $a \ne b$ because $\iii \ne \jjj$, forcing $\fii_\kkk^{|a-b|}$ to be the identity, which is impossible since $\fii_\kkk$ is a strict contraction. 

By \cite[\S 6.2]{BaranyHochmanRapaport2019}, the strong separation condition implies exponential separation. The exponential separation condition also allows severe overlapping: if $\fii_\iii \ne \fii_\jjj$ for all distinct $\iii, \jjj \in \II^*$ and the entries of $A_i$ and $t_i$ are algebraic numbers, then $\Phi$ satisfies the exponential separation condition. The following lemma generalizes the planar result of Gamburd, Jakobson, and Sarnak \cite[Proposition 4.3]{GamburdJakobsonSarnak1999} to higher dimensions.

\begin{lemma} \label{lem:algebraic-esc}
  Let $\Phi = (\fii_1,\ldots,\fii_N)$ be an affine iterated function system on $\R^d$ such that $\fii_i(x) = A_ix+t_i$ for all $i \in \II$, the associated tuple $\A = (A_1,\ldots,A_N)$ of matrices has algebraic entries, and the translation vectors $t_1,\ldots,t_N$ have algebraic entries. If $\fii_\iii \ne \fii_\jjj$ for all distinct $\iii,\jjj \in \II^*$, then $\Phi$ satisfies the exponential separation condition.
\end{lemma}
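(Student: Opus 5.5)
Identify each affine map $\fii_\iii$ with the $(d+1)\times(d+1)$ matrix
\begin{equation*}
  M_\iii = \begin{pmatrix} A_\iii & v_\iii \\ 0 & 1 \end{pmatrix}, \qquad v_\iii = \fii_\iii(0),
\end{equation*}
so that $M_\iii = M_{i_1}\cdots M_{i_n}$. Since all norms on the finite-dimensional space of affine maps are equivalent, it suffices to prove the exponential lower bound for $\|M_\iii - M_\jjj\|_\infty$, the maximal absolute value of an entry. Let $K$ be the number field generated over $\Q$ by all entries of the $A_i$ and $t_i$, and set $D = [K:\Q]$. The plan follows the Liouville-type argument of Gamburd, Jakobson, and Sarnak: a nonzero algebraic number of controlled height and denominator cannot be too small.

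First, fix a positive integer $q$ such that $qM_i$ has entries in the ring of integers $\OO_K$ for every $i$. Then $q^n(M_\iii - M_\jjj)$ has entries in $\OO_K$ for all $\iii,\jjj \in \II^n$. Let $\sigma_1,\ldots,\sigma_D$ be the field embeddings $K \to \C$. For each $k$, the $\sigma_k$-conjugate matrices $\sigma_k(M_i)$ have bounded entries, so there is $C>0$ with
\begin{equation*}
  |\sigma_k(\text{entry of } M_\iii)| \le C^n
\end{equation*}
for all $\iii \in \II^n$ and all $k$. This follows by submultiplicativity of any matrix norm.

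Next, take distinct $\iii,\jjj \in \II^n$. By hypothesis $\fii_\iii \neq \fii_\jjj$, so some entry $x$ of $M_\iii - M_\jjj$ is nonzero. Then $y = q^n x \in \OO_K\setminus\{0\}$, so its norm $N_{K/\Q}(y) = \prod_k \sigma_k(y)$ is a nonzero rational integer and hence has absolute value at least $1$. Since $|\sigma_k(y)| \le 2q^nC^n$ for every $k$,
\begin{equation*}
  q^n|x| = |\sigma_1(y)| \ge \prod_{k\ne 1}|\sigma_k(y)|^{-1} \ge (2q^nC^n)^{-(D-1)},
\end{equation*}
where $\sigma_1$ is the identity embedding. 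Therefore $\|M_\iii-M_\jjj\|_\infty \ge |x| \ge c^n$ for a suitable $c>0$ depending only on $q$, $C$, and $D$. This holds for all $n$, in particular for infinitely many, which gives exponential separation.

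The main obstacle is mostly bookkeeping. One must make sure the identity embedding is one of the $\sigma_k$, so that $K\subset\R$ is embedded compatibly with the real entries. One must also absorb the factor $2$ and the norm-equivalence constants into $c$, using that $c^n$ decays for $c<1$ and that $n\ge1$. Note that only the case of equal-length distinct words matters, and this is exactly what the hypothesis $\fii_\iii\ne\fii_\jjj$ provides.
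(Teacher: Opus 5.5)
Your proposal is correct and follows essentially the same route as the paper's proof. Both arguments encode the affine maps as $(d+1)\times(d+1)$ block matrices and clear denominators by $q^n$ to land in $\mathcal{O}_K$. Both then bound every Galois conjugate exponentially via submultiplicativity and use that the norm of a nonzero algebraic integer is a nonzero rational integer, giving a Liouville-type lower bound.
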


\begin{proof}
  Let $K$ be a finite extension of $\Q$ containing every entry of every $A_i$ and $t_i$, that is, a field containing $\Q$ and having finite dimension as a vector space over $\Q$. Let $\mathcal O_K$ denote the set of algebraic integers in $K$, that is, the set of all $\alpha \in K$ that are roots of a polynomial with integer coefficients and leading coefficient one. For each $i \in \II$, define
  \begin{equation*}
    \widehat\fii_i =
    \begin{pmatrix}
      A_i & t_i \\
      0 & 1
    \end{pmatrix}
    \in \Mat_{d+1}(K),
  \end{equation*}
  where $\Mat_{d+1}(K)$ denotes the set of $(d+1) \times (d+1)$ matrices with entries in $K$. Then $\widehat\fii_\iii$ is the block matrix of the affine map $\fii_\iii$ for every word $\iii \in \II^*$. Choose an integer $q \ge 1$ such that every entry of $q\widehat\fii_i$ lies in $\mathcal O_K$ for each $i \in \II$. If $\iii = i_1\cdots i_n \in \II^n$, then
  \begin{equation} \label{eq:algebraic-esc-product}
    q^n\widehat\fii_\iii = (q\widehat\fii_{i_1})\cdots(q\widehat\fii_{i_n}),
  \end{equation}
  and therefore every entry of $q^n\widehat\fii_\iii$ lies in $\mathcal O_K$ by \cref{eq:algebraic-esc-product}.

  Regard $K$ as a subfield of $\C$ and let $\Sigma$ be the set of embeddings $\sigma \colon K \to \C$. Each such $\sigma$ is a ring homomorphism that fixes $\Q$ pointwise; being $\Q$-linear, it is determined by its values on a basis $\omega_1,\ldots,\omega_m$ of $K$ over $\Q$. Each $\omega_j$ satisfies a nonzero polynomial $p_j$ with rational coefficients, since the powers $1,\omega_j,\omega_j^2,\ldots$ cannot all be linearly independent in the finite-dimensional space $K$; applying $\sigma$ to the relation $p_j(\omega_j)=0$ yields $p_j(\sigma(\omega_j))=0$, so $\sigma(\omega_j)$ is one of the finitely many roots of $p_j$. There are therefore only finitely many possibilities for the tuple $(\sigma(\omega_1),\ldots,\sigma(\omega_m))$, and hence $\Sigma$ is finite. Moreover, since finite extensions of $\Q$ are separable, $\Sigma$ has exactly as many elements as the dimension of $K$ as a vector space over $\Q$. Equip $\Mat_{d+1}(\C)$ with the operator norm induced by $\|\cdot\|_\infty$ on $\C^{d+1}$, again denoted by $\|\cdot\|$. Define
  \begin{equation} \label{eq:algebraic-esc-M}
    M = \max_{\sigma \in \Sigma} \max_{i \in \II} \|\sigma(q\widehat\fii_i)\| < \infty.
  \end{equation}
  Then $M \ge q \ge 1$, and it follows from \cref{eq:algebraic-esc-product} and \cref{eq:algebraic-esc-M} that for every $\sigma \in \Sigma$ and $\iii \in \II^n$,
  \begin{equation} \label{eq:algebraic-esc-growth}
    \|\sigma(q^n\widehat\fii_\iii)\| \le M^n.
  \end{equation}
  Fix $n \ge 1$ and distinct $\iii,\jjj \in \II^n$. Since $\fii_\iii \ne \fii_\jjj$, we have $\widehat\fii_\iii \ne \widehat\fii_\jjj$, and so the matrix
  \begin{equation} \label{eq:algebraic-esc-H}
    H = q^n(\widehat\fii_\iii-\widehat\fii_\jjj)
  \end{equation}
  is nonzero and has entries in $\mathcal O_K$. Choose a nonzero entry $h$ of $H$. The last row of $H$ is zero, so $q^{-n}h$ is one of the coefficients of the affine map $\fii_\iii-\fii_\jjj$. It follows from \cref{eq:algebraic-esc-H} and \cref{eq:algebraic-esc-growth} that for every $\sigma \in \Sigma$,
  \begin{equation} \label{eq:algebraic-esc-conjugates}
    |\sigma(h)| \le \|\sigma(H)\| \le \|\sigma(q^n\widehat\fii_\iii)\| + \|\sigma(q^n\widehat\fii_\jjj)\| \le 2M^n.
  \end{equation}
  Since $h \in \mathcal O_K \setminus \{0\}$ and $\Sigma$ is the full set of embeddings of $K$ into $\C$, the algebraic norm
  \begin{equation} \label{eq:algebraic-esc-norm}
    N_{K/\Q}(h) = \prod_{\sigma \in \Sigma} \sigma(h)
  \end{equation}
  is a nonzero integer. Since the identity embedding belongs to $\Sigma$, it follows from \cref{eq:algebraic-esc-conjugates} and \cref{eq:algebraic-esc-norm} that
  \begin{equation} \label{eq:algebraic-esc-product-bound}
    1 \le |N_{K/\Q}(h)| = \prod_{\sigma \in \Sigma} |\sigma(h)| \le |h|(2M^n)^{|\Sigma|-1}.
  \end{equation}
  Therefore, by \cref{eq:algebraic-esc-product-bound},
  \begin{equation} \label{eq:algebraic-esc-h-lower}
    |h| \ge (2M)^{-(|\Sigma|-1)n}.
  \end{equation}
  For an affine map $\psi(x) = Bx+b$, let $\|\psi\|_{\max}$ be the maximum of the absolute values of the entries of $B$ and $b$. Since $q^{-n}h$ is one of the coefficients of the affine map $\fii_\iii-\fii_\jjj$, it follows from \cref{eq:algebraic-esc-h-lower} that
  \begin{equation} \label{eq:algebraic-esc-max-lower}
    \|\fii_\iii-\fii_\jjj\|_{\max} \ge q^{-n}|h| \ge (q(2M)^{|\Sigma|-1})^{-n}.
  \end{equation}
  Since all norms on the vector space of affine maps on $\R^d$ are equivalent, there is $C \ge 1$ such that $\|\psi\| \ge C^{-1}\|\psi\|_{\max}$ for every affine map $\psi$. Then \cref{eq:algebraic-esc-max-lower} implies
  \begin{equation*}
    \|\fii_\iii-\fii_\jjj\| \ge C^{-1}(q(2M)^{|\Sigma|-1})^{-n} \ge (C^{-1}q^{-1}(2M)^{-(|\Sigma|-1)})^n
  \end{equation*}
  for all $n \ge 1$ and all distinct $\iii,\jjj \in \II^n$. Thus $\Phi$ satisfies the exponential separation condition. This finishes the proof.
\end{proof}

\subsection{Affinity dimension} \label{sec:dimensions}
The singular values of $A \in \GL(d,\R)$ are the square roots of the eigenvalues of the positive definite matrix $A^\top A$. The \emph{singular value function}, introduced by Falconer \cite{Falconer1988} (with related ideas in \cite{DouadyOesterle1980,KaplanYorke1979}) is defined for $0 \leq s \leq d$ as
\begin{equation} \label{eq:svf-def}
  \varphi^s(A) = \alpha_1(A)\cdots \alpha_k(A) \alpha_{k+1}(A)^{s-k},
\end{equation}
where $\alpha_i(A)$ denotes the $i$-th singular value of $A$ in non-increasing order and $k$ is the integer part of $s$. For completeness, let $\fii^s(A) = |\det A|^{s/d}$ for $s>d$. Note that $\alpha_d(A) = \|A^{-1}\|^{-1}$, $\alpha_1(A) = \|A\|$, and $\alpha_d(A)^{s} \le \fii^s(A) \le \alpha_1(A)^s$ for all $s \ge 0$. In particular, $\fii^s(A) = \|A\|^s$ whenever $0 \leq s \leq 1$. By the submultiplicativity of the operator norm, the singular value function is submultiplicative, that is, $\fii^s(AB) \leq \fii^s(A) \fii^s(B)$ for all $A,B \in \GL(d,\R)$; see also \cite[Lemma 2.1]{Falconer1988}. For each $\A \in \GL(d,\R)^N$ and $s \ge 0$ we define the \emph{pressure} by setting
\begin{equation*}
  P(\A,s) = \lim_{n \to \infty} \frac{1}{n}\log\sum_{\iii \in \II^n} \fii^s(A_\iii).
\end{equation*}
As the singular value function $\fii^s$ is sub-multiplicative, the sequence $(\log\sum_{\iii \in \II^n} \fii^s(A_\iii))_{n \ge 1}$ is sub-additive and hence, the limit above exists by Fekete's lemma. It is also easy to see that the pressure $P(\A,s)$ is continuous and, if $\|A_i\|<1$ for all $i$, strictly decreasing in $s$ with $P(\A,0) \ge 0$ and $\lim_{s \to \infty}P(\A,s) = -\infty$. The \emph{affinity dimension} of $\A$, denoted $\dimaff(\A)$, is the unique $s \ge 0$ for which $P(\A,s)=0$. Falconer \cite[Theorem 5.4]{Falconer1988} proved that
\begin{equation*}
  \udimm(X) \le \dimaff(\A)
\end{equation*}
for any self-affine set $X \subset \R^d$.

\subsection{Dimensions of measures} \label{sec:dimensions-measures}
Let $\mu$ be a finite Borel measure on a separable metric space $M$. Then the \emph{upper and lower pointwise dimensions} of $\mu$ at $x \in M$ are defined as
\begin{align*}
  \udimloc(\mu,x) &= \limsup_{r \downarrow 0} \frac{\log \mu(B(x,r))}{\log r}, \\
  \ldimloc(\mu,x) &= \liminf_{r \downarrow 0} \frac{\log \mu(B(x,r))}{\log r},
\end{align*}
respectively. If the limit exists, the common value is denoted by $\dimloc(\mu,x)$. We use the convention that writing $\dimloc(\mu,x)$ implicitly means that the limit exists. If there exists a constant $s$ such that $\udimloc(\mu,x) = \ldimloc(\mu,x) = s$ for $\mu$-almost all $x \in M$, then we say that $\mu$ is \emph{exact-dimensional} and write $\dim(\mu) = s$. By convention, the use of $\dim(\mu)$ implies $\mu$ is exact-dimensional. The \emph{upper and lower packing dimensions} of $\mu$, denoted $\udimp(\mu)$ and $\ldimp(\mu)$, are the essential supremum and essential infimum, respectively, of the upper pointwise dimension with respect to $\mu$. Similarly, the upper and lower Hausdorff dimensions of $\mu$, denoted $\udimh(\mu)$ and $\ldimh(\mu)$, correspond to the essential supremum and essential infimum, respectively, of the lower pointwise dimension with respect to $\mu$. Equivalently, the Hausdorff dimensions can be expressed as
\begin{align*}
  \udimh(\mu) &= \inf\{\dimh(A) : A\text{ is a Borel set such that }\mu(M \setminus A) = 0\}, \\
  \ldimh(\mu) &= \inf\{\dimh(A) : A\text{ is a Borel set such that }\mu(A)>0\};
\end{align*}
see, for example, Falconer \cite[\S 10]{Falconer1997}. If these values coincide, the common value is the \emph{Hausdorff dimension} of $\mu$, denoted $\dimh(\mu)$. In particular, if $\mu$ is exact-dimensional with $\dim(\mu)=s$, then $\dimh(\mu)$ exists and equals $s$.

Let $\A = (A_1,\ldots,A_N) \in \GL(d,\R)^N$. The \emph{$s$-energy} of $\nu \in \MM_\sigma(\II^\N)$ is defined as
\begin{equation*}
  \Lambda(\A,\nu,s) = \lim_{n \to \infty} \frac{1}{n} \int_{\II^\N} \log \fii^s(A_{\iii|_n}) \dd\nu(\iii)
\end{equation*}
for all $s \ge 0$. Since the singular value function $\fii^s$ is sub-multiplicative and $\nu$ is invariant, the limit above exists by Fekete’s lemma. The \emph{Shannon entropy} of a Borel probability measure $\nu$ on a metric space $M$ with respect to a partition $\PP$ of $M$ is
\begin{equation*}
  H(\nu,\PP) = -\sum_{Q \in \PP} \nu(Q) \log \nu(Q).
\end{equation*}
The \emph{Kolmogorov-Sinai entropy} of $\nu \in \MM_\sigma(\II^\N)$ is
\begin{equation*}
  h(\nu) = \lim_{n \to \infty} \frac{1}{n} H(\nu,\PP_n),
\end{equation*}
where $\PP_n = \{[\iii] : \iii \in \II^n\}$. The limit above exists by Fekete's lemma and the invariance of $\nu$. It is easy to see that the sum $h(\nu) + \Lambda(\A,\nu,s)$ is continuous and, if $\|A_i\|<1$ for all $i$, strictly decreasing in $s$ with $h(\nu) + \Lambda(\A,\nu,0) \ge 0$ and $\lim_{s \to \infty} h(\nu) + \Lambda(\A,\nu,s) = -\infty$. The \emph{Lyapunov dimension} of $\nu$, denoted $\diml(\nu)$, is the unique $s \ge 0$ for which $h(\nu) + \Lambda(\A,\nu,s) = 0$. An application of Jensen’s inequality yields
\begin{equation*}
  h(\nu) + \Lambda(\A,\nu,s) \le P(\A,s)
\end{equation*}
for all $\nu \in \MM_\sigma(\II^\N)$ and $s \ge 0$. Consequently, $\diml(\nu) \le \dimaff(\A)$ for all $\nu \in \MM_\sigma(\II^\N)$; for details, see K\"aenm\"aki \cite{Kaenmaki2004}. Rossi \cite[Theorem 2.2]{Rossi2014} proved that
\begin{equation*}
  \udimp(\pi_*\nu) \le \min\{d, \diml(\nu)\}
\end{equation*}
for all ergodic measures $\nu \in \MM_\sigma(\II^\N)$.

\subsection{Local entropy averages} \label{sec:local-entropy-averages}

Let $\mu$ be a finite Borel measure on $\R^d$. The \emph{upper and lower entropy dimensions} of $\mu$ are defined as
\begin{align*}
  \udime(\mu) &= \limsup_{r \downarrow 0} \frac{1}{\mu(\R^d)} \int_{\R^d} \frac{\log\mu(B(y,r))}{\log r} \dd\mu(y), \\
  \ldime(\mu) &= \liminf_{r \downarrow 0} \frac{1}{\mu(\R^d)} \int_{\R^d} \frac{\log\mu(B(y,r))}{\log r} \dd\mu(y),
\end{align*}
respectively. If these values coincide, the common value is the \emph{entropy dimension} of $\mu$, denoted $\dime(\mu)$. Young \cite[Proposition 4.3]{Young1982} (see also \cite[Remark 3.1]{KaenmakiRajalaSuomala2016}) established that
\begin{equation*}
  \ldimh(\mu) \le \ldime(\mu) \le \udime(\mu) \le \udimp(\mu).
\end{equation*}
In particular, if $\mu$ is exact-dimensional, then $\dime(\mu)$ exists and equals $\dim(\mu)$. As the name suggests, the entropy dimension can be calculated via Shannon entropies of partitions. We denote the dyadic partition of $\mathbb{R}^d$ by
\begin{equation} \label{eq:dyadic-partition}
  \mathcal{D}_n = \biggl\{\prod_{i=1}^d \biggl[\frac{k_i}{2^{n}},\frac{k_i+1}{2^{n}}\biggr) : k_1,\ldots,k_d \in \mathbb{Z}\biggr\}.
\end{equation}
By Fan, Lau, and Rao \cite{FanLauRao2002}, the entropy dimension $\dime(\mu)$ of a Borel probability measure $\mu$ can be equivalently expressed through dyadic partitions as
\begin{equation} \label{eq:A2}
  \begin{split}
    \udime(\mu) &= \limsup_{n \to \infty} \frac{H(\mu,\mathcal{D}_n)}{n\log 2}, \\
    \ldime(\mu) &= \liminf_{n \to \infty} \frac{H(\mu,\mathcal{D}_n)}{n\log 2}.
  \end{split}
\end{equation}
The following three facts about Shannon entropy are standard; see, for example, \cite[Lemma~13.2.2]{BaranySimonSolomyak2023}. There exists a constant $C>0$ such that the following two estimates hold. If $S_{a,b} \colon \R \to \R$ is given by $S_{a,b}(x)=ax+b$ with $a\neq 0$ and $\mu$ is a Borel probability measure on $\R$, then
\begin{equation} \label{eq:A1}
  |H((S_{a, b})_* \mu, \mathcal{D}_n)-H(\mu, \mathcal{D}_{n+\lceil\log_2 |a|\rceil})| \le C
\end{equation}
for all $n\in\N$; here we use that the dyadic entropy $H(\cdot,\mathcal{D}_n)$ is invariant under the reflection $x\mapsto -x$ up to the additive constant $C$, since each atom of $\mathcal{D}_n$ meets at most two atoms of its reflection. Furthermore, if $M$ is a metric space, $\nu$ is a compactly supported finite Borel measure on $M$, and $f,g \colon M \to \R$ are continuous functions such that $\sup_{x\in M} |f(x)-g(x)| \leq 2^{-n}$ for some $n\in\N$, then
\begin{equation} \label{eq:A3}
  |H(f_*\nu,\mathcal{D}_n)-H(g_*\nu,\mathcal{D}_n)| \le C.
\end{equation}
Finally, a simple calculation shows that for every Borel probability measure $\nu$ on $\R$
\begin{equation}\label{eq:A3b}
  |H(\nu,\mathcal{D}_{n+k})-H(\nu,\mathcal{D}_n)|\leq k\log2
\end{equation} 
for all $n,k\in\N$. The method of local entropy averages, developed by Hochman and Shmerkin~\cite{HochmanShmerkin2012} and extended in~\cite{FalconerKempton2017, BaranyHochmanRapaport2019}, bounds the Hausdorff dimension of an image measure from below by the averaged entropies of images of conditioned measures. We state it for measures on $\II^\N$. If $\QQ \subset \II^*$ is such that $\{[\iii] : \iii \in \QQ\}$ is a partition of $\II^\N$ and $\jjj \in \II^\N$, then $\QQ(\jjj) \in \II^*$ denotes the unique element of $\QQ$ such that $\jjj \in [\QQ(\jjj)]$. Furthermore, if $\nu$ is a fully supported Borel probability measure on $\II^{\N}$ and $\iii \in \II^*$, then the measure $\nu$ conditioned on $[\iii]$ is $\nu_{[\iii]} = \nu([\iii])^{-1} \nu|_{[\iii]}$. For a detailed proof of the following result, we refer to~\cite[Theorem 13.2.1]{BaranySimonSolomyak2023}.

\begin{theorem} \label{thm:LocalEntropyAverages}
  Let $\nu$ be a Borel probability measure on $\II^{\N}$ and $f \colon \II^{\N}\to[-1,1]$ be continuous. Suppose $\{\mathcal{Q}_n^q : n,q \ge 1\}$ is a collection of subsets of $\II^*$ such that
  \begin{enumerate}
    \item\label{it:Q1} the family $\{[\iii] : \iii\in\mathcal{Q}_n^q\}$ is a partition of $\II^{\N}$ for all $n,q\ge 1$,
    \item\label{it:Q2} there exists $C>1$ such that
    \begin{equation*}
      C^{-1} 2^{-q n} \le \diam(f([\iii])) \le 2^{-q n-1}
    \end{equation*}
    for all $\iii \in \mathcal{Q}_n^q$ and $n,q\ge 1$,
    \item\label{it:Q3} for every $q \ge 1$ and $n \ge m \ge 1$, the set $\mathcal{Q}_n^q$ is a refinement of $\mathcal{Q}_m^q$, that is, for every $\iii \in \mathcal{Q}_n^q$ there exists a unique $\jjj \in \mathcal{Q}_m^q$ such that $[\iii] \subseteq [\jjj]$,
    \item\label{it:Q4} there exists $q_0 \ge 1$ such that
    \begin{equation*}
      \mathcal{Q}_n^q \cap \mathcal{Q}_m^q = \emptyset
    \end{equation*}
    for all $q \ge q_0$ and $n \ne m$.
  \end{enumerate}
  If there is $\alpha>0$ such that
  \begin{equation*}
    \limsup _{q \to \infty} \liminf _{n \to \infty} \frac{1}{q \log 2} \cdot \frac{1}{n} \sum_{i = 1}^n H(f_* \nu_{[\mathcal{Q}_i^q(\iii)]}, \mathcal{D}_{q(i+1)}) \ge \alpha
  \end{equation*}
  for $\nu$-almost all $\iii \in \II^\N$, then $\ldimh(f_*\nu) \ge \alpha$.
\end{theorem}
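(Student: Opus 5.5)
The plan follows the Hochman--Shmerkin proof of local entropy averages: a martingale law of large numbers along the dyadic filtration on the target line. The error terms are $O(1)$ per scale block, and they vanish after dividing by $q$ and letting $q\to\infty$.

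\emph{Setup.} Fix $q \ge q_0$. Write $\mu = f_*\nu$ and let $I_n(x) \in \mathcal D_{qn}$ be the dyadic interval containing $x$. By Frostman-type reasoning it suffices to show, for $\nu$-a.e.\ $\jjj$,
\begin{equation*}
  \liminf_{n\to\infty} \frac{-\log \mu(I_n(f(\jjj)))}{qn\log 2} \ge \frac{1}{q\log 2}\liminf_{n\to\infty}\frac1n\sum_{i=1}^n H(f_*\nu_{[\QQ_i^q(\jjj)]},\DD_{q(i+1)}) - \frac{O(1)}{q}.
\end{equation*}
Passing from dyadic intervals to balls, and to $\ldimh$, costs only a union over two or three neighbouring intervals. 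The tail condition (4) of \cref{thm:LocalEntropyAverages} is harmless for such a standard reduction.

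\emph{Telescoping and the martingale step.} On the symbolic side, consider the filtration $\mathcal F_n = \sigma(f^{-1}\DD_{qn})$ and the sets $E_n(\jjj) = f^{-1}(I_n(f(\jjj)))$. Telescoping gives
\begin{equation*}
  -\log\nu(E_n(\jjj)) = \sum_{i<n} -\log \nu(E_{i+1}(\jjj)\mid E_i(\jjj)).
\end{equation*}
Each increment is an information function, so its conditional second moment given $\mathcal F_i$ is uniformly bounded: the quantity $\sum p(\log p)^2$ is bounded over at most $2^q$ atoms, up to a factor depending on $q$. The martingale strong law of large numbers then says that the average of the increments is, almost surely, asymptotically at least the average of their conditional expectations. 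Those conditional expectations are
\begin{equation*}
  H\bigl(\mu_{I_i}, \DD_{q(i+1)}\bigr),
\end{equation*}
the conditional entropy of the normalized restriction $\mu_{I_i}$ of $\mu$ to $I_i(f(\jjj))$.

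\emph{Comparison with the $\QQ$-cells.} Next I decompose $\nu|_{E_i}$ as a mixture over the cells $[\iii]$, $\iii\in\QQ_i^q$, that meet $E_i$. By conditions (2) and (3), each image $f([\iii])$ has diameter at most $2^{-qi-1}$, so it meets at most two atoms of $\DD_{qi}$. Hence, up to an additive $O(1)$, the mixture is over whole cells $f_*\nu_{[\iii]}$. Concavity of entropy then gives that $H(\mu_{I_i},\DD_{q(i+1)})$ is at least the weighted average of $H(f_*\nu_{[\iii]},\DD_{q(i+1)})$ over those cells, minus $O(1)$. This uses \cref{eq:A3b} and the two-interval observation.

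\emph{Main obstacle.} The difficulty is that the resulting bound averages over $\iii$ with respect to the $\mathcal F_i$-conditional distribution. The hypothesis, by contrast, is along the single path $\QQ_i^q(\jjj)$, which belongs to a different filtration. To bridge them I would apply a second martingale law of large numbers. The differences
\begin{equation*}
  H(f_*\nu_{[\QQ_i^q(\jjj)]},\DD_{q(i+1)}) - \mathbb E\bigl[H(f_*\nu_{[\QQ_i^q(\cdot)]},\DD_{q(i+1)}) \,\big|\, \mathcal F_i \vee \ldots\bigr]
\end{equation*}
are bounded by $q\log 2 + O(1)$. The cleanest option is to work with the joint filtration generated by $\QQ_i^q$ and $f^{-1}\DD_{qi}$; it is increasing by condition (3). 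Along this joint filtration the telescoping sum is controlled by conditioning first on the $\QQ$-cell. Conditioning on the extra $\QQ$-information can only increase $-\log$ of the $\mu$-mass of a dyadic cell in expectation, via a chain-rule inequality. Provided this reduction works, one gets
\begin{equation*}
  \liminf_n \frac{-\log\nu(E_n(\jjj))}{qn\log 2} \ge \frac{1}{q\log2}\liminf_n \frac1n\sum_{i\le n} H(f_*\nu_{[\QQ_i^q(\jjj)]},\DD_{q(i+1)}) - \frac{C'}{q}
\end{equation*}
almost surely. Taking $\limsup_{q\to\infty}$ then yields $\ldimloc(\mu,f(\jjj)) \ge \alpha$ for $\nu$-a.e.\ $\jjj$, so $\ldimh(f_*\nu)\ge\alpha$.
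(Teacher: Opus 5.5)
Your proposal has a genuine gap, and it sits exactly at the step you call the main obstacle.

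\textbf{The gap.} After the target-side martingale and the concavity step, you need the almost-sure inequality $\liminf_n\frac1n\sum_{i\le n}\mathbb{E}[Z_i\mid\mathcal F_i]\ge\liminf_n\frac1n\sum_{i\le n}Z_i-O(1)$. Here $Z_i$ is the entropy of the image of the $\mathcal Q_i^q$-piece containing $\jjj$. Nothing you invoke gives this.
\begin{itemize}
\item $Z_i$ is not $\mathcal F_{i+k}$-measurable for any fixed $k$, because many $\mathcal Q$-cells overlap in projection.
\item In the joint filtration $\mathcal G_i=\sigma(\mathcal Q_i^q)\vee\mathcal F_i$ one has $\mathbb{E}[Z_i\mid\mathcal G_i]=Z_i$. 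So $\mathbb{E}[Z_i\mid\mathcal F_i]-Z_i$ is not a martingale difference for any filtration at hand, and the martingale strong law does not apply.
\item The chain-rule inequality you propose (conditioning on more information lowers entropy) holds only after averaging over $\jjj$. It cannot bound a pathwise $\liminf$, which is why your text has to say ``provided this reduction works''.
\end{itemize}
The same averaged-versus-pathwise slip already occurs in your comparison step. If $f([\iii])$ straddles two intervals of $\mathcal D_{qi}$, the normalized restriction of $f_*\nu_{[\iii]}$ to one of them can have entropy far below $H(f_*\nu_{[\iii]},\mathcal D_{q(i+1)})$. Only the $\nu_{[\iii]}$-weighted average over the two sides is within $\log 2$ of it, so ``up to an additive $O(1)$'' is false cell by cell.

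\textbf{The missing idea.} Drop the target-side martingale and compare masses directly along the symbolic path, in the filtration $\mathcal B_n=\sigma(\mathcal Q_n^q)$ where the hypothesis lives. This is the Hochman--Shmerkin mechanism behind the cited proof.

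Let $\tilde D_n(\jjj)$ be the union of the at most two intervals of $\mathcal D_{qn}$ meeting $f([\mathcal Q_n^q(\jjj)])$, and set $p_i(\jjj)=f_*\nu_{[\mathcal Q_i^q(\jjj)]}(\tilde D_{i+1}(\jjj))$. Then $R_n=\mu(\tilde D_n(\jjj))/\prod_{1\le i<n}p_i(\jjj)$ is $\mathcal B_n$-measurable and satisfies $\mathbb{E}[R_{n+1}/R_n\mid\mathcal B_n]\le 3$. To see this:
\begin{itemize}
\item group the children of $\mathcal Q_n^q(\jjj)$ according to their set $T=\tilde D_{n+1}$;
\item the $\nu_{[\mathcal Q_n^q(\jjj)]}$-mass of the children with a given $T$ is at most $f_*\nu_{[\mathcal Q_n^q(\jjj)]}(T)$;
\item each interval of $\mathcal D_{q(n+1)}$ lies in at most three such sets $T$, all contained in $\tilde D_n(\jjj)$.
\end{itemize}
Hence $3^{-n}R_n$ is a nonnegative supermartingale and therefore almost surely bounded. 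Since $D_{qn}(f(\jjj))\subseteq\tilde D_n(\jjj)$, this gives the pathwise bound
\[
-\log\mu(D_{qn}(f(\jjj)))\ \ge\ \sum_{1\le i<n}\bigl(-\log p_i(\jjj)\bigr)\;-\;n\log 3\;-\;O_{\jjj}(1).
\]

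Now $-\log p_i$ is $\mathcal B_{i+1}$-measurable with bounded conditional second moments. Moreover
\[
\mathbb{E}[-\log p_i\mid\mathcal B_i]\ \ge\ H\bigl(f_*\nu_{[\mathcal Q_i^q(\jjj)]},\mathcal D_{q(i+1)}\bigr)-\log 3.
\]
This holds because $\tilde D_{i+1}$ lies in the triple $3D$ of the dyadic interval $D$ containing the point, and Jensen's inequality gives $\sum_D m(D)\log\frac{m(3D)}{m(D)}\le\log 3$. The martingale strong law along $(\mathcal B_i)$ then produces exactly the averages in the hypothesis, with an $O(1/q)$ loss that vanishes as $q\to\infty$. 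The passage from dyadic intervals to balls is the usual Borel--Cantelli step. Neither your target-side martingale nor the concavity step is needed.
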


\subsection{Multilinear algebra} \label{sec:multilinear}
We review key properties of the exterior algebra. Let $\{ e_1,\ldots,e_d \}$ denote the standard orthonormal basis of $\R^d$. For each $k \in \{ 1,\ldots,d \}$, the $k$-th exterior power of $\R^d$ is the $\binom{d}{k}$-dimensional real vector space defined as
\begin{equation*}
  \wedge^k \R^d = \linspan\{ e_{i_1} \wedge \cdots \wedge e_{i_k} : 1 \leq i_1 < \cdots < i_k \leq d \}.
\end{equation*}
The wedge product $\wedge \colon \wedge^k \R^d \times \wedge^j \R^d \to \wedge^{k+j} \R^d$ in the exterior algebra of $\R^d$ is an associative, bilinear operator that satisfies graded anticommutativity: for $v \in \wedge^k \R^d$ and $w \in \wedge^j \R^d$ it holds that $v \wedge w = (-1)^{k j} w \wedge v$. Every $v \in \wedge^k \R^d$ can be expressed as
\begin{equation*}
  v = \sum_{1 \leq i_1 < \cdots < i_k \leq d} v_{i_1 \cdots i_k} e_{i_1} \wedge \cdots \wedge e_{i_k},
\end{equation*}
where $v_{i_1 \cdots i_k} \in \R$ are the coefficients. For vectors $v_j = (v_j^1,\ldots,v_j^d) \in \R^d$, $j \in \{1,\ldots,k\}$, the wedge product is
\begin{equation*}
  v_1 \wedge \cdots \wedge v_k = \sum_{1 \leq i_1 < \cdots < i_k \leq d} \det
  \begin{pmatrix}
    v_1^{i_1} & \cdots & v_1^{i_k} \\
    \vdots & \ddots & \vdots \\
    v_k^{i_1} & \cdots & v_k^{i_k}
  \end{pmatrix}
  e_{i_1} \wedge \cdots \wedge e_{i_k}.
\end{equation*}
An element $v \in \wedge^k \R^d$ is \emph{decomposable} if it can be written as $v = v_1 \wedge \cdots \wedge v_k$ for some $v_1,\ldots,v_k \in \R^d$. For example, $e_1 \wedge e_2 + e_3 \wedge e_4 \in \wedge^2 \R^4$ is not decomposable. The inner product on $\wedge^k \R^d$ is defined by
\begin{equation*}
  \la v, w \ra_k = \sum_{1 \leq i_1 < \cdots < i_k \leq d} v_{i_1 \cdots i_k} w_{i_1 \cdots i_k},
\end{equation*}
where $v_{i_1 \cdots i_k}$ and $w_{i_1 \cdots i_k}$ are the coefficients of $v$ and $w$, respectively. The induced norm is $|v|_k = \la v,v \ra_k^{1/2}$. For decomposable $v = v_1 \wedge \cdots \wedge v_k$, the norm $|v|_k$ equals the $k$-dimensional volume of the parallelepiped spanned by $v_1,\ldots,v_k$.

For $A \in \GL(d,\R)$, let $A^{\wedge k} \colon \wedge^k \R^d \to \wedge^k \R^d$ denote the induced linear map, defined on basis elements by
\begin{equation*}
  A^{\wedge k} (e_{i_1} \wedge \cdots \wedge e_{i_k}) = A e_{i_1} \wedge \cdots \wedge A e_{i_k},
\end{equation*}
for $1 \leq i_1 < \cdots < i_k \leq d$, and extended linearly. Equivalently, $A^{\wedge k}(v_1 \wedge \cdots \wedge v_k) = Av_1 \wedge \cdots \wedge Av_k$ for all decomposable vectors $v_1 \wedge \cdots \wedge v_k \in \wedge^k \R^d$. The map $A^{\wedge k}$ is invertible and represented by a $\binom{d}{k} \times \binom{d}{k}$ matrix whose entries are the $k \times k$ minors of $A$ corresponding to ordered index sets. Standard determinant properties imply that
\begin{equation*}
  (AB)^{\wedge k} = A^{\wedge k} B^{\wedge k}
\end{equation*}
for all $A, B \in \GL(d,\R)$. In particular, $A^{\wedge k} \in \GL(\binom{d}{k},\R)$. The operator norm of $A^{\wedge k}$ is
\begin{equation} \label{eq:knorm}
  \|A^{\wedge k}\|_k = \max\{ |A^{\wedge k} v|_k : |v|_k = 1 \} = \alpha_1(A) \cdots \alpha_k(A),
\end{equation}
where $\alpha_d(A) \le \cdots \le \alpha_1(A)$ are the singular values of $A$. In particular, the singular value function defined in \cref{eq:svf-def} can be expressed as
\begin{equation*}
  \varphi^s(A) = \|A^{\wedge k}\|_{k}^{k+1-s} \|A^{\wedge(k+1)}\|_{k+1}^{s-k}
\end{equation*}
for all $0 \leq s \le d$, where $k$ is the integer part of $s$. More generally, the singular values of $A^{\wedge k}$ are the products $\alpha_{i_1}(A) \cdots \alpha_{i_k}(A)$ for all $1 \leq i_1 < \cdots < i_k \leq d$.


\subsection{Irreducibility, proximality, and domination} \label{sec:irred}

Let $\mathcal{S}(\A) = \la A_1,\ldots,A_N \ra$ denote the subsemigroup of $\GL(d,\R)$ generated by the tuple $\A = (A_1,\ldots,A_N) \in \GL(d,\R)^N$. We say that $\A$ is \emph{reducible} if there exists a proper, non-trivial linear subspace $V \subset \R^d$ such that $AV = V$ for all $A \in \mathcal{S}(\A)$. If $\A$ is not reducible, it is \emph{irreducible}. Furthermore, $\A$ is \emph{strongly irreducible} if, for every proper, non-trivial linear subspace $V \subset \R^d$, the set $\{AV : A \in \mathcal{S}(\A)\}$ is infinite. Equivalently, $\A$ is strongly irreducible if and only if there does not exist a set $\VV$, formed by a finite union of proper, non-trivial linear subspaces of $\R^d$, such that $A_i\VV = \VV$ for all $i$. When $\VV$ consists of a single linear subspace, this condition reduces to the definition of irreducibility.

A tuple $\A = (A_1, \ldots, A_N) \in \GL(d, \R)^N$ is \emph{proximal} if there exists a sequence of finite words $\iii_1, \iii_2, \ldots \in \II^*$ and real numbers $c_1, c_2, \ldots$ such that the sequence $(c_n A_{\iii_n})_{n \geq 1}$ converges to a rank-one linear transformation. For an irreducible tuple $\A$, this is equivalent to the existence of a matrix $A \in \mathcal{S}(\A)$ with a simple dominant eigenvalue; see \cite[Lemma~4.1]{BenoistQuint2016}. A $d \times d$ matrix is \emph{pinching} if it is diagonalizable and all its eigenvalues are distinct in absolute value. The tuple $\A$ is \emph{pinching} if its generated subsemigroup $\mathcal{S}(\A)$ contains a pinching matrix. As a result, any pinching irreducible tuple is proximal, and in the case of $\GL(2, \R)$, the converse also holds; that is, a proximal irreducible tuple is pinching. Furthermore, if $A \in \mathcal{S}(\A)$ is pinching and $\A$ is \emph{twisting}, i.e., there exists $A' \in \mathcal{S}(\A)$ such that $A'V \cap W = \{0\}$ for every pair of $A$-invariant linear subspaces $V$ and $W$ with $\dim(V) + \dim(W) \leq d$, then $\A$ is strongly irreducible; see \cite[Remark~3.8]{MartinezRamos2026}.

Let $\A = (A_1, \ldots, A_N) \in \GL(d, \R)^N$ be a tuple of invertible matrices. For each $k \in \{1, \ldots, d-1\}$, define $\A^{\wedge k} = (A_1^{\wedge k}, \ldots, A_N^{\wedge k}) \in \GL(\binom{d}{k}, \R)^N$, where $A_i^{\wedge k}$ denotes the induced map on the $k$-th exterior power $\wedge^k \R^d$. We say that $\A$ is \emph{$k$-reducible}, \emph{$k$-irreducible}, \emph{strongly $k$-irreducible}, \emph{$k$-proximal}, or \emph{$k$-pinching} if the tuple $\A^{\wedge k}$ is reducible, irreducible, strongly irreducible, proximal, or pinching, respectively. Additionally, $\A$ is \emph{strongly pinching} if there exists $A \in \mathcal{S}(\A)$ such that $A^{\wedge k}$ is $1$-pinching for each $k \in \{1, \ldots, d-1\}$. A strongly pinching tuple is clearly $k$-pinching for all $k \in \{1,\ldots,d-1\}$. Observe also that if $\A \in \GL(2,\R)^N$ is $1$-proximal and irreducible, or $\A \in \GL(3,\R)^N$ is $1$-pinching, then $\A$ is strongly pinching.

The norm on $\wedge^2\R^d$ induces a metric on the real projective space $\RP^{d-1}$. For $V,W \in \RP^{d-1}$, choose unit vectors $v,w \in \R^d$ in the directions $V,W$, respectively, and define
\begin{equation} \label{eq:RP-metric}
  d(V,W) = |v \wedge w|_2 = \|\proj_V-\proj_W\| = |\sin(\sphericalangle(V,W))|.
\end{equation}
Since the map $V \mapsto \proj_V$ is a continuous injection from the compact space $\RP^{d-1}$ into the finite-dimensional space of linear maps on $\R^d$, it is a homeomorphism onto its image. Therefore $d$ is a metric on $\RP^{d-1}$ inducing the usual topology. For $V,W \in \RP^{d-1}$, we also write
\begin{equation*}
  |\la V,W\ra| = |\cos(\sphericalangle(V,W))| = \sqrt{1-d(V,W)^2}
\end{equation*}
for the absolute value of the inner product of unit vectors in the directions $V$ and $W$.

Domination is a sufficient condition for proximality. A tuple $\mathsf{A} = (A_1,\ldots,A_N) \in \GL(d,\R)^N$ is \emph{$k$-dominated} if there are constants $C>0$ and $0<\tau<1$ such that
\begin{equation*}
  \alpha_{k+1}(A_\iii) \le C\tau^{|\iii|}\alpha_k(A_\iii)
\end{equation*}
for all $\iii \in \II^*$, where $\alpha_k(A_\iii)$ is the $k$-th largest singular value of $A_\iii$. By \cref{eq:knorm}, $\A$ is $k$-dominated if and only if $\A^{\land k}$ is $1$-dominated. Bochi and Gourmelon \cite[Theorem~B]{BochiGourmelon2009} showed that $\A$ is $1$-dominated if and only if there is a nonempty proper closed set $\CC \subset \RP^{d-1}$ with finitely many connected components such that
\begin{equation*}
  A_i\CC\subset\CC^o
\end{equation*}
for all $i \in \{1,\ldots,N\}$, where $\CC^o$ is the interior of $\CC$ in $\RP^{d-1}$, and there is a $(d-1)$-dimensional linear subspace transverse to all elements of $\CC$. Such a set $\CC$ is called a \emph{strongly invariant multicone} for $\A$. For a fixed finite tuple $\A$, this is equivalent to the existence of a nonempty compact set $\CC_0 \subset \CC^o$ such that $A_i\CC \subset \CC_0$ for all $i \in \{1,\ldots,N\}$, for instance by taking $\CC_0 = \bigcup_{i = 1}^N A_i\CC$. A fixed-point argument applied to the strongly invariant multicone shows that a $k$-dominated tuple is $k$-proximal.

Domination gives a lower bound for the pressure through a strongly invariant multicone.

\begin{lemma} \label{lem:domination-pressure}
  Let $\CC \subset \RP^{d-1}$ be a nonempty proper closed set with finitely many connected components, $\CC_0 \subset \CC^o$ be a nonempty compact set, and suppose that there is a hyperplane transverse to every element of $\CC$. Then there exists a constant $0<\kappa\le 1$, depending only on $\CC$ and $\CC_0$, with the following property: if $\B = (B_1,\ldots,B_M) \in \GL(d,\R)^M$ is a finite tuple satisfying $B_i\CC \subset \CC_0$ for all $i \in \{1,\ldots,M\}$, then
  \begin{equation*}
    \|B_{\iii\jjj}\| \ge \kappa^2\|B_\iii\|\|B_\jjj\|
  \end{equation*}
  for all finite words $\iii,\jjj \in \{1,\ldots,M\}^*$, and
  \begin{equation*}
    P(\B,s) \ge \log\kappa^{2s} + \log\sum_{i \in \{1,\ldots,M\}} \|B_i\|^s
  \end{equation*}
  for all $0 \le s \le 1$. In particular, if $\A \in \GL(d,\R)^N$ is $1$-dominated, then the same conclusions hold.
\end{lemma}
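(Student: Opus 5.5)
The plan is to prove a uniform lower bound $\|Bv\| \ge \kappa \|B\|\|v\|$ for every $B$ in the semigroup generated by $\B$ and every $v$ whose direction lies in $\CC$. This is the standard cone estimate, and the submultiplicativity-reversal follows from it.

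\emph{Step 1 (directions in $\CC_0$ see the top singular direction of $B$).} Let $\CC$ have components $\CC^1,\dots,\CC^m$, and let $H$ be the hyperplane transverse to $\CC$. Write $\delta = d(\CC_0,\RP^{d-1}\setminus\CC^o) > 0$, which is positive by compactness. Fix $B = B_\iii$ with $|\iii| \ge 1$, so that $B\CC \subset \CC_0$. Let $u$ be a unit vector with $|Bu| = \|B\|$. The aim is to show that every $V \in \CC$ satisfies $|\la V, u\ra| \ge \kappa$. Argue by contradiction: if $V$ were nearly orthogonal to $u$, then $BV$ would be dominated by the lower singular directions. The cleaner route is to use projective contraction with the Hilbert metric. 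Each component of $\CC^o$ can be taken (after choosing an affine chart complementary to $H$) inside a bounded convex region, and $\CC_0$ lies uniformly inside it. Then a $B$ mapping $\CC$ into $\CC_0$ has image directions $B\CC$ of diameter bounded in terms of $\delta$, and $B$ ``stretches'' every vector of $\CC$ comparably.

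Concretely, I would pick for each component a vector $w_j$ and constant $c>0$ such that $\CC^j \subset \{V : |\la V,w_j\ra| \ge c\}$ is a cone with respect to $H$. The key claim is then the following. There is $\kappa_0>0$, depending only on $\CC,\CC_0$, such that for every $B$ with $B\CC\subset \CC_0$ and every unit $v\in V\in\CC$ we have $|Bv| \ge \kappa_0\|B\|$.

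To prove it, note that the set $\CC^{(\epsilon)}$ of all directions within $\epsilon$ of $\CC_0$ is still contained in $\CC$ for $\epsilon<\delta$. Now take any unit $x$. Decompose $x$ along a fixed finite family of vectors spanning $\R^d$ whose directions lie in $\CC$. Here I must use that $\CC$ has nonempty interior, so it contains a basis $v_1,\dots,v_d$ with uniformly bounded coefficients. This gives $\|B\| \le C_1 \max_k |Bv_k|$. Then compare $|Bv_k|$ with $|Bv|$ for arbitrary $v \in \CC$: both $Bv$ and $Bv_k$ have directions in $\CC_0$. If they lie in the same component, the segment between $v$ and $v_k$ might leave $\CC$, so I instead use the fact that $B$ maps the full cone into $\CC_0$. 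In particular, $B(v+tv_k)$ stays in $\CC_0$ for $v+tv_k$ in $\CC$, and the compact-inside-open gap forbids cancellation. This gives $|Bv| \ge c_2 |Bv_k|$.

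Making this cancellation argument rigorous when components differ is the main obstacle. My first attempt would be to enlarge each component $\CC^j$ slightly to $\widetilde\CC^j$, a projectively convex cone contained in the complement of $H$, which is possible since each component avoids $H$. I would then argue within a single convex cone using the Birkhoff/Hilbert-metric contraction. This yields $|Bv|/|Bv'| \in [c_2, c_2^{-1}]$ for $v,v'$ in the same component. Components are handled by applying the argument to one more letter: $B_\iii = B_{\iii^-}B_{i_n}$, and $B_{i_n}$ sends everything into $\CC_0$, which has compactly many components.

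\emph{Step 2 (conclusion).} Given Step 1 with $\kappa = \kappa_0$, take $v$ a top singular vector of $B_\jjj$. Then $B_\jjj v = \|B_\jjj\| w$ with the direction of $w$ in $\CC_0 \subset \CC$, provided $\jjj\neq\varnothing$. Hence $\|B_{\iii\jjj}\| \ge |B_\iii w|\,\|B_\jjj\| \ge \kappa\|B_\iii\|\|B_\jjj\| \ge \kappa^2\|B_\iii\|\|B_\jjj\|$; the empty-word cases are trivial since $\kappa\le1$. Iterating gives $\|B_{\iii}\| \ge \kappa^{2(n-1)}\prod_k\|B_{i_k}\|$ for $\iii\in\{1,\dots,M\}^n$. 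Using $\fii^s = \|\cdot\|^s$ for $s\le1$, this yields $\sum_{\iii}\fii^s(B_\iii) \ge \kappa^{2s(n-1)}\bigl(\sum_i\|B_i\|^s\bigr)^n$. Taking $\frac1n\log$ and letting $n\to\infty$ gives the pressure bound. The final statement follows from Bochi--Gourmelon, which supplies $\CC$ with $\CC_0=\bigcup_i A_i\CC$.
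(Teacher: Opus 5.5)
Your proposal has two genuine gaps: the key claim of Step 1 is false as stated, and Step 2 applies a cone estimate to a vector whose direction is not controlled.

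\textbf{Step 1 aims at a false statement.} In general there is no $\kappa_0>0$ with $|Bv|\ge\kappa_0\|B\||v|$ for all $B$ satisfying $B\CC\subset\CC_0$ and all $v$ whose direction lies in $\CC$. Work in $\RP^1$, parametrized by angles modulo $\pi$. Take $\CC=[-3\pi/8,3\pi/8]\cup[\pi/2,\pi/2+\eta]$ with $\eta>0$ small and $\CC_0=[-\pi/4-0.1,\pi/4+0.1]$; the line at angle $7\pi/16$ is transverse to $\CC$. Let $B_\delta=R\diag(1,\delta)$, with $R$ the rotation by $\pi/4$. The diagonal factor maps the first arc into an $O(\delta)$-neighborhood of $0$ and the second arc onto $[-\pi/2,-\arctan(\delta\cot\eta)]$. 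Hence $B_\delta\CC\subset\CC_0$ for all small $\delta>0$. Yet $\linspan\{e_2\}\in\CC$ and $|B_\delta e_2|=\delta=\delta\|B_\delta\|$.

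The same example refutes your intermediate claims: comparable norms along a single component, and $|Bv|\ge c_2|Bv_k|$ for arbitrary $v$ in $\CC$. So no version of the Hilbert-metric or basis-decomposition sketch can overcome the ``main obstacle'' you flag.

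A uniform estimate holds only on compact subsets of $\CC^o$. This is why the paper takes it from Bochi and Morris for $v\in\widehat{\CC}_0$ only. That version has a short compactness proof:
\begin{enumerate}[(i)]
\item Suppose $\|B_n\|=1$, $B_n\CC\subset\CC_0$, and unit vectors $v_n\in\widehat{\CC}_0$ satisfy $B_nv_n\to0$. Pass to limits $B_n\to B\ne0$ and $v_n\to v$, so that $Bv=0$.
\item Choose a small $x\perp v$ with $Bx\ne0$ (possible as $Bv=0\ne B$) such that every $v+tx$, $|t|\le1$, has direction in $\CC$.
\item Write $H=\ker\ell$ for the transverse hyperplane. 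The directions of $B_n(v+tx)$ lie in $\CC_0$, so $\ell$ does not vanish along this segment. Hence $\ell(B_n(v+x))$ and $\ell(B_n(v-x))$ have the same sign.
\item But they converge to $\pm\ell(Bx)$. Moreover $\ell(Bx)\ne0$, because $\linspan\{Bx\}\in\CC_0$ as a limit of the directions of $B_n(v+x)$. This is a contradiction.
\end{enumerate}

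\textbf{Step 2 has an independent gap.} The top singular vector $v$ of $B_\jjj$ need not have its direction in $\CC$, so $B_\jjj\CC\subset\CC_0$ says nothing about the direction of $B_\jjj v$. For instance, take $\CC=[-0.01,0.01]$ and $\CC_0=[-0.004,0.004]$. The map $B(x,y)=(x+10y,0.27y)$ satisfies $B\CC\subset\CC_0$. However, its top right singular direction is near angle $\arctan 10$, and its top left singular direction is at angle about $0.027$, outside $\CC$ altogether. So no cone estimate can be applied to $w$.

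The repair is the paper's. Fix any $V\in\CC_0$, so that $B_\jjj V\in\CC_0$ for $\jjj\ne\varnothing$. Then estimate $\|B_{\iii\jjj}\|\ge\|B_\iii|B_\jjj V\|\,\|B_\jjj|V\|\ge\kappa^2\|B_\iii\|\|B_\jjj\|$ by applying the $\widehat{\CC}_0$-estimate twice; this is where the square in $\kappa^2$ comes from. With that in place, your derivation of the pressure bound, by iterating to $\kappa^{2s(n-1)}(\sum_i\|B_i\|^s)^n$, is correct and equivalent to the paper's Fekete argument.
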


\begin{proof}
  Let $\widehat{\CC} = \{v \in \R^d \setminus \{0\} : \linspan\{v\} \in \CC\}$ and $\widehat{\CC}_0 = \{v \in \R^d \setminus \{0\} : \linspan\{v\} \in \CC_0\}$, and note that $\widehat{\CC}$ and $\widehat{\CC}_0$ are invariant under multiplication by non-zero scalars. Their projectivizations are $\CC$ and $\CC_0$, and the given hyperplane is disjoint from $\widehat{\CC}$. Since $\CC_0 \subset \CC^o$, \cite[Lemma 2.2]{BochiMorris2015} gives $\kappa > 0$ such that
  \begin{equation*}
    |Bv| \ge \kappa\|B\||v|
  \end{equation*}
  for all $v \in \widehat{\CC}_0$ and every $B \in \GL(d,\R)$ satisfying $B\CC \subset \CC_0$. Since $|Bv| \le \|B\||v|$ by the definition of the operator norm, the inequality above forces $\kappa \le 1$. If $V \in \CC_0$ and $v \in V$ satisfies $|v| = 1$, then $v \in \widehat{\CC}_0$, and therefore
  \begin{equation*}
    \|B|V\| = |Bv| \ge \kappa\|B\|
  \end{equation*}
  for all $V \in \CC_0$ and every $B \in \GL(d,\R)$ satisfying $B\CC \subset \CC_0$. Since $\CC_0 \subset \CC$, we have $B_i\CC_0 \subset B_i\CC \subset \CC_0$ for all $i \in \{1,\ldots,M\}$. An induction on the word length gives
  \begin{equation*}
    B_\iii\CC \subset \CC_0 \qquad \text{and} \qquad B_\iii\CC_0 \subset \CC_0
  \end{equation*}
  for all non-empty finite words $\iii \in \{1,\ldots,M\}^*$. Fix $V \in \CC_0$ and non-empty finite words $\iii,\jjj \in \{1,\ldots,M\}^*$. Since $V$ and $B_\jjj V$ are one-dimensional subspaces and $B_\jjj V \in \CC_0$, the operator norm is multiplicative for the composition $V \to B_\jjj V \to B_\iii B_\jjj V$, and hence
  \begin{equation*}
    \|B_\iii B_\jjj\| \ge \|B_\iii B_\jjj | V\| = \|B_\iii | B_\jjj V\| \|B_\jjj | V\| \ge \kappa^2 \|B_\iii\| \|B_\jjj\|.
  \end{equation*}
  If either $\iii = \varnothing$ or $\jjj = \varnothing$, then the same inequality follows from $B_\varnothing = I$ and $\kappa \le 1$. Therefore
  \begin{equation*}
    \sum_{\iii \in \{1,\ldots,M\}^{m+n}} \|B_\iii\|^s \ge \kappa^{2s} \sum_{\iii \in \{1,\ldots,M\}^{m}} \|B_\iii\|^s \sum_{\jjj \in \{1,\ldots,M\}^{n}} \|B_\jjj\|^s
  \end{equation*}
  for all $m,n \ge 1$, so the sequence
  \begin{equation*}
    \biggl(\kappa^{2s}\sum_{\iii \in \{1,\ldots,M\}^{n}} \|B_\iii\|^s\biggr)_{n \ge 1}
  \end{equation*}
  is supermultiplicative. Since $0 \le s \le 1$, we have $\fii^s(A) = \|A\|^s$ for all $A \in \GL(d,\R)$, and therefore Fekete's lemma gives
  \begin{align*}
    P(\B,s) &= \lim_{n \to \infty} \frac{1}{n}\log\sum_{\iii \in \{1,\ldots,M\}^n} \|B_\iii\|^s = \lim_{n \to \infty} \frac{1}{n}\log\biggl(\kappa^{2s}\sum_{\iii \in \{1,\ldots,M\}^n} \|B_\iii\|^s\biggr) \\
    &= \sup_{n \ge 1} \frac{1}{n}\log\biggl(\kappa^{2s}\sum_{\iii \in \{1,\ldots,M\}^n} \|B_\iii\|^s\biggr) \ge \log\kappa^{2s} + \log\sum_{i \in \{1,\ldots,M\}} \|B_i\|^s
  \end{align*}
  as claimed. The final claim follows from the characterization of $1$-domination by a strongly invariant multicone recalled above.
\end{proof}

\subsection{Zariski topology and simultaneous escape property} \label{sec:zariski-escape}

We recall the basic properties of the Zariski topology on $\GL(d,\R)$. For a more detailed discussion, we refer the reader to the books of Benoist and Quint \cite{BenoistQuint2016} and Humphreys \cite{Humphreys1975}. We say that a set $Z \subset \GL(d,\R)$ is \emph{Zariski closed} if it is the common zero set of finitely many polynomials on $\GL(d,\R)$. A set is \emph{Zariski open} if its complement is Zariski closed. For a set $E \subset \GL(d,\R)$, its \emph{Zariski closure} is the smallest Zariski closed subset of $\GL(d,\R)$ containing $E$, equivalently the intersection of all Zariski closed subsets of $\GL(d,\R)$ containing $E$; when the ambient topology is clear, we denote it by $\overline{E}$. If $X$ and $Y$ are endowed with Zariski topologies, a map $f \colon X \to Y$ is \emph{Zariski continuous} if $f^{-1}(F)$ is Zariski closed in $X$ for every Zariski closed set $F \subset Y$. A tuple $\A = (A_1,\ldots,A_N) \in \GL(d,\R)^N$ is \emph{Zariski dense} if for every polynomial $p \colon \GL(d,\R) \to \R$ the condition $p(A_\iii) = 0$ for all $\iii \in \II^*$ implies that $p(A) = 0$ for all $A \in \GL(d,\R)$. Since $\det$ is a nowhere-vanishing regular function on $\GL(d,\R)$, allowing $p$ to be a regular function $q/\det^k$ yields the same notion, as $q/\det^k$ and $q$ have the same zero set; this agrees with the standard convention for Zariski density in algebraic groups. The Zariski topology satisfies the \emph{descending chain condition}: every descending chain of Zariski closed sets stabilizes. Equivalently, every non-empty family of Zariski closed sets contains a minimal element with respect to inclusion.

For every $B \in \GL(d,\R)$, the left translation $C \mapsto BC$, the right translation $C \mapsto CB$, and the inversion $C \mapsto C^{-1}$ are Zariski homeomorphisms of $\GL(d,\R)$; each is a bijection whose inverse is a map of the same form. For the translations this holds because the entries of $BC$ and $CB$ are linear in the entries of $C$, so $p(BC)$ and $p(CB)$ are polynomials whenever $p$ is, and the preimage of a common zero set is again a common zero set. For the inversion, Cramer's rule expresses the entries of $C^{-1}$ as polynomials in the entries of $C$ divided by $\det(C)$; if $p$ has total degree at most $m$, then $C \mapsto \det(C)^mp(C^{-1})$ is a polynomial with the same zero set on $\GL(d,\R)$ as $C \mapsto p(C^{-1})$, since $\det$ is nowhere vanishing there, so the preimage under inversion of a Zariski closed set is Zariski closed. In particular, each of these maps carries Zariski closed sets to Zariski closed sets and commutes with taking Zariski closures.

A non-empty set $X \subset \GL(d,\R)$ is \emph{Zariski irreducible} if, whenever $X \subset Z_1 \cup Z_2$ with $Z_1$ and $Z_2$ Zariski closed, then $X \subset Z_1$ or $X \subset Z_2$; by induction on $r$, a Zariski irreducible set contained in a finite union $Z_1 \cup \cdots \cup Z_r$ of Zariski closed sets is contained in one of the $Z_s$. Every singleton is Zariski irreducible, the image of a Zariski irreducible set under a Zariski continuous map is Zariski irreducible, and the product $XY = \{BC : B \in X \text{ and } C \in Y\}$ of two Zariski irreducible sets is Zariski irreducible. Using the descending chain condition, every non-empty Zariski closed set $Z$ can be written as a finite union of maximal Zariski irreducible closed subsets; these are the \emph{Zariski irreducible components} of $Z$, and every Zariski irreducible subset of $Z$, in particular every point of $Z$, lies in one of them.

These facts have two consequences for the subsemigroup $\mathcal{S}(\A) \subset \GL(d,\R)$ generated by $\A$, which we use below. First, the Zariski closure $H = \overline{\mathcal{S}(\A)}$ is a Zariski closed subgroup of $\GL(d,\R)$; in particular $I \in H$. Indeed, since $\mathcal{S}(\A)$ is closed under multiplication and the translations are Zariski homeomorphisms that commute with closures, $H$ is closed under multiplication as well. To see this explicitly, fix $B \in H$ and $C \in \mathcal{S}(\A)$. The right-translation identity $\overline{\mathcal{S}(\A)}C = \overline{\mathcal{S}(\A)C}$ gives $BC \in H$, because $\mathcal{S}(\A)C \subset \mathcal{S}(\A)$. Thus $B\mathcal{S}(\A) \subset H$, and the left-translation identity $B\overline{\mathcal{S}(\A)} = \overline{B\mathcal{S}(\A)}$ gives $BH \subset H$. Since $B \in H$ was arbitrary, $H$ is closed under multiplication. Moreover, for every $B \in H$ the sets $B^nH$ are Zariski closed and form a descending chain $H \supseteq BH \supseteq B^2H \supseteq \cdots$, which stabilizes by the descending chain condition and thereby forces $BH = H$, placing both $I$ and $B^{-1}$ in $H$.

Second, among the Zariski irreducible components of a Zariski closed subgroup $H$ there is a unique one, denoted $H^\circ$, that contains $I$: if two components contained $I$, their product would be a Zariski irreducible subset of $H$ containing both and, lying in a single component, would force the two to coincide. This component $H^\circ$ is a Zariski closed subgroup, because a product of Zariski irreducible sets and the image of a Zariski irreducible set under inversion are again Zariski irreducible, so that $H^\circ H^\circ$ and $(H^\circ)^{-1}$ are Zariski irreducible subsets of $H$ containing $I$ and hence, by this uniqueness, lie in $H^\circ$. Finally, for each $B \in H$ the coset $BH^\circ$ is a Zariski irreducible subset of $H$ containing $B$; if $C$ is a component of $H$ containing $B$, then $B^{-1}C$ is a Zariski irreducible subset of $H$ containing $I$, hence lies in $H^\circ$, giving $C \subset BH^\circ$, and since $BH^\circ$ in turn lies in a component, the maximality of $C$ forces $C = BH^\circ$. Thus the Zariski irreducible components of $H$ are exactly the cosets $BH^\circ$ with $B \in H$, and there are finitely many of them, say $B_1H^\circ,\ldots,B_rH^\circ$.

With the Zariski topology at our disposal, we return to the irreducibility of matrix tuples. Strong $k$-irreducibility for every $k \in \{1,\ldots,d-1\}$ is a standing assumption both in our main theorems and in the entropy-dimension formula of Rapaport~\cite{Rapaport2024} on which they rely, but the proofs below, most notably the subsystem construction in \cref{sec:strongly-separated-subsystems}, invoke it through a seemingly stronger escape property, which \cref{prop:simultaneous-escape} shows to be equivalent. We say that $\A = (A_1,\ldots,A_N) \in \GL(d,\R)^N$ has the \emph{simultaneous escape property} if for every $n \in \N$, all choices of $k_j \in \{1,\ldots,d-1\}$, proper and non-trivial subspaces $V_j \subset \wedge^{k_j}\R^d$, and proper subspaces $W_{j,1},\ldots,W_{j,m_j} \subset \wedge^{k_j}\R^d$ for $j \in \{1,\ldots,n\}$, there exists a word $\iii \in \II^*$ such that
\begin{equation*}
  A_\iii^{\wedge k_j}V_j \notin \{W_{j,1},\ldots,W_{j,m_j}\}
\end{equation*}
for all $j \in \{1,\ldots,n\}$. In other words, a single word $\iii$ must make each subspace $A_\iii^{\wedge k_j}V_j$ escape its prescribed finite set of forbidden subspaces $W_{j,1},\ldots,W_{j,m_j}$, and do so simultaneously for all $j$, across the various exterior powers $\wedge^{k_j}\R^d$; such a word brings finitely many prescribed subspaces into general position at once.

\begin{proposition} \label{prop:simultaneous-escape}
  Let $\A = (A_1,\ldots,A_N) \in \GL(d,\R)^N$. Then $\A$ has the simultaneous escape property if and only if $\A$ is strongly $k$-irreducible for every $k \in \{1,\ldots,d-1\}$.
\end{proposition}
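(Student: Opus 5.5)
The plan is to pass to the Zariski closure $H = \overline{\mathcal{S}(\A)}$ and its identity component $H^\circ$, as set up in \cref{sec:zariski-escape}, and to turn each escape requirement into the statement that some Zariski closed subset of $H$ is proper. A finite union of proper closed sets cannot cover $H^\circ$, and this will produce the common word.

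\emph{Easy direction.} Suppose the simultaneous escape property holds and fix $k$. Assume, for contradiction, that $\A^{\wedge k}$ is not strongly irreducible. Then there is a finite union $\VV = W_1\cup\cdots\cup W_m$ of proper non-trivial subspaces of $\wedge^k\R^d$ with $A_i^{\wedge k}\VV=\VV$ for all $i$. This forces every $A_\iii^{\wedge k}$ to permute the maximal members among the $W_l$. Apply the property with $n=1$, $V_1$ a maximal $W_l$, and forbidden list $W_1,\ldots,W_m$. This produces a word $\iii$ with $A_\iii^{\wedge k}V_1$ outside the list, which is a contradiction.

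\emph{Converse, step 1: each condition is Zariski closed.} Assume strong $k$-irreducibility for all $k$. For a single pair $(V,W)$ in $\wedge^k\R^d$, the set
\begin{equation*}
  Z_{V,W}=\{B\in\GL(d,\R) : B^{\wedge k}V = W\}
\end{equation*}
is Zariski closed. If $\dim W \ne \dim V$ it is empty. Otherwise, since $B^{\wedge k}$ is injective, $B^{\wedge k}V=W$ is equivalent to $B^{\wedge k}V\subseteq W$. This in turn says that $\ell(B^{\wedge k}v_a)=0$ for a basis $v_a$ of $V$ and linear functionals $\ell$ cutting out $W$, and these are polynomial equations in the entries of $B$.

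\emph{Converse, step 2: each condition misses $H^\circ$.} The key claim is that $H^\circ\not\subseteq Z_{V,W}$ whenever $V$ is proper and non-trivial. Suppose $H^\circ\subseteq Z_{V,W}$. Since $I\in H^\circ$, this gives $W=V$, so $H^\circ$ stabilizes $V$. Write $H=\bigcup_s B_sH^\circ$, using that $H^\circ$ is normal in $H$ (conjugation by $B\in H$ is a Zariski homeomorphism fixing $I$ and permuting components). Then $\{B^{\wedge k}V : B\in H\}$ is the finite set $\{B_s^{\wedge k}V\}$, and so $\{A_\iii^{\wedge k}V\}$ is finite. This contradicts strong $k$-irreducibility of $\A^{\wedge k}$. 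I will check normality carefully: $BH^\circ B^{-1}$ is an irreducible closed subset of $H$ containing $I$, hence contained in $H^\circ$.

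\emph{Converse, step 3: conclusion.} The union $Z=\bigcup_{j,l} Z_{V_j,W_{j,l}}$ is Zariski closed. Since $H^\circ$ is Zariski irreducible and not contained in any single $Z_{V_j,W_{j,l}}$, we get $H^\circ\not\subseteq Z$. The remaining task is to pass from $H$ back to the semigroup. $H$ is the closure of $\mathcal{S}(\A)$ and $H=\bigcup_s B_sH^\circ$ with each $B_sH^\circ\not\subseteq Z$. The latter holds because $B_s^{-1}Z$ is a finite union of sets of the same form $Z_{V_j,\,B_s^{-1}W}$, and step 2 applies to these. So $H\not\subseteq Z$, and since $\mathcal{S}(\A)$ is Zariski dense in $H$, some $A_\iii\notin Z$. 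That word $\iii$ satisfies all escape conditions simultaneously.

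I expect the main obstacle to be step 2, the identity-component argument: showing that strong irreducibility of the semigroup forbids $H^\circ$ from fixing $V$. The point to verify with care is that finiteness of the $H$-orbit of $V$ follows from $H^\circ$ fixing $V$ and $[H:H^\circ]<\infty$.
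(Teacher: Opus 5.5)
Your proposal is correct and follows essentially the same route as the paper. You pass to $H=\overline{\mathcal{S}(\A)}$, use the Zariski irreducibility of $H^\circ$ to confine it to a single coincidence set, evaluate at $I$ to get $V=W$, and use the finitely many cosets $B_sH^\circ$ to get a finite orbit contradicting strong irreducibility. Your linear-functional description of the coincidence sets and the invariant-union form of the easy direction are only cosmetic variants.

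Neither the normality of $H^\circ$ nor the coset-by-coset check in step 3 is needed. Indeed, $(B_sh)^{\wedge k}V=B_s^{\wedge k}V$ for $h\in H^\circ$, and $H^\circ\subseteq H$ already turns $H^\circ\not\subseteq Z$ into $H\not\subseteq Z$.
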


\begin{proof}
  Suppose that $\A$ is not strongly $k$-irreducible for some $k \in \{1,\ldots,d-1\}$. Then there exists a proper and non-trivial subspace $U \subset \wedge^k\R^d$ whose orbit $\OO = \{A_\iii^{\wedge k}U : \iii \in \II^* \setminus \{\varnothing\}\}$ is finite, say $\OO = \{U_1,\ldots,U_m\}$. Each $U_\ell$ is the image of $U$ under an invertible linear map, so each $U_\ell$ is a proper and non-trivial subspace of $\wedge^k\R^d$. Fix a non-empty word $\kkk$ with $U_1 = A_\kkk^{\wedge k}U$. For every $\iii \in \II^*$, the concatenation $\iii\kkk$ is non-empty and
  \begin{equation*}
    A_\iii^{\wedge k}U_1 = A_\iii^{\wedge k}A_\kkk^{\wedge k}U = A_{\iii\kkk}^{\wedge k}U \in \OO.
  \end{equation*}
  Hence no word $\iii \in \II^*$ satisfies $A_\iii^{\wedge k}U_1 \notin \{U_1,\ldots,U_m\}$, and the simultaneous escape property fails with $n = 1$, $k_1 = k$, $V_1 = U_1$, and $W_{1,\ell} = U_\ell$ for $\ell \in \{1,\ldots,m\}$.

  Let us then show the converse. Suppose to the contrary that $\A$ is strongly $k$-irreducible for every $k \in \{1,\ldots,d-1\}$, but there are $n \in \N$ and, for each $j \in \{1,\ldots,n\}$, a choice of $k_j \in \{1,\ldots,d-1\}$, a proper and non-trivial subspace $V_j \subset \wedge^{k_j}\R^d$, and proper subspaces $W_{j,1},\ldots,W_{j,m_j} \subset \wedge^{k_j}\R^d$ such that for every $\iii \in \II^*$ there exist $j$ and $\ell$ with $A_\iii^{\wedge k_j}V_j = W_{j,\ell}$. Writing $Z(k,V,W) = \{B \in \GL(d,\R) : B^{\wedge k}V = W\}$ for the coincidence set, this means that
  \begin{equation*}
    A_\iii \in Z = \bigcup_{j=1}^n \bigcup_{\ell=1}^{m_j} Z(k_j,V_j,W_{j,\ell})
  \end{equation*}
  for every $\iii \in \II^*$. Each coincidence set $Z(k_j,V_j,W_{j,\ell})$ is Zariski closed. Since $B^{\wedge k_j}$ is invertible, $B^{\wedge k_j}V_j$ has the same dimension as $V_j$, so $Z(k_j,V_j,W_{j,\ell})$ is empty, and hence Zariski closed, unless $\dim(V_j) = \dim(W_{j,\ell})$. Suppose therefore that $\dim(V_j) = \dim(W_{j,\ell}) = t$ and write $E_j = \wedge^{k_j}\R^d$. Choose nonzero vectors $v_j,w_{j,\ell} \in \wedge^t E_j$ spanning the lines $\wedge^t V_j$ and $\wedge^t W_{j,\ell}$, respectively, and fix a basis of $\wedge^t E_j$, writing $u_p$ for the coordinates of $u \in \wedge^t E_j$ in this basis. A $t$-dimensional subspace $V' \subseteq E_j$ is recovered from the line $\wedge^t V'$ as $\{x \in E_j : x \wedge v' = 0\}$ for any nonzero $v' \in \wedge^t V'$, so $B^{\wedge k_j}V_j = W_{j,\ell}$ if and only if the one-dimensional subspaces $\wedge^t(B^{\wedge k_j}V_j)$ and $\wedge^t W_{j,\ell}$ coincide, that is, if and only if the nonzero vectors $\wedge^t(B^{\wedge k_j})v_j$ and $w_{j,\ell}$ are proportional, which amounts to the identities
  \begin{equation*}
    (\wedge^t(B^{\wedge k_j})v_j)_p(w_{j,\ell})_q - (\wedge^t(B^{\wedge k_j})v_j)_q(w_{j,\ell})_p = 0
  \end{equation*}
  for all coordinate indices $p$ and $q$. Since $B \mapsto \wedge^t(B^{\wedge k_j})$ depends polynomially on the entries of $B$, each of these expressions is a polynomial in the entries of $B$, so $Z(k_j,V_j,W_{j,\ell})$ is Zariski closed. As a finite union of Zariski closed sets, $Z$ is Zariski closed. In particular, $\mathcal{S}(\A) \subset Z$, and hence $H = \overline{\mathcal{S}(\A)} \subset Z$. By the two consequences recorded above, $H$ is a Zariski closed subgroup of $\GL(d,\R)$, its Zariski irreducible component $H^\circ$ containing $I$ is a subgroup, and $H$ is the union of finitely many cosets $B_1H^\circ,\ldots,B_rH^\circ$ with $B_1,\ldots,B_r \in H$.

  Since $H^\circ$ is Zariski irreducible and $H^\circ \subset H \subset Z$, where $Z$ is the finite union of the Zariski closed sets $Z(k_j,V_j,W_{j,\ell})$, the component $H^\circ$ is contained in one of them, so there are indices $j_0$ and $\ell_0$ with $H^\circ \subset Z(k_{j_0},V_{j_0},W_{j_0,\ell_0})$. Since $I \in H^\circ$ and $I^{\wedge k_{j_0}}$ is the identity, evaluating the coincidence condition at $I$ gives $V_{j_0} = W_{j_0,\ell_0}$, and therefore
  \begin{equation*}
    h^{\wedge k_{j_0}}V_{j_0} = V_{j_0}
  \end{equation*}
  for all $h \in H^\circ$. If $\iii \in \II^* \setminus \{\varnothing\}$, then $A_\iii \in \mathcal{S}(\A) \subset H$, so $A_\iii = B_q h$ for some $q \in \{1,\ldots,r\}$ and $h \in H^\circ$, and consequently
  \begin{equation*}
    A_\iii^{\wedge k_{j_0}}V_{j_0} = B_q^{\wedge k_{j_0}}h^{\wedge k_{j_0}}V_{j_0} = B_q^{\wedge k_{j_0}}V_{j_0}.
  \end{equation*}
  Hence the orbit $\{A_\iii^{\wedge k_{j_0}}V_{j_0} : \iii \in \II^* \setminus \{\varnothing\}\}$ is contained in the set $\{B_q^{\wedge k_{j_0}}V_{j_0} : q \in \{1,\ldots,r\}\}$ of at most $r$ subspaces. Since $V_{j_0}$ is a proper and non-trivial subspace of $\wedge^{k_{j_0}}\R^d$, this contradicts the strong $k_{j_0}$-irreducibility of $\A$ and completes the proof.
\end{proof}

\section{Projections of self-affine measures} \label{sec:projections-selfaffine-measures}

In this section, we prove \cref{thm:projection-bernoulli}, the all-directions projection theorem for self-affine measures. In \cref{sec:furstenberg-measure}, we define the Furstenberg measure $\mu_F$ and combine results of Rapaport \cite{Rapaport2024} and Feng \cite{Feng2023} to show that the projection formula holds for $\mu_F$-almost every direction. The remaining step is to pass from $\mu_F$-almost every direction to every direction. For a fixed $V$, one still has the entropy dimension identity $\dime((\proj_V)_*\mu) = \min\{1, \diml(\nu)\}$, but this does not by itself yield the required Hausdorff dimension lower bound. We prove that lower bound by verifying the local entropy averages hypothesis of \cref{thm:LocalEntropyAverages} for each fixed direction.

\begin{proposition} \label{prop:fixed-direction-lea}
  Let $X \subset \R^d$ be a self-affine set satisfying the exponential separation condition, $\nu$ be a fully supported Bernoulli measure, and $\mu$ be the associated self-affine measure such that the associated tuple of matrices is $k$-proximal and strongly $k$-irreducible for all $k \in \{1,\ldots,d-1\}$. Then
  \begin{equation*}
    \ldimh((\proj_V)_*\mu) \ge \min \{ 1, \diml(\nu) \}
  \end{equation*}
  for all $V \in \RP^{d-1}$.
\end{proposition}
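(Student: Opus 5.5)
We apply \cref{thm:LocalEntropyAverages} with $f = \proj_V \circ \pi$, where $V$ is fixed and $\proj_V$ is identified with a map to $\R$ via a unit vector $v \in V$. After rescaling we may assume $f$ takes values in $[-1,1]$. The underlying fact is that for a word $\iii$, $f$ restricted to $[\iii]$ is the affine image $x \mapsto \la v, A_\iii x\ra + \la v, \fii_\iii(0)\ra$ of $\pi$. Hence $f_*\nu_{[\iii]}$ is, up to translation, the measure $\mu$ projected in the direction $A_\iii^\top V$ and scaled by $|A_\iii^\top v|$. Here $\nu_{[\iii]} = \sigma^{|\iii|}$-shifted copy of $\nu$, since $\nu$ is Bernoulli.

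\textbf{Partitions.} For $q,n \ge 1$ let $\QQ_n^q$ be the stopping-time partition consisting of the minimal words $\iii$ with $|A_\iii^\top v| \le 2^{-qn}$. We need $|A_\iii^\top v| \approx 2^{-qn}$ up to a constant factor, which holds because one extra symbol changes the norm by at most the factor $\max_i\|A_i^{-1}\|$. We also need $\diam f([\iii]) \approx |A_\iii^\top v|\,\diam(\proj_{A_\iii^\top V} X)$. Since $X$ is not contained in a hyperplane (by irreducibility), $\diam \proj_W X$ is bounded below uniformly in $W$. So \cref{it:Q2} holds after shifting $n$ by a constant. The conditions \cref{it:Q1}, \cref{it:Q3} and \cref{it:Q4} are immediate from the stopping-time construction. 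By \cref{eq:A1}, the entropy $H(f_*\nu_{[\QQ_i^q(\jjj)]},\DD_{q(i+1)})$ differs by $O(1)$ from $H((\proj_{W_i})_*\mu,\DD_q)$, where $W_i = A_{\QQ_i^q(\jjj)}^\top V$.

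\textbf{Reduction.} It then suffices to show the following for $\nu$-almost every $\jjj$. For every $\eps>0$ and all large $q$, the proportion of $i \le n$ with $\frac{1}{q\log 2}H((\proj_{W_i})_*\mu,\DD_q) \ge \min\{1,\diml(\nu)\}-\eps$ tends, as $n \to \infty$, to at least $1-\eps$. We would establish this in two steps.

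First, we use the uniform-in-direction statement obtained from the Furstenberg-typical projection theorem (Rapaport, Feng), as recorded in \cref{sec:furstenberg-measure}. For $\mu_F$-a.e.\ $W$ (with $\mu_F$ the Furstenberg measure of the transposed system) the projection has entropy dimension $\min\{1,\diml(\nu)\}$. Lower semicontinuity in $W$ at scale $q$, via \cref{eq:A3}, together with Egorov's theorem, gives an open set $U_{q}$ with $\mu_F(U_q)>1-\eps$ on which the normalized scale-$q$ entropy exceeds $\min\{1,\diml(\nu)\}-2\eps$ for $q$ large.

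Second, we show that the directions $W_i = A_{\QQ_i^q(\jjj)}^\top V$ equidistribute towards $\mu_F$ along the stopping times, for every starting $V$. Proximality and strong irreducibility of the transposed tuple give uniqueness of the stationary measure and convergence of $A_{\jjj|_m}^\top V \to$ a Furstenberg-distributed random direction, uniformly in $V$ (Guivarc'h–Raugi / Benoist–Quint). The stopping times are Markov-type stopping times for the random walk on $\RP^{d-1}$ weighted by the norm cocycle. So an ergodic theorem for the skew product (direction, cocycle) shows that the empirical measure $\frac1n\sum_{i\le n}\delta_{W_i}$ converges to an absolutely comparable measure, namely the $\mu_F$-type stationary measure for the induced ``norm-time'' process. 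Applied to the open set $U_q$, this gives the required density.

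\textbf{Main obstacle.} The hardest step is this last equidistribution. The directions along a single sequence $\jjj$ sampled at norm-stopping times are not the plain random walk, and we need the statement for the fixed, possibly atypical, starting direction $V$. We would first try to use the contraction on $\RP^{d-1}$: by proximality, for $\nu$-a.e.\ $\jjj$, $A_{\jjj|_m}^\top$ acts on directions, and the dependence on $V$ disappears exponentially unless $V$ lies in a fixed exceptional hyperplane, an event of $\nu$-measure zero for each fixed $V$ by strong irreducibility. This reduces the problem to a $\mu_F$-typical starting point, where stationarity and Birkhoff's theorem apply. Finally, the $\limsup_q$ in \cref{thm:LocalEntropyAverages} yields $\ldimh((\proj_V)_*\mu)\ge \min\{1,\diml(\nu)\}-3\eps$, and letting $\eps \to 0$ proves \cref{prop:fixed-direction-lea}.
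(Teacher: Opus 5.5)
Your architecture is the same as the paper's: stopping-time partitions for \cref{thm:LocalEntropyAverages}, entropy input from Rapaport, synchronization of $A^\top_{\jjj|_m}V$ with a $\mu_F$-typical starting direction, and an ergodic theorem for the norm-time process. However, the step you call the main obstacle is closed by an unjustified claim.

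You assert that $\frac1n\sum_{i\le n}\delta_{W_i}$ converges to a measure comparable to $\mu_F$. This requires the time-$q$ map $T_q$ of the suspension flow over $(\II^\N\times\RP^{d-1},F)$, with roof $-\log_2\|A^\top_{i_1}|V\|$, to be ergodic for the lift $m$ of $\nu\times\mu_F$. Birkhoff's theorem alone only gives convergence to the ergodic component $\nu_{q,\mathbf{x}}$ of $m$ under $T_q$. A single component may give your set $U_q$ small mass even though $m$ gives it mass close to $1$. Ergodicity of $T_q$ is exactly what \cite[Lemma~7.3]{BaranyHochmanRapaport2019} asserted via Quas--Soo. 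That argument needs a H\"older roof, for instance from domination, which is not assumed here. Removing the dependence on the starting direction, as you propose, does nothing for this issue.

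The missing idea is to work with the ergodic decomposition instead. Set $f_{q,k}(\iii,V,t)=\min_{\ell\le k}H((P_{A^\top_{\iii|_\ell}V}\circ\pi)_*\nu,\mathcal{D}_q)$. Then $f_{q,k}/(q\log 2)\to s$ $m$-almost everywhere; your $\mu_F$-a.e.\ input suffices for this by stationarity, and the paper has it everywhere from Rapaport. Since $f_{q,k}/(q\log 2)$ is also uniformly bounded, it converges in $L^1(m)$. By Jensen, the component averages $\frac{1}{q\log 2}\int f_{q,k}\dd\nu_{q,\mathbf{x}}$ then converge to $s$ in $L^1(m)$, hence $m$-a.e.\ along a subsequence of $q$. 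That is enough, because \cref{thm:LocalEntropyAverages} only asks for $\limsup_q$; this is \cref{lem:ErgDecompValue}. Your $U_q$ formulation could be rescued the same way, by Markov's inequality applied to $\nu_{q,\mathbf{x}}(U_q^c)$ together with a Borel--Cantelli argument over $q$ and $\eps$. As written, though, it is missing.

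A second point you skip concerns the stopping times. The norm-stopping times for the fixed $V$ and for the typical start $V'$ do not coincide. Since $\|A^\top_{\iii|_n}|V'\|/\|A^\top_{\iii|_n}|V\|\to|\la Z(\iii),V'\ra|/|\la Z(\iii),V\ra|$, they differ by at most a bounded number $p$ of steps. Even so, $W_i$ is then $A^\top_\kkk$ applied to the flow direction for some word $\kkk$ of length at most $p$ that you do not control. A condition on directions alone, such as membership in $U_q$, therefore does not transfer. The paper prefixes a word $\jjj$ so that the $V'$-clock runs slightly ahead. It then uses the minimum over $\ell\le p$ built into $f_{q,p}$, which is a function of the flow point rather than of the direction alone.

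Finally, the forward products $A^\top_{\jjj|_m}V$ do not converge to a Furstenberg-distributed direction. Only the synchronization $d(A^\top_{\jjj|_m}V,A^\top_{\jjj|_m}V')\to0$ holds, for $V,V'\not\perp Z(\jjj)$, and it is not uniform in $V$.
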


In the proof of \cref{prop:fixed-direction-lea}, which can be found in \cref{sec:proof-all-directions}, we follow and adapt the strategy of \cite[Theorem~7.1]{BaranyHochmanRapaport2019} and \cite[Theorem~3.1]{FalconerKempton2017}. We note a slight gap in the proof of B\'ar\'any, Hochman, and Rapaport: in \cite[Lemma~7.3]{BaranyHochmanRapaport2019}, they claim that the corresponding time-$q$ map $T_q$ is ergodic by appealing to Falconer and Kempton \cite[Lemma~5.3]{FalconerKempton2017}. The proof of \cite[Lemma~5.3]{FalconerKempton2017} relies on a result of Quas and Soo \cite[Proposition~5]{QuasSoo2012}, which is applicable only when the roof function of the induced suspension flow is H\"older continuous over the symbolic space; this is the case, in particular, when the matrices are dominated. However, domination is not assumed in the setting of \cite[Theorem~7.1]{BaranyHochmanRapaport2019}. We close this gap by not assuming ergodicity and working instead with the ergodic decomposition.

The previous proposition supplies the lower bound in every direction. The reverse inequality is the standard upper bound for Lipschitz images, so with \cref{prop:fixed-direction-lea} in hand we can now complete the proof of \cref{thm:projection-bernoulli}.

\begin{theorem} \label{thm:proj-dim}
  Let $\Phi$ be an affine iterated function system satisfying the exponential separation condition such that the associated tuple $\A \in \GL(d,\R)^N$ is $k$-proximal and strongly $k$-irreducible for all $k \in \{1,\ldots,d-1\}$, and let $X \subset \R^d$ be the self-affine set associated with $\Phi$. If $\nu$ is a fully supported Bernoulli measure and $\mu = \pi_*\nu$ is the associated self-affine measure, then
  \begin{equation*}
    \dim((\proj_V)_*\mu) = \min\{1,\diml(\nu)\}
  \end{equation*}
  for all $V \in \RP^{d-1}$.
\end{theorem}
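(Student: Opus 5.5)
The plan is to sandwich the lower and upper pointwise dimensions of $(\proj_V)_*\mu$ between the same value $\min\{1,\diml(\nu)\}$. Exact-dimensionality then follows, and so does the displayed formula. Fix $V \in \RP^{d-1}$ and write $\mu_V = (\proj_V)_*\mu$.

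\textbf{Lower bound.} Apply \cref{prop:fixed-direction-lea} directly. It gives $\ldimh(\mu_V) \ge \min\{1,\diml(\nu)\}$. By definition, $\ldimh$ is the essential infimum of $\ldimloc(\mu_V,\cdot)$, so
\begin{equation*}
  \ldimloc(\mu_V,x) \ge \min\{1,\diml(\nu)\} \quad \text{for } \mu_V\text{-almost every } x .
\end{equation*}

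\textbf{Upper bound.} We need $\udimloc(\mu_V,x) \le \min\{1,\diml(\nu)\}$ for $\mu_V$-almost every $x$. There are two bounds to check.
\begin{enumerate}
  \item Since $\mu_V$ lives on the line $V$, the standard fact that $\udimloc \le 1$ almost everywhere for any finite measure on $\R$ handles the bound by $1$. This holds, for example, because $\udimp(\mu_V) \le 1$: packing dimension of a subset of a line is at most one.
  \item For the bound by $\diml(\nu)$, use that $\proj_V$ is $1$-Lipschitz. Then $\mu_V(B(\proj_V y,r)) \ge \mu(B(y,r))$, so $\udimloc(\mu_V,\proj_V y) \le \udimloc(\mu,y)$ for every $y$. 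The almost-everywhere statement for $\mu$ transfers to $\mu_V$ through the pushforward. Rossi's bound $\udimp(\pi_*\nu) \le \min\{d,\diml(\nu)\}$, recalled in \cref{sec:dimensions-measures}, applies because Bernoulli measures are ergodic. It gives $\udimloc(\mu,y) \le \diml(\nu)$ for $\mu$-almost every $y$.
\end{enumerate}
Combining the two bounds yields $\udimloc(\mu_V,x) \le \min\{1,\diml(\nu)\}$ for $\mu_V$-almost every $x$.

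\textbf{Conclusion.} Since $\ldimloc \le \udimloc$ everywhere, both local dimensions equal $\min\{1,\diml(\nu)\}$ for $\mu_V$-almost every $x$. Hence $\mu_V$ is exact-dimensional with $\dim(\mu_V) = \min\{1,\diml(\nu)\}$. The only substantive input is \cref{prop:fixed-direction-lea}, which is taken as given here; the main obstacle in the overall argument lies there, in handling non-ergodicity of the time-$q$ map. Within this proof, the only point needing care is the transfer of the almost-everywhere bound through the Lipschitz pushforward, which is immediate from the ball inclusion $\proj_V^{-1}(B(\proj_V y,r)) \supseteq B(y,r)$.
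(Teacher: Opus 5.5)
Your proof is correct and follows the paper's argument. The lower bound comes from \cref{prop:fixed-direction-lea}. The upper bound comes from three facts: $\proj_V$ is Lipschitz, the measure lives on a line, and Rossi's bound $\udimp(\mu)\le\min\{d,\diml(\nu)\}$ holds for the ergodic Bernoulli measure $\nu$. The only difference is presentation: the paper states the upper bound as $\udimp((\proj_V)_*\mu)\le\min\{1,\udimp(\mu)\}$, whereas you unpack it pointwise through the ball inclusion.
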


\begin{proof}
  Fix $V \in \RP^{d-1}$. By \cref{prop:fixed-direction-lea},
  \begin{equation*}
    \ldimh((\proj_V)_*\mu) \ge \min \{ 1, \diml(\nu) \}.
  \end{equation*}
  Since $(\proj_V)_*\mu$ is a Borel measure on the line $V$ and $\proj_V$ is Lipschitz, we have $\udimp((\proj_V)_*\mu) \le \min\{1, \udimp(\mu)\}$. Since $\mu = \pi_*\nu$ and $\nu$ is Bernoulli, hence ergodic, \cite[Theorem 2.2]{Rossi2014} gives
  \begin{equation*}
    \udimp(\mu) \le \min \{ d, \diml(\nu) \},
  \end{equation*}
  and therefore
  \begin{equation*}
    \udimp((\proj_V)_*\mu) \le \min \{ 1, \diml(\nu) \}.
  \end{equation*}
  Hence the lower Hausdorff and upper packing dimensions of $(\proj_V)_*\mu$ coincide and equal $\min \{1,\diml(\nu)\}$, as claimed.
\end{proof}

The remainder of the section is devoted to the proof of \cref{prop:fixed-direction-lea}. We first introduce the Furstenberg measure and prove the projection theorem in Furstenberg-typical directions, then construct a suspension flow yielding the entropy input, and finally verify the local entropy averages hypothesis in each fixed direction.

\subsection{The Furstenberg measure}\label{sec:furstenberg-measure}

We introduce the stationary measure on projective space associated to a proximal strongly irreducible tuple, then record the projection theorem that holds in its typical directions. For a proximal tuple $\A = (A_1,\ldots,A_N) \in \GL(d,\R)^N$, the set of \emph{Furstenberg directions} is defined as
\begin{equation*}
  X_F = \{A\R^d \in \RP^{d-1} : A \in \overline{\{cA_{\iii}^\top : c\in\R\text{ and }\iii\in\II^*\}}\text{ and $A$ has rank one}\}.
\end{equation*}
In the planar papers \cite{BaranyKaenmakiYu2021-preprint,AnttilaBaranyKaenmaki2024,BaranyKaenmakiRossi2021}, the relevant Furstenberg directions are the orthogonal complements of the elements of our $X_F$.

If $\A$ is $1$-proximal and strongly $1$-irreducible and $\nu$ is the Bernoulli measure obtained from a probability vector $(p_1,\ldots,p_N)$ with $p_i = \nu([i]) > 0$ for all $i$, then a Borel probability measure $\mu_F$ on $\RP^{d-1}$ satisfying
\begin{equation*}
  \mu_F = \sum_{i = 1}^N p_i (A_i^\top)_*\mu_F
\end{equation*}
is called the \emph{Furstenberg measure} associated to $\nu$. We first prove existence and uniqueness of this measure.

\begin{proposition} \label{prop:FurstenbergMeasure}
  Let $\A = (A_1, \ldots, A_N) \in \GL(d,\R)^N$ be $1$-proximal and strongly $1$-irreducible and $\nu \in \MM_\sigma(\II^\N)$ be a fully supported Bernoulli measure. Then there exists a unique Borel probability measure $\mu_F$ on $\RP^{d-1}$ such that
  \begin{equation} \label{eq:Furstenberg-invariance}
    \mu_F = \sum_{i = 1}^N p_i (A_i^\top)_*\mu_F,
  \end{equation}
  and this measure satisfies $\ldimh(\mu_F) > 0$, has projective hyperplane non-concentration in the sense that $\mu_F(U^\bot) = 0$ for every $U \in \RP^{d-1}$, and has support $\spt(\mu_F) = X_F$.
\end{proposition}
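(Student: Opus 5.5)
The plan is to reduce everything to the classical theory of random matrix products, applied to the transposed tuple $\A^\top = (A_1^\top,\ldots,A_N^\top)$. The first step is to check that $\A^\top$ is again $1$-proximal and strongly $1$-irreducible. Proximality transfers directly: if $c_nA_{\iii_n}$ converges to a rank-one matrix, then so does its transpose $c_nA_{\iii_n}^\top$, and $A_{\iii}^\top = A_{\overline{\iii}}^\top$-type products of the transposes form exactly the semigroup generated by the $A_i^\top$. Strong irreducibility transfers by duality: a finite union $\VV$ of proper subspaces permuted by all $A_i^\top$ gives the finite union of orthogonal complements $\VV^\bot$, which is permuted by all $A_i^{-1}$. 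Since a finite permuted family is permuted by the inverses as well, $\VV^\bot$ is permuted by all $A_i$, contradicting strong $1$-irreducibility of $\A$.

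With this in hand, I invoke Furstenberg's theorem in the form given by Benoist--Quint \cite{BenoistQuint2016} (Proposition 4.7 and Lemma 4.6 there). For a proximal, strongly irreducible probability measure $\sum_i p_i\delta_{A_i^\top}$ on $\GL(d,\R)$ there is a unique stationary probability measure $\mu_F$ on $\RP^{d-1}$, which gives \cref{eq:Furstenberg-invariance}. The same result says $\mu_F$ is proper, meaning $\mu_F(W)=0$ for every proper projective subspace $W$. Taking $W = U^\bot$ gives the hyperplane non-concentration.

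The support claim follows from the boundary map. For $\nu$-almost every $\iii$, the normalized products $A_{\iii|_n}^\top/\|A_{\iii|_n}^\top\|$ accumulate only on rank-one matrices whose common image line $\xi(\iii)$ is well defined, and $\mu_F = \xi_*\nu$. This gives $\spt(\mu_F)\subseteq X_F$ directly. For the reverse inclusion, take a rank-one limit $A$ of $c_nA^\top_{\jjj_n}$. By non-concentration, $\mu_F$-almost every line avoids $\ker A$. Stationarity gives $\mu_F = \sum_{|\jjj|=n} p_\jjj (A_\jjj^\top)_*\mu_F$, so the pushforwards $(A^\top_{\jjj_n})_*\mu_F$ lie in $\spt\mu_F$ and converge to $\delta_{A\R^d}$. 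Since the support is closed, $A\R^d\in\spt\mu_F$. One has to be careful that the reversal of word order in $A_\iii^\top$ matches the definition of $X_F$; because $X_F$ ranges over all finite words, this is harmless.

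The main obstacle is the positive dimension $\ldimh(\mu_F)>0$. Here I would use the regularity (H\"older) property of stationary measures from Guivarc'h, also in \cite[Theorem 14.1]{BenoistQuint2016}, whose finite exponential moment hypothesis is trivial for a finitely supported measure. It gives $C,\alpha>0$ with $\mu_F(\{x : d(x,U^\bot)\le r\})\le Cr^\alpha$ for all $U$ and $r>0$. Since a ball of radius $r$ in $\RP^{d-1}$ is contained in an $r$-neighbourhood of some projective hyperplane, we get $\mu_F(B(x,r))\le Cr^\alpha$, and the mass distribution principle yields $\ldimh(\mu_F)\ge\alpha>0$. I expect most of the work to be in verifying that the hypotheses of the cited theorems match our transposed setting, not in any new estimate.
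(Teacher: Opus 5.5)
Your proposal is correct, and for most of the statement it follows the paper's route.

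\emph{Where you match the paper.}
\begin{itemize}
\item You transfer $1$-proximality and strong $1$-irreducibility to $\A^\top$. You prove the duality $(A_i^\top V)^\bot = A_i^{-1}V^\bot$ by hand, where the paper cites Bougerol--Lacroix.
\item You take existence, uniqueness, properness and H\"older regularity of the stationary measure from the standard theory, citing Benoist--Quint in place of Bougerol--Lacroix. Your passage from hyperplane neighbourhoods to balls is fine.
\item You obtain $X_F \subseteq \spt(\mu_F)$ by pushing $\mu_F$ forward along a rank-one limit. This is the paper's argument, applied to the whole measure rather than to a single $V \in \spt(\mu_F)$ with $B|V \ne 0$.
\end{itemize}

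\emph{Where you differ.} The genuine difference is the inclusion $\spt(\mu_F) \subseteq X_F$. You represent $\mu_F$ as the law $\xi_*\nu$ of the limit direction. The paper instead shows that $X_F$ is compact and forward invariant under every $A_i^\top$, produces a stationary measure living on $X_F$ by a fixed-point argument, and concludes by uniqueness. The paper's route needs only uniqueness of the stationary measure. Yours relies on the almost-sure convergence theorem for random matrix products, but gives a more informative description of $\mu_F$.

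\emph{Two details need repair.}

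First, $\mu_F(X_F) = 1$ yields $\spt(\mu_F) \subseteq X_F$ only if $X_F$ is closed, and you do not address this. This is the compactness step in the paper. Normalize the rank-one elements of the closed cone $\overline{\{cA_\iii^\top : c \in \R,\ \iii \in \II^*\}}$ to unit norm and extract a convergent subsequence. Its limit has norm one and rank at most one, hence rank exactly one. Then use continuity of $B \mapsto B\R^d$ on rank-one matrices.

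Second, the paper's convention is $A_{\iii|_n}^\top = A_{i_n}^\top \cdots A_{i_1}^\top$. With this convention, the limit points of $A_{\iii|_n}^\top/\|A_{\iii|_n}^\top\|$ are rank one, but their image lines do not stabilize; for these forward products it is the top right singular direction that converges. The boundary map comes from the reversed products $A_{i_1}^\top \cdots A_{i_n}^\top = (A_{i_n}\cdots A_{i_1})^\top$. Your remark that $X_F$ ranges over all finite words is exactly why this reversal is harmless, but the claim should be stated for the reversed products.
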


\begin{proof}
  Set
  \begin{equation*}
    \eta = \sum_{i \in \II} p_i\delta_{A_i^\top}.
  \end{equation*}
  If $(c_nA_{\iii_n})_n$ converges to a rank-one map, then $(c_nA_{\iii_n}^\top)_n$ converges to a rank-one map as well. Hence $\A^\top = (A_1^\top,\ldots,A_N^\top)$ is $1$-proximal. Since each $A_{\iii_n}^\top$ lies in the semigroup generated by $\A^\top$ and the normalizations $\|A_{\iii_n}^\top\|^{-1}A_{\iii_n}^\top$ have a rank-one limit point, the measure $\eta$ is contracting in the sense of \cite[Definition III.1.3]{BougerolLacroix1985}. By~\cite[Lemma III.3.3]{BougerolLacroix1985}, $\A^\top$ is also strongly $1$-irreducible. Strong irreducibility and contraction are the standing hypotheses of the results of \cite{BougerolLacroix1985} applied below. Applying~\cite[Theorem III.3.1]{BougerolLacroix1985} to $\eta$ now gives the existence and uniqueness of the Borel probability measure $\mu_F$ satisfying \cref{eq:Furstenberg-invariance}. Since $\eta$ has finite support, it has an exponential moment, so the hypotheses of~\cite[Proposition VI.4.1 and Corollary VI.4.2]{BougerolLacroix1985} are satisfied. Hence there exist constants $\gamma,C > 0$ such that
  \begin{equation*}
    \mu_F(\{U \in \RP^{d-1} : d(U,W) < r\}) \le Cr^\gamma
  \end{equation*}
  for all $W \in \RP^{d-1}$ and all $r > 0$. Thus $\mu_F$ is a $\gamma$-Frostman measure and, by the definition of $\ldimh(\mu_F)$, this gives $\ldimh(\mu_F) \ge \gamma > 0$.

  We next prove projective hyperplane non-concentration. Fix $U \in \RP^{d-1}$, choose a unit vector $u \in U$ and a unit vector $w \in \R^d$, and define the rank-one operator $B \colon \R^d \to \R^d$ by
  \begin{equation*}
    Bx = \la u,x \ra w.
  \end{equation*}
  Then $B|L = 0$ if and only if $L \subseteq U^\bot$, that is, if and only if $L \in U^\bot$. Since $\mu_F$ is stationary by the invariance relation \cref{eq:Furstenberg-invariance} and $\A^\top$ is strongly $1$-irreducible, \cite[Proposition III.2.3]{BougerolLacroix1985} gives
  \begin{equation*}
    \mu_F(U^\bot) = \mu_F(\{L \in \RP^{d-1} : B|L = 0\}) = 0.
  \end{equation*}
  It remains to identify $\spt(\mu_F)$. Let
  \begin{equation*}
    \mathcal C = \overline{\{cA_{\iii}^\top : c \in \R\text{ and }\iii \in \II^*\}}.
  \end{equation*}
  Since $\mathcal C$ is closed, the set of matrices of rank at most one is closed, and the unit sphere in the finite-dimensional space of linear maps on $\R^d$ is compact, the set
  \begin{equation*}
    \mathcal{K} = \{B \in \mathcal C : \|B\| = 1\text{ and $B$ has rank at most one}\}
  \end{equation*}
  is compact. As every element of $\mathcal{K}$ has norm $1$, every element of $\mathcal{K}$ has rank one. For each $B \in \mathcal{K}$, the line $B\R^d$ belongs to $\RP^{d-1}$ and
  \begin{equation*}
    \proj_{B\R^d} = \|BB^\top\|^{-1}BB^\top.
  \end{equation*}
  Therefore the map $B \mapsto B\R^d$ is continuous on $\mathcal{K}$, and its image is $X_F$. Hence $X_F$ is compact. In particular, $X_F$ is closed. If $W \in X_F$ and $i \in \II$, then there exist words $\jjj_n \in \II^*$ and scalars $c_n \in \R$ such that $c_nA_{\jjj_n}^\top$ converges to a rank-one matrix $B$ with range $W$. Since $A_i^\top B$ has rank one and range $A_i^\top W$, and since
  \begin{equation*}
    A_i^\top(c_nA_{\jjj_n}^\top) = c_n(A_{\jjj_n}A_i)^\top,
  \end{equation*}
  we obtain $A_i^\top W \in X_F$. Thus $X_F$ is forward invariant under each map $A_i^\top$. Define $T \colon \mathcal P(X_F) \to \mathcal P(X_F)$ by
  \begin{equation*}
    T\lambda = \sum_{i = 1}^N \nu([i]) (A_i^\top)_*\lambda.
  \end{equation*}
  Since $\A^\top$ is $1$-proximal, some sequence $c_nA_{\iii_n}^\top$ converges to a rank-one matrix lying in $\mathcal C$, whose normalization to unit norm belongs to $\mathcal K$; hence $X_F \ne \emptyset$. Since $X_F$ is compact, the space $\mathcal P(X_F)$ of Borel probability measures on $X_F$ is weak$^*$ compact and convex, and $T$ is continuous. Schauder's fixed point theorem therefore yields a Borel probability measure $\lambda$ on $X_F$ satisfying
  \begin{equation*}
    \lambda = \sum_{i = 1}^N \nu([i]) (A_i^\top)_*\lambda.
  \end{equation*}
  Since $\mu_F$ is the unique Borel probability measure satisfying this invariance relation, we have $\lambda = \mu_F$, and hence $\spt(\mu_F) \subset X_F$. Conversely, fix $W \in X_F$. Then there exist words $\jjj_n \in \II^*$ and scalars $c_n \in \R$ such that $c_nA_{\jjj_n}^\top$ converges to a rank-one matrix $B$ with range $W$. Since $B$ has rank one, $\ker(B)$ is a hyperplane. Let $U = \ker(B)^\bot$. Then $\{V \in \RP^{d-1} : B|V = 0\} = U^\bot$, so the projective hyperplane non-concentration just proved gives
  \begin{equation*}
    \mu_F(\{V \in \RP^{d-1} : B|V = 0\}) = 0.
  \end{equation*}
  Since $\mu_F(\RP^{d-1} \setminus \spt(\mu_F)) = 0$, we may choose $V \in \spt(\mu_F)$ such that $B|V \ne 0$. As each map $A_i^\top$ is a homeomorphism of $\RP^{d-1}$ and all weights $\nu([i])$ are positive, the invariance relation \cref{eq:Furstenberg-invariance} implies
  \begin{equation*}
    \spt(\mu_F) = \bigcup_{i \in \II} A_i^\top \spt(\mu_F).
  \end{equation*}
  Therefore $A_{\jjj_n}^\top V \in \spt(\mu_F)$ for every $n$, and since $c_nA_{\jjj_n}^\top|V$ converges to $B|V \ne 0$, the directions $A_{\jjj_n}^\top V$ converge to the range of $B|V$, namely $W$. Since $\spt(\mu_F)$ is closed, we obtain $W \in \spt(\mu_F)$. This proves the claim on the support, completing the proof of the proposition.
\end{proof}

The next proposition records the almost-sure asymptotic behaviour of the projective action of the cocycle $(A_{\iii|_n}^\top)_{n \ge 1}$ on $\RP^{d-1}$ needed later.

\begin{proposition} \label{prop:FurstenbergMeasure2}
  Let $\A = (A_1, \ldots, A_N) \in \GL(d,\R)^N$ be $1$-proximal and strongly $1$-irreducible and $\nu \in \MM_\sigma(\II^\N)$ be a fully supported Bernoulli measure. Then there exist a set $\Omega \subset \II^\N$ of full $\nu$-measure and a measurable map $Z \colon \Omega \to \RP^{d-1}$ such that
  \begin{equation} \label{eq:Furstenberg4}
    \lim_{n\to\infty} \frac{\|A_{\iii|_n}^{\top}|V\|}{\|A_{\iii|_n}^{\top}|W\|} = \frac{|\la Z(\iii),V\ra|}{|\la Z(\iii),W\ra|}
  \end{equation}
  for all $\iii \in \Omega$ and all $V,W \in \RP^{d-1}$ with $W \not\perp Z(\iii)$. Furthermore,
  \begin{equation} \label{eq:Furstenberg3}
    \lim_{n \to \infty} d(A_{\iii|_n}^{\top} V, A_{\iii|_n}^{\top} W) = 0
  \end{equation}
  for all $\iii \in \Omega$ and all $V,W \in \RP^{d-1}$ with $V \not\perp Z(\iii)$ and $W \not\perp Z(\iii)$. Finally, for every $V \in \RP^{d-1}$,
  \begin{equation*}
    \nu(\{\iii : Z(\iii) \perp V\}) = 0.
  \end{equation*}
\end{proposition}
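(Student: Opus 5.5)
The approach is to use the classical Furstenberg convergence for products of i.i.d.\ random matrices in the \emph{forward} order $A_{\iii|_n} = A_{i_1}\cdots A_{i_n}$. These products have the transposes $A_{\iii|_n}^\top = A_{i_n}^\top\cdots A_{i_1}^\top$. The tuple $\A$ itself is $1$-proximal and strongly $1$-irreducible, so \cite[Theorem III.3.1 and Proposition III.3.2]{BougerolLacroix1985}, applied to $\eta' = \sum_i p_i\delta_{A_i}$, give a set $\Omega$ of full $\nu$-measure. On $\Omega$, every limit point of $\|A_{\iii|_n}\|^{-1}A_{\iii|_n}$ is a rank-one matrix whose range is a single line $Z(\iii)\in\RP^{d-1}$ depending only on $\iii$. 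Moreover, $Z$ is measurable and distributed according to the stationary measure $\mu$ of $\eta'$. Transposing, every limit point $B$ of $\|A_{\iii|_n}^\top\|^{-1}A_{\iii|_n}^\top$ has the form $Bx = \la z,x\ra w$, where $z$ is a unit vector spanning $Z(\iii)$ and $|w|=1$. The vector $w$ may depend on the subsequence, but the kernel $Z(\iii)^\perp$ does not.

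The first limit \cref{eq:Furstenberg4} then follows from a subsequence argument. Fix $V,W$ with $W\not\perp Z(\iii)$, and take unit vectors $v\in V$, $u\in W$. Along any subsequence where the normalized matrices converge to $B$ as above, we have
\begin{equation*}
  \frac{\|A_{\iii|_n}^\top|V\|}{\|A_{\iii|_n}^\top|W\|} = \frac{|\|A_{\iii|_n}^\top\|^{-1}A_{\iii|_n}^\top v|}{|\|A_{\iii|_n}^\top\|^{-1}A_{\iii|_n}^\top u|} \to \frac{|\la z,v\ra|}{|\la z,u\ra|}.
\end{equation*}
The denominator converges to a nonzero number because $W\not\perp Z(\iii)$. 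This limit is independent of the subsequence, and compactness of the unit sphere in matrix space yields the full limit.

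The second limit \cref{eq:Furstenberg3} is handled the same way. Along such a subsequence, $A_{\iii|_n}^\top V \to B V = \linspan\{w\}$ and $A_{\iii|_n}^\top W\to\linspan\{w\}$; this uses $\la z,v\ra\ne0$ and $\la z,u\ra\ne0$. Hence their distance tends to $0$. Since every subsequence has a further subsequence along which the distance tends to $0$, the whole sequence converges to $0$.

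For the final claim, $\nu(\{\iii: Z(\iii)\perp V\}) = \mu(V^\perp)$, because the law of $Z$ under $\nu$ is the stationary measure $\mu$ of $\eta'$. This is exactly the projective hyperplane non-concentration of \cref{prop:FurstenbergMeasure} applied to the tuple $\A^\top$ in place of $\A$. That application is legitimate because $\A^\top$ is $1$-proximal and strongly $1$-irreducible, as shown in that proof, and $(\A^\top)^\top=\A$. Alternatively, one can invoke \cite[Proposition III.2.3]{BougerolLacroix1985} directly.

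The main obstacle is the first step: correctly identifying the almost sure rank-one limit together with its measurability. The essential point is that the ordering $A_{i_1}\cdots A_{i_n}$ gives a.s.\ convergence of the image direction, which is the classical martingale-type result, and hence convergence of the kernel of the transposed product. The proof should make sure the cited Bougerol--Lacroix statement concerns right products in this order. It should also take $Z$ as the a.s.\ limit of the directions $A_{\iii|_n}x$ for $x$ outside a null set, and check that this limit is measurable.
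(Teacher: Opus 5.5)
Your proof is correct and follows essentially the paper's route. Both apply \cite[Theorem III.3.1 and Proposition III.3.2]{BougerolLacroix1985} to the forward products $A_{i_1}\cdots A_{i_n}$, whose transposes are $A_{\iii|_n}^\top$, and take $Z(\iii)$ to be the almost-sure limiting range line. This is the same object as the paper's limit of the top right-singular direction of $A_{\iii|_n}^\top$.

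The differences are only in how the limits are extracted. The paper uses the singular-value gap $\alpha_2/\alpha_1\to 0$ from Proposition III.3.2(b), the polar decomposition, and the distance bound of \cite[Lemma III.4.2]{BougerolLacroix1985}, and it cites Bougerol--Lacroix directly for $\nu(\{\iii : Z(\iii)\perp V\})=0$. You instead use only the qualitative rank-one limit points together with a compactness/subsequence argument. You then get the last claim from the law of $Z$ and hyperplane non-concentration, via \cref{prop:FurstenbergMeasure} applied to $\A^\top$. This is legitimate and slightly more self-contained.
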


\begin{proof}
  Since $\nu$ is a fully supported Bernoulli measure, the matrices $A_{i_n}^\top$, $n \ge 1$, are independent and identically distributed, and $A_{\iii|_n}^\top = A_{i_n}^\top\cdots A_{i_1}^\top$ is the corresponding product. Both $\A$ and $\A^\top$ are $1$-proximal and strongly $1$-irreducible, the latter by the proof of \cref{prop:FurstenbergMeasure}. By \cite[Theorem III.3.1 and Proposition III.3.2(a)]{BougerolLacroix1985}, there exist a set $\Omega \subset \II^\N$ of full $\nu$-measure and a measurable map $Z \colon \Omega \to \RP^{d-1}$ such that $Z(\iii)=\lim_{n\to\infty}\linspan\{z_1(\iii|_n)\}$, where $\{z_i(\iii|_n)\}_{i=1}^d$ is the right singular orthonormal basis of $\R^d$ generated by $A_{\iii|_n}^\top$. That is, $A_{\iii|_n}^\top z_i(\iii|_n)=\alpha_i(A_{\iii|_n}^\top)w_i(\iii|_n)$, where $\{w_i(\iii|_n)\}_{i=1}^d$ is the left singular orthonormal basis of $\R^d$ generated by $A_{\iii|_n}^\top$. Moreover, for every $V \in \RP^{d-1}$,
  \begin{equation*}
    \nu(\{\iii : Z(\iii) \perp V\}) = 0.
  \end{equation*}
  Recall that $\alpha_1(A_{\iii|_n}^\top) = \|A_{\iii|_n}^\top\|$. Intersecting $\Omega$ with a further set of full $\nu$-measure if necessary, we may assume that part~(b) of \cite[Proposition III.3.2]{BougerolLacroix1985} holds for every $\iii \in \Omega$, that is,
  \begin{equation*}
    \lim_{n\to\infty}\frac{\alpha_2(A_{\iii|_n}^\top)}{\alpha_1(A_{\iii|_n}^\top)} = 0.
  \end{equation*}
  Combining the convergence $Z(\iii)=\lim_{n\to\infty}\linspan\{z_1(\iii|_n)\}$ with this singular-value gap, the polar decomposition of $A_{\iii|_n}^\top$ gives
  \begin{equation*}
    \lim_{n\to\infty}\frac{\|A_{\iii|_n}^\top|V\|}{\|A_{\iii|_n}^\top\|} = |\la Z(\iii),V\ra|
  \end{equation*}
  for every $\iii \in \Omega$ and every $V \in \RP^{d-1}$, which implies \cref{eq:Furstenberg4}. To see \cref{eq:Furstenberg3}, since $d(V,W) \le 1$, \cite[Lemma III.4.2]{BougerolLacroix1985}, applied to unit representatives of $V$ and $W$, gives
  \begin{equation*}
    d(A_{\iii|_n}^{\top} V, A_{\iii|_n}^{\top} W) \le \frac{\alpha_1(A_{\iii|_n}^\top)\alpha_2(A_{\iii|_n}^\top)}{\|A_{\iii|_n}^\top|V\|\|A_{\iii|_n}^\top|W\|} = \frac{\alpha_2(A_{\iii|_n}^\top)}{\alpha_1(A_{\iii|_n}^\top)}\cdot\frac{\|A_{\iii|_n}^\top\|}{\|A_{\iii|_n}^\top|V\|}\cdot\frac{\|A_{\iii|_n}^\top\|}{\|A_{\iii|_n}^\top|W\|},
  \end{equation*}
  which converges to zero as $n\to\infty$ for every $\iii\in\Omega$ and all $V,W \in \RP^{d-1}$ with $V \not\perp Z(\iii)$ and $W \not\perp Z(\iii)$, since $\alpha_2(A_{\iii|_n}^\top)/\alpha_1(A_{\iii|_n}^\top) \to 0$ while $\|A_{\iii|_n}^\top\|/\|A_{\iii|_n}^\top|V\| \to |\la Z(\iii),V\ra|^{-1}$ and $\|A_{\iii|_n}^\top\|/\|A_{\iii|_n}^\top|W\| \to |\la Z(\iii),W\ra|^{-1}$.
\end{proof}

We conclude the subsection by recording the Furstenberg-typical projection theorem that serves as the starting point for the all-directions argument. It combines Rapaport's entropy-dimension formula for projected measures with Feng's exact-dimensionality theorem. 

\begin{theorem} \label{thm:rapaport}
  Let $\Phi$ be an affine iterated function system satisfying the exponential separation condition such that the associated tuple $\A \in \GL(d,\R)^N$ is $k$-proximal and strongly $k$-irreducible for all $k \in \{1,\ldots,d-1\}$, and let $X \subset \R^d$ be the self-affine set associated with $\Phi$. If $\nu$ is a fully supported Bernoulli measure and $\mu = \pi_*\nu$ is the associated self-affine measure, then
  \begin{equation*}
    \dim((\proj_V)_*\mu) = \min\{1,\diml(\nu)\}
  \end{equation*}
  for $\mu_F$-almost all $V \in \RP^{d-1}$.
\end{theorem}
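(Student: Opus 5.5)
The plan is to combine two external inputs, as announced before the statement: Rapaport's entropy-dimension formula \cite[Theorem~1.12]{Rapaport2024} and Feng's exact-dimensionality theorem \cite[Theorem 1.6(ii)]{Feng2023}. Rapaport's theorem applies under exactly our hypotheses: exponential separation, and $k$-proximality together with strong $k$-irreducibility for all $k$. It identifies the entropy dimension of the projection $(\proj_V)_*\mu$ for $\mu_F$-almost every $V$, where $\mu_F$ is the Furstenberg measure. Before invoking it I will check conventions. Rapaport's Furstenberg measure is stationary for the transposed action $A_i^\top$ on lines, or equivalently for the action $A_i$ on hyperplanes via orthogonal complements. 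In our normalisation $\mu_F$ satisfies \cref{eq:Furstenberg-invariance}, and \cref{prop:FurstenbergMeasure} guarantees that it exists and is unique. After this translation, Rapaport's statement reads
\begin{equation*}
  \dime((\proj_V)_*\mu) = \min\{1,\diml(\nu)\}
\end{equation*}
for $\mu_F$-almost all $V\in\RP^{d-1}$. Here the Lyapunov dimension is written through the Lyapunov exponents of the Bernoulli measure, which coincide with our $\diml(\nu)$ defined via $h(\nu)+\Lambda(\A,\nu,s)=0$.

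The second step upgrades entropy dimension to exact dimension. Feng's theorem states that projections of self-affine measures coming from ergodic (in particular Bernoulli) measures are exact-dimensional under linear maps. I will quote it in the form: for every linear map $P$ (in particular every $\proj_V$), the measure $P_*\mu$ is exact-dimensional. Once $(\proj_V)_*\mu$ is exact-dimensional, the Young inequalities recalled in \cref{sec:local-entropy-averages} show that its exact dimension equals its entropy dimension. Combining this with the first step gives $\dim((\proj_V)_*\mu)=\min\{1,\diml(\nu)\}$ for $\mu_F$-almost every $V$. Note that Feng's theorem needs no irreducibility or separation, so it applies to every $V$. Rapaport's formula is the only source of the value, and it is only available almost surely.

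The main obstacle is the bookkeeping in translating Rapaport's setup into ours. Three points need care. First, his projections may be onto the Furstenberg hyperplane complements rather than onto lines, so the correct object is the line $V$ distributed according to the stationary measure of the transposes. Second, his irreducibility hypotheses are stated for the algebraic group generated by the tuple, and I must confirm that strong $k$-irreducibility and $k$-proximality of $\A$, which transfer to $\A^\top$ as in the proof of \cref{prop:FurstenbergMeasure}, imply them. Third, his dimension formula has to be matched with $\min\{1,\diml(\nu)\}$ in the case $\diml(\nu)\ge 1$. If Rapaport's normalisation uses the dual action, I would apply his result to the transposed system or use the duality $V\leftrightarrow V^\perp$.
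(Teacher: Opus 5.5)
Your second step has a genuine gap: you quote Feng's theorem in a form it does not have. Feng's exact-dimensionality result is about $\pi_*m$ itself and about the projections in his Ledrappier--Young formula. A fixed image $(\proj_V)_*\mu$ is not the stationary measure of an affine system on $V$, so nothing in Feng's theorem says that $P_*\mu$ is exact-dimensional for every linear map $P$.

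What Feng's Theorem 1.6(ii) actually gives is the following. Let $m$ be the two-sided Bernoulli measure with the weights of $\nu$. For $m$-almost every sequence $x$, the projection of $\mu$ onto $(V_x^1)^\perp$ is exact-dimensional, where $V_x^1$ is the leading Oseledets subspace cut out by the backward products $A_{x_{-n}}\cdots A_{x_{-1}}$.

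You have the roles of the two inputs reversed. The paper uses Rapaport's identity $\dime((\proj_V)_*\mu)=\min\{1,\diml(\nu)\}$ for every $V$, and it is the exact-dimensionality that is only almost sure. Two observations confirm that your everywhere version of Feng cannot be right:
\begin{enumerate}
\item Combined with Rapaport's all-directions identity, it would give \cref{thm:projection-bernoulli} at once. The whole local entropy averages argument of \cref{sec:projections-selfaffine-measures} would then be superfluous.
\item For ergodic measures on irreducible but not strongly irreducible planar systems, projections need not be exact-dimensional, as recalled in the introduction.
\end{enumerate}

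To repair the argument, apply Feng in its true form and move the duality bookkeeping you planned for Rapaport's side over to Feng's side.
\begin{enumerate}
\item Check that the top Lyapunov exponent is simple. This follows from $1$-proximality and strong $1$-irreducibility (Bougerol--Lacroix). It ensures that $V_x^1$ is a hyperplane and $(V_x^1)^\perp$ is a line.
\item Identify the law of $(V_x^1)^\perp$ with $\mu_F$. Inserting an independent symbol $i$ at position $-1$ in the past of $x$ replaces $V_x^1$ by $A_i^{-1}V_x^1$. Hence the law of $V_x^1$ is stationary for the maps $W\mapsto A_i^{-1}W$ with weights $p_i$.
\item Since $(A_i^\top V)^\perp = A_i^{-1}V^\perp$, the image of this law under $W\mapsto W^\perp$ is stationary for the action of the $A_i^\top$ on lines. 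The uniqueness part of \cref{prop:FurstenbergMeasure} then forces it to equal $\mu_F$.
\end{enumerate}

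It follows that $(\proj_V)_*\mu$ is exact-dimensional for $\mu_F$-almost every $V$. Intersect this full-measure set with the set where Rapaport's identity holds; your weaker almost-everywhere reading of Rapaport suffices here. Then Young's inequalities equate exact and entropy dimension, as you say, and the statement follows.
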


\begin{proof}
  By Rapaport \cite[Theorem 1.12]{Rapaport2024},
  \begin{equation*}
    \dime((\proj_V)_*\mu) = \min\{1,\diml(\nu)\}
  \end{equation*}
  for all $V \in \RP^{d-1}$. By \cref{sec:dimensions-measures}, the Hausdorff dimension of an exact-dimensional measure equals its entropy dimension, so it suffices to prove that $(\proj_V)_*\mu$ is exact-dimensional for $\mu_F$-almost every $V \in \RP^{d-1}$.

  This exact-dimensionality is provided by Feng \cite[Theorem 1.6(ii)]{Feng2023}, whose hypotheses require an ergodic quasi-Bernoulli measure on the two-sided shift, an average contracting affine system, and at least two distinct Lyapunov exponents. Let $m$ be the two-sided Bernoulli measure with the same weights as $\nu$. The theorem holds in one-sided form through the identity $(\pi^+)_*(m^+) = \pi_*m$ between the one- and two-sided coding push-forwards; see \cite[\S 1]{Feng2023}. Equivalently, taking $m^+ = \nu$, our fully supported Bernoulli measure meets these hypotheses, being ergodic and quasi-Bernoulli with contracting linear parts and, since $\A$ is $1$-proximal and strongly $1$-irreducible, having a simple leading Lyapunov exponent by \cite[Theorem IV.1.2]{BougerolLacroix1985}, so that at least two distinct exponents occur. In this case Feng's Furstenberg--Oseledets measure is the law of the leading Oseledets subspace $V_x^1$, defined for a two-sided sequence $x = (x_n)_{n \in \Z} \in \II^\Z$ distributed according to $m$: the subspace $V_x^1$ is cut out by the backward products $A_{x_{-n}} \cdots A_{x_{-1}}$ of the linear parts and is a hyperplane because the leading Lyapunov exponent is simple. By \cite[Theorem 1.6(ii)]{Feng2023}, the projection of $\mu$ onto the line $(V_x^1)^\perp$ is exact-dimensional for $m$-almost every such $x$.

  It remains to identify this Furstenberg--Oseledets measure on the space of hyperplanes in $\R^d$ with $\mu_F$ through the homeomorphism $V \mapsto V^\perp$ of $\RP^{d-1}$ onto that space. Since $(A_i^\top V)^\perp = A_i^{-1}V^\perp$ for every $i \in \II$ and every $V \in \RP^{d-1}$, this homeomorphism conjugates the cocycle $V \mapsto A_i^\top V$ on $\RP^{d-1}$ to the dual cocycle $W \mapsto A_i^{-1}W$ on the space of hyperplanes, and hence carries the stationary measures of the former bijectively onto those of the latter. By \cref{prop:FurstenbergMeasure}, $\mu_F$ is the unique stationary measure of $V \mapsto A_i^\top V$, so its image under $V \mapsto V^\perp$ is the unique stationary measure of the dual cocycle. Feng's Furstenberg--Oseledets measure is stationary for the dual cocycle by the Oseledets equivariance of the subspaces $V_x^1$, and therefore coincides with the image of $\mu_F$. Hence the exact-dimensionality holds for $\mu_F$-almost every $V \in \RP^{d-1}$, which completes the proof.
\end{proof}

\subsection{Suspension flow and entropy input}

We begin the proof of \cref{prop:fixed-direction-lea} with a suspension semiflow over $\II^\N \times \RP^{d-1}$ whose roof function records the contraction rate in the current projective direction. Along this flow the direction coordinate converges to the Furstenberg direction, independently of the initial direction. Rapaport's entropy identity therefore yields entropy bounds on ergodic components of the time-$q$ maps, and a stopping-time comparison transfers these bounds from the evolving direction to the prescribed direction.

Throughout the remainder of the section, let $\Phi = (\fii_1,\ldots,\fii_N)$ denote the affine iterated function system, $\A = (A_1,\ldots,A_N) \in \GL(d,\R)^N$ the associated matrix tuple, $\nu$ the Bernoulli measure associated to a probability vector $(p_1,\ldots,p_N)$ with $p_i > 0$ for all $i$, $\mu = \pi_*\nu$ the self-affine measure, and $X \subset \R^d$ the self-affine set, all as in \cref{prop:fixed-direction-lea}. For each $V \in \RP^{d-1}$, let $u_V \in V$ be the unit vector whose first non-zero coordinate is positive, and define $P_V \colon \R^d \to \R$ by $P_V(x) = \la u_V,x \ra$. Thus $P_V$ identifies the line $V$ isometrically with $\R$. For every affine map $f(x) = Ax+t$ and every $V \in \RP^{d-1}$, there exists $\delta \in \{-1,1\}$ such that
\begin{equation*}
  P_V(f(x)) = \delta\|A^\top|V\|P_{A^\top V}(x) + P_V(f(0))
\end{equation*}
for all $x \in \R^d$. Therefore, by self-affinity, for every $\iii \in \II^*$ there exists $\delta_{\iii,V} \in \{-1,1\}$ such that
\begin{equation} \label{eq:ProjSelfAffinity}
  (P_V\circ \pi)_* \nu_{[\iii]} = (S_{\delta_{\iii,V}\|A_{\iii}^{\top}|V\|, P_V(\fii_\iii(0))} \circ P_{A_{\iii}^{\top}V})_*\mu.
\end{equation}
Here $S_{a,b} \colon \R \to \R$ is the affine map given by $S_{a,b}(x) = ax+b$, and $\nu_{[\iii]} = \nu([\iii])^{-1}\nu|_{[\iii]}$ is the measure $\nu$ conditioned on the cylinder $[\iii]$. For $(\mathtt{i},V) \in \II^\N \times \RP^{d-1}$, define
\begin{equation*}
  F(\mathtt{i},V) = (\sigma\mathtt{i},A_{i_1}^{\top}V) \qquad \text{and} \qquad r(\mathtt{i},V) = -\log_2\|A_{i_1}^{\top}|V\|.
\end{equation*}
Since each $A_i$ is contractive and invertible, the roof function $r$ is bounded above and away from zero. Define
\begin{equation*}
  \widehat{\mathcal{X}} = \{(\mathtt{i},V,t)\in\II^\N\times\RP^{d-1}\times\R : 0\leq t\leq r(\mathtt{i},V)\},
\end{equation*}
and let $\sim$ be the equivalence relation on $\widehat{\mathcal{X}}$ defined by
\begin{equation*}
  (\mathtt{i},V,r(\mathtt{i},V)) \sim (F(\mathtt{i},V),0).
\end{equation*}
We define the suspension space by $\mathcal{X} = \widehat{\mathcal{X}}/_\sim$. Each equivalence class has a unique representative with $0\leq t<r(\mathtt{i},V)$, so we may identify $\mathcal{X}$ with
\begin{equation*}
  \{(\mathtt{i},V,t)\in\II^\N\times\RP^{d-1}\times\R : 0\leq t<r(\mathtt{i},V)\}.
\end{equation*}
Let $(T_u)_{u \geq 0}$ denote the associated suspension semiflow, and write $\mathbf{x}$ for a point of $\mathcal{X}$. For $u \ge 0$, the map $T_u$ is obtained by increasing the third coordinate by $u$ and using the identification whenever the roof is crossed. Let $\mu_F$ be the Furstenberg measure associated to $\nu$. Since $\nu$ is Bernoulli and $\mu_F$ satisfies
\begin{equation*}
  \mu_F = \sum_{i=1}^N p_i(A_i^\top)_*\mu_F,
\end{equation*}
the measure $\nu\times\mu_F$ is $F$-invariant. Hence
\begin{equation*}
  m = \frac{(\nu\times\mu_F\times \mathcal{L}^1)|_{\mathcal{X}}}{(\nu\times\mu_F\times \mathcal{L}^1)(\mathcal{X})}
\end{equation*}
is a $T_u$-invariant Borel probability measure on $\mathcal{X}$. For each integer $q \in \N$, we use the ergodic decomposition of $m$ with respect to the time-$q$ map $T_q$: for $m$-almost every $\mathbf{x}\in \mathcal{X}$ there exists a $T_q$-invariant, ergodic Borel probability measure $\nu_{q,\mathbf{x}}$ such that for every $f\in L^1(\mathcal{X},m)$,
\begin{equation} \label{eq:ErgodicDecomp}
  \int_{\mathcal{X}} f(\mathbf{x}) \dd m(\mathbf{x}) = \int_{\mathcal{X}} \biggl( \int_{\mathcal{X}} f(\mathbf{y}) \dd \nu_{q,\mathbf{x}}(\mathbf{y}) \biggr) \dd m(\mathbf{x}),
\end{equation}
where the map $\mathbf{x}\mapsto \int_{\mathcal{X}} f \dd \nu_{q,\mathbf{x}}$ is measurable and $T_q$-invariant. For integers $q,k\in\N$, define the function $f_{q,k} \colon \mathcal{X}\to \R$ by
\begin{equation*}
  f_{q,k}(\iii,V,t) = \min_{\ell \in \{0,\ldots,k\}} H((P_{A_{\iii|_\ell}^\top V}\circ \pi)_*\nu,\mathcal{D}_q),
\end{equation*}
where $\mathcal{D}_q$ is the dyadic partition of $\R$ defined in \cref{eq:dyadic-partition}. The map $V \mapsto u_V$ is Borel, being continuous on each set of the partition of $\RP^{d-1}$ according to the index of the first non-zero coordinate; hence, for each $q,k\in\N$, the function $f_{q,k}$ is a bounded Borel function on $\mathcal{X}$.

The first lemma is the suspension-flow form of Rapaport's all-directions entropy-dimension identity \cite[Theorem~1.12]{Rapaport2024}, recalled in the proof of \cref{thm:rapaport}. It gives the lower bound $\min \{ 1, \diml(\nu) \}$ for the normalized entropy averages on almost every ergodic component of the time-$q$ map $T_q$. This is the ingredient that feeds the entropy-dimension identity into the fixed-direction local entropy averages argument.

\begin{lemma} \label{lem:ErgDecompValue}
  Let $m$ be the $T_u$-invariant Borel probability measure on $\mathcal{X}$ defined above. For each $q \in \N$, let $\nu_{q,\mathbf{x}}$ be the ergodic decomposition measure of $m$ with respect to $T_q$. Then for $m$-almost every $\mathbf{x}\in \mathcal{X}$ and every $k\in\N$,
  \begin{equation*}
    \limsup_{q\to \infty} \frac{1}{q\log 2} \int f_{q,k}(\mathbf{y}) \dd \nu_{q,\mathbf{x}}(\mathbf{y}) \ge \min \{ 1, \diml(\nu) \}.
  \end{equation*}
\end{lemma}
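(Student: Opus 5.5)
The plan is to reduce the statement to a pointwise bound that holds for every $\mathbf{x}$, with no need for ergodicity or the ergodic decomposition. The pointwise bound comes from Rapaport's all-directions identity. By \cite[Theorem 1.12]{Rapaport2024}, recalled in the proof of \cref{thm:rapaport},
\begin{equation*}
  \lim_{q\to\infty}\frac{1}{q\log 2}H((P_W\circ\pi)_*\nu,\mathcal{D}_q)=\min\{1,\diml(\nu)\}
\end{equation*}
for every $W \in \RP^{d-1}$. Here we use \cref{eq:A2}, and the fact that $P_W$ identifies $W$ isometrically with $\R$, so that $(P_W\circ\pi)_*\nu$ is the image of $(\proj_W)_*\mu$ under an isometry. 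That isometry changes dyadic entropy only by a bounded additive constant; this follows from \cref{eq:A1} with $a=\pm1$ together with \cref{eq:A3b}.

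The first step is to upgrade this pointwise convergence to uniform convergence in $W$. The map $W\mapsto (P_W\circ\pi)_*\nu$ is not continuous at the finitely many sign-switching loci of $u_W$. However, up to the reflection $x\mapsto -x$, the function $P_W$ depends continuously on $W$. On a compact set of directions we have $\sup_{x\in X}|P_W(x)-(\pm P_{W'}(x))| \le \diam(X\cup\{0\})\,C\,d(W,W')$, so \cref{eq:A3} and \cref{eq:A1} give
\begin{equation*}
  |H((P_W\circ\pi)_*\nu,\mathcal{D}_q)-H((P_{W'}\circ\pi)_*\nu,\mathcal{D}_q)|\le C'
\end{equation*}
whenever $d(W,W')\le c2^{-q}$. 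Equicontinuity at scale $2^{-q}$ alone is too weak for compactness. I would therefore argue instead by contradiction, using a lower semicontinuity-type argument. Suppose $W_q$ are directions with $\frac{1}{q\log2}H((P_{W_q}\circ\pi)_*\nu,\mathcal{D}_q)\le \beta<\min\{1,\diml(\nu)\}$. The function $q\mapsto \frac1q H(\cdot,\mathcal{D}_q)$ is almost superadditive-type: a bound at scale $q$ controls the entropy at coarser scales $q'$ only weakly. So I would rather use Rapaport's theorem in its uniform form. I expect \cite{Rapaport2024} to yield the entropy bound uniformly over $W$, since his proof proceeds via entropy porosity and the dimension conservation is uniform over compact families of directions.

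If the uniform form is available, the remaining steps are direct. Set
\begin{equation*}
  \varepsilon_q=\min\{1,\diml(\nu)\}-\inf_W\frac{1}{q\log2}H((P_W\circ\pi)_*\nu,\mathcal{D}_q)\to 0.
\end{equation*}
The function $f_{q,k}$ is a minimum of $k+1$ such entropies, each evaluated at some direction $A^\top_{\iii|_\ell}V$. Hence $f_{q,k}\ge q\log2(\min\{1,\diml(\nu)\}-\varepsilon_q)$ everywhere on $\mathcal{X}$. Integrating this bound against the probability measure $\nu_{q,\mathbf{x}}$ gives the conclusion for every $\mathbf{x}$ at which the decomposition is defined, and in fact it gives a $\liminf$ rather than merely a $\limsup$.

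The main obstacle is the uniformity in $W$. If only pointwise convergence were available, I would fall back on the following. By Fatou's lemma and dominated convergence applied to \cref{eq:ErgodicDecomp}, with $f_{q,k}/(q\log2)$ bounded by $d$,
\begin{equation*}
  \int\frac{1}{q\log2}\int f_{q,k}\dd\nu_{q,\mathbf{x}}\dd m(\mathbf{x})=\int\frac{f_{q,k}}{q\log2}\dd m\to\min\{1,\diml(\nu)\},
\end{equation*}
because $f_{q,k}/(q\log 2)\to\min\{1,\diml(\nu)\}$ pointwise for each fixed $k$. Combined with the upper bound $\limsup\le\min\{1,\diml(\nu)\}$ from Rossi's theorem, this gives $L^1(m)$ convergence of the component averages. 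That yields the claim only along a subsequence of $q$ for $m$-almost every $\mathbf{x}$, which suffices for a $\limsup$. Since $k$ ranges over a countable set, a single null set serves all $k$ simultaneously. For each $k$ I would choose the subsequence by a diagonal argument, so that the almost-sure statement holds for all $k$ at once.
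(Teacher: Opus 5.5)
Write $s=\min\{1,\diml(\nu)\}$. Your primary route has a genuine gap. It rests on a uniform-in-$W$ version of Rapaport's identity, $\inf_{W}\frac{1}{q\log 2}H((P_W\circ\pi)_*\nu,\mathcal{D}_q)\to s$. The result \cite{Rapaport2024}, as used here, does not provide this, and your equicontinuity estimate at scale $2^{-q}$ cannot produce it, as you note yourself. It is also far from a cheap input. If it held, then $f_{q,k}\ge q\log 2\,(s-\varepsilon_q)$ everywhere. Combined with \cref{eq:ProjSelfAffinity}, \cref{eq:A1} and \cref{eq:A3b}, that bound would verify the hypothesis of \cref{thm:LocalEntropyAverages} directly in every direction. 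The suspension flow, the ergodic decomposition and this lemma would then all be superfluous. So this route should be dropped rather than expected to work.

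Your fallback is essentially the paper's proof. Its steps are:
\begin{itemize}
\item pointwise convergence $f_{q,k}/(q\log 2)\to s$ for every $\mathbf{x}$, since it is a minimum of $k+1$ sequences each converging to $s$ by Rapaport and \cref{eq:A2};
\item a uniform bound;
\item $L^1(m)$ convergence of $h_{q,k}(\mathbf{x})=\frac{1}{q\log 2}\int f_{q,k}\dd\nu_{q,\mathbf{x}}$;
\item almost sure convergence along a subsequence;
\item a countable union over $k$. No diagonal argument is needed, because the subsequence may depend on $k$.
\end{itemize}

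The one step that fails as written is the passage to $L^1$ convergence. Knowing $\int h_{q,k}\dd m\to s$ together with a pointwise $\limsup\le s$ for $f_{q,k}/(q\log 2)$ does not control $h_{q,k}$, because $h_{q,k}$ integrates against $q$-dependent measures. You need the upper bound either uniformly in $\mathbf{y}$ or transferred through the decomposition. The paper does this in one line, applying Jensen inside \cref{eq:ErgodicDecomp}: $\int|h_{q,k}-s|\dd m\le\int|f_{q,k}/(q\log 2)-s|\dd m\to 0$ by dominated convergence. Neither Rossi's theorem nor Fatou's lemma is needed, since the pointwise convergence to $s$ already contains the upper bound.

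If you prefer to keep your route, a uniform bound $\sup_{\mathbf{y}}f_{q,k}(\mathbf{y})/(q\log 2)\le s+o(1)$ is available. It follows from $H((P_W)_*\mu,\mathcal{D}_q)\le H(\mu,\mathcal{D}_q)+C$ together with Young's inequality and Rossi's theorem, but this is a detour.
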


\begin{proof}
  Set $s = \min \{ 1, \diml(\nu) \}$. We use Rapaport's all-directions entropy-dimension identity, in the form recalled in the proof of \cref{thm:rapaport}. Since $(P_V)_*\mu$ is an isometric image of $(\proj_V)_*\mu$, Rapaport \cite[Theorem~1.12]{Rapaport2024} implies that
  \begin{equation*}
    \dime((P_V)_*\mu) = s
  \end{equation*}
  for every $V \in \RP^{d-1}$. Therefore \cref{eq:A2} implies that for every $k\in\N$,
  \begin{equation*}
    \frac{f_{q,k}(\mathbf{x})}{q\log 2} \to s
  \end{equation*}
  for every $\mathbf{x}\in\mathcal{X}$ as $q\to\infty$. Since $P_V(X)$ has diameter at most $\diam(X)$ for every $V \in \RP^{d-1}$, the partition $\mathcal{D}_q$ has at most $2^q\diam(X) + 2$ atoms meeting the support of $(P_V)_*\mu$. Hence there exists $C>0$ such that
  \begin{equation*} 
    0 \le \frac{f_{q,k}(\mathbf{x})}{q\log 2} \le 1 + \frac{C}{q\log 2}
  \end{equation*}
  for all $q,k \in \N$ and $\mathbf{x}\in \mathcal{X}$. Hence,
  \begin{equation*}
    \frac{f_{q,k}}{q\log 2} \to s
  \end{equation*}
  in $L^1(\mathcal{X},m)$. For each $q \in \N$, the ergodic decomposition kernel $\mathbf{x} \mapsto \nu_{q,\mathbf{x}}$ is defined for $m$-almost every $\mathbf{x}$, and \cref{eq:ErgodicDecomp} then holds for every $f \in L^1(\mathcal{X},m)$. Since only countably many integers $q$ are used below, discarding the countable union of the corresponding $m$-null sets leaves one fixed $m$-null set outside which \cref{eq:ErgodicDecomp} is available for all of them. Define for every $q,k\in\N$,
  \begin{equation*}
    h_{q,k}(\mathbf{x}) = \frac{1}{q\log 2} \int f_{q,k}(\mathbf{y}) \dd \nu_{q,\mathbf{x}}(\mathbf{y}).
  \end{equation*}
  Using \cref{eq:ErgodicDecomp} and Jensen's inequality, we obtain
  \begin{align*}
    \int_{\mathcal{X}} |h_{q,k}(\mathbf{x}) - s| \dd m(\mathbf{x}) &\le \int_{\mathcal{X}} \biggl( \int_{\mathcal{X}} \biggl| \frac{f_{q,k}(\mathbf{y})}{q\log 2} - s \biggr| \dd \nu_{q,\mathbf{x}}(\mathbf{y}) \biggr) \dd m(\mathbf{x}) \\
    &= \int_{\mathcal{X}} \biggl| \frac{f_{q,k}(\mathbf{x})}{q\log 2} - s \biggr| \dd m(\mathbf{x}).
  \end{align*}
  Hence $h_{q,k}\to s$ in $L^1(\mathcal{X},m)$ as $q\to\infty$. Convergence in $L^1(\mathcal{X},m)$ implies convergence in measure, so there exists a strictly increasing sequence $(q_n)_{n\in\N}$ such that $h_{q_n,k}(\mathbf{x})\to s$ for $m$-almost every $\mathbf{x}\in \mathcal{X}$. Thus, for each $k\in\N$,
  \begin{equation*}
    \limsup_{q\to \infty} \frac{1}{q\log 2} \int f_{q,k}(\mathbf{y}) \dd \nu_{q,\mathbf{x}}(\mathbf{y}) \ge s
  \end{equation*}
  for $m$-almost every $\mathbf{x}\in \mathcal{X}$. Since $\N$ is countable, the exceptional set can be chosen independently of $k$, so the inequality holds for $m$-almost every $\mathbf{x}\in\mathcal{X}$ simultaneously for all $k\in\N$, as claimed.
\end{proof}

\subsection{Proof of the all-directions lower bound} \label{sec:proof-all-directions}

The suspension flow constructed above, the entropy estimate of \cref{lem:ErgDecompValue}, and the Furstenberg-direction results of \cref{prop:FurstenbergMeasure2} provide everything needed to apply local entropy averages and prove \cref{prop:fixed-direction-lea}. 

Before giving the details, we outline the argument. We first attach to the fixed direction $W$ a family of stopping-time cutsets and verify that they form the nested, diameter-controlled cylinder partitions required by \cref{thm:LocalEntropyAverages}. Using \cref{prop:FurstenbergMeasure2} and Fubini's theorem, we then pass to a full-measure set of sequences along which the evolving projective direction never becomes orthogonal to the Furstenberg direction $Z(\iii)$, so that the projected entropy in the evolving direction agrees with the entropy in $W$ up to a bounded error. Identifying the time-$q$ map $T_q$ of the suspension flow with these cutsets expresses the resulting entropy averages, via Birkhoff's theorem, as integrals of $f_{q,k}$ against the ergodic components, which \cref{lem:ErgDecompValue} bounds below by $\min\{1,\diml(\nu)\}$. Finally, these estimates verify the local entropy averages hypothesis of \cref{thm:LocalEntropyAverages} in the direction $W$, and the proposition follows.

\begin{proof}[Proof of \cref{prop:fixed-direction-lea}]
  Fix $W \in \RP^{d-1}$. Rescaling and translating $\Phi$ if necessary, we assume that $\diam(X)\le 1$ and $0\in X$, so that $P_W\circ\pi$ maps $\II^\N$ into $[-1,1]$. Let $L = \linspan(X-X)$. Since $X$ is not a singleton by convention, $L$ is non-trivial. Moreover, $A_iL\subset L$ for every $i\in\II$, because $\fii_i(X)\subset X$, and hence $A_iL = L$ since $A_i$ is invertible. Strong $1$-irreducibility therefore gives $L=\R^d$. Thus $X$ is not contained in any affine hyperplane, so $\diam(P_V(X))>0$ for every $V\in\RP^{d-1}$. By compactness of $\RP^{d-1}$ and continuity of $V\mapsto\diam(P_V(X))$, we have $\delta_0 = \inf_{V\in\RP^{d-1}}\diam(P_V(X))>0$. For every $\iii \in \II^\N$ and $i,q \in \N$, let $\eta = \eta_{i,q}(\iii,W)$ be the least integer such that
  \begin{equation*} 
    \|A_{\iii|_{\eta}}^\top|W\| \le 2^{-iq-1}\min_{j\in\II}\alpha_d(A_j).
  \end{equation*}
  The supermultiplicativity bound $\|A_{\kkk}^\top|W\|\ge\alpha_d(A_{k_{|\kkk|}})\|A_{\kkk^-}^\top|W\|$ and the minimality of $\eta$ give
  \begin{equation} \label{eq:partbound}
    2^{-iq-1}\Bigl(\min_{j\in\II}\alpha_d(A_j)\Bigr)^2 < \|A_{\iii|_{\eta}}^\top|W\| \le 2^{-iq-1}\min_{j\in\II}\alpha_d(A_j).
  \end{equation}
  Let $Q_i^q(\iii,W) = \iii|_{\eta_{i,q}(\iii,W)}$ and write $\mathcal Q_i^q(W)=\{Q_i^q(\iii,W) : \iii\in\II^\N\}$. Since all matrices are strict contractions, the stopping time $\eta_{i,q}(\iii,W)$ is finite. The membership of a word in $\mathcal Q_i^q(W)$ depends only on the word itself, and
  \begin{equation*}
    \diam((P_W\circ\pi)([Q_i^q(\iii,W)]))=\|A_{Q_i^q(\iii,W)}^\top|W\|\diam(P_{A_{Q_i^q(\iii,W)}^\top W}(X)),
  \end{equation*}
  which by \cref{eq:partbound} lies in the interval $(\delta_0 2^{-iq-1}(\min_{j\in\II}\alpha_d(A_j))^2,2^{-iq-1}]$ because $\delta_0\le\diam(P_V(X))\le\diam(X)\le 1$ for every $V$. Thus $\mathcal Q_i^q(W)$ is a partition of $\II^\N$ by cylinders, and the displayed bounds give \cref{it:Q1} and \cref{it:Q2}. If $i\ge m$, then the stopping word at level $i$ extends the stopping word at level $m$, giving \cref{it:Q3}. Finally, choose $q_0$ such that $2^{-q}<\delta_0(\min_{j\in\II}\alpha_d(A_j))^2$ for all $q\ge q_0$. If a word belonged to $\mathcal Q_i^q(W)\cap\mathcal Q_m^q(W)$ with $i>m$ and $q\ge q_0$, then the same projected cylinder would have diameter both larger than $\delta_0 2^{-mq-1}(\min_{j\in\II}\alpha_d(A_j))^2$ and at most $2^{-iq-1}$, contradicting
  \begin{equation*}
    2^{-iq-1} \le 2^{-(m+1)q-1} < \delta_0 2^{-mq-1}\Bigl(\min_{j\in\II}\alpha_d(A_j)\Bigr)^2.
  \end{equation*}
  This proves \cref{it:Q4}, so the family $\mathcal Q_i^q(W)$ satisfies the assumptions of \cref{thm:LocalEntropyAverages}.

  Let $\Omega \subseteq \II^\N$ and $Z \colon \Omega \to \RP^{d-1}$ be as in \cref{prop:FurstenbergMeasure2}, so that $\nu(\{\iii : Z(\iii) \perp V\}) = 0$ for every $V \in \RP^{d-1}$. By Fubini's theorem, $\nu\times\mu_F(\{(\iii,V) : Z(\iii)\not\perp V\})=1$, and in particular, there exists $\mathcal{G}_2\subseteq\Omega$ with $\nu(\mathcal{G}_2)=1$ and for every $\iii\in\mathcal{G}_2$ a measurable set $\widehat{\Xi}_{\iii}\subset\RP^{d-1}$ with $\mu_F(\widehat{\Xi}_{\iii})=1$ such that $\bigcup_{\iii\in\mathcal{G}_2}\{\iii\}\times\widehat{\Xi}_{\iii}\subseteq\{(\iii,V) : Z(\iii)\not\perp V\}$. Using the invariance of the Furstenberg measure,
  \begin{equation*}
    1 = \mu_F(\widehat{\Xi}_\iii) = \sum_{i\in\II}p_i\mu_F((A_{i}^\top)^{-1}\widehat{\Xi}_{\iii}),
  \end{equation*}
  and so $\mu_F((A_{i}^\top)^{-1}\widehat{\Xi}_{\iii})=1$ for every $i\in\II$. In particular, for every finite word $\jjj\in\II^*$, $\mu_F((A_{\jjj}^\top)^{-1}\widehat{\Xi}_{\iii})=1$. Let $\Xi_\iii = \bigcap_{\jjj\in\II^*}(A_{\jjj}^\top)^{-1}\widehat{\Xi}_{\iii}$. Then clearly $\mu_F(\Xi_\iii)=1$. Moreover, by \cref{prop:FurstenbergMeasure2}, the set $\mathcal{G}_3=\{\iii\in\Omega : Z(\iii)\not\perp W\}$ satisfies $\nu(\mathcal{G}_3)=1$.

  Using Birkhoff's ergodic theorem and \cref{lem:ErgDecompValue}, let $\mathcal{X}_1\subset\mathcal{X}$ be such that $m(\mathcal{X}_1)=1$, and for every $\mathbf{x}\in\mathcal{X}_1$ and every $q,k\in\N$,
  \begin{equation} \label{eq:birkus}
    \begin{split}
      \lim_{n\to\infty}\frac{1}{n}\sum_{i=0}^{n-1}f_{q,k}(T_q^i(\mathbf{x})) &= \int f_{q,k}(\mathbf{y}) \dd \nu_{q,\mathbf{x}}(\mathbf{y}), \\
      \limsup_{q\to\infty}\frac{1}{q\log 2}\int f_{q,k}(\mathbf{y}) \dd \nu_{q,\mathbf{x}}(\mathbf{y}) &\ge \min\{1,\diml(\nu)\}.
    \end{split}
  \end{equation}
  By Fubini's theorem, there exists $\mathcal{G}_1\subset\II^\N$ with $\nu(\mathcal{G}_1)=1$ and for every $\iii\in\mathcal{G}_1$ a set $\Theta_{\iii}\subset\{(V,t) : V\in\RP^{d-1}\text{ and }t\in(0,r(\iii,V))\}$ such that $m_{\iii}(\Theta_{\iii}^c)=0$ and $\bigcup_{\iii\in\mathcal{G}_1}\{\iii\}\times\Theta_{\iii}\subset\mathcal{X}_1$, where $\dd m(\iii,V,t) = \dd m_{\iii}(V,t) \dd \nu(\iii)$. Set $r_{\min} = -\log_2\max_{j\in\II}\alpha_1(A_j)$ and $r_{\max} = -\log_2\min_{j\in\II}\alpha_d(A_j)$, and let $\mathcal{G} = \{\iii\in\II^\N : \jjj\iii\in\mathcal{G}_1\cap\mathcal{G}_2\cap\mathcal{G}_3\text{ for all }\jjj\in\II^*\}$. Since $\nu$ is Bernoulli and fully supported, $\nu(E\cap[\jjj]) = \nu([\jjj])\nu(\{\iii : \jjj\iii\in E\})$ for every Borel set $E\subseteq\II^\N$ and every $\jjj\in\II^*$, so each of the countably many sets in this intersection has full $\nu$-measure, and hence $\nu(\mathcal{G})=1$. Since for every $\iii\in\mathcal{G}$ and $\jjj\in\II^*$ the concatenation satisfies $\jjj\iii\in\mathcal{G}_1$, and since $r(\iii,V) \ge r_{\min}$ for every $(\iii,V)\in\II^\N\times\RP^{d-1}$, the set $\widehat{\Theta}_{\iii}=\bigcap_{\jjj\in\II^*}\Theta_{\jjj\iii}$ has full $\mu_F\times\mathcal{L}^1|_{(0,r_{\min})}$-measure for every $\iii\in\mathcal{G}$.
 
  Our goal is now to show that
  \begin{equation} \label{eq:goal}
    \limsup_{q \to \infty} \liminf_{n \to \infty} \frac{1}{q\log 2} \cdot \frac{1}{n} \sum_{i=1}^{n} H((P_W\circ\pi)_*\nu_{[Q_i^q(\iii,W)]}, \mathcal{D}_{q(i+1)}) \ge \min \{ 1, \diml(\nu) \}
  \end{equation}
  for all $\iii\in\mathcal{G}$, which implies the claim by \cref{thm:LocalEntropyAverages}. Let $\iii\in\mathcal{G}$ be arbitrary but fixed, and let $(V,t)\in\widehat{\Theta}_\iii\cap(\Xi_{\iii}\times(0,r_{\min}))$ also be arbitrary but fixed. By the construction of the set $\mathcal{G}$,
  \begin{enumerate}
    \item\label{it:good} for every $\jjj\in\II^*$, $(\jjj\iii,V,t)\in\mathcal{X}_1$, and so \cref{eq:birkus} holds for $\mathbf{x}=(\jjj\iii,V,t)$;
    \item\label{it:limits} $\iii\in\Omega$ and, for every $\jjj\in\II^*$, $A_{\jjj}^\top V\not\perp Z(\iii)$ and $W\not\perp Z(\iii)$, so \cref{eq:Furstenberg4} and \cref{eq:Furstenberg3}, applied with $A_{\jjj}^\top V$ in place of $V$, give
    \begin{equation*}
      \lim_{n\to\infty}\frac{\|A_{\iii|_n}^\top|A_{\jjj}^\top V\|}{\|A_{\iii|_n}^\top|W\|}=\frac{|\la Z(\iii),A_{\jjj}^\top V\ra|}{|\la Z(\iii),W\ra|} \qquad \text{and} \qquad \lim_{n\to\infty}d(A_{\iii|_n}^\top A_{\jjj}^\top V,A_{\iii|_n}^\top W)=0.
    \end{equation*}
  \end{enumerate}
  Now fix $\ell \ge 0$ and $\jjj\in\II^\ell$ such that
  \begin{equation} \label{eq:bound1}
    \Bigl(\min_{j\in\II}\alpha_d(A_j)\Bigr)^{\ell} \le \|A_{\jjj}^\top|V\| \le \frac{|\la Z(\iii),W\ra|}{|\la Z(\iii),A_{\jjj}^\top V\ra|}.
  \end{equation}
  The first inequality in \cref{eq:bound1} holds for every $\jjj\in\II^\ell$ since $\alpha_d$ is supermultiplicative, and the second can be arranged since $|\la Z(\iii),W\ra|>0$ and, for every $\jjj\in\II^\ell$, the product
  \begin{equation*}
    \|A_{\jjj}^\top|V\| |\la Z(\iii),A_{\jjj}^\top V\ra|
  \end{equation*}
  tends to zero as $\ell\to\infty$.
 
  Now choose $N=N(\jjj,q,\iii,V,t)$ such that for every $n \ge N$,
  \begin{equation*}
    2^{-1}\frac{|\la Z(\iii),A_{\jjj}^\top V\ra|}{|\la Z(\iii),W\ra|}\le\frac{\|A_{\iii|_n}^\top|A_{\jjj}^\top V\|}{\|A_{\iii|_n}^\top|W\|}\le 2\frac{|\la Z(\iii),A_{\jjj}^\top V\ra|}{|\la Z(\iii),W\ra|}
  \end{equation*}
  and $\min_{\delta\in\{-1,1\}}\sup_{x\in\II^\N}|P_{A_{\iii|_n}^\top A_{\jjj}^\top V}(\pi(x))-\delta P_{A_{\iii|_n}^\top W}(\pi(x))|\le 2^{-q}$; such a choice is possible since $d(A_{\iii|_n}^\top A_{\jjj}^\top V,A_{\iii|_n}^\top W)\to0$ as $n\to\infty$ and $\pi(\II^\N)$ is bounded, the sign $\delta$ accounting for the orientation convention in the definition of $P_V$. Hence, \cref{eq:A3} together with the reflection invariance of dyadic entropy implies
  \begin{equation} \label{eq:compent}
    |H((P_{A_{\iii|_n}^\top A_{\jjj}^\top V}\circ\pi)_*\nu,\mathcal{D}_q)-H((P_{A_{\iii|_n}^\top W}\circ\pi)_*\nu,\mathcal{D}_q)|\le 2C
  \end{equation}
  for all $n \ge N$.

  Observe that, for every $q$ and $i$ with $\eta_{i,q}(\iii,W) \ge N$,
  \begin{align*}
    \|A_{\iii|_{\eta_{i,q}(\iii,W)}}^\top A_{\jjj}^\top|V\|&=\|A_{\jjj}^\top|V\|\frac{\|A_{\iii|_{\eta_{i,q}(\iii,W)}}^\top|A_{\jjj}^\top V\|}{\|A_{\iii|_{\eta_{i,q}(\iii,W)}}^\top|W\|}\|A_{\iii|_{\eta_{i,q}(\iii,W)}}^\top|W\|\\
    &\le \|A_{\jjj}^\top|V\| 2\frac{|\la Z(\iii),A_{\jjj}^\top V\ra|}{|\la Z(\iii),W\ra|}2^{-iq-1}\min_{j\in\II}\alpha_d(A_j)\\
    &\le \min_{j\in\II}\alpha_d(A_j)2^{-iq}\le 2^{-iq-r_{\min}}\le 2^{-iq-t}.
  \end{align*}
  On the other hand, choose $p \in \N$, depending on $\jjj,\iii,V,t$ but not on $i$ or $q$, such that $2^{p r_{\min}-(\ell+2) r_{\max}-2}|\la Z(\iii),A_{\jjj}^\top V\ra|>2^{-t}$. Then
  \begin{align*}
    \|A_{\iii|_{\eta_{i,q}(\iii,W)-p}}^\top A_{\jjj}^\top|V\|&\ge 2^{pr_{\min}}\frac{\|A_{\iii|_{\eta_{i,q}(\iii,W)}}^\top |A_{\jjj}^\top V\|}{\|A_{\iii|_{\eta_{i,q}(\iii,W)}}^\top |W\|}\|A_{\iii|_{\eta_{i,q}(\iii,W)}}^\top |W\|\|A_{\jjj}^\top|V\|\\
    &\ge 2^{pr_{\min}}2^{-1}\frac{|\la Z(\iii),A_{\jjj}^\top V\ra|}{|\la Z(\iii),W\ra|}2^{-iq-1}\Bigl(\min_{j\in\II}\alpha_d(A_j)\Bigr)^{\ell+2}\\
    &\ge 2^{-iq}2^{p r_{\min}-(\ell+2) r_{\max}-2}|\la Z(\iii),A_{\jjj}^\top V\ra|>2^{-iq-t}.
  \end{align*}
  The last two estimates say that the suspension orbit crosses the level $iq+t$ after the prefix $\eta_{i,q}(\iii,W)-p$ has been read and no later than after the prefix $\eta_{i,q}(\iii,W)$ has been read. Hence, for every $q$ and $i$ such that $\eta_{i,q}(\iii,W)-p \ge N$ there exist $\xi\in\{0,1,\ldots,p\}$ and $t'\in[0,r(\sigma^{\eta_{i,q}(\iii,W)-\xi}\iii,A_{\iii|_{\eta_{i,q}(\iii,W)-\xi}}^\top A_{\jjj}^\top V))$ such that
  \begin{equation*}
    T_{q}^i(\jjj\iii,V,t)=(\sigma^{\eta_{i,q}(\iii,W)-\xi}\iii,A_{\iii|_{\eta_{i,q}(\iii,W)-\xi}}^\top A_{\jjj}^\top V,t').
  \end{equation*}
  Since $f_{q,p}$ does not depend on the third coordinate, the value $f_{q,p}(T_q^i(\jjj\iii,V,t))$ is unaffected by $t'$. Moreover, taking $\ell'=\xi\le p$ in the minimum defining $f_{q,p}$ and using $A_{(\sigma^{\eta_{i,q}(\iii,W)-\xi}\iii)|_{\xi}}^\top A_{\iii|_{\eta_{i,q}(\iii,W)-\xi}}^\top = A_{Q_i^q(\iii,W)}^\top$, we have 
  \begin{equation*}
    f_{q,p}(T_q^i(\jjj\iii,V,t))\le H((P_{A_{Q_i^q(\iii,W)}^\top A_{\jjj}^\top V}\circ\pi)_*\nu,\mathcal{D}_q).
  \end{equation*}
  By \cref{eq:partbound}, the integer $\lceil\log_2\|A_{Q_i^q(\iii,W)}^\top|W\|\rceil$ differs from $-iq$ by a bounded amount independent of $i$ and $q$. Therefore, by the self-affinity identity \cref{eq:ProjSelfAffinity}, \cref{eq:A1}, \cref{eq:A3b}, and \cref{eq:compent}, there exists a constant $C'>0$ independent of $i$ and $q$ such that
  \begin{align*}
    H((P_W)_*\pi_*\nu_{[Q_i^q(\iii,W)]}, \mathcal{D}_{q(i+1)})&\ge H((P_{A_{Q_i^q(\iii,W)}^\top W})_*\pi_*\nu, \mathcal{D}_{q})-C'\\
    &\ge H((P_{A_{Q_i^q(\iii,W)}^\top A_{\jjj}^\top V})_*\pi_*\nu, \mathcal{D}_{q})-2C'\\
    &\ge f_{q,p}(T_q^i(\jjj\iii,V,t))-2C'
  \end{align*}
  for all sufficiently large $q$. For each fixed $q$, the inequality $\eta_{i,q}(\iii,W)-p \ge N$ holds for all but finitely many $i$, since $\eta_{i,q}(\iii,W)$ is non-decreasing in $i$ and tends to infinity. We may therefore sum the previous display over the indices for which this inequality holds and then pass to the normalized limit inferior: the omitted entropy terms $H((P_W\circ\pi)_*\nu_{[Q_i^q(\iii,W)]}, \mathcal{D}_{q(i+1)})$ are non-negative, while the omitted terms $f_{q,p}(T_q^i(\jjj\iii,V,t))$ are finitely many fixed terms and hence disappear after division by $n$. Thus, by \cref{eq:birkus} and \cref{it:good},
  \begin{align*}
    \limsup_{q\to\infty}\liminf_{n \to \infty} &\frac{1}{q\log 2} \cdot \frac{1}{n} \sum_{i=1}^{n} H((P_W\circ\pi)_*\nu_{[Q_i^q(\iii,W)]}, \mathcal{D}_{q(i+1)})\\
    &\ge \limsup_{q\to\infty}\liminf_{n \to \infty} \frac{1}{n} \sum_{i=1}^{n} \frac{1}{q\log 2}f_{q,p}(T_q^i(\jjj\iii,V,t))-\frac{2C'}{q\log 2}\\
    &= \limsup_{q\to\infty}\frac{1}{q\log 2}\int f_{q,p}(\mathbf{y}) \dd \nu_{q,(\jjj\iii,V,t)}(\mathbf{y})-\frac{2C'}{q\log 2}\\
    &\ge \min\{1,\diml(\nu)\},
  \end{align*}
  which proves \cref{eq:goal}.
\end{proof}

\section{Zariski dense tuples} \label{sec:zariski-density}

The set projection theorem, \cref{thm:projection}, assumes that the linear parts are strongly pinching and strongly irreducible in every exterior power. This section establishes \cref{thm:assumptions-typical}, that these two hypotheses hold for a typical tuple. The centerpiece of the section is the following genericity statement, whose proof, resting on a criterion that recovers $\GL(d,\R)$ from projective and determinant data, is given in \cref{sec:zariski-dense-generic}.

\begin{proposition} \label{prop:tuples-open-dense}
  The set of Zariski dense tuples in $\GL(d,\R)^N$ is a non-empty Zariski open subset of $\GL(d,\R)^N$. In particular, it is dense in the Zariski topology and open and dense in the Euclidean topology.
\end{proposition}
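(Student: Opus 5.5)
The plan is to show that the complement, the set of tuples whose generated semigroup has a proper Zariski closure, is Zariski closed in $\GL(d,\R)^N$, and separately that it is not everything. As recalled in \cref{sec:zariski-escape}, the Zariski closure $H$ of $\mathcal{S}(\A)$ is a Zariski closed subgroup. Hence $\A$ is Zariski dense exactly when the group generated by $A_1,\ldots,A_N$ is Zariski dense. I would characterize density of this subgroup through two pieces of data. The first is its image in $\PGL(d,\R)$, which I call the projective quotient. The second is the determinant. The criterion announced in the section heading should then read: $H = \GL(d,\R)$ if and only if (a) the image of $H$ in $\PGL(d,\R)$ is Zariski dense, and (b) $\det(H)$ is infinite, i.e.\ Zariski dense in $\R^\times$ modulo signs.

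To prove the criterion, note that (a) forces $H^\circ$ to map onto $\PGL(d,\R)^\circ$. Hence $H^\circ \supseteq \SL(d,\R)$, since $\SL(d,\R)$ is the commutator of the preimage. Given this, $H$ is determined by the closed subgroup $\det(H) \subseteq \R^\times$, and the closed subgroups of $\R^\times$ are either finite or open. So (b) gives $H=\GL(d,\R)$.

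For openness I would translate each condition into the failure of finitely many polynomial conditions on $(A_1,\ldots,A_N)$. Condition (b) fails precisely when every $|\det A_i| = 1$, because $|\det A_i|<1$ generates an infinite group otherwise. This failure is a closed algebraic condition; in fact for contractions (b) holds automatically, but I would state it for general tuples.

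Condition (a) is the main obstacle. The standard route is the following fact: a subgroup $\Gamma$ of a semisimple group $G$ is Zariski dense if and only if $\Gamma$ acts irreducibly, or even just without a common invariant line, on a finite list of representations depending only on $G$. For $G=\PGL(d,\R)$ I would use the adjoint representation and a bounded-degree word test. If the closure $\overline{\Gamma}$ is proper, its Lie algebra is a proper subalgebra, or its identity component is trivial, or the group is virtually of bounded index. By Jordan/Chevalley-type bounds, this fixes a nonzero vector in some $\mathrm{Sym}^{m}(\mathfrak{gl}_d)$ with $m$ bounded in terms of $d$, after passing to words of length at most $L=L(d)$. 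Alternatively, I would use the descending chain condition to show that $\overline{\langle A_1,\ldots,A_N\rangle}$ equals the closure of the words of length at most $L(d)$. The existence of a common fixed line for finitely many explicit matrices, namely the $A_\iii$ with $|\iii|\le L$ acting on a fixed finite-dimensional representation, is a Zariski closed condition on the tuple. It is the vanishing of the determinant-type minors of the stacked matrices $(\rho(A_\iii)-I)$, or for lines, of the corresponding resultant conditions. Making the uniform bound $L(d)$ honest, together with the non-resonance lemma mentioned earlier to handle scalar ambiguities in $\PGL$, is where I expect the real work. I would first try Chevalley's theorem: every proper algebraic subgroup is the stabilizer of a line in some representation of bounded degree. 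This bypasses the word bound, because "stabilizes some line in $\rho$" is projection of a closed set along the compact factor $\mathbb{P}(\rho)$, hence closed.

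Non-emptiness follows from an explicit example. Take $A_1$ diagonal with generic entries of distinct modulus and $A_2$ a generic rotation-like matrix, scaled to be contractive. Their closure contains a maximal torus together with a conjugate in general position, so the Lie algebra generated is all of $\mathfrak{gl}_d$, while $|\det|<1$ gives (b). Since a non-empty Zariski open subset of the irreducible variety $\GL(d,\R)^N$ is Zariski dense and Euclidean open and dense, the conclusion follows.
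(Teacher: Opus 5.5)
Several parts of your proposal are sound. Your criterion (a)+(b) is the paper's \cref{lem:projective-determinant}, and your group-commutator argument for it is an acceptable substitute for the Lie-algebra one. One correction there: the fact you need is that \emph{Zariski} closed subsets of $\R\setminus\{0\}$ are finite or everything. Euclidean closed subgroups such as $2^{\mathbb{Z}}$ are neither finite nor open. Your determinant condition is the paper's set $D$. Your explicit pair (a diagonal matrix with multiplicatively independent entries together with a generic scaled rotation) is a legitimate alternative proof of non-emptiness.

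The gap is the step you flag yourself: Zariski openness of (a). None of the mechanisms you propose delivers it as written.
\begin{itemize}
\item Chevalley's theorem gives, for each proper algebraic subgroup, a representation that depends on that subgroup. It contains no uniform degree bound. Without a finite list of tests, the non-dense set is an infinite union of closed sets and need not be closed. Finite subgroups of unbounded order are exactly where this uniformity is at stake.
\item The specific test you name is vacuous. Every element of $\GL(d)$ fixes $I\in\mathfrak{gl}_d$ under $\mathrm{Ad}$, hence fixes $I^m\in\mathrm{Sym}^m(\mathfrak{gl}_d)$. Any ``invariant line'' test must exclude lines fixed by all of $\GL(d)$.
\item The descending-chain alternative is false: the words of length at most $L$ form a finite, hence Zariski closed, set.
\item Over $\R$, projecting a real algebraic set along $\mathbb{P}(\rho)$ yields only a Euclidean closed semialgebraic set. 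For example, the projection of $\{(x,[y:z]) : y^2=xz^2\}\subset\R\times\RP^1$ to $\R$ is $[0,\infty)$. So you would get Euclidean openness at best, and from that neither Zariski openness nor density follows.
\end{itemize}

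The paper sidesteps uniformity completely. It uses the Breuillard--Green--Guralnick--Tao theorem that the pairs generating a Zariski dense subgroup of $\PGL(d,\R)$ form a non-empty Zariski open set $U\subset\PGL(d,\R)^2$. It pulls $U$ back along the algebraic maps $\A\mapsto(\Pi(u(\A)),\Pi(v(\A)))$ for words $u,v$, and takes the union of the resulting sets $U_{u,v}$. A union of open sets is open with no finiteness needed. The union is exactly the set of projectively dense tuples, because $\Gamma\times\Gamma$ is Zariski dense in $\PGL(d,\R)^2$ whenever $\Gamma$ is Zariski dense in $\PGL(d,\R)$. The paper then intersects with $D$ and applies the projective--determinant criterion.

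If you want to keep your self-contained route, here is a repair.
\begin{itemize}
\item Complexify, and use that a subgroup of $\PGL(d,\C)$ is Zariski dense if and only if it acts irreducibly on $\mathfrak{sl}_d(\C)$ and on one fixed irreducible representation of dimension $>J(d)$, where $J(d)$ is Jordan's constant.
\item A positive-dimensional proper closure $H$ leaves the proper non-zero subspace $\mathrm{Lie}(H)$ invariant, so it fails the adjoint test.
\item A finite closure lifts to a finite subgroup of $\SL(d,\C)$ with an abelian normal subgroup of index at most $J(d)$. All irreducible representations of such a group have dimension at most $J(d)$, so it fails the second test.
\item Reducibility on a fixed representation is Zariski closed, since Grassmannians are complete. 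The resulting set is defined over $\R$.
\item Finally, a subset of $\GL(d,\R)$ is Zariski dense in $\GL(d,\R)$ if and only if it is Zariski dense in $\GL(d,\C)$, which transfers the conclusion back to real tuples.
\end{itemize}
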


In \cref{sec:zariski-consequences}, we show in \cref{lem:zariski-irreducible,lem:zariski-strongly-pinching} that every Zariski dense tuple is strongly $k$-irreducible for all $k \in \{1,\ldots,d-1\}$ and strongly pinching. Together with \cref{prop:tuples-open-dense}, these lemmas prove \cref{thm:assumptions-typical}.

\begin{theorem} \label{thm:typical-tuples}
  The set of tuples in $\GL(d,\R)^N$ that are strongly pinching and strongly $k$-irreducible for all $k \in \{1,\ldots,d-1\}$ contains the set of Zariski dense tuples, which is open and dense in $\GL(d,\R)^N$.
\end{theorem}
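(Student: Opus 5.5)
The statement of \cref{thm:typical-tuples} reduces to three ingredients: \cref{prop:tuples-open-dense}, and the two lemmas announced for \cref{sec:zariski-consequences}, that a Zariski dense tuple is strongly $k$-irreducible for every $k$ and that it is strongly pinching. Given those, the inclusion follows by combining them, and openness and density in the Euclidean topology are exactly the content of \cref{prop:tuples-open-dense}. So the plan is to prove the two lemmas and then cite the proposition.

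For strong $k$-irreducibility I will argue by contradiction, following the proof of \cref{prop:simultaneous-escape}. Suppose some proper non-trivial $U \subset \wedge^k\R^d$ has finite orbit $\{U_1,\ldots,U_m\}$ under $\mathcal{S}(\A)$. Then every $A_\iii$ lies in the Zariski closed set $Z = \bigcup_{\ell} Z(k,U,U_\ell)$, which was shown to be Zariski closed there. By Zariski density, $Z = \GL(d,\R)$.

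Since $\GL(d,\R)$ is Zariski irreducible (its coordinate ring is a localization of a polynomial ring, hence a domain), it lies in a single $Z(k,U,U_\ell)$. Evaluating at $I$ gives $U_\ell = U$, so $U$ would be invariant under $B^{\wedge k}$ for all $B \in \GL(d,\R)$. This is impossible, because $\wedge^k\R^d$ is an irreducible $\GL(d,\R)$-representation. One can see this directly: $\GL(d,\R)$ acts transitively on decomposable lines, these span $\wedge^k\R^d$, and the diagonal torus has distinct weights $e_{i_1}\wedge\cdots\wedge e_{i_k}$. Any invariant subspace is then a sum of weight lines, and it is moved around by permutation matrices, so it is all or nothing.

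For strong pinching, I consider the set $P$ of $B \in \GL(d,\R)$ such that, for every $k$, $B^{\wedge k}$ has eigenvalues of pairwise distinct moduli and is diagonalizable. It suffices to have $B$ with real eigenvalues of pairwise distinct moduli such that all products $\lambda_{i_1}\cdots\lambda_{i_k}$ over $k$-subsets have distinct moduli, since then every $B^{\wedge k}$ is diagonalizable with simple dominant eigenvalue.

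The main obstacle is that $P$ is not Zariski open, since the modulus condition is semialgebraic, so Zariski density does not immediately place an element of $\mathcal{S}(\A)$ in $P$. My first attempt will use the Benoist--Quint results (\cite{BenoistQuint2016}): a Zariski dense subsemigroup of a real reductive group contains loxodromic (here $\R$-regular) elements, and the set of their Jordan projections is not contained in any proper subspace of the Cartan subspace. The condition that the $k$-fold sums of the Lyapunov coordinates $\log|\lambda_i|$ are pairwise distinct for all $k$ fails only on a finite union of hyperplanes. Hence some element of $\mathcal{S}(\A)$, or a suitable product of loxodromic elements, avoids all of them. Such an element is semisimple with real eigenvalues of distinct moduli, so it lies in $P$, which gives strong pinching.
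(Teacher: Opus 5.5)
Your reduction is the paper's: \cref{thm:typical-tuples} is exactly \cref{lem:zariski-irreducible}, \cref{lem:zariski-strongly-pinching} and \cref{prop:tuples-open-dense} combined. Your strong $k$-irreducibility argument is sound and slightly more direct than the paper's, which routes through \cref{prop:simultaneous-escape}. One small point: the paper's definition of Zariski density includes the empty word, so you also need $I \in Z$. This holds because a Zariski closed set containing $\mathcal{S}(\A)$ contains the group $\overline{\mathcal{S}(\A)}$. Alternatively, you can replace $U$ by an element $U_1$ of its orbit, as in the proof of \cref{prop:simultaneous-escape}.

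The strong pinching step has a gap. You use the fact that the Jordan projections of a Zariski dense semigroup are not contained in any proper subspace. From this you conclude that some element of $\mathcal{S}(\A)$ has Jordan projection outside the finite union of hyperplanes $\sum_{i\in I}x_i=\sum_{j\in J}x_j$. That does not follow: a set spanning the space can lie in a union of two hyperplanes, for instance the two coordinate axes in $\R^2$.

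Your fallback, ``a suitable product of loxodromic elements'', would need two things you do not supply. First, an additivity estimate $\lambda(g^nh^m)=n\lambda(g)+m\lambda(h)+O(1)$ for elements in general (Schottky) position. Second, a construction of such elements inside $\mathcal{S}(\A)$. Both are nontrivial parts of Benoist's theory.

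The ingredient actually needed, and the one used in \cref{lem:zariski-nonresonant}, is Benoist's theorem that the limit cone is convex with non-empty interior in the closed Weyl chamber. Here the limit cone is the smallest closed cone containing all Jordan projections. Convexity is exactly what upgrades ``spanning'' to ``non-empty interior''. With it the avoidance is immediate. The union of your hyperplanes, which for $k=1$ already contains the walls $x_i=x_j$, is a closed cone with empty interior. It therefore cannot contain every Jordan projection, so some $B\in\mathcal{S}(\A)$ has $\lambda(B)$ in the open chamber and off all of them.

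The paper applies this theorem in $\PGL(d,\R)^\circ$, after projecting and squaring, since that is the setting in which it is stated. Nothing is lost, because your conditions with $\#I=\#J$ are invariant under adding a common constant to all coordinates.

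From there your conclusion is correct and matches \cref{lem:zariski-strongly-pinching}. Pairwise distinct eigenvalue moduli force real simple eigenvalues and hence diagonalizability. Distinct subset sums then make every $B^{\wedge k}$ pinching.
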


\begin{proof}
  By \cref{lem:zariski-irreducible,lem:zariski-strongly-pinching}, every Zariski dense tuple in $\GL(d,\R)^N$ is strongly pinching and strongly $k$-irreducible for all $k \in \{1,\ldots,d-1\}$. By \cref{prop:tuples-open-dense}, the set of Zariski dense tuples is open and dense in $\GL(d,\R)^N$.
\end{proof}

The containment in \cref{thm:typical-tuples} can be strict: in \cref{ex:zariski-examples}, we exhibit a strongly pinching and strongly $1$-irreducible pair in $\SL(2,\R)$, necessarily non-contractive, and, in dimension three, a contractive strongly pinching tuple, strongly $k$-irreducible for all $k \in \{1,2\}$, whose generated semigroup preserves a quadratic form up to scalar; neither tuple is Zariski dense. Finally, in \cref{sec:planar-converse}, we treat the two-dimensional case and prove that, for contractive tuples, strong irreducibility and proximality already characterise Zariski density; since proximality and strong pinching coincide for irreducible planar tuples, the containment in \cref{thm:typical-tuples} then becomes an equality for contractive tuples in the plane.

\subsection{Consequences of Zariski density} \label{sec:zariski-consequences}

For each $k \in \{1,\ldots,d-1\}$, the restriction of $A \mapsto A^{\wedge k}$ to $\SL(d,\R) = \{B \in \GL(d,\R) : \det(B) = 1\}$ is the \emph{natural action} of $\SL(d,\R)$ on $\wedge^k \R^d$. The natural action is \emph{irreducible} if the only subspaces of $\wedge^k \R^d$ that are invariant under every map $B^{\wedge k}$ with $B \in \SL(d,\R)$ are $\{0\}$ and $\wedge^k \R^d$.

\begin{lemma} \label{lem:natural-action-irreducible}
  The natural action of $\SL(d,\R)$ on $\wedge^k \R^d$ is irreducible for all $k \in \{1,\ldots,d-1\}$.
\end{lemma}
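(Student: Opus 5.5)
Let $U \subset \wedge^k\R^d$ be a non-zero subspace invariant under $B^{\wedge k}$ for every $B \in \SL(d,\R)$. We will show that $U = \wedge^k\R^d$. The approach uses weight vectors of the diagonal subgroup to locate a basis vector in $U$, and then transitivity of $\SL(d,\R)$ on decomposable basis vectors to obtain all of them.

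\emph{Step 1: $U$ is spanned by basis vectors in $U$.} Consider the diagonal matrices $D = \diag(\lambda_1,\ldots,\lambda_d)$ with $\lambda_i > 0$ and $\prod_i \lambda_i = 1$; these lie in $\SL(d,\R)$. For $I = \{i_1 < \cdots < i_k\}$ write $e_I = e_{i_1} \wedge \cdots \wedge e_{i_k}$. Then
\[
  D^{\wedge k} e_I = \Bigl(\prod_{i \in I} \lambda_i\Bigr) e_I .
\]
Distinct $k$-subsets $I \ne J$ give distinct characters $\lambda \mapsto \prod_{i\in I}\lambda_i$ on this group. To see this, pick $a \in I \setminus J$ and $b \in J \setminus I$, and take $\lambda_a = 2$, $\lambda_b = 1/2$, and all other $\lambda_i = 1$. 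Hence we can choose a single $D$ for which the eigenvalues $\prod_{i\in I}\lambda_i$ are pairwise distinct over all $I$; for instance take $\lambda_i = 2^{c_i}$ with generic $c_i$ summing to zero. Then $D^{\wedge k}$ is diagonalizable with one-dimensional eigenspaces $\R e_I$. A subspace invariant under an operator with distinct eigenvalues is a sum of eigenspaces, which follows from a Vandermonde or polynomial-interpolation argument with the projections $p(D^{\wedge k})$. So $U = \linspan\{e_I : e_I \in U\}$. Since $U \ne \{0\}$, some $e_{I_0}$ lies in $U$.

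\emph{Step 2: all basis vectors lie in $U$.} Given any $k$-subset $J$, choose a permutation $\tau$ of $\{1,\ldots,d\}$ with $\tau(I_0) = J$. Let $P$ be its permutation matrix, and flip the sign of one column if necessary so that $\det P = 1$. Then $P \in \SL(d,\R)$ and $P^{\wedge k} e_{I_0} = \pm e_J$. By invariance, $e_J \in U$. Therefore $U$ contains a basis of $\wedge^k\R^d$, and $U = \wedge^k\R^d$.

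This argument uses nothing from the earlier sections. The only point requiring care is the existence of a single $D$ separating all weights; one could alternatively use the whole diagonal torus and simultaneous diagonalization. Sign issues from orientation are harmless because scalars do not affect membership in $U$. Note also that $k \le d-1$ is not needed here; the statement is trivially true for $k = d$ as well.
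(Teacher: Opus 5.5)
Your proof is correct and follows essentially the same route as the paper. In both, a diagonal element of $\SL(d,\R)$ with pairwise distinct eigenvalues on $\wedge^k\R^d$ forces $U$ to contain some $e_I$, and a group element then carries $e_I$ to a nonzero multiple of any $e_J$. The only difference is cosmetic: you use explicit signed permutation matrices where the paper cites transitivity of $\SL(d,\R)$ on the Grassmannian.
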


\begin{proof}
  Fix $k \in \{1,\ldots,d-1\}$. Let $e_1,\ldots,e_d$ be the standard basis of $\R^d$ and write $e_I = e_{i_1} \wedge \cdots \wedge e_{i_k}$ for $I = \{i_1,\ldots,i_k\} \subset \{1,\ldots,d\}$ with $i_1 < \cdots < i_k$. Then $(e_I)_I$ is a basis of $\wedge^k \R^d$. Suppose that $V \subset \wedge^k \R^d$ is a nonzero $\SL(d,\R)$-invariant subspace. To see that $V$ contains some $e_I$, choose real numbers $s_1,\ldots,s_d$ for which the sums $\sum_{i \in I} s_i$ over $k$-element subsets $I \subset \{1,\ldots,d\}$ are pairwise distinct, for instance $s_i = 4^i$, and let $D$ be the diagonal matrix with entries $t_i = \exp(s_i - \tfrac{1}{d}\sum_{j=1}^d s_j)$. Then $\prod_{i=1}^d t_i = 1$, so $D \in \SL(d,\R)$, and the eigenvalue of $D^{\wedge k}$ on $e_I$ is $\prod_{i \in I} t_i = \exp(\sum_{i \in I} s_i - \tfrac{k}{d}\sum_{j=1}^d s_j)$. Since every $I$ has cardinality $k$, the logarithms of these eigenvalues differ from the subset sums $\sum_{i \in I} s_i$ by the common shift $-\tfrac{k}{d}\sum_{j=1}^d s_j$; the logarithms are therefore pairwise distinct, and hence so are the eigenvalues. Hence the basis $(e_I)_I$ diagonalizes $D^{\wedge k}$ with pairwise distinct eigenvalues, so every $D^{\wedge k}$-invariant subspace is spanned by a subset of $(e_I)_I$; as $V$ is nonzero and invariant under $D^{\wedge k}$, it contains some $e_I$. Since $\SL(d,\R)$ acts transitively on the Grassmannian of $k$-dimensional subspaces of $\R^d$, for every $J$ there exists $B \in \SL(d,\R)$ such that $B^{\wedge k}e_I$ is a nonzero scalar multiple of $e_J$. As $V$ is invariant, $B^{\wedge k}e_I \in V$, and since $V$ is a subspace it follows that $e_J \in V$. As $J$ was arbitrary, every basis vector belongs to $V$, and hence $V = \wedge^k \R^d$.
\end{proof}

By combining the polynomial structure of the induced maps $A \mapsto A^{\wedge k}$ with the irreducibility established in \cref{lem:natural-action-irreducible}, one obtains the following consequence of Zariski density.

\begin{lemma} \label{lem:zariski-irreducible}
  Let $\A = (A_1,\ldots,A_N) \in \GL(d,\R)^N$ be Zariski dense. Then $\A$ is strongly $k$-irreducible for all $k \in \{1,\ldots,d-1\}$.
\end{lemma}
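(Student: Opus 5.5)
We argue by contradiction, reusing the Zariski-closure machinery from the proof of \cref{prop:simultaneous-escape}. Fix $k \in \{1,\ldots,d-1\}$ and suppose that $\A$ is not strongly $k$-irreducible. Then there is a proper non-trivial subspace $U \subset \wedge^k\R^d$ whose orbit $\{A_\iii^{\wedge k}U : \iii \in \II^*\}$ is a finite set $\{U_1,\ldots,U_m\}$; we include $\iii=\varnothing$, so that $U$ itself lies in the list. With the coincidence sets $Z(k,V,W) = \{B \in \GL(d,\R) : B^{\wedge k}V = W\}$, which were shown to be Zariski closed in the proof of \cref{prop:simultaneous-escape}, every $A_\iii$ lies in
\begin{equation*}
  Z = \bigcup_{\ell=1}^m Z(k,U,U_\ell),
\end{equation*}
a Zariski closed set.

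\textbf{Step 1: the closure is everything.} Zariski density of $\A$ says exactly that every polynomial vanishing on $\{A_\iii : \iii \in \II^*\}$ vanishes on all of $\GL(d,\R)$. The set $Z$ is the common zero set of finitely many polynomials (a finite union of zero sets is the zero set of the products). Hence every such polynomial vanishes identically, and $Z = \GL(d,\R)$. In particular $\SL(d,\R) \subset Z$.

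\textbf{Step 2: pass to an irreducible piece containing the identity.} The set $\GL(d,\R)$ is Zariski irreducible, since its coordinate ring is a localisation of a polynomial ring, which is an integral domain. Being covered by finitely many closed sets, it lies in one of them, say $Z(k,U,U_{\ell_0})$. Evaluating at $I$ gives $U_{\ell_0} = U$. Consequently $B^{\wedge k}U = U$ for every $B \in \GL(d,\R)$, and in particular for every $B \in \SL(d,\R)$.

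If one prefers to avoid irreducibility of $\GL(d,\R)$, there is an alternative. For each $B$ the subspace $B^{\wedge k}U$ takes one of finitely many values. Applying this to $B = \exp(tX)$ and using that the map $t \mapsto B^{\wedge k}U$ is continuous into a Grassmannian, the orbit of the connected group $\SL(d,\R)$ is connected and finite, hence a single point. Either route gives invariance of $U$.

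\textbf{Step 3: contradiction.} The subspace $U$ is therefore invariant under the natural action of $\SL(d,\R)$ on $\wedge^k\R^d$. It is proper and non-trivial, which contradicts \cref{lem:natural-action-irreducible}. Hence $\A$ is strongly $k$-irreducible for every $k$.

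The only delicate point is Step 2: justifying that a finite union of coincidence sets covering the group forces one of them to contain a neighbourhood of $I$, or all of the group. I expect the irreducibility of $\GL(d,\R)$ (equivalently, $H^\circ = \GL(d,\R)$ in the notation of \cref{sec:zariski-escape}) to be the cleanest justification. The connectedness argument is the fallback.
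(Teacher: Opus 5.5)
Your argument is correct and is essentially the paper's: it uses the same ingredients, namely Zariski closedness of the coincidence sets, Zariski irreducibility of $\GL(d,\R)$ (the product-of-polynomials argument), evaluation at $I$, and \cref{lem:natural-action-irreducible}, run in contrapositive order. The only difference is that you work directly with a single finite orbit, whereas the paper argues by contradiction against the simultaneous escape property and then invokes \cref{prop:simultaneous-escape}; your version is exactly the $n=1$ case of that argument, so it avoids the detour.
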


\begin{proof}
  Suppose to the contrary that $\A$ does not have the simultaneous escape property. Then there are $n \in \N$ and, for each $j \in \{1,\ldots,n\}$, a choice of $k_j \in \{1,\ldots,d-1\}$, a proper and non-trivial subspace $V_j \subset \wedge^{k_j}\R^d$, and proper subspaces $W_{j,1},\ldots,W_{j,m_j} \subset \wedge^{k_j}\R^d$ such that for every $\iii \in \II^*$ there exist $j$ and $\ell$ with $A_\iii^{\wedge k_j}V_j = W_{j,\ell}$. Writing $Z(k_j,V_j,W_{j,\ell}) = \{B \in \GL(d,\R) : B^{\wedge k_j}V_j = W_{j,\ell}\}$ for the coincidence set, this means that
  \begin{equation*}
    A_\iii \in Z = \bigcup_{j=1}^n \bigcup_{\ell=1}^{m_j} Z(k_j,V_j,W_{j,\ell})
  \end{equation*}
  for every $\iii \in \II^*$. Recall from the proof of \cref{prop:simultaneous-escape} that each $Z(k_j,V_j,W_{j,\ell})$ is Zariski closed, and hence so is $Z$.

  To see that $Z(k_j,V_j,W_{j,\ell})$ is proper, note that if $Z(k_j,V_j,W_{j,\ell}) = \GL(d,\R)$, then $B^{\wedge k_j}V_j = W_{j,\ell}$ for all $B \in \GL(d,\R)$. Evaluating at the identity gives $W_{j,\ell} = V_j$, and therefore $C^{\wedge k_j}V_j = V_j$ for every $C \in \SL(d,\R)$. Since $V_j$ is a proper and non-trivial subspace of $\wedge^{k_j}\R^d$, this contradicts \cref{lem:natural-action-irreducible}. Thus $Z(k_j,V_j,W_{j,\ell})$ is proper, so for each pair $(j,\ell)$ we may choose a polynomial $p_{j,\ell}$ in the entries of $B$ that vanishes on $Z(k_j,V_j,W_{j,\ell})$ but is not identically zero on $\GL(d,\R)$. The union $Z$ is proper as well: if $Z = \GL(d,\R)$, then $\prod_{j=1}^n \prod_{\ell=1}^{m_j} p_{j,\ell}$ vanishes on all of $\GL(d,\R)$, and since $\GL(d,\R)$ is a non-empty Euclidean open subset of $\R^{d^2}$, this product is the zero polynomial, which is impossible because each factor is non-zero. Therefore $Z$ is a proper Zariski closed subset of $\GL(d,\R)$. 
  
  Since $Z$ is a proper Zariski closed subset of $\GL(d,\R)$, there exists a polynomial $p \colon \GL(d,\R) \to \R$ that vanishes on $Z$ but is not identically zero on $\GL(d,\R)$. As $A_\iii \in Z$ for every $\iii \in \II^*$, it follows that $p(A_\iii) = 0$ for all $\iii \in \II^*$, contradicting the Zariski density of $\A$. This contradiction proves that $\A$ has the simultaneous escape property and the claim follows from \cref{prop:simultaneous-escape}.
\end{proof}

We turn to the second consequence, strong pinching, which requires a single element of the semigroup generated by $\A$ that is pinching in every exterior power. Such an element is found by passing to the projective quotient of $\GL(d,\R)$, where the Jordan projection records the eigenvalue moduli, and the Jordan decomposition then yields the required pinching. We fix notation for the quotient and the decomposition before stating the two lemmas. For $d \geq 2$, the \emph{projective linear group} is
\begin{equation*}
  \PGL(d,\R) = \GL(d,\R)/\{tI : t \in \R \setminus \{0\}\},
\end{equation*}
and let $\Pi \colon \GL(d,\R) \to \PGL(d,\R)$ be the quotient map. We equip $\PGL(d,\R)$ with the quotient Zariski topology, in which a subset $Z \subseteq \PGL(d,\R)$ is \emph{Zariski closed} if and only if $\Pi^{-1}(Z)$ is Zariski closed in $\GL(d,\R)$; in particular $\Pi$ is Zariski continuous, and a subset of $\PGL(d,\R)$ is \emph{Zariski dense} if it is contained in no proper Zariski closed subset. For $B \in \GL(d,\R)$, we write
\begin{equation*}
  B = B_eB_hB_u = B_uB_hB_e
\end{equation*}
for its \emph{Jordan decomposition}, where the three factors commute pairwise, $B_e$ is semisimple with all eigenvalues of modulus $1$, $B_h$ is diagonalizable over $\R$ with positive eigenvalues, and $B_u$ is unipotent; for the existence and uniqueness of this decomposition, see \cite[\S 2.4]{Benoist1997} or \cite[\S 6.7.6]{BenoistQuint2016}. If $g \in \PGL(d,\R)$ and $B' \in \GL(d,\R)$ is any lift of $g$, then the \emph{Jordan projection} $\lambda(g)$ is the ordered $d$-tuple whose coordinates are the logarithms of the moduli of the eigenvalues of $B_h'$, arranged in non-increasing order and shifted by a common constant so that their sum is zero. The tuple $\lambda(g)$ does not depend on the choice of lift: two lifts of $g$ differ by a nonzero scalar multiple, which scales the eigenvalues of the hyperbolic part by a common positive factor and hence shifts the coordinates of $\lambda(g)$ by a common constant that the normalization removes. Moreover, $\lambda(g)$ agrees with the intrinsic Jordan projection of $g$, because the Jordan decomposition is compatible with the quotient morphism $\Pi$; see \cite[\S 6.7.6]{BenoistQuint2016}. Benoist's description of the Jordan projections of a Zariski dense semigroup yields an element whose hyperbolic part is non-resonant in every exterior power.

\begin{lemma} \label{lem:zariski-nonresonant}
  Let $\A = (A_1,\ldots,A_N) \in \GL(d,\R)^N$ be Zariski dense. Then there exists $B \in \mathcal{S}(\A)$ such that if $B_h$ denotes the hyperbolic part of $B$ in its Jordan decomposition, then for every $k \in \{1,\ldots,d-1\}$ the eigenvalues of $B_h^{\wedge k}$ have pairwise distinct moduli.
\end{lemma}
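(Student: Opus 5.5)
We pass to the projective quotient and apply Benoist's theorem on the limit cone. Let $\Gamma = \Pi(\mathcal{S}(\A)) \subseteq \PGL(d,\R)$. First we check that $\Gamma$ is Zariski dense in $\PGL(d,\R)$. If $Z \subseteq \PGL(d,\R)$ is Zariski closed and contains $\Gamma$, then $\Pi^{-1}(Z)$ is Zariski closed in $\GL(d,\R)$ and contains every $A_\iii$. Zariski density of $\A$ then forces $\Pi^{-1}(Z) = \GL(d,\R)$, so $Z = \PGL(d,\R)$. The identity component of $\PGL(d,\R)$ is semisimple (it is isomorphic to $\PGL(d,\R)^\circ$, locally $\SL(d,\R)$) and split over $\R$. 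Benoist's theorem \cite{Benoist1997} then applies to the Zariski dense semigroup $\Gamma$: the closed cone spanned by the Jordan projections $\{\lambda(g) : g \in \Gamma\}$ is convex and has non-empty interior in the Cartan subspace $\mathfrak{a} = \{x \in \R^d : \sum_i x_i = 0\}$. If $\Gamma$ meets several components of $\PGL(d,\R)$ when $d$ is even, we first pass to the finite-index subsemigroup $\Gamma \cap \PGL(d,\R)^\circ$. It is still Zariski dense in the identity component, since a Zariski dense semigroup meets each component in a Zariski dense subset of it, and it consists of images of words of the semigroup.

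Next comes the non-resonance step. For each $k$ and each pair of distinct $k$-subsets $I \neq J$ of $\{1,\ldots,d\}$, the condition that the eigenvalues of $B_h^{\wedge k}$ indexed by $I$ and $J$ have equal modulus reads
\[
\textstyle\sum_{i\in I}\lambda_i = \sum_{j\in J}\lambda_j ,
\]
where $\lambda = \lambda(\Pi(B))$. The hyperbolic part of $B^{\wedge k}$ is $B_h^{\wedge k}$, and its eigenvalue moduli are the products $\prod_{i\in I}e^{\lambda_i + c}$ for a common shift $c$. Because the shift is common to all $k$-subsets, equality of moduli is exactly the displayed linear equation. Since $I \neq J$, the functional $\sum_{i\in I}x_i - \sum_{j\in J}x_j$ is nonzero on $\mathfrak{a}$. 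Its kernel $H_{I,J}$ is therefore a hyperplane of $\mathfrak{a}$. There are finitely many such hyperplanes over all $k$, $I$ and $J$.

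It remains to find $g \in \Gamma$ with $\lambda(g) \notin \bigcup H_{I,J}$. A cone with non-empty interior is not contained in a finite union of hyperplanes, so some interior direction $x$ of the limit cone avoids all $H_{I,J}$, together with an open conical neighbourhood $U$ of $x$. The limit cone is by definition the asymptotic cone of $\lambda(\Gamma)$, so its interior directions are approximated by directions of Jordan projections. Benoist shows more precisely that the set $\{\lambda(g)/\|\lambda(g)\|\}$ has closure containing the interior directions, using loxodromic elements. Hence some $g \in \Gamma$ has $\lambda(g) \in U$. Any lift $B \in \mathcal{S}(\A)$ of $g$ then has $B_h^{\wedge k}$ with pairwise distinct eigenvalue moduli for every $k$.

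We expect the main obstacle to be invoking Benoist's theorem correctly. Three points need care: the semigroup version rather than the group version; the disconnectedness of $\PGL(d,\R)$; and the fact that $\lambda$ computed via lifts matches Benoist's intrinsic Jordan projection. For the last point we rely on the compatibility remark made just before the lemma. As a fallback, one could avoid the cone theorem and use the fact that a Zariski dense semigroup contains a loxodromic (proximal in every $\wedge^k$) element, by Goldsheid--Margulis and Benoist--Labourie. Its $\lambda$ lies in the open Weyl chamber but may still lie on a resonance hyperplane $H_{I,J}$ with $k \ge 2$. Perturbing by products $g^n h^m$ with two loxodromic elements having independent Jordan projections would then push $\lambda$ off the finitely many hyperplanes. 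Independence of such projections again comes from Benoist's density of the group generated by Jordan projections.
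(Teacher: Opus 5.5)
Your proposal is correct and follows essentially the paper's route: Zariski density of $\Pi(\mathcal{S}(\A))$, passage to a Zariski dense subsemigroup of $\PGL(d,\R)^\circ$, Benoist's theorem that the limit cone has non-empty interior, avoidance of the finitely many resonance hyperplanes $\sum_{i\in I}x_i=\sum_{j\in J}x_j$, and lifting back to $\mathcal{S}(\A)$. The differences are small. The paper works with the semigroup generated by squares and proves its Zariski density via the squaring map; your blanket claim about meeting every component in a Zariski dense set is false as stated and needs such a justification (e.g.\ $\Gamma\cap\PGL(d,\R)^\circ$ contains all $g^2$ with $g\in\Gamma$), and your density argument for $\Gamma$ also tacitly needs $I\in\overline{\mathcal{S}(\A)}$. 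Finally, the paper cites Benoist's \S 4.2 lemma where your direct use of the definition of the limit cone as the closed cone generated by $\lambda(\Gamma)$ already suffices, since only one element is needed.
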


\begin{proof}
  We pass to the projective group, where Jordan projections are naturally defined, and then use Benoist's description \cite{Benoist1997} of Jordan projections for Zariski dense semigroups to find an element whose exterior powers have no repeated eigenvalue moduli. The quotient map $\Pi \colon \GL(d,\R) \to \PGL(d,\R)$ is a morphism of real algebraic groups and hence Zariski continuous. We claim that the semigroup $\Gamma = \Pi(\mathcal{S}(\A))$ is Zariski dense in $\PGL(d,\R)$. Indeed, if $\Gamma$ were contained in a proper Zariski closed subset $Z \subset \PGL(d,\R)$, then $\Pi^{-1}(Z)$ would be a proper Zariski closed subset of $\GL(d,\R)$ containing $\mathcal{S}(\A)$. Being Zariski closed, it would then contain the Zariski closure $\overline{\mathcal{S}(\A)}$, which is a subgroup of $\GL(d,\R)$ and in particular contains $I = A_\varnothing$; thus $A_\iii \in \Pi^{-1}(Z)$ for every $\iii \in \II^*$, contradicting the definition of Zariski density.

  Let $\sigma \colon \PGL(d,\R) \to \PGL(d,\R)$ be given by $\sigma(g) = g^2$, and let $\Gamma_2$ be the subsemigroup of $\Gamma$ generated by the squares of elements of $\Gamma$. We square only to reduce to the identity component $\PGL(d,\R)^\circ$, that is, the connected component of $\PGL(d,\R)$ containing the identity element. This is the connected semisimple group needed for Benoist's theorem: it is connected by definition, and semisimple because its Lie algebra is $\mathfrak{pgl}_d(\R) \cong \mathfrak{sl}_d(\R)$, which is semisimple by \cite[Theorem 5.49]{Kirillov2008}. Since $\PGL(d,\R)/\PGL(d,\R)^\circ$ has order at most $2$, every square in $\PGL(d,\R)$ belongs to $\PGL(d,\R)^\circ$, and therefore $\sigma(\PGL(d,\R)) \subset \PGL(d,\R)^\circ$ and $\Gamma_2 \subset \PGL(d,\R)^\circ$. Moreover, $\sigma$ is a morphism of real algebraic varieties, so it is Zariski continuous. The differential of $\sigma$ at the identity is multiplication by $2$ on the Lie algebra of $\PGL(d,\R)$, so the inverse function theorem implies that $\sigma(\PGL(d,\R))$ contains a non-empty Euclidean open subset of $\PGL(d,\R)^\circ$. Since a proper Zariski closed subset of $\PGL(d,\R)^\circ$ cannot contain a non-empty Euclidean open subset, the image $\sigma(\PGL(d,\R))$ is Zariski dense in $\PGL(d,\R)^\circ$. Since $\Gamma$ is Zariski dense in $\PGL(d,\R)$ and $\sigma$ is Zariski continuous, the set $\sigma(\Gamma)$ is Zariski dense in $\sigma(\PGL(d,\R))$. Therefore $\Gamma_2 \supset \sigma(\Gamma)$ is Zariski dense in $\PGL(d,\R)^\circ$.

  Let
  \begin{align*}
    C^+ &= \biggl\{x \in \R^d : x_1 \ge \cdots \ge x_d \text{ and } \sum_{i=1}^d x_i = 0\biggr\}, \\
    C^{++} &= \{x \in C^+ : x_1 > \cdots > x_d\}.
  \end{align*}
  For each $k \in \{1,\ldots,d-1\}$ and each pair of distinct subsets $I,J \subset \{1,\ldots,d\}$ with $\# I = \# J = k$, define
  \begin{equation*}
    H_{k,I,J} = \{x \in C^+ : \sum_{i \in I} x_i = \sum_{j \in J} x_j\},
  \end{equation*}
  which is the intersection of $C^+$ with a proper hyperplane: since $I \ne J$ and $\# I = \# J$, both $I \setminus J$ and $J \setminus I$ are non-empty, so the coefficient vector of $\sum_{i \in I} x_i - \sum_{j \in J} x_j$, equal to $1$ on $I \setminus J$ and $-1$ on $J \setminus I$, is nonzero and not proportional to $(1,\ldots,1)$, and therefore defines a proper hyperplane of the subspace $\{x \in \R^d : \sum_{i=1}^d x_i = 0\}$. For each $k \in \{1,\ldots,d-1\}$, let
  \begin{equation*}
    H_k = \bigcup \{H_{k,I,J} : I,J \subset \{1,\ldots,d\},\; \# I = \# J = k, \text{ and } I \ne J\},
  \end{equation*}
  and define
  \begin{equation*}
    H = \bigcup_{k=1}^{d-1} H_k.
  \end{equation*}
  The set $H$ records exactly the coincidences among subset sums that would produce repeated moduli in some exterior power. Thus it is enough to find a Jordan projection in $C^{++} \setminus H$. We now use Benoist \cite{Benoist1997} in two steps. Both results apply to Zariski dense sub-semigroups, not only subgroups, of a connected semisimple linear real Lie group, which is exactly the setting of the sub-semigroup $\Gamma_2 \subset \PGL(d,\R)^\circ$. First, with the coordinate convention fixed above, Jordan projections in $\PGL(d,\R)^\circ$ are vectors in $C^+$. The theorem stated in \cite[\S 1.2]{Benoist1997} and proved in \cite[\S 7.4]{Benoist1997} therefore shows that the limit cone of the Zariski dense semigroup $\Gamma_2$ has non-empty interior in $C^+$. Since $C^{++}$ is the relative interior of $C^+$ and $H$ is a finite union of such proper hyperplane sections, the set $C^{++} \setminus H$ is a dense open cone in $C^+$. Hence the interior of the limit cone meets $C^{++} \setminus H$, so we may fix an open cone $\Omega \subset C^{++} \setminus H$ contained in this intersection. Second, the lemma of \cite[\S 4.2]{Benoist1997} says that once an open cone meets the limit cone, the elements whose Jordan projections lie in that open cone still form a Zariski dense subset. Therefore the set
  \begin{equation*}
    Y = \{g \in \Gamma_2 : \lambda(g) \in \Omega\}
  \end{equation*}
  is Zariski dense in $\PGL(d,\R)^\circ$. In particular, $Y$ is non-empty, so we may choose $g \in Y$. Write $\lambda(g) = (x_1,\ldots,x_d)$. Since $\Gamma_2 \subset \Gamma = \Pi(\mathcal{S}(\A))$, there exists $\iii \in \II^* \setminus \{\varnothing\}$ such that
  \begin{equation*}
    \Pi(A_\iii) = g.
  \end{equation*}
  Write $B = A_\iii \in \mathcal{S}(\A)$, which is well defined because $\iii$ is nonempty. Then $\lambda(\Pi(B)) = \lambda(g) = (x_1,\ldots,x_d) \in \Omega$. Since $(x_1,\ldots,x_d) \in C^{++}$, there exist $t \in \R$ and $P \in \GL(d,\R)$ such that
  \begin{equation*}
    B_h = P\diag(e^{t+x_1},\ldots,e^{t+x_d})P^{-1}.
  \end{equation*}
  Thus the eigenvalues of $B_h$ have pairwise distinct moduli. Fix $k \in \{1,\ldots,d-1\}$. The eigenvalues of $B_h^{\wedge k}$ are
  \begin{equation*}
    e^{kt+\sum_{i \in I} x_i},
  \end{equation*}
  where $I$ ranges over the $k$-element subsets of $\{1,\ldots,d\}$. Since $(x_1,\ldots,x_d) \notin H$, the numbers $\sum_{i \in I} x_i$ are pairwise distinct for fixed $k$. Therefore the eigenvalues of $B_h^{\wedge k}$ have pairwise distinct moduli for every $k \in \{1,\ldots,d-1\}$. This proves the lemma.
\end{proof}

The Jordan decomposition shows that the element produced in \cref{lem:zariski-nonresonant} is itself pinching in every exterior power, and hence that $\A$ is strongly pinching.

\begin{lemma} \label{lem:zariski-strongly-pinching}
  Let $\A = (A_1,\ldots,A_N) \in \GL(d,\R)^N$ be Zariski dense. Then $\A$ is strongly pinching.
\end{lemma}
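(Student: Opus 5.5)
The plan is to take the element $B \in \mathcal{S}(\A)$ supplied by \cref{lem:zariski-nonresonant} and show that $B^{\wedge k}$ is $1$-pinching for every $k \in \{1,\ldots,d-1\}$. That is exactly the definition of $\A$ being strongly pinching. The idea is that the non-resonance of the hyperbolic part $B_h$ is strong enough to force the elliptic and unipotent parts of the Jordan decomposition to be trivial up to signs. Then $B$ itself is diagonalizable over $\R$, and its exterior powers inherit the distinct-moduli property from $B_h$.

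First, use the case $k=1$ of \cref{lem:zariski-nonresonant}: the eigenvalues of $B_h$ have pairwise distinct moduli. Since $B_h$ is diagonalizable over $\R$ with positive eigenvalues, we can write $B_h = P\diag(\lambda_1,\ldots,\lambda_d)P^{-1}$ with $\lambda_1 > \cdots > \lambda_d > 0$, so its eigenspaces are the $d$ lines $L_i = \linspan\{Pe_i\}$. The factors $B_e$ and $B_u$ commute with $B_h$, so each of them preserves every $L_i$ and acts on it by a real scalar. On a line, a unipotent map is the identity, so $B_u = I$. Similarly, $B_e$ acts on $L_i$ by a real scalar of modulus one, that is, by $\epsilon_i \in \{-1,1\}$. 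Hence
\begin{equation*}
  B = B_eB_h = P\diag(\epsilon_1\lambda_1,\ldots,\epsilon_d\lambda_d)P^{-1},
\end{equation*}
so $B$ is diagonalizable over $\R$ and its eigenvalues have pairwise distinct absolute values. This is the step that needs the most care: it uses the commutation of the Jordan factors together with the fact that simple real eigenspaces are lines. It is nonetheless elementary.

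Next, fix $k \in \{1,\ldots,d-1\}$ and let $f_i = Pe_i$. The vectors $f_I = f_{i_1} \wedge \cdots \wedge f_{i_k}$, for $k$-element sets $I$, form a basis of $\wedge^k\R^d$. This basis diagonalizes $B^{\wedge k}$ with eigenvalues $\prod_{i \in I}\epsilon_i\lambda_i$, and the same basis diagonalizes $B_h^{\wedge k}$ with eigenvalues $\prod_{i\in I}\lambda_i$. The moduli of these eigenvalues coincide, and by \cref{lem:zariski-nonresonant} they are pairwise distinct. Therefore $B^{\wedge k}$ is diagonalizable with eigenvalues of pairwise distinct absolute values, i.e.\ $B^{\wedge k}$ is $1$-pinching. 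Since $k$ was arbitrary, $\A$ is strongly pinching.
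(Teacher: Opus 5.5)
Your proposal is correct and follows the paper's proof essentially step for step. You take $B$ from \cref{lem:zariski-nonresonant}, use commutation of the Jordan factors with $B_h$ to get $B_u = I$ and $B_e = \pm 1$ on each eigenline, conclude that $B = P\diag(\epsilon_i\lambda_i)P^{-1}$, and transfer the distinct moduli of $B_h^{\wedge k}$ to $B^{\wedge k}$ via the wedge basis.
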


\begin{proof}
  By \cref{lem:zariski-nonresonant}, there exists $B \in \mathcal{S}(\A)$ such that if $B = B_eB_hB_u = B_uB_hB_e$ is the Jordan decomposition, then for every $k \in \{1,\ldots,d-1\}$ the eigenvalues of $B_h^{\wedge k}$ have pairwise distinct moduli. We show first that $B$ is pinching and then that the same holds for every exterior power $B^{\wedge k}$.

  Let $\beta_1,\ldots,\beta_d$ be the eigenvalues of $B_h$, and let $E_i$ be the eigenspace corresponding to $\beta_i$. Since $B_h$ is diagonalizable over $\R$ with positive eigenvalues, pairwise distinct moduli mean that the numbers $\beta_1,\ldots,\beta_d$ are distinct. Hence each $E_i$ is one-dimensional and
  \begin{equation*}
    \R^d = E_1 \oplus \cdots \oplus E_d.
  \end{equation*}
  Because $B_e$ and $B_u$ commute with $B_h$, each $E_i$ is invariant under both maps. The restriction of the unipotent map $B_u$ to the line $E_i$ is the identity, so $B_u = I$. Likewise, $B_e|_{E_i}$ is multiplication by a real scalar of modulus $1$, hence by some $\eps_i \in \{-1,1\}$. Choosing a non-zero vector $v_i \in E_i$ for each $i$ and letting $P \in \GL(d,\R)$ be the matrix with columns $v_1,\ldots,v_d$, we obtain
  \begin{equation*}
    B = P\diag(\eps_1\beta_1,\ldots,\eps_d\beta_d)P^{-1}.
  \end{equation*}
  Therefore $B$ is diagonalizable and its eigenvalues have pairwise distinct absolute values, so $B$ is pinching.

  Fix $k \in \{1,\ldots,d-1\}$. Since $B$ is diagonalizable, so is $B^{\wedge k}$. In the basis $v_{i_1} \wedge \cdots \wedge v_{i_k}$ with $1 \le i_1 < \cdots < i_k \le d$, the eigenvalues of $B^{\wedge k}$ are
  \begin{equation*}
    \prod_{r=1}^k \eps_{i_r}\beta_{i_r}.
  \end{equation*}
  Hence the eigenvalues of $B^{\wedge k}$ have the same absolute values as those of $B_h^{\wedge k}$. By \cref{lem:zariski-nonresonant}, these absolute values are pairwise distinct. Therefore $B^{\wedge k}$ is a pinching matrix on $\wedge^k \R^d$. Since the same matrix $B$ works for every $k \in \{1,\ldots,d-1\}$, the tuple $\A$ is strongly pinching.
\end{proof}

\subsection{Genericity of Zariski dense tuples} \label{sec:zariski-dense-generic}

The genericity of Zariski density rests on a criterion that detects, from projective and determinant data, whether an algebraic subgroup is all of $\GL(d,\R)$. The determinant records the scalar information that the quotient $\PGL(d,\R)$ discards, so the two pieces together suffice. 

\begin{lemma} \label{lem:projective-determinant}
  Let $H < \GL(d,\R)$ be an algebraic subgroup. If $\det(H)$ is Zariski dense in $\R \setminus \{0\}$ and $\Pi(H)$ is Zariski dense in $\PGL(d,\R)$, then $H = \GL(d,\R)$.
\end{lemma}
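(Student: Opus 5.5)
The plan is to show in turn that $H$ contains $\SL(d,\R)$, and then that the determinant data forces $H$ to be all of $\GL(d,\R)$. Throughout, $H$ is Zariski closed, and $H^\circ$ denotes its identity component as in \cref{sec:zariski-escape}. Since $H$ is the finite union of the cosets of $H^\circ$, the set $\Pi(H)$ is the finite union of the translates of $\Pi(H^\circ)$. Zariski density of $\Pi(H)$ in $\PGL(d,\R)$ therefore implies that the Zariski closure of $\Pi(H^\circ)$ has full dimension $d^2-1$. Density does pass to a finite-index piece in this sense, because $\PGL(d,\R)$ is not a finite union of lower-dimensional closed sets.

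The key step is to pass from this dimension statement to Lie algebras. Let $\mathfrak h \subset \mathfrak{gl}_d(\R)$ be the Lie algebra of $H$. The morphism $\Pi|_H$ of real algebraic groups has differential $\mathfrak h \to \mathfrak{gl}_d(\R)/\R I$. In characteristic zero, the Lie algebra of the Zariski closure of the image of an algebraic group homomorphism equals the image of the differential. To get this I would complexify and use Chevalley's theorem that the image of a morphism of algebraic groups is closed, together with the fact that the real points of $H$ are Zariski dense in $H_\C$. Since the closure of $\Pi(H^\circ)$ has dimension $d^2-1$, it follows that $\mathfrak h + \R I = \mathfrak{gl}_d(\R)$. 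Taking commutators, and using that $I$ is central, gives
\[
[\mathfrak h,\mathfrak h] = [\mathfrak{gl}_d(\R),\mathfrak{gl}_d(\R)] = \mathfrak{sl}_d(\R),
\]
so $\mathfrak{sl}_d(\R) \subset \mathfrak h$. Hence $H$ contains the connected Lie subgroup generated by $\exp(\mathfrak{sl}_d(\R))$, which is $\SL(d,\R)$. I expect this step, making the density-to-Lie-algebra passage rigorous over $\R$, to be the main obstacle. If the Chevalley route becomes awkward, my fallback is to argue directly: if $\mathfrak h + \R I$ were a proper subspace, then the closure of $\Pi(H^\circ)$, being a connected algebraic subgroup with that Lie algebra, would be a proper closed subgroup of smaller dimension, contradicting density.

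Once $\SL(d,\R) \subset H$, the rest is elementary. Since $\SL(d,\R) = \ker\det$, we have $H = \det^{-1}(\det(H))$. Let $D(t) = \diag(t,1,\ldots,1)$. Then $S = \det(H) = \{t \in \R\setminus\{0\} : D(t) \in H\}$. This set is the preimage of the Zariski closed set $H$ under the Zariski continuous map $t \mapsto D(t)$, so $S$ is Zariski closed in $\R \setminus\{0\}$. By hypothesis $S$ is Zariski dense, so $S = \R\setminus\{0\}$. Therefore $H = \det^{-1}(\R\setminus\{0\}) = \GL(d,\R)$, which proves \cref{lem:projective-determinant}.
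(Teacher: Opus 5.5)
Your argument is correct, but it divides the work differently from the paper.

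The paper complexifies at the outset and transfers both density hypotheses to $H_\C$. It then feeds them simultaneously into the homomorphism $\Psi = (\Pi_\C,\det)$. Surjectivity of both coordinate projections of $(\mathrm{d}\Psi)_I(\mathfrak h_\C)$, together with $[\mathfrak{pgl}_d(\C),\mathfrak{pgl}_d(\C)] = \mathfrak{pgl}_d(\C)$, gives $\mathfrak h_\C = \mathfrak{gl}_d(\C)$. A dimension count then gives $H_\C = \GL(d,\C)$, and descending the real defining equations gives $H = \GL(d,\R)$.

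You use Lie theory only for the projective hypothesis. From $\mathfrak h + \R I = \mathfrak{gl}_d(\R)$ and the centrality of $I$ you get $\mathfrak{sl}_d(\R) = [\mathfrak h,\mathfrak h] \subset \mathfrak h$, and exponentiating gives $\SL(d,\R) \subset H$. You then handle the determinant by the elementary observation that $\det(H) = \{t : \diag(t,1,\ldots,1) \in H\}$ is Zariski closed in $\R\setminus\{0\}$.

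This makes your second half cleaner. You need no Lie algebra of $\det(H_\C)$, no ``full-dimensional closed subgroup is everything'' step, and no descent from $\C$ to $\R$. The price is that you must identify the algebraic Lie algebra of $H$ with the Lie-group Lie algebra $\{X : \exp(tX) \in H \text{ for all } t \in \R\}$ before you can exponentiate. This is standard for real points of a group defined over $\R$, but you should say it explicitly.

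The step you flag, $\mathfrak h + \R I = \mathfrak{gl}_d(\R)$, is exactly where the paper also needs complexification and the fact that the Lie algebra of the image is the image of the differential. That includes the check, done explicitly in the paper, that Zariski density of $\Pi(H)$ in $\PGL(d,\R)$ implies density in $\PGL(d,\C)$.

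Two smaller points. Your fallback is not an independent argument: asserting that the closure of $\Pi(H^\circ)$ has Lie algebra $(\mathfrak h + \R I)/\R I$ is that same image-of-the-differential fact. The preliminary reduction to $H^\circ$ is also unnecessary, because $\Pi|_H$ and $\Pi|_{H^\circ}$ have the same differential and Chevalley's theorem applies to $\Pi_\C(H_\C)$ directly.
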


\begin{proof}
  Although the statement is real, the algebraic-group argument below is cleaner after complexification. Over $\C$, we can use the standard Lie-algebra correspondence for algebraic group homomorphisms and the fact that a Zariski closed subgroup of $\GL(d,\C)$ with full dimension is all of $\GL(d,\C)$. We therefore let $H_\C$ be the complex Zariski closure of $H$, prove that $H_\C = \GL(d,\C)$, and then descend to real points using that $H$ is cut out inside $\GL(d,\R)$ by real polynomial equations.

  Let $H_\C$ be the Zariski closure of $H$ in $\GL(d,\C)$. Since $H$ is a subgroup and multiplication and inversion are Zariski continuous, $H_\C$ is an algebraic subgroup of $\GL(d,\C)$. Let $\Pi_\C \colon \GL(d,\C) \to \PGL(d,\C)$ be the quotient map. As for the real points, we endow $\PGL(d,\C)$ with the quotient Zariski topology, in which a subset $Z \subseteq \PGL(d,\C)$ is Zariski closed if and only if $\Pi_\C^{-1}(Z)$ is Zariski closed in $\GL(d,\C)$. We first claim that $\det(H_\C)$ is Zariski dense in $\C \setminus \{0\}$ and $\Pi_\C(H_\C)$ is Zariski dense in $\PGL(d,\C)$. Since $\det(H_\C)$ contains $\det(H)$ and every proper Zariski closed subset of $\C \setminus \{0\}$ is finite, the first claim follows from the assumption that $\det(H)$ is Zariski dense in $\R \setminus \{0\}$. For the second, note that $\GL(d,\R)$ is Zariski dense in $\GL(d,\C)$. Since $\Pi_\C$ is Zariski continuous, it follows that $\PGL(d,\R) = \Pi_\C(\GL(d,\R))$ is Zariski dense in $\PGL(d,\C)$. As $\Pi(H)$ is Zariski dense in $\PGL(d,\R)$ and $\PGL(d,\R)$ is Zariski dense in $\PGL(d,\C)$, the set $\Pi(H)$ is also Zariski dense in $\PGL(d,\C)$. Indeed, a complex Zariski closed subset of $\PGL(d,\C)$ containing $\Pi(H)$ traces on the real points a real Zariski closed subset of $\PGL(d,\R)$, which is all of $\PGL(d,\R)$ by the density of $\Pi(H)$; hence the subset contains $\PGL(d,\R)$ and, since $\PGL(d,\R)$ is Zariski dense in $\PGL(d,\C)$, equals $\PGL(d,\C)$. Therefore $\Pi_\C(H_\C)$ contains a Zariski dense subset of $\PGL(d,\C)$ and is itself Zariski dense.

  We recall the basic facts from the Lie theory of complex algebraic groups used below; see, for example, \cite[\S 3.1 and \S 3.8]{Kirillov2008} and \cite{Humphreys1975}. For an algebraic subgroup $G < \GL(d,\C)$, the \emph{Lie algebra} of $G$ is the tangent space $\mathfrak{g} \subset \mathfrak{gl}_d(\C) = \Mat_d(\C)$ to $G$ at the identity matrix $I$; equivalently, it is the set of $X \in \mathfrak{gl}_d(\C)$ such that $\exp(tX) \in G$ for all sufficiently small $t \in \C$. By \cite[\S 9.1]{Humphreys1975}, the dimension of $\mathfrak{g}$ as a complex vector space equals the dimension of $G$ as an algebraic variety, and by \cite[Theorem~9.1]{Humphreys1975}, the space $\mathfrak{g}$ is closed under the \emph{Lie bracket} $[X,Y] = XY-YX$ on $\mathfrak{gl}_d(\C)$. The Lie algebra of the quotient $\PGL(d,\C) = \GL(d,\C)/\{tI : t \in \C \setminus \{0\}\}$ is $\mathfrak{pgl}_d(\C) = \mathfrak{gl}_d(\C)/\C I$ with the induced bracket by \cite[\S 11.5]{Humphreys1975}, and the Lie algebra of $\C \setminus \{0\}$ is $\C$ with the trivial bracket. On a direct product of algebraic groups, the bracket on the Lie algebra is componentwise. If $\varphi \colon G \to G'$ is a homomorphism of algebraic groups, then its differential $(\mathrm{d}\varphi)_I \colon \mathfrak{g} \to \mathfrak{g}'$ at the identity is a homomorphism of Lie algebras by \cite[Theorem~9.1]{Humphreys1975}, and the Lie algebra of the image $\varphi(G)$ is $(\mathrm{d}\varphi)_I(\mathfrak{g})$. Indeed, the image $\varphi(G)$ is an algebraic subgroup of $G'$ satisfying $\dim G = \dim \Ker(\varphi) + \dim \varphi(G)$ by \cite[Proposition~7.4B]{Humphreys1975}, and since the ground field has characteristic zero, the kernel of $(\mathrm{d}\varphi)_I$ is the Lie algebra of $\Ker(\varphi)$ by \cite[Theorem~12.5]{Humphreys1975}, so the subalgebra $(\mathrm{d}\varphi)_I(\mathfrak{g})$ of the Lie algebra of $\varphi(G)$ has full dimension and the two coincide.

  Our strategy is to compute the Lie algebra of $H_\C$ and show that it is all of $\mathfrak{gl}_d(\C)$; the comparison of dimensions then forces $H_\C$ to equal $\GL(d,\C)$. Let $\mathfrak{h}_\C$ be the Lie algebra of $H_\C$ and define
  \begin{equation*}
    \Psi \colon \GL(d,\C) \to \PGL(d,\C) \times (\C \setminus \{0\}), \qquad \Psi(A) = (\Pi_\C(A),\det(A)).
  \end{equation*}
  Since $\Psi$ is a homomorphism of algebraic groups, its differential $(\mathrm{d}\Psi)_I$ at the identity matrix $I$ is a homomorphism of Lie algebras, and therefore
  \begin{equation*}
    \mathfrak{g} = (\mathrm{d}\Psi)_I(\mathfrak{h}_\C)
  \end{equation*}
  is a Lie subalgebra of $\mathfrak{pgl}_d(\C) \oplus \C$. The restrictions $\Pi_\C|_{H_\C}$ and $\det|_{H_\C}$ are homomorphisms of algebraic groups. Thus their images, equivalently their Zariski closures, have Lie algebras equal to the images of the corresponding differentials at $I$. Since the previous paragraph shows that these images are Zariski dense in their respective targets, their Zariski closures are the whole targets. Therefore the coordinate projections of $\mathfrak{g}$ onto $\mathfrak{pgl}_d(\C)$ and $\C$ are surjective.

  As recalled above, the Lie bracket on $\mathfrak{pgl}_d(\C) \oplus \C$ is componentwise and the second factor is abelian, so every commutator in $\mathfrak{g}$ has zero second component. Therefore $[\mathfrak{g},\mathfrak{g}]$ is contained in $\mathfrak{pgl}_d(\C) \oplus \{0\}$. On the other hand, the first projection $\mathfrak{g} \to \mathfrak{pgl}_d(\C)$ is surjective, so the first projection of $[\mathfrak{g},\mathfrak{g}]$ equals $[\mathfrak{pgl}_d(\C),\mathfrak{pgl}_d(\C)]$. The Lie algebra $\mathfrak{pgl}_d(\C)$ is isomorphic to $\mathfrak{sl}_d(\C)$ and is therefore semisimple by \cite[Theorem 5.49]{Kirillov2008}, which gives $[\mathfrak{pgl}_d(\C),\mathfrak{pgl}_d(\C)] = \mathfrak{pgl}_d(\C)$ by \cite[Corollary 6.5]{Kirillov2008}. Hence
  \begin{equation*}
    [\mathfrak{g},\mathfrak{g}] = \mathfrak{pgl}_d(\C) \oplus \{0\}.
  \end{equation*}
  Since the second projection $\mathfrak{g} \to \C$ is surjective, we may choose $X = (u,c) \in \mathfrak{g}$ with $c \ne 0$. As $(u,0) \in \mathfrak{pgl}_d(\C) \oplus \{0\} \subset \mathfrak{g}$, subtraction gives $(0,c) \in \mathfrak{g}$. Thus $\{0\} \oplus \C \subset \mathfrak{g}$, and together with $[\mathfrak{g},\mathfrak{g}] = \mathfrak{pgl}_d(\C) \oplus \{0\}$ this gives
  \begin{equation*}
    \mathfrak{g} = \mathfrak{pgl}_d(\C) \oplus \C.
  \end{equation*}
  We next show that $(\mathrm{d}\Psi)_I$ is an isomorphism from $\mathfrak{gl}_d(\C)$ onto $\mathfrak{pgl}_d(\C) \oplus \C$. If $(\mathrm{d}\Psi)_I(Y) = 0$, then $(\mathrm{d}\Pi_\C)_I(Y) = 0$, so $Y = \lambda I$ for some $\lambda \in \C$. Also $(\mathrm{d}\det)_I(Y) = \tr(Y) = d\lambda$, and therefore $\lambda = 0$. Thus $(\mathrm{d}\Psi)_I$ is injective. Since both $\mathfrak{gl}_d(\C)$ and $\mathfrak{pgl}_d(\C) \oplus \C$ have dimension $d^2$, it follows that $(\mathrm{d}\Psi)_I$ is an isomorphism. Because $(\mathrm{d}\Psi)_I(\mathfrak{h}_\C) = \mathfrak{g} = \mathfrak{pgl}_d(\C) \oplus \C$, we conclude that $\mathfrak{h}_\C = \mathfrak{gl}_d(\C)$.

  Hence $\dim(H_\C) = d^2 = \dim(\GL(d,\C))$. Since $\GL(d,\C)$ is a non-empty Zariski open subset of the affine space $\C^{d^2}$, which is Zariski irreducible as an algebraic variety, every proper Zariski closed subset of $\GL(d,\C)$ has strictly smaller dimension. It follows that $H_\C = \GL(d,\C)$. Finally, choose real polynomial equations defining $H$ inside $\GL(d,\R)$. Their complexifications vanish on $H_\C = \GL(d,\C)$, so the original real equations vanish on $\GL(d,\R)$ as well. Therefore $H = \GL(d,\R)$.
\end{proof}

Zariski density is a generic condition: arbitrarily small perturbations of a tuple can produce a Zariski dense one, and the property is stable under further perturbations. Intuitively, a Zariski dense tuple generates a semigroup that is algebraically as large as possible: the matrices do not collectively satisfy any non-trivial polynomial identity. A tuple fails to be Zariski dense only when its semigroup is confined to a proper algebraic subgroup of $\GL(d,\R)$, such as the group of matrices preserving a quadratic form.

For pairs of generators in a semisimple algebraic group, the genericity of Zariski density is known. Guralnick \cite[Theorem~3.3]{Guralnick1998} proved that the pairs generating a Zariski dense subgroup of a simple algebraic group over an algebraically closed field of characteristic zero form a Zariski open set, and Breuillard, Green, Guralnick, and Tao \cite[Theorem~4.1]{BreuillardGreenGuralnickTao2012} extended this to semisimple algebraic groups over an arbitrary field $k$ of characteristic zero: for non-commuting words $w$ and $w'$ in the free group on two generators, the pairs $(a,b)$ for which $w(a,b)$ and $w'(a,b)$ generate a Zariski dense subgroup form a Zariski open subvariety defined over $k$ whose set of $k$-points is non-empty.

The group $\GL(d,\R)$ is reductive rather than semisimple, and its determinant carries scalar information that no semisimple quotient can see. The proof therefore again separates the projective and scalar aspects of invertible matrices: the quotient $\PGL(d,\R)$ retains only the action on lines through the origin, where the theorem of Breuillard, Green, Guralnick, and Tao applies, while the determinant records the missing scalar information. By \cref{lem:projective-determinant}, these two pieces together are enough to recover Zariski density in $\GL(d,\R)$. We are now ready to prove \cref{prop:tuples-open-dense}.

\begin{proof}[Proof of \cref{prop:tuples-open-dense}]
  Let $Z \subset \GL(d,\R)^N$ be the set of Zariski dense tuples. The key point is to separate the projective information from the scalar information: the quotient $\PGL(d,\R)$ forgets scalar multiples, while the determinant records the missing scalar factor.

  We begin with an elementary observation. If a subsemigroup of $\GL(d,\R)$ contains the identity, then its Zariski closure, meaning the smallest Zariski closed set containing it, is automatically an algebraic subgroup, that is, a subgroup which is Zariski closed. Indeed, let $S \subset \GL(d,\R)$ be such a semigroup, and let $H$ be its Zariski closure. For each $s \in S$, the map $x \mapsto sx$ is Zariski continuous, and since $sS \subset S \subset H$, the Zariski density of $S$ in $H$ implies $sH \subset H$. Now fix $h \in H$. The map $x \mapsto xh$ is Zariski continuous, and since $Sh \subset H$, again by the Zariski density of $S$ in $H$ we obtain $Hh \subset H$. Hence $HH \subset H$. Next fix $s \in S$. The chain of Zariski closed sets
  \begin{equation*}
    H \supset sH \supset s^2H \supset \cdots
  \end{equation*}
  stabilizes by the descending chain condition for Zariski closed sets. Hence $s^nH = s^{n+1}H$ for some $n \geq 0$. Multiplying by $s^{-n}$ on the left yields $H = sH$. Since the identity belongs to $H$, there exists $h \in H$ such that $sh = I$, and therefore $s^{-1} \in H$. This holds for every $s \in S$, so $S^{-1} \subset H$. As inversion is Zariski continuous, we obtain
  \begin{equation*}
    H^{-1} = \overline{S}^{-1} = \overline{S^{-1}} \subset H.
  \end{equation*}
  Thus $H$ is closed under multiplication and inversion, so $H$ is an algebraic subgroup of $\GL(d,\R)$.

  Even though our standing assumption is $d \ge 2$, let us also cover the case $d = 1$. Then $\GL(d,\R) = \R \setminus \{0\}$ and a tuple $(A_1,\ldots,A_N) \in \GL(d,\R)^N$ is Zariski dense if and only if $A_i^2 \neq 1$ for some $i \in \{1,\ldots,N\}$. Since every proper Zariski closed subset of $\R \setminus \{0\}$ is finite, the complement of
  \begin{equation*}
    \{(A_1,\ldots,A_N) \in \GL(d,\R)^N : A_i^2 = 1 \text{ for all } i \in \{1,\ldots,N\}\}
  \end{equation*}
  is a dense open subset of $\GL(d,\R)^N$, and it is exactly $Z$. Therefore the proposition follows in this case, and in the remainder of the proof we use $d \ge 2$.

  We now characterize the tuples in $Z$. The idea is to detect Zariski density in $\PGL(d,\R)$ by looking for two words in the generators whose projective images already generate a Zariski dense subgroup. By the characteristic-zero case of \cite[Theorem~4.1]{BreuillardGreenGuralnickTao2012} applied to the semisimple algebraic group $\PGL(d,\R)$ and the non-commuting words $x_1,x_2 \in F_2$, the set
  \begin{align*}
    U = \{(g,h) \in \PGL(d,\R)^2 : \;&\text{the subgroup generated by } g \\
    &\text{and } h \text{ is Zariski dense in } \PGL(d,\R)\}
  \end{align*}
  is a non-empty Zariski open subset of $\PGL(d,\R)^2$. Equivalently, the exceptional pairs lie in a proper algebraic subset, so from the Zariski point of view a typical pair already generates as large a subgroup as possible. Let $F_N$ be the free group on generators $x_1,\ldots,x_N$, so its elements are words in the letters $x_i^{\pm 1}$. For a word $w \in F_N$ and a tuple $\A = (A_1,\ldots,A_N) \in \GL(d,\R)^N$, we write $w(\A)$ for the image of $w$ under the homomorphism $F_N \to \GL(d,\R)$ determined by $x_i \mapsto A_i$; concretely, $w(\A)$ is the matrix obtained by substituting $A_i^{\pm 1}$ for each letter $x_i^{\pm 1}$, and it belongs to the group generated by $A_1,\ldots,A_N$. For every pair of words $u,v \in F_N$, define
  \begin{equation*}
    U_{u,v} = \{(A_1,\ldots,A_N) \in \GL(d,\R)^N : (\Pi(u(\A)),\Pi(v(\A))) \in U\}.
  \end{equation*}
  Thus $U_{u,v}$ consists of those tuples for which the two words $u(\A)$ and $v(\A)$ already witness projective Zariski density. Since multiplication and inversion on $\GL(d,\R)$ are algebraic maps, and $\Pi \colon \GL(d,\R) \to \PGL(d,\R)$ is the algebraic quotient map, the map
  \begin{equation*}
    \A \mapsto (\Pi(u(\A)),\Pi(v(\A)))
  \end{equation*}
  is algebraic. As $U$ is Zariski open, each $U_{u,v}$ is a Zariski open, and hence open, subset of $\GL(d,\R)^N$.

  We next identify exactly what these sets detect. We claim that
  \begin{equation*}
    \bigcup_{u,v \in F_N} U_{u,v}
  \end{equation*}
  is exactly the set of tuples $\A \in \GL(d,\R)^N$ for which the subgroup generated by $\Pi(A_1),\ldots,\Pi(A_N)$ is Zariski dense in $\PGL(d,\R)$. If $\A \in U_{u,v}$ for some $u,v \in F_N$, then the subgroup generated by $\Pi(A_1),\ldots,\Pi(A_N)$ contains the Zariski-dense subgroup generated by $\Pi(u(\A))$ and $\Pi(v(\A))$, and is therefore Zariski dense. Conversely, let $\Gamma$ be the subgroup generated by $\Pi(A_1),\ldots,\Pi(A_N)$ and suppose that $\Gamma$ is Zariski dense in $\PGL(d,\R)$. Then $\Gamma \times \Gamma$ is Zariski dense in $\PGL(d,\R)^2$: if a Zariski closed set $F \subseteq \PGL(d,\R)^2$ contains $\Gamma \times \Gamma$, then for each $g \in \Gamma$ the slice $\{k \in \PGL(d,\R) : (g,k) \in F\}$ is Zariski closed and contains $\Gamma$, hence equals $\PGL(d,\R)$, so that $\Gamma \times \PGL(d,\R) \subseteq F$; repeating the argument in the first coordinate for each fixed $k \in \PGL(d,\R)$ gives $F = \PGL(d,\R)^2$. Since $U$ is a non-empty Zariski open subset of $\PGL(d,\R)^2$, some pair of elements of $\Gamma$ must lie in $U$. In other words, once the whole projective subgroup is Zariski dense, two suitable words already detect this. More formally, we have
  \begin{equation*}
    U \cap (\Gamma \times \Gamma) \neq \emptyset.
  \end{equation*}
  Hence there exist $g,h \in \Gamma$ such that $(g,h) \in U$. Writing $g = \Pi(u(\A))$ and $h = \Pi(v(\A))$ for suitable words $u,v \in F_N$, we conclude that $\A \in U_{u,v}$.

  The projective image alone does not determine whether $\A$ is Zariski dense in $\GL(d,\R)$, because scalar matrices disappear in $\PGL(d,\R)$. We therefore also record the determinant, which detects the missing scalar direction. Let
  \begin{equation*}
    D = \{(A_1,\ldots,A_N) \in \GL(d,\R)^N : \det(A_i)^2 \neq 1 \text{ for some } i \in \{1,\ldots,N\}\}.
  \end{equation*}
  This is a Zariski open, and hence open, subset of $\GL(d,\R)^N$, since its complement is the common zero set of the polynomials $\A \mapsto \det(A_i)^2 - 1$ for $i \in \{1,\ldots,N\}$. We claim that
  \begin{equation*}
    Z = D \cap \bigcup_{u,v \in F_N} U_{u,v}.
  \end{equation*}
  If $\A \in Z$, then the subgroup generated by $\Pi(A_1),\ldots,\Pi(A_N)$ is Zariski dense in $\PGL(d,\R)$, so $\A \in U_{u,v}$ for some $u,v \in F_N$ by the preceding paragraph. Moreover, since $\overline{\mathcal{S}(\A)} = \GL(d,\R)$ and the determinant map $\det \colon \GL(d,\R) \to \R \setminus \{0\}$ is Zariski continuous, we have
  \begin{equation*}
    \R \setminus \{0\} = \det(\GL(d,\R)) = \det(\overline{\mathcal{S}(\A)}) \subset \overline{\det(\mathcal{S}(\A))}.
  \end{equation*}
  Thus $\det(\mathcal{S}(\A))$ is Zariski dense in $\R \setminus \{0\}$, so it cannot be contained in the finite set $\{1,-1\}$. Hence $\det(A_i)^2 \neq 1$ for some $i$, and thus $\A \in D$.

  Conversely, suppose that
  \begin{equation*}
    \A \in D \cap \bigcup_{u,v \in F_N} U_{u,v},
  \end{equation*}
  and let $H$ be the Zariski closure of $\mathcal{S}(\A) \cup \{I\}$ in $\GL(d,\R)$. By the opening observation, $H$ is an algebraic subgroup of $\GL(d,\R)$. Since $H$ contains $\mathcal{S}(\A)$ and is a subgroup, it also contains the group generated by $A_1,\ldots,A_N$. The condition $\A \in U_{u,v}$ tells us that the projective image of this group is Zariski dense in $\PGL(d,\R)$. Because this group is contained in $H$, it follows that $\Pi(H)$ is Zariski dense in $\PGL(d,\R)$. The condition $\A \in D$ supplies the missing scalar information: there exists $i \in \{1,\ldots,N\}$ with $\det(A_i)^2 \neq 1$. The set $\{\det(A_i)^n : n \geq 1\}$ is then infinite and contained in $\det(H)$. As every proper Zariski closed subset of $\R \setminus \{0\}$ is finite, it follows that $\det(H)$ is Zariski dense in $\R \setminus \{0\}$. Therefore \cref{lem:projective-determinant} gives $H = \GL(d,\R)$. Since $H = \overline{\mathcal{S}(\A) \cup \{I\}} = \overline{\mathcal{S}(\A)} \cup \{I\}$ and $\GL(d,\R)$ is Zariski irreducible, being a non-empty Zariski open subset of the affine space $\R^{d^2}$, while $\{I\}$ is a proper Zariski closed subset, we conclude that $\overline{\mathcal{S}(\A)} = \GL(d,\R)$; that is, $\A \in Z$.

  The characterization $Z = D \cap \bigcup_{u,v \in F_N} U_{u,v}$ shows that $Z$ is Zariski open, and hence open: each $U_{u,v}$ is Zariski open by the algebraicity above, and by the descending chain condition the intersection of the complementary Zariski closed sets $\GL(d,\R)^N \setminus U_{u,v}$ reduces to a finite subintersection, so it is Zariski closed and the union $\bigcup_{u,v} U_{u,v}$ is Zariski open; intersecting with the Zariski open set $D$ preserves Zariski openness. To prove that $Z$ is dense, it is enough to exhibit one dense open subset contained in $Z$. Consider the set
  \begin{equation*}
    \{(A_1,\ldots,A_N) \in \GL(d,\R)^N : (\Pi(A_1),\Pi(A_2)) \in U \text{ and } \det(A_1)^2 \neq 1\}.
  \end{equation*}
  This set is contained in $Z$ by the characterization above, taking $u = x_1$ and $v = x_2$. It is dense open because, by the standing assumption $N \geq 2$, the map $\Pi \times \Pi \colon \GL(d,\R)^2 \to \PGL(d,\R)^2$ is a surjective algebraic map, so $(\Pi \times \Pi)^{-1}(U)$ is a non-empty Zariski open, and hence dense open, subset of $\GL(d,\R)^2$. The condition $\det(A_1)^2 \neq 1$ is dense open in $\GL(d,\R)$, and taking the product with $\GL(d,\R)^{N-2}$ preserves density and openness. Thus $Z$ is dense. 
\end{proof}

\subsection{The two-dimensional case} \label{sec:planar-converse}

In dimension two, the decomposition provided by \cref{lem:projective-determinant} yields a complete description of the contractive strongly irreducible tuples: they are Zariski dense exactly when proximal, and are otherwise conjugate to a system of contracting similarities. The algebraic subgroups of $\PGL(2,\R)$ are few and well understood: the following dichotomy isolates the only proper possibility compatible with strong irreducibility, namely a group conjugate into $\Pi(\GO(2))$. We record it first, as both the converse below and the non-proximal structural \cref{lem:planar-nonproximal-conformal} rely on it: strong irreducibility narrows the projective image to either $\PGL(2,\R)$ or the conformal group $\Pi(\GO(2))$, proximality distinguishes the two, and contractivity makes the determinants Zariski dense in $\R \setminus \{0\}$.

\begin{lemma} \label{lem:planar-projective-dichotomy}
  Let $G$ be an algebraic subgroup of $\PGL(2,\R)$ that preserves no non-empty finite subset of $\RP^1$. Then either $G = \PGL(2,\R)$, or there exists $g \in \PGL(2,\R)$ such that the identity component of $gGg^{-1}$ is $\Pi(\SO(2))$ and $gGg^{-1} \subseteq \Pi(\GO(2))$.
\end{lemma}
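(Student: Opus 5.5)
We plan to reduce the statement to the classification of the identity component $G^\circ$ of $G$, using the Lie algebra of $\PGL(2,\R)$, namely $\mathfrak{pgl}_2(\R)\cong\mathfrak{sl}_2(\R)$. Since $G$ is an algebraic subgroup, it has finitely many Zariski components, and $G^\circ$ is a normal subgroup of finite index. Its Lie algebra $\mathfrak g$ is a subalgebra of $\mathfrak{sl}_2(\R)$, so it has dimension $0,1,2$ or $3$. We treat these dimensions in turn.

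\emph{Dimension $3$.} Then $G^\circ$ is all of $\PGL(2,\R)^\circ$. Moreover, $G$ is not contained in $\PGL(2,\R)^\circ$: that would give the right dimension but the wrong Zariski closure, because the full group $\PGL(2,\R)$, which includes the determinant-sign component, is Zariski connected over $\C$. We therefore expect $G=\PGL(2,\R)$. More precisely, the real Zariski closure of $\PGL(2,\R)^\circ$ is already $\PGL(2,\R)$, since $\PGL(2,\R)$ is the real points of an irreducible variety. Hence $G\supseteq\PGL(2,\R)^\circ$, and $G$ is Zariski closed, so $G=\PGL(2,\R)$.

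\emph{Dimension $2$.} Every $2$-dimensional subalgebra of $\mathfrak{sl}_2(\R)$ is conjugate to the Borel subalgebra of upper triangular matrices. Its normalizer is the Borel subgroup, which fixes a point of $\RP^1$. Since $G$ normalizes $G^\circ$, it would then fix that point, which contradicts the hypothesis.

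\emph{Dimension $1$.} A one-dimensional subalgebra is spanned by a single element $X$, which is, up to conjugacy, hyperbolic (diagonal), parabolic (nilpotent) or elliptic (rotation generator).
\begin{itemize}
  \item If $X$ is hyperbolic, $G^\circ$ is the diagonal torus. Its normalizer preserves the pair $\{[e_1],[e_2]\}$, which is a finite subset of $\RP^1$, excluded by hypothesis.
  \item If $X$ is parabolic, $G^\circ$ is a unipotent group fixing one line. Its normalizer fixes that line, again excluded.
  \item If $X$ is elliptic, then after conjugating by some $g$ we have $G^\circ=\Pi(\SO(2))$. The normalizer of $\Pi(\SO(2))$ in $\PGL(2,\R)$ is $\Pi(\GO(2))$, so $gGg^{-1}\subseteq\Pi(\GO(2))$, which is the second alternative.
\end{itemize}

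\emph{Dimension $0$.} Then $G$ is finite. A finite subgroup of $\PGL(2,\R)$ is compact, so it is conjugate into the maximal compact subgroup $\Pi(\GO(2))$. Finite subgroups of $\Pi(\GO(2))$ are cyclic or dihedral. A cyclic rotation group of order $n$ has finite orbits on $\RP^1$, and so does a dihedral group. Any finite group has finite orbits, so $G$ preserves the finite orbit of any point, which is excluded.

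The main obstacle is the normalizer computations together with the classification of subalgebras up to conjugacy. In particular, we must show that the normalizer of $\Pi(\SO(2))$ is exactly $\Pi(\GO(2))$. We would argue this as follows: an element normalizing $\Pi(\SO(2))$ preserves its unique fixed point pair $\{i,-i\}$ in $\C\mathrm{P}^1$, hence preserves the corresponding conformal structure, and so is a similarity up to scalar. For the subalgebra classification, we would use the trichotomy by the sign of the determinant of the generator for the one-dimensional case, and the standard fact that a $2$-dimensional subalgebra of $\mathfrak{sl}_2$ is solvable and therefore has a common eigenvector.
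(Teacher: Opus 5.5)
Your proposal is correct and follows essentially the paper's route. Both arguments do a case analysis on $\dim\mathfrak g\in\{0,1,2,3\}$ for the identity component, using that $G$ normalizes it so that its fixed-point set in $\RP^1$ is $G$-invariant, and in the elliptic one-dimensional case obtain $\Pi(\SO(2))$ with normalizer $\Pi(\GO(2))$. The paper proves the two-dimensional case and the normalizer computation by hand ($\tr(Y^2)=0$, and $AJA^{-1}=\pm J$), where you cite the Borel classification and the fixed pair $\{\pm i\}$ in $\C\mathrm{P}^1$.

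Two justifications need tightening:
\begin{itemize}
  \item Over $\R$, solvability alone does not give a common real eigenvector (consider $\R J$). In the two-dimensional case, use instead that $[\mathfrak g,\mathfrak g]$ is spanned by a nonzero nilpotent $Y$, whose kernel is a $\mathfrak g$-invariant real line.
  \item The Zariski density of $\PGL(2,\R)^\circ$ should be deduced from $\Pi^{-1}(\PGL(2,\R)^\circ)=\GL^+(2,\R)$ being a nonempty open subset of $\R^4$. Irreducibility of the underlying variety is not enough, since connected components of real points of an irreducible variety need not be Zariski dense in general.
\end{itemize}
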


\begin{proof}
  Let $L$ be the connected component of $G$ containing the identity element; throughout the proof, topological notions refer to the Euclidean topology unless the Zariski topology is mentioned explicitly. Being Zariski closed, $G$ is closed also in the Euclidean topology, since $\Pi^{-1}(G)$ is the common zero set of finitely many polynomials, and is thus a closed subgroup of the Lie group $\PGL(2,\R)$, whose Lie algebra is $\mathfrak{pgl}_2(\R) \cong \mathfrak{sl}_2(\R)$. By \cite[Theorem~2.9]{Kirillov2008}, $G$ is then a Lie subgroup; let $\mathfrak{g} \subseteq \mathfrak{sl}_2(\R)$ be its Lie algebra, consisting of the trace-zero matrices $X$ with $\Pi(\exp(tX)) \in G$ for all $t \in \R$. By \cite[Theorem~2.6]{Kirillov2008}, $L$ is a normal subgroup of $G$, and $L$ is generated by $\{\Pi(\exp(X)) : X \in \mathfrak{g}\}$, since this set contains a neighborhood of the identity in $G$ by \cite[Theorem~3.7]{Kirillov2008} applied to the Lie group $G$, and a neighborhood of the identity generates the connected Lie group $L$ by \cite[Corollary~2.10]{Kirillov2008}. Moreover, $G$ has finitely many connected components. Indeed, if the real polynomials $p_1,\ldots,p_k$ cut $\Pi^{-1}(G)$ out of $\GL(2,\R)$, then $\{(A,s) \in \Mat_2(\R) \times \R : s\det(A) = 1 \text{ and } p_1(A) = \cdots = p_k(A) = 0\}$, in which the auxiliary coordinate $s = \det(A)^{-1}$ encodes the invertibility of $A$ as a polynomial condition, is a real algebraic subset of $\R^5$ and hence has finitely many connected components by \cite[\S 2.4]{BochnakCosteRoy1998}, and $G$ is the image of this set under the continuous map $(A,s) \mapsto \Pi(A)$.

  We record two observations. First, a non-identity element of $\PGL(2,\R)$ fixes at most two points of $\RP^1$: its lifts are non-scalar, two distinct eigendirections of a matrix are linearly independent, and a third eigendirection, being a combination of the first two with non-zero coefficients, would force the corresponding eigenvalues to coincide and the matrix to be scalar. In particular, if $L$ is non-trivial, then the set $F \subseteq \RP^1$ of points fixed by every element of $L$ has at most two elements. Second, $F$ is invariant under $G$: if $x \in F$, $h \in G$, and $l \in L$, then the normality of $L$ gives $l(hx) = h(h^{-1}lh)x = hx$. If $G = \PGL(2,\R)$, there is nothing to prove, so assume $G \neq \PGL(2,\R)$. If $L$ is trivial, then $G$ is discrete and, having finitely many connected components, finite, so any $G$-orbit in $\RP^1$ is a non-empty finite $G$-invariant set, contradicting the hypothesis; hence $\dim(\mathfrak{g}) \geq 1$. If $\dim(\mathfrak{g}) = 3$, then $\mathfrak{g} = \mathfrak{sl}_2(\R)$, so $G$ contains a neighborhood of the identity in $\PGL(2,\R)$ and, being a subgroup, is open in $\PGL(2,\R)$; this is impossible, since $\Pi^{-1}(G)$ would then contain a non-empty Euclidean open subset of $\GL(2,\R)$, so each $p_j$ would vanish on a non-empty Euclidean open subset of $\R^4$ and hence identically, forcing $\Pi^{-1}(G) = \GL(2,\R)$ and $G = \PGL(2,\R)$, contrary to our assumption. Hence $\dim(\mathfrak{g}) \in \{1,2\}$.

  Suppose that $\dim(\mathfrak{g}) = 2$. A non-zero element of $\mathfrak{sl}_2(\R)$ is either nilpotent or has two distinct eigenvalues, and in both cases the matrices commuting with it are the linear combinations of it and the identity, so its centralizer in $\mathfrak{sl}_2(\R)$ is the line it spans. Therefore $\mathfrak{g}$ is not abelian, since an abelian $\mathfrak{g}$ would be contained in the centralizer of each of its non-zero elements, a line. By bilinearity and antisymmetry, the bracket of any two elements of $\mathfrak{g}$ is a scalar multiple of the bracket $Y$ of a fixed basis of $\mathfrak{g}$, and $Y \neq 0$ since $\mathfrak{g}$ is not abelian. As the centralizer of $Y$ cannot contain the two-dimensional $\mathfrak{g}$, there exists $X \in \mathfrak{g}$ with $[X,Y] = \lambda Y$ for some $\lambda \neq 0$, and after replacing $X$ by $\lambda^{-1}X$ we have $[X,Y] = Y$. Then $\{X,Y\}$ is a basis of $\mathfrak{g}$, since $X \in \R Y$ would force $[X,Y] = 0$. Multiplying $[X,Y] = Y$ by $Y$ from the right and taking traces gives $\tr(Y^2) = \tr(XY^2) - \tr(YXY) = 0$ by the cyclic invariance of the trace, and since the trace-zero matrix $Y$ satisfies $Y^2 = -\det(Y)I$ by the Cayley--Hamilton theorem, we conclude that $\det(Y) = 0$; thus $Y$ is nilpotent and non-zero, and its kernel is a line $\ell \subset \R^2$. If $v \in \ell$ and $Z = aX + bY \in \mathfrak{g}$, then $YZv = ZYv - [Z,Y]v = -aYv = 0$, so every element of $\mathfrak{g}$ maps $\ell$ into itself. Consequently, every $\Pi(\exp(Z))$ with $Z \in \mathfrak{g}$ fixes the point $\ell \in \RP^1$, and hence so does every element of $L$. Thus $F$ is a non-empty finite $G$-invariant subset of $\RP^1$, contradicting the hypothesis.

  Suppose then that $\dim(\mathfrak{g}) = 1$ and write $\mathfrak{g} = \R X$ for a non-zero $X \in \mathfrak{sl}_2(\R)$, so that $L = \{\Pi(\exp(tX)) : t \in \R\}$, as this set is a subgroup and generates $L$. The set $F$ consists exactly of the real eigendirections of $X$: every eigendirection of $X$ is fixed by every $\exp(tX)$, and conversely, if $\exp(tX)v \in \R v$ for all $t \in \R$, then differentiating at $t = 0$ shows that $Xv \in \R v$. Since the characteristic polynomial of the trace-zero matrix $X$ is $\lambda^2 + \det(X)$, there are three possibilities. If $\det(X) < 0$, then $X$ has two real eigendirections, and if $\det(X) = 0$, then $X$ is nilpotent and non-zero and has exactly one; in both cases $F$ is a non-empty finite $G$-invariant subset of $\RP^1$, contradicting the hypothesis. Hence $\det(X) > 0$, and after rescaling $X$ we may assume $\det(X) = 1$, so that $X$ has the eigenvalues $\pm i$. The real canonical form provides $P \in \GL(2,\R)$ with $PXP^{-1} = J$, where $J$ denotes the rotation by the angle $\pi/2$, the standard complex structure on $\R^2$. Since $\exp(tJ)$ is the rotation by the angle $t$, conjugating by $\Pi(P)$ gives $\Pi(P)L\Pi(P)^{-1} = \{\Pi(\exp(tJ)) : t \in \R\} = \Pi(\SO(2))$. Hence $L$ is conjugate, by an element of $\PGL(2,\R)$, to $\Pi(\SO(2))$.

  Choose $g \in \PGL(2,\R)$ with $gLg^{-1} = \Pi(\SO(2))$. Since $L$ is normal in $G$, the group $gGg^{-1}$ normalizes $\Pi(\SO(2))$. The normalizer of $\Pi(\SO(2))$ in $\PGL(2,\R)$ is $\Pi(\GO(2))$: the group $\Pi(\GO(2))$ normalizes $\Pi(\SO(2))$ since $\SO(2)$ is a normal subgroup of $\GO(2)$. Conversely, let $A \in \GL(2,\R)$ be a lift of a projective class normalizing $\Pi(\SO(2))$, and recall that $J$ has trace zero and spectrum $\{\pm i\}$. For every $t \in \R$ the class of $A\exp(tJ)A^{-1} = \exp(tAJA^{-1})$ belongs to $\Pi(\SO(2))$, so the matrix $\exp(tAJA^{-1})$ is a non-zero real scalar multiple of a rotation and lies in the two-dimensional linear space $\R I + \R J$; differentiating at $t = 0$ gives $AJA^{-1} \in \R I + \R J$, and since conjugation preserves the trace, $AJA^{-1}$ lies in the trace-zero part $\R J$. Hence $AJA^{-1} = cJ$ for some $c \in \R \setminus \{0\}$; since $AJA^{-1}$ has the same spectrum $\{\pm i\}$ as $J$ while $cJ$ has spectrum $\{\pm ci\}$, we conclude that $c = \pm 1$. Thus $A$ commutes or anticommutes with $J$, hence is complex-linear or complex-antilinear with respect to $J$, and is therefore a non-zero scalar multiple of an orthogonal matrix: identifying $(\R^2,J)$ with $\C$, a complex-linear $A$ acts as $z \mapsto \alpha z$ for some $\alpha \neq 0$, which is $|\alpha|$ times a rotation, and a complex-antilinear $A$ acts as $z \mapsto \alpha \overline{z}$, which is $|\alpha|$ times a reflection. Therefore $gGg^{-1} \subseteq \Pi(\GO(2))$, and since conjugation by $g$ is a homeomorphism of $\PGL(2,\R)$, the identity component of $gGg^{-1}$ is $gLg^{-1} = \Pi(\SO(2))$.
\end{proof}

In any dimension, a Zariski dense tuple is strongly $1$-irreducible and strongly pinching by \cref{lem:zariski-irreducible,lem:zariski-strongly-pinching}, and in the plane strong pinching yields $1$-proximality. With \cref{lem:planar-projective-dichotomy} available, the following proposition establishes the reverse implication for contractive tuples, so that $1$-proximality and strong $1$-irreducibility together characterize Zariski density. The forward implication is used in the proof of \cref{lem:planar-nonproximal-conformal}, to conclude that a strongly irreducible non-proximal tuple is not Zariski dense.

\begin{proposition} \label{prop:zariski-irreducible-converse-planar}
  Let $\A = (A_1,\ldots,A_N) \in \GL(2,\R)^N$ satisfy $\|A_i\| < 1$ for all $i \in \{1,\ldots,N\}$. Then $\A$ is Zariski dense if and only if $\A$ is $1$-proximal and strongly $1$-irreducible.
\end{proposition}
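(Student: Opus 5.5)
The forward implication is essentially already available. If $\A$ is Zariski dense, then \cref{lem:zariski-irreducible} gives strong $1$-irreducibility and \cref{lem:zariski-strongly-pinching} gives some $B \in \mathcal{S}(\A)$ that is pinching. In the plane, a pinching matrix has a simple dominant eigenvalue. Hence the normalized powers $\|B^n\|^{-1}B^n$ converge to the rank-one spectral projection onto its dominant eigendirection, and $\A$ is $1$-proximal.

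For the converse, assume $\A$ is $1$-proximal and strongly $1$-irreducible, and let $H = \overline{\mathcal{S}(\A)}$. By the facts recorded in \cref{sec:zariski-escape}, $H$ is an algebraic subgroup of $\GL(2,\R)$. I will apply \cref{lem:projective-determinant}, which requires two things.

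\emph{Determinant.} Since $\|A_1\|<1$, we have $0<|\det A_1|<1$. So $\{\det(A_1)^n : n\ge1\}$ is an infinite subset of $\det(H)$. Every proper Zariski closed subset of $\R\setminus\{0\}$ is finite, so $\det(H)$ is Zariski dense.

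\emph{Projective image.} Let $G$ be the Zariski closure of $\Pi(H)$ in $\PGL(2,\R)$. I expect $G$ to be an algebraic subgroup. If $\Pi(H)$ is not already Zariski closed, I would pass to $\Pi^{-1}(G)$ and work with it directly, since it is an algebraic subgroup containing $\mathcal{S}(\A)$. Next I check that $G$ preserves no non-empty finite subset $F\subset\RP^1$. Suppose it did. The stabilizer of $F$ under the projective action is Zariski closed in $\PGL(2,\R)$: pointwise fixing of finitely many lines is a polynomial condition, and permutations of $F$ give finitely many such conditions. Its preimage in $\GL(2,\R)$ would then be a Zariski closed set containing $\mathcal{S}(\A)$. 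This gives $A_iF=F$ for all $i$, contradicting strong $1$-irreducibility.

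Now \cref{lem:planar-projective-dichotomy} says either $G=\PGL(2,\R)$, or after conjugation $G\subseteq\Pi(\GO(2))$. In the second case, every $A\in\mathcal{S}(\A)$ is, up to a fixed conjugation $P$, a scalar multiple of an orthogonal matrix. Then $PA_\iii P^{-1}$ has $\alpha_1=\alpha_2$, so every normalized limit $c_nA_{\iii_n}$ is conjugate to a scalar times an orthogonal matrix. Such a matrix is either zero or invertible, never rank one, which contradicts $1$-proximality. Hence $G=\PGL(2,\R)$, so $\Pi(H)$ is Zariski dense, and \cref{lem:projective-determinant} gives $H=\GL(2,\R)$. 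That is, $\A$ is Zariski dense.

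The main obstacle I expect is the bookkeeping that keeps everything algebraic. Specifically, I need $G$ to be a genuine algebraic subgroup so that \cref{lem:planar-projective-dichotomy} applies, and I need the invariant-finite-set condition to be Zariski closed. The proximality contradiction and the determinant step are routine.
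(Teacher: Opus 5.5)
Your proposal is correct and follows essentially the paper's proof (determinant density from contractivity, projective density via \cref{lem:planar-projective-dichotomy}, then \cref{lem:projective-determinant}). The two genuine variations are these. You exclude the $\Pi(\GO(2))$ alternative directly from $1$-proximality, since any limit of matrices $P^{-1}(t_nO_n)P$ with $t_n \in \R$ and $O_n \in \GO(2)$ has rank $0$ or $2$, whereas the paper first extracts a pinching $A_\iii$ and compares eigenvalue moduli. You also work with $\overline{\mathcal{S}(\A)}$ directly, which is a group by \cref{sec:zariski-escape}, and this spares you the paper's final step of removing $\{I\}$.

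Two cosmetic remarks. First, the Zariski-closedness of the stabilizer of $F$ is unnecessary: once $G$ preserves $F$, the fact that $\Pi(A_i) \in G$ already gives $A_iF = F$. Second, in the forward direction you should normalize by $\lambda_1^{-n}$ (or pass to even $n$), because $\|B^n\|^{-1}B^n$ oscillates in sign when the dominant eigenvalue $\lambda_1$ is negative.
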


\begin{proof}
  If $\A$ is Zariski dense, then \cref{lem:zariski-irreducible,lem:zariski-strongly-pinching} show that $\A$ is strongly $1$-irreducible and strongly pinching. Since $d = 2$, strong pinching means that there exists $B \in \mathcal{S}(\A)$ such that $B$ is pinching, that is, $B$ is diagonalizable and its eigenvalues have distinct absolute values. The eigenvalues $\lambda_1$ and $\lambda_2$ of $B$ are both real, since a complex-conjugate pair would have equal moduli. Ordering them so that $|\lambda_1| > |\lambda_2|$, we see that $\lambda_1^{-n}B^n$ converges to a rank-one linear map as $n \to \infty$. Hence $\A$ is $1$-proximal.

  Assume now that $\A$ is $1$-proximal and strongly $1$-irreducible. Let $S = \mathcal{S}(\A) \cup \{I\}$ be the monoid generated by $\A$, and let $H$ be the Zariski closure of $S$ in $\GL(2,\R)$. We first check that $H$ is an algebraic subgroup of $\GL(2,\R)$. For each $B \in S$, the map $C \mapsto BC$ is Zariski continuous, and since $BS \subset S \subset H$, the Zariski density of $S$ in $H$ implies $BH \subset H$. Now fix $C \in H$. The map $B \mapsto BC$ is again Zariski continuous, and since $SC \subset H$, the Zariski density of $S$ in $H$ gives $HC \subset H$. Hence $HH \subset H$. Next fix $B \in S$. The descending chain $H \supset BH \supset B^2H \supset \cdots$ stabilizes, so $B^nH = B^{n+1}H$ for some $n \geq 0$. Multiplying by $B^{-n}$ on the left yields $H = BH$. Since $I \in H$, there exists $C \in H$ such that $BC = I$, and therefore $B^{-1} \in H$. This holds for every $B \in S$, so $S^{-1} \subset H$. As inversion is Zariski continuous, we have $H^{-1} = \overline{S}^{-1} = \overline{S^{-1}} \subset H$. Thus $H$ is closed under multiplication and inversion, and therefore $H$ is an algebraic subgroup of $\GL(2,\R)$.

  We next show that the projective image of $H$ is all of $\PGL(2,\R)$. Since $\A$ is strongly $1$-irreducible, it is irreducible. As $d = 2$, the assumption that $\A$ is $1$-proximal means that $\A$ is proximal. Therefore, since a proximal irreducible tuple in $\GL(2,\R)$ is pinching, $\A$ is pinching, and there exists $\iii \in \II^* \setminus \{\varnothing\}$ such that $A_\iii$ is pinching. Let $M$ be the Zariski closure of $\Pi(H)$ in $\PGL(2,\R)$. Since $\Pi(H)$ is a subgroup and multiplication and inversion on $\PGL(2,\R)$ are Zariski continuous, $M$ is an algebraic subgroup of $\PGL(2,\R)$. If $M$ preserved a non-empty finite subset of $\RP^1$, then, since each $\Pi(A_i)$ belongs to $\Pi(H) \subseteq M$, the corresponding finite union of lines in $\R^2$ would be invariant under every $A_i$, contradicting strong $1$-irreducibility. Hence \cref{lem:planar-projective-dichotomy} applies: either $M = \PGL(2,\R)$, or $M$ is conjugate into $\Pi(\GO(2))$. In the latter case every element of $M$ is the class of a scalar multiple of a conjugate of an orthogonal matrix, and hence every representative has eigenvalues of equal absolute value; but $A_\iii$ is pinching, so $\Pi(A_\iii) \in M$ has a representative with eigenvalues of distinct absolute value, a contradiction. Therefore $M = \PGL(2,\R)$, and $\Pi(H)$ is Zariski dense in $\PGL(2,\R)$.

  Finally, contractivity gives the determinant information. Since $\|A_1\| < 1$ and $A_1$ is invertible, the singular values of $A_1$ satisfy $0 < \alpha_2(A_1) \leq \alpha_1(A_1) = \|A_1\| < 1$, and therefore $0 < |\det(A_1)| = \alpha_1(A_1)\alpha_2(A_1) < 1$. Therefore the set $\{\det(A_1)^n : n \geq 1\}$ is infinite and contained in $\det(H)$. As every proper Zariski closed subset of $\R \setminus \{0\}$ is finite, it follows that $\det(H)$ is Zariski dense in $\R \setminus \{0\}$. Hence \cref{lem:projective-determinant} gives $H = \GL(2,\R)$. The group $\GL(2,\R)$ is Zariski irreducible, being a non-empty Zariski open subset of the affine space $\R^4$, and therefore cannot be written as a union of two proper Zariski closed subsets. Since
  \begin{equation*}
    \GL(2,\R) = H = \overline{\mathcal{S}(\A) \cup \{I\}} = \overline{\mathcal{S}(\A)} \cup \{I\}
  \end{equation*}
  and $\{I\}$ is a proper Zariski closed subset, it follows that $\overline{\mathcal{S}(\A)} = \GL(2,\R)$. Therefore $\A$ is Zariski dense.
\end{proof}

The hypotheses of \cref{prop:zariski-irreducible-converse-planar} are sharp. Without contractivity, the determinants of all semigroup elements may equal $1$, confining the semigroup to $\SL(2,\R)$. In higher dimensions, even contractive tuples satisfying strong pinching and strong irreducibility can fail to be Zariski dense when the semigroup preserves additional algebraic structure, such as a quadratic form.

\begin{example} \label{ex:zariski-examples}
  (1) The contractive hypothesis in \cref{prop:zariski-irreducible-converse-planar} cannot be omitted. Take
  \begin{equation*}
    A_1 =
    \begin{pmatrix}
      2 & 0 \\
      0 & 1/2
    \end{pmatrix}
    \qquad \text{and} \qquad
    A_2 =
    \begin{pmatrix}
      \cos \theta & -\sin \theta \\
      \sin \theta & \cos \theta
    \end{pmatrix},
  \end{equation*}
  where $\theta/\pi \notin \Q$. Then $A_1,A_2 \in \SL(2,\R)$, so $\A = (A_1,A_2)$ is contained in the proper algebraic subgroup $\SL(2,\R)$ of $\GL(2,\R)$ and hence is not Zariski dense in $\GL(2,\R)$. The matrix $A_1$ is pinching, so $\A$ is strongly pinching. Moreover, $A_2$ has infinite order and no finite orbit on $\RP^1$, so $\A$ is strongly $1$-irreducible. In fact, the semigroup generated by $A_1$ and $A_2$ is Zariski dense in $\SL(2,\R)$.

  (2) The converse fails in dimension three, even for contractive tuples which are strongly pinching and strongly $k$-irreducible for all $k \in \{1,2\}$. Let
  \begin{equation*}
    Q =
    \begin{pmatrix}
      1 & 0 & 0 \\
      0 & 1 & 0 \\
      0 & 0 & -1
    \end{pmatrix},
  \end{equation*}
  fix $t > 0$, and choose $\theta \in \R$ such that $\theta/\pi \notin \Q$. Set
  \begin{equation*}
    B_1 =
    \begin{pmatrix}
      \cosh t & 0 & \sinh t \\
      0 & 1 & 0 \\
      \sinh t & 0 & \cosh t
    \end{pmatrix}
    \qquad \text{and} \qquad
    B_2 =
    \begin{pmatrix}
      \cos \theta & -\sin \theta & 0 \\
      \sin \theta & \cos \theta & 0 \\
      0 & 0 & 1
    \end{pmatrix}.
  \end{equation*}
  Choose $0 < c < 1$ so small that $\|cB_i\| < 1$ for both $i \in \{1,2\}$, and set $A_i = cB_i$. Then $\A = (A_1,A_2)$ is contractive. If $\iii = i_1 \cdots i_n \in \II^n$ and $B_\iii = B_{i_1} \cdots B_{i_n}$, then $A_\iii = c^nB_\iii$. Since $B_1^\top QB_1 = Q$ and $B_2^\top QB_2 = Q$, we have $B_\iii^\top QB_\iii = Q$, and therefore $A_\iii^\top QA_\iii = c^{2n}Q$. Hence every element of $\mathcal{S}(\A)$ belongs to the subset
  \begin{equation*}
    E = \{A \in \GL(3,\R) : A^\top Q A = sQ \text{ for some } s \in \R \setminus \{0\}\}.
  \end{equation*}
  Since $A^\top QA$ is symmetric, the condition $A \in E$ is equivalent to
  \begin{align*}
    (A^\top QA)_{12} &= (A^\top QA)_{13} = (A^\top QA)_{23} = 0, \\
    (A^\top QA)_{11} &= (A^\top QA)_{22} = -(A^\top QA)_{33}.
  \end{align*}
  Indeed, the displayed conditions state that $A^\top QA = sQ$ with $s = (A^\top QA)_{11}$, and $s$ is automatically non-zero, since $A^\top QA$ is invertible for every $A \in \GL(3,\R)$. Thus $E$ is defined by polynomial equations. It is a proper subset of $\GL(3,\R)$ because, for example,
  \begin{equation*}
    \begin{pmatrix}
      1 & 1 & 0 \\
      0 & 1 & 0 \\
      0 & 0 & 1
    \end{pmatrix}
    \notin E.
  \end{equation*}
  Hence $\A$ is not Zariski dense in $\GL(3,\R)$.

  The matrix $A_1$ has eigenvalues $ce^t$, $c$, and $ce^{-t}$, whose moduli are distinct. Thus $A_1$ is pinching, and since we are in dimension three, $\A$ is strongly pinching. To prove strong $1$-irreducibility, suppose that $\VV$ is a finite union of proper, non-trivial subspaces of $\R^3$ such that $A_i\VV = \VV$ for both $i \in \{1,2\}$. Since each $A_i$ is a non-zero scalar multiple of $B_i$, the same union $\VV$ is invariant under $B_1$ and $B_2$. Write $\VV = V_1 \cup \cdots \cup V_m$, where each $V_j$ is maximal among the subspaces contained in $\VV$. Since $B_2$ is invertible and $B_2\VV = \VV$, it permutes the subspaces $V_1,\ldots,V_m$. Hence some power $B_2^n$ fixes each $V_j$. Since $\theta/\pi \notin \Q$, the matrix $B_2^n$ is still an irrational rotation about the $x_3$-axis. The only proper, non-trivial subspaces of $\R^3$ invariant under such a rotation are $\linspan\{e_3\}$ and $\linspan\{e_1,e_2\}$. As $B_1$ likewise permutes the subspaces $V_1,\ldots,V_m$ and preserves their dimensions, it fixes each $V_j$; but $B_1$ preserves neither $\linspan\{e_3\}$ nor $\linspan\{e_1,e_2\}$, since $\sinh t \neq 0$. This contradiction shows that $\A$ is strongly $1$-irreducible.

  To prove strong $2$-irreducibility, suppose that $\VV$ is a finite union of proper, non-trivial subspaces of $\wedge^2\R^3$ such that $A_i^{\wedge 2}\VV = \VV$ for both $i \in \{1,2\}$. Then $\VV$ is also invariant under $B_1^{\wedge 2}$ and $B_2^{\wedge 2}$. The map $u \wedge v \mapsto [w \mapsto \det(u,v,w)]$ is an isomorphism from $\wedge^2\R^3$ to the dual $(\R^3)^*$, and because $\det(B_i) = 1$, it intertwines $B_i^{\wedge 2}$ with the dual action of $B_i$ for $i \in \{1,2\}$. Also, $Q$ identifies $(\R^3)^*$ with $\R^3$, and the identity $B_i^\top QB_i = Q$ shows that this identification commutes with the action of $B_i$ for $i \in \{1,2\}$. Therefore the action of $B_1,B_2$ on $\wedge^2\R^3$ is conjugate to their action on $\R^3$. The image of $\VV$ would thus be a finite union of proper, non-trivial subspaces of $\R^3$ invariant under $B_1$ and $B_2$, contradicting the previous paragraph. Hence $\A$ is strongly $2$-irreducible.
\end{example}

The same algebraic analysis applies in the non-proximal planar case. Here strong $1$-irreducibility forces a rigid structure: up to a linear conjugacy, the tuple consists of contracting similarities whose rotational parts generate an infinite group. This is the structural input used in the non-proximal case of \cref{thm:projection-two-dimensional}.

\begin{lemma} \label{lem:planar-nonproximal-conformal}
  Let $\A = (A_1,\ldots,A_N) \in \GL(2,\R)^N$ satisfy $\|A_i\| < 1$ for all $i \in \{1,\ldots,N\}$. If $\A$ is strongly $1$-irreducible but not $1$-proximal, then there exists $M \in \GL(2,\R)$ such that
  \begin{equation*}
    MA_iM^{-1} = \lambda_iO_i
  \end{equation*}
  for each $i \in \{1,\ldots,N\}$, where $\lambda_i \in \R \setminus \{0\}$ satisfies $0 < |\lambda_i| < 1$ and $O_i \in \GO(2)$. Moreover, if $\mathcal{T}$ is the group generated by $O_1,\ldots,O_N$, then $\mathcal{T}$ is infinite and $\mathcal{T}V$ is dense in $\RP^1$ for every $V \in \RP^1$.
\end{lemma}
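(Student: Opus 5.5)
Let $S = \mathcal{S}(\A) \cup \{I\}$ and let $H$ be its Zariski closure in $\GL(2,\R)$. As in the proof of \cref{prop:zariski-irreducible-converse-planar}, $H$ is an algebraic subgroup. Let $G$ be the Zariski closure of $\Pi(H)$ in $\PGL(2,\R)$; it is an algebraic subgroup containing every $\Pi(A_i)$. Suppose $G$ preserved a non-empty finite subset of $\RP^1$. The corresponding finite union of lines would then be invariant under every $A_i$, contradicting strong $1$-irreducibility. Hence \cref{lem:planar-projective-dichotomy} applies, and either $G = \PGL(2,\R)$ or some conjugate $gGg^{-1}$ lies in $\Pi(\GO(2))$.

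To exclude $G = \PGL(2,\R)$, observe that $\Pi(H)$ would then be Zariski dense in $\PGL(2,\R)$. Contractivity gives $0<|\det A_1|<1$, so $\det(H)$ is infinite and hence Zariski dense in $\R\setminus\{0\}$. Then \cref{lem:projective-determinant} yields $H=\GL(2,\R)$, and, exactly as at the end of the proof of \cref{prop:zariski-irreducible-converse-planar}, $\A$ is Zariski dense. By \cref{prop:zariski-irreducible-converse-planar}, $\A$ would then be $1$-proximal, contradicting the hypothesis.

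So there is $g$ with $gGg^{-1}\subseteq\Pi(\GO(2))$ and identity component $\Pi(\SO(2))$. Lift $g$ to $M\in\GL(2,\R)$. For each $i$, $MA_iM^{-1}$ is a nonzero real multiple $c_iO_i$ with $O_i\in\GO(2)$. Taking operator norms gives $\|MA_iM^{-1}\| = |c_i|$, and taking determinants gives $|c_i|^2 = |\det A_i| \in (0,1)$. Set $\lambda_i = c_i$; then $0<|\lambda_i|<1$.

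It remains to show that $\mathcal{T}$ is infinite. Suppose $\mathcal{T}$ were finite. Then the orbit $\mathcal{T}\,\linspan\{e_1\}$ is a finite subset of $\RP^1$ invariant under every $O_i$. Pulling it back by $M^{-1}$ gives a finite union of lines invariant under every $A_i$, contradicting strong irreducibility.

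For density, note that an infinite subgroup of $\GO(2)$ has infinite intersection with $\SO(2)$, since the index is $2$. An infinite subgroup of the circle $\SO(2)$ is dense in $\SO(2)$, because closed subgroups of the circle are either finite or the whole circle. Rotations by a dense set of angles move any line $V$ to a dense subset of $\RP^1$. Hence $\mathcal{T}V$ is dense in $\RP^1$.

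The step I expect to need most care is the identification of $\lambda_i$ and the sign conventions when lifting from $\Pi(\GO(2))$. The point to get right is that a projective class in $\Pi(\GO(2))$ lifts to a real scalar multiple of an orthogonal matrix, which is precisely what \cref{lem:planar-projective-dichotomy} asserts. The exclusion of the $\PGL(2,\R)$ case is routine given the earlier results.
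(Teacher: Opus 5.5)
Your proposal is correct and follows essentially the same route as the paper: strong irreducibility lets you apply the planar dichotomy to the Zariski closure of $\Pi(H)$; the case $\PGL(2,\R)$ is excluded by combining determinant density with the projective--determinant criterion and the forward direction of the planar converse; and the lift, the determinant identity $|\lambda_i|^2 = |\det A_i|$, the finite-orbit contradiction, and the circle-subgroup density argument all match the paper's proof. The only differences are cosmetic: you apply the dichotomy before ruling out $\PGL(2,\R)$, and you obtain density by pushing the dense subgroup $\mathcal{T} \cap \SO(2)$ through the continuous surjection $R \mapsto RV$ from $\SO(2)$ onto $\RP^1$, rather than passing to $\SO(2)/\{I,-I\}$.
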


\begin{proof}
  Since $\A$ is strongly $1$-irreducible but not $1$-proximal, \cref{prop:zariski-irreducible-converse-planar} shows that $\A$ is not Zariski dense. Let $H$ be the Zariski closure of $\mathcal{S}(\A) \cup \{I\}$ in $\GL(2,\R)$. As in the proof of \cref{prop:zariski-irreducible-converse-planar}, this Zariski closure is an algebraic subgroup of $\GL(2,\R)$. Moreover, $H = \overline{\mathcal{S}(\A)} \cup \{I\}$, so, as $\GL(2,\R)$ is Zariski irreducible and $\{I\}$ is a proper Zariski closed subset, the equality $H = \GL(2,\R)$ would force $\overline{\mathcal{S}(\A)} = \GL(2,\R)$, that is, the Zariski density of $\A$; hence $H \neq \GL(2,\R)$.

  Since $\|A_1\| < 1$, we have $0 < |\det(A_1)| < 1$, so $\{\det(A_1)^n : n \geq 1\}$ is infinite and contained in $\det(H)$; as every proper Zariski closed subset of $\R \setminus \{0\}$ is finite, $\det(H)$ is Zariski dense in $\R \setminus \{0\}$. Let $M_0$ be the Zariski closure of $\Pi(H)$ in $\PGL(2,\R)$, an algebraic subgroup. If $\Pi(H)$ were Zariski dense in $\PGL(2,\R)$, that is, if $M_0 = \PGL(2,\R)$, then \cref{lem:projective-determinant} together with the determinant density just established would give $H = \GL(2,\R)$, contradicting $H \neq \GL(2,\R)$.

  Since each $\Pi(A_i)$ belongs to $\Pi(H) \subseteq M_0$, any non-empty finite $M_0$-invariant subset of $\RP^1$ would, as $M_0$ is a group, be invariant under every $\Pi(A_i)$ and hence give a finite union of lines in $\R^2$ invariant under every $A_i$, contradicting strong $1$-irreducibility; thus $M_0$ preserves no non-empty finite subset of $\RP^1$. By \cref{lem:planar-projective-dichotomy}, either $M_0 = \PGL(2,\R)$, or there exists $g \in \PGL(2,\R)$ such that $gM_0g^{-1} \subseteq \Pi(\GO(2))$ with identity component $\Pi(\SO(2))$. The first case was excluded above, so the second holds.

  Let $M \in \GL(2,\R)$ be a lift of $g$, so that $\Pi(M) = g$. For each $i$ we have $\Pi(MA_iM^{-1}) = g\Pi(A_i)g^{-1} \in gM_0g^{-1} \subseteq \Pi(\GO(2))$, so $MA_iM^{-1}$ is a non-zero scalar multiple of an orthogonal matrix, that is, $MA_iM^{-1} = \lambda_iO_i$ with $\lambda_i \in \R \setminus \{0\}$ and $O_i \in \GO(2)$. Comparing determinants, $\lambda_i^2 = |\det(MA_iM^{-1})| = |\det(A_i)|$, and since $\|A_i\| < 1$ gives $|\det(A_i)| \le \|A_i\|^2 < 1$, we obtain $0 < |\lambda_i| < 1$.

  Finally, let $\mathcal{T}$ be the group generated by $O_1,\ldots,O_N$. If $\mathcal{T}$ were finite, then the orbit of any line under $\mathcal{T}$ would be a finite set of lines; since each $O_i$ lies in $\mathcal{T}$, this orbit is invariant under every $O_i$, and so the corresponding finite union of lines is invariant under every $MA_iM^{-1} = \lambda_iO_i$. Conjugating back by $M^{-1}$ gives a finite union of lines invariant under every $A_i$, contradicting strong $1$-irreducibility. Therefore $\mathcal{T}$ is infinite. Let $\mathcal{T}_0 = \mathcal{T} \cap \SO(2)$. Since the determinant map from $\mathcal{T}$ to $\{1,-1\}$ has kernel $\mathcal{T}_0$, the group $\mathcal{T}_0$ is infinite. The kernel of the action of $\SO(2)$ on $\RP^1$ is $\{I,-I\}$, so the image of $\mathcal{T}_0$ in the circle group $\SO(2)/\{I,-I\}$ is infinite. Its closure is therefore an infinite closed subgroup of the circle, and hence is the whole circle. Thus $\mathcal{T}_0V$ is dense in $\RP^1$ for every $V \in \RP^1$, and the same is true for $\mathcal{T}V$.
\end{proof}

\section{Subsystems} \label{sec:subsystems}

To prove the set projection theorem, \cref{thm:projection}, we apply the measure theorem, \cref{thm:projection-bernoulli}, to a subsystem obtained in two steps: we first pass to a dominated subsystem, and then, using a packing argument, refine it to a strongly separated one that meets the separation hypothesis of the measure theorem. The outcome of the two steps is the following proposition, which is proved in \cref{sec:strongly-separated-subsystems} after the dominated-subsystem construction of \cref{sec:dominated-subsystems}.

\begin{proposition} \label{prop:ssc-approx}
  Let $\Phi = (\fii_1,\ldots,\fii_N)$ be an affine iterated function system such that the associated tuple $\A = (A_1,\ldots,A_N) \in \GL(d,\R)^N$ is strongly pinching and strongly $k$-irreducible for all $k \in \{1,\ldots,d-1\}$, and let $X \subset \R^d$ be the self-affine set associated with $\Phi$. Then for every $\eps > 0$ there exists a finite set $\JJ \subset \II^*$ for which $\A' = (A_\iii)_{\iii \in \JJ}$ is $1$-dominated, and $k$-proximal and strongly $k$-irreducible for all $k \in \{1,\ldots,d-1\}$, satisfying
  \begin{equation*}
    \min\{1, \udimm(X)\} - \eps \le \dimaff(\A') < d,
  \end{equation*}
  and $\Phi' = (\fii_\iii)_{\iii \in \JJ}$ satisfies the strong separation condition.
\end{proposition}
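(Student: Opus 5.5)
Starting from the strongly pinching element, I will build the subsystem in two stages: first a dominated one with nearly full pressure, then a strongly separated refinement. Set $s_0 = \min\{1,\udimm(X)\}$ and fix $\eps>0$. If $s_0 - \eps \le 0$ the dimension bound is trivial, so assume $0 < s := s_0 - \eps/2 < s_0 \le 1$. Since $\udimm(X) \le \dimaff(\A)$, we have $P(\A,s) > 0$. Moreover $\fii^s = \|\cdot\|^s$ for $s \le 1$. Let $B = A_\kkk \in \mathcal{S}(\A)$ be the element whose exterior powers $B^{\wedge k}$ are all pinching, with attracting line $E^+$ and repelling hyperplane $E^-$ for the projective action on $\RP^{d-1}$.

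\emph{Dominated subsystem.} By the simultaneous escape property (\cref{prop:simultaneous-escape}) there are connecting words $\mathtt{u}$ with $A_{\mathtt{u}}E^+ \not\subset E^-$, and the same simultaneously in every $\wedge^k$. For large $n$ and $m$, consider words of the form $\kkk^m \iii\, \mathtt{u}\, \kkk^m$ with $\iii \in \II^n$. Choose the connecting word $\mathtt{u} = \mathtt{u}(\iii)$ from a finite list so that $A_\iii A_{\mathtt{u}}$ maps a small cone $\CC$ around $E^+$ away from $E^-$. Then the powers $B^m$ contract $\CC$ into a compact subset $\CC_0 \subset \CC^o$, so $\CC$ is a strongly invariant multicone and the subsystem is $1$-dominated. \cref{lem:domination-pressure} then gives
\[
P(\A',s) \ge \log \kappa^{2s} + \log \sum \|A_{\kkk^m\iii\mathtt{u}\kkk^m}\|^s \ge \log\kappa^{2s} - C_m + \log\sum_{\iii\in\II^n}\|A_\iii\|^s,
\]
where $C_m$ depends on $m$ and on the finite list of connecting words but not on $n$. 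The constant $\kappa$ depends only on the cone, hence is fixed once $m$ is fixed. Since $\frac1n\log\sum_{\iii \in \II^n}\|A_\iii\|^s \to P(\A,s) > 0$, we obtain $P(\A',s) > 0$ for $n$ large, and therefore $\dimaff(\A') \ge s$. To preserve strong $k$-irreducibility and $k$-proximality for every $k$, I would also add a fixed finite set of words built from $B$ and the escape words so that every exterior power of $\A'$ still has no finite invariant union of subspaces. This is arranged by again invoking simultaneous escape: take $\A'$ containing enough conjugates so that its Zariski-type orbit argument mimics that of $\A$, and the pinching element $B^{m'}$-type words give $k$-proximality. For $\dimaff(\A') < d$, note that $\dimaff(\A') \le s' < d$ as long as we retain the $s\le 1$ regime and the words are sparse, or simply because the contraction bounds give $\dimaff(\A') \le \dimaff(\A)$ by passing to a subset of $\mathcal{S}(\A)$. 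If that fails to be strict, drop words.

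\emph{Strong separation.} This is the main obstacle. Among the dominated words of length about $n$, the sets $\fii_\jjj(X)$ are roughly $\|A_\jjj\|$-thin, nearly line-like ellipsoids. I would take a maximal subfamily of pairwise disjoint sets $\fii_\jjj(X)$, chosen greedily at comparable scales $r \approx \|A_\jjj\|$. Packing counts then show that each set meets at most $r^{-O(\delta)}$ others: the long axes lie near the Furstenberg directions, and comparability comes from domination via \cref{lem:domination-pressure}. So the loss in $\sum\|A_\jjj\|^s$ is only subexponential in $n$. This is where $\udimm(X) \ge s_0$ is used, rather than the affinity dimension: $N_r(X) \ge r^{-s_0+\delta}$ guarantees that many disjoint pieces survive. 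Concretely, I would work at scale $r$, take an $r$-packing of $X$, and for each ball select one cylinder of diameter about $r$ inside it, taken from the dominated subsystem. This yields at least $r^{-s_0+\delta}$ separated cylinders with $\|A_\jjj\| \gtrsim r$, and hence $P(\cdot,s) > 0$ again by the supermultiplicativity of \cref{lem:domination-pressure}. Finally, a small shrinkage, prepending and appending $B^m$, restores strict separation together with the irreducibility and proximality properties, which are stable under adding finitely many words all chosen inside a separated region.
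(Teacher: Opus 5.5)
Your proposal has a genuine gap in the dominated-subsystem step, and the later steps need the mechanisms the paper uses rather than the ones you sketch.

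\textbf{The dominated-subsystem step fails.} The word $\kkk^m\iii\mathtt{u}\kkk^m$ has matrix $B^mA_\iii A_{\mathtt{u}}B^m$, so the connector $\mathtt{u}$ acts \emph{before} $A_\iii$. It can steer the small cone away from the near-kernel of $A_\iii$, but it has no control over where $A_\iii$ sends it. Suppose $A_\iii$ is nearly rank one with image direction close to $E^-$. For instance, take $A_1=\tfrac12R_\theta$ with $\theta/\pi\notin\Q$ and $B=A_2=\diag(\tfrac12,\tfrac14)$. The words $A_1^jA_2^{n-j}$ with $j\theta$ close to $\pi/2$ modulo $\pi$ behave this way, and this tuple satisfies all the hypotheses. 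Then, for every $\mathtt{u}$ in a fixed finite list, the set $A_\iii A_{\mathtt{u}}B^m\CC$ contains points arbitrarily close to $E^-$ as $n\to\infty$. A power $B^m$ with $m$ independent of $n$, which your pressure estimate requires, cannot move such points into $\CC_0$.

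You need a connector on the output side as well. The real content is that \emph{finitely many} prefix and suffix connectors work uniformly for all $\iii\in\II^*$. The simultaneous escape property only moves finitely many prescribed subspaces, so it does not give this. The paper proves it in \cref{prop:dominated-subsystem}:
\begin{enumerate}
\item It covers $\RP^{d-1}$ by projective polygons $Q_i$, together with words sending each $Q_i$ into $\CC_0^o$.
\item It shows that for every $D\in\GL(d,\R)$ some point of $Z_F$ lies $\eta$-deep inside some $D^{-1}Q_i$. This uses a Grassmannian compactness argument and \cref{lem:Z_F-infinite}: the orbit closure $Z_F$ of the leading eigenspace is not covered by finitely many hyperplanes.
\item A suffix from a finite list then maps the cone into that region.
\end{enumerate}

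\textbf{The separation stage needs repair.} Your claim that each $\fii_\jjj(X)$ meets only $r^{-O(\delta)}$ others is false without a separation hypothesis; overlaps may even be exact. This is exactly why the bound involves $\udimm(X)$ rather than $\dimaff(\A)$. Your packing alternative is the right idea. However, you cannot pick a cylinder of the dominated subsystem inside each packing ball, because cylinders of words carrying a prefix $\mathtt{v}$ lie in the fixed set $\fii_{\mathtt{v}}(X)$. In \cref{lem:ssc-dominated} the paper proceeds as follows:
\begin{enumerate}
\item For each ball it takes the first cylinder $\qqq$ of diameter $<r$ containing the centre, so that $\|A_\qqq\|\gtrsim r$.
\item It passes to $\overline{\hhh}(\qqq)\qqq\overline{\kkk}(\qqq)$ and pigeonholes over the finitely many connector pairs.
\item A common prefix $\hhh$ preserves disjointness because $\fii_\hhh$ is injective, and only a constant factor of the count is lost.
\item \cref{lem:domination-pressure} then gives positive pressure at $s_0$.
\end{enumerate}

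\textbf{The irreducibility step is not yet an argument.} Adding words would also destroy strong separation. The paper first includes $\ppp^{n_0}$. This forces every member of a finite invariant union in $\wedge^k\R^d$ into the finite list $\VV_k$ of subspaces spanned by eigenvectors of $A_\ppp^{\wedge k}$. It then \emph{replaces} one word $\jjj_V$ per $V\in\VV_k$ by $\jjj_V\ppp^{n_V}\qqq$, where $\qqq$ comes from simultaneous escape. The nested cylinders keep the system strongly separated.

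\textbf{The bound $\dimaff(\A')<d$ is not justified.} The inequality $\dimaff(\A')\le\dimaff(\A)$ does not give it, since $\dimaff(\A)$ may be at least $d$. The paper derives $\sum_{\iii\in\JJ}|\det A_\iii|<1$ from strong separation by a Lebesgue-measure argument.
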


\subsection{Dominated subsystems} \label{sec:dominated-subsystems}

The separation argument of \cref{sec:strongly-separated-subsystems} requires uniform projective control, which domination provides through a common strongly invariant multicone. In this subsection we show that, starting from a proximal strongly irreducible tuple, any finite family of words $\DD \subset \II^*$ can be sandwiched between prefix and suffix words of uniformly bounded length so that the resulting tuple is $1$-dominated. This extends the earlier $2 \times 2$ construction of B\'ar\'any, Jordan, K\"aenm\"aki, and Rams \cite[Lemma 5.2]{BaranyJordanKaenmakiRams2021}. 

A matrix $A \in \GL(d,\R)$ has a \emph{simple dominant eigenvalue} if it has an eigenvalue $\lambda_1$ of algebraic multiplicity one whose modulus strictly exceeds the modulus of every other eigenvalue. In that case $\lambda_1$ is real, as non-real eigenvalues of a real matrix occur in conjugate pairs of equal modulus, and its eigenspace, viewed as an element of $\RP^{d-1}$, is called the \emph{leading eigenspace} of $A$. The unique hyperplane $W \subset \R^d$ such that $W^\bot \in \RP^{d-1}$ is the eigenspace of $A^\top$ corresponding to $\lambda_1$ is called the \emph{repelling hyperplane} of $A$. Throughout, whenever a linear subspace $U \subseteq \R^d$ is intersected with or compared to subsets of $\RP^{d-1}$, it is identified with its projectivization $\{L \in \RP^{d-1} : L \subseteq U\}$; for instance, the conditions $K \cap W = \emptyset$ with $K \subseteq \RP^{d-1}$ and $V \subset W$ with $V \in \RP^{d-1}$ are read through this identification.

\begin{proposition} \label{prop:dominated-subsystem}
  Let $\A = (A_1,\ldots,A_N) \in \GL(d,\R)^N$ be $1$-proximal and strongly $1$-irreducible. For each $\iii \in \II^*$ such that $A_\iii$ has a simple dominant eigenvalue, there exist $K \geq 1$, $\rho > 0$, and functions $\overline{\hhh}, \overline{\kkk} \colon \II^* \to \bigcup_{m=0}^K \II^m$ with the following property: if $V \in \RP^{d-1}$ is the leading eigenspace of $A_\iii$ and $W \subset \R^d$ is the repelling hyperplane of $A_\iii$, then there exists a nonempty compact set $\CC_0 \subset B^o(V,\rho)$ with $V \in \CC_0^o$ such that
  \begin{equation*}
    B(V,\rho) \cap W = \emptyset
  \end{equation*}
  and
  \begin{equation*}
    A_{\overline{\hhh}(\jjj)} A_\jjj A_{\overline{\kkk}(\jjj)} B(V,\rho) \subset \CC_0^o
  \end{equation*}
  for all $\jjj \in \II^*$, where $B(V,\rho)$ and $B^o(V,\rho)$ are the closed and open balls in $\RP^{d-1}$ centered at $V$ with radius $\rho$. In particular, the hyperplane $W$ is transverse to every element of $B(V,\rho)$, and for every finite set $\DD \subset \II^*$ the tuple
  \begin{equation*}
    \A' = (A_{\overline{\hhh}(\jjj)} A_\jjj A_{\overline{\kkk}(\jjj)})_{\jjj \in \DD}
  \end{equation*}
  is $1$-dominated.
\end{proposition}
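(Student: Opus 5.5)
Fix $\iii$ with $A = A_\iii$ having a simple dominant eigenvalue $\lambda_1$, leading eigenspace $V$ and repelling hyperplane $W$, so that $V\not\subset W$. The key dynamical fact is that for any compact $K \subset \RP^{d-1}$ with $K \cap W = \emptyset$, the iterates $A^n K$ converge uniformly to $V$. Choose $\rho>0$ so small that $B(V,2\rho)\cap W=\emptyset$. Fix $n_0$ such that $A^{n_0}$ maps every compact set at distance at least $\rho'$ from $W$ into $B^o(V,\rho/2)$, where $\rho'$ will be fixed below. We then set $\CC_0 = B(V,\rho/2)$. This set is nonempty and compact, lies in $B^o(V,\rho)$, and has $V$ in its interior.

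We then handle a general word $\jjj$. We want a suffix $\overline{\kkk}(\jjj)$ and a prefix $\overline{\hhh}(\jjj)$ of bounded length with two properties. First, $A_\jjj A_{\overline{\kkk}(\jjj)} B(V,\rho)$ should stay away from $W$. Second, the prefix should be of the form $\overline{\hhh}(\jjj) = \iii^{n_0}\hhh'$, where $\hhh'$ moves the image off $W$, after which $A^{n_0}$ pulls it into $\CC_0^o$.

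The main obstacle is uniformity: $A_\jjj$ may be very degenerate, mapping $B(V,\rho)$ almost onto a single point. That point may lie near $W$, and its location depends on $\jjj$ in an uncontrolled way. To handle this I would pass to the normalized limits $\|A_\jjj\|^{-1}A_\jjj$, which range over a compact set of nonzero matrices.

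The plan is a compactness argument over the set $\mathcal{M}$ of all limits of $c A_\jjj$ with $\|cA_\jjj\|=1$, including non-invertible ones. For each $B \in \mathcal{M}$, strong $1$-irreducibility gives, via the simultaneous escape property, \cref{prop:simultaneous-escape}, a short word $\kkk$ with $A_\kkk V \not\subset \ker B$. Finitely many proper subspaces are to be avoided, so a single word handles each one. Then $B A_\kkk$ maps a neighbourhood of $V$ to a neighbourhood of some point $U_B \in \RP^{d-1}$. Using strong irreducibility again, there is a word $\hhh$ with $A_\hhh U_B \notin W$, so $A_\hhh B A_\kkk$ maps a small ball around $V$ to a compact set disjoint from $W$.

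Both conditions are open in $B$, and $\mathcal{M}$ is compact. We can therefore extract finitely many pairs $(\hhh,\kkk)$ that work on open neighbourhoods covering $\mathcal{M}$, with a common $\rho$ and a common distance $\rho'$ from $W$. Shrinking $\rho$ is harmless provided we first fix the finite cover, then $\rho$, and only then $n_0$ and $\CC_0$; I would carry these choices out in exactly that order.

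For each $\jjj$, the normalized matrix $\|A_\jjj\|^{-1}A_\jjj$ lies in the closure of the normalized semigroup. Long words have normalized matrices near $\mathcal{M}$, and finitely many exceptional short words can be treated individually, since invertible matrices need no suffix correction. We define $\overline{\kkk}(\jjj)=\kkk$ and $\overline{\hhh}(\jjj)=\iii^{n_0}\hhh$ from a cover element containing $\|A_\jjj\|^{-1}A_\jjj$. Since all lengths are bounded, this gives $K$.

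Finally, the tuple $\A'$ satisfies $A'_\jjj B(V,\rho)\subset \CC_0^o\subset B^o(V,\rho)$. Here $\CC := B(V,\rho)$ is a closed proper set with one component, and $W$ is transverse to all its elements. By the Bochi--Gourmelon characterization, $\CC$ is therefore a strongly invariant multicone, so $\A'$ is $1$-dominated.
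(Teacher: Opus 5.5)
Your argument is correct, and it reaches the proposition by a genuinely different and shorter route than the paper.

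\textbf{The paper's route.} The paper works geometrically in $\RP^{d-1}$. It covers $\RP^{d-1}$ by finitely many projective polygons $Q_1,\ldots,Q_m$, each of which is sent into $\CC_0^o$ by a fixed prefix $A_{\iii\jjj_i}$. It then combines compactness of the Grassmannian with \cref{lem:Z_F-infinite}, applied to the orbit closure $Z_F=\overline{\{A_\jjj V : \jjj\in\II^*\}}$. This yields a uniform $\eta>0$ such that, for every $D\in\GL(d,\R)$, some point of $Z_F$ lies $\eta$-deep inside some $D^{-1}Q_i$. Finitely many suffixes $\jjj_G\iii_G$, which squeeze $B(V,\rho)$ into $\eta/2$-balls meeting $Z_F$, then finish the argument.

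\textbf{Your route.} You absorb the possible degeneracy of $A_\jjj$ directly, by compactness of the normalized matrices and their possibly singular limits. For each limit $B$ you only need a suffix with $A_\kkk V\not\subset\ker B$ and a prefix with $A_\hhh BA_\kkk V\not\subset W$. After that, $A_\iii^{n_0}$ does all the contracting by \cref{lem:pinching-projective}. Two points make this work. First, the radius around $V$ is uniform over a neighbourhood of $B$, by joint continuity of $(B',L)\mapsto A_\hhh B'A_\kkk L$ at $(B,V)$. Second, your ordering of the choices (finite cover, then $\rho$ and $\rho'$, then $n_0$ and $\CC_0$) keeps everything consistent.

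\textbf{What each approach buys.} Your route avoids polygons, $Z_F$ and \cref{lem:Z_F-infinite} entirely. It also works verbatim with $\mathcal{M}$ replaced by the whole unit sphere of $\Mat_d(\R)$. That makes the long-word versus short-word discussion unnecessary and, like the paper, gives uniformity over all $D\in\GL(d,\R)$. It uses only $1$-irreducibility, not strong $1$-irreducibility, together with the given matrix having a simple dominant eigenvalue. The paper's route is a direct higher-dimensional extension of the planar construction of B\'ar\'any, Jordan, K\"aenm\"aki, and Rams, and keeps the whole picture in projective space.

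\textbf{One correction.} Do not obtain the words $\kkk$ and $\hhh$ from \cref{prop:simultaneous-escape}. There are two reasons:
\begin{itemize}
\item That proposition needs strong $k$-irreducibility for every $k$, which is not assumed here.
\item It only yields $A_\kkk V\neq W'$ for finitely many prescribed $W'$, not $A_\kkk V\not\subset\ker B$; the latter must avoid infinitely many lines when $\dim\ker B\ge 2$.
\end{itemize}
Use irreducibility directly instead. If $A_\kkk V\subset\ker B$ for all $\kkk\in\II^*$, then the span of $\{A_\kkk V:\kkk\in\II^*\}$ is a nonzero subspace invariant under every $A_i$. It is proper because $B\neq 0$, which is a contradiction. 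The same argument, with $U_B$ in place of $V$ and $W$ in place of $\ker B$, produces $\hhh$.

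Also, your stated ``first property'' should concern $A_\hhh A_\jjj A_\kkk B(V,\rho)$ rather than $A_\jjj A_\kkk B(V,\rho)$ staying a fixed distance from $W$. That is what your detailed plan actually establishes.
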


The next lemma isolates the covering obstruction used in the proof of \cref{prop:dominated-subsystem}: under strong $1$-irreducibility, an invariant subset of projective space cannot be covered by finitely many proper linear subspaces.

\begin{lemma} \label{lem:Z_F-infinite}
  Let $\A = (A_1,\ldots,A_N) \in \GL(d,\R)^N$ be strongly $1$-irreducible. If $\mathcal{F}$ is a collection of proper linear subspaces of $\R^d$ and $\emptyset \ne Z_F \subseteq \RP^{d-1}$ such that $Z_F = \bigcup_{i \in \II} A_i Z_F$ and
  \begin{equation*}
    Z_F \subseteq \bigcup_{V \in \mathcal{F}} V,
  \end{equation*}
  then $\mathcal{F}$ is infinite.
\end{lemma}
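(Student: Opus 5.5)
We argue by contradiction: suppose $\mathcal{F}$ is finite, and produce a finite union of proper non-trivial subspaces invariant under every $A_i$. Strong $1$-irreducibility forbids exactly such a union.

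First we normalize $\mathcal{F}$. Discard the zero subspace, since its projectivization is empty and contributes nothing to the cover. If $\mathcal{F}$ is finite, consider the finite family of all proper non-trivial subspaces $U$ with the following property: $U$ is spanned by elements of $Z_F$, i.e.\ $U = \linspan(Z_F \cap U)$, and $U$ is contained in some member of $\mathcal{F}$. Among the finite coverings of $Z_F$ by proper subspaces, choose one minimizing first the number of subspaces and then the sum of their dimensions. Replacing each member $U$ by $\linspan(Z_F \cap U)$ keeps it a cover, so in a minimal cover $\mathcal{F}_0 = \{U_1,\ldots,U_m\}$ every $U_j$ is spanned by points of $Z_F$ lying in $U_j$. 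Minimality also rules out redundancy: no $U_j$ can be dropped.

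Next we show that $\mathcal{F}_0$ is permuted by each $A_i$. Since $Z_F = \bigcup_i A_iZ_F$, we have $A_iZ_F \subseteq Z_F$ for every $i$. Hence $A_i^{-1}U_1,\ldots,A_i^{-1}U_m$ cover $A_i^{-1}$ applied to $Z_F \supseteq A_iZ_F$. That only controls $Z_F$ itself, not the preimage, so we argue more carefully by passing to a bigger family.

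Let $\mathcal{G}$ be the collection of all subspaces of the form $\linspan(Z_F \cap U)$, where $U$ ranges over finite intersections of members of $\mathcal{F}_0$ and their images $A_\kkk U_j$. Instead of that route, a cleaner path uses the descending chain condition on the Zariski topology of $\RP^{d-1}$. Let $Y$ be the Zariski closure of $Z_F$ in $\RP^{d-1}$. Then $Y$ is contained in the finite union $\bigcup_j U_j$ of projective linear subspaces, each of which is Zariski closed. Since each $A_i$ is a Zariski homeomorphism of $\RP^{d-1}$ and $A_iZ_F \subseteq Z_F$, we get $A_iY \subseteq Y$. A Zariski closed set mapped into itself by an algebraic automorphism is mapped onto itself: the descending chain $Y \supseteq A_iY \supseteq A_i^2Y \supseteq \cdots$ stabilizes, so $A_i^{n}Y = A_i^{n+1}Y$ for some $n$, and applying $A_i^{-n}$ gives $A_iY = Y$. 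Thus $A_i$ permutes the finitely many irreducible components of $Y$, since it maps components to components.

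Each component $Y_r$ is an irreducible subset of $\bigcup_j U_j$, hence lies in a single $U_j$. Let $L_r = \linspan Y_r$ be its linear span in $\R^d$. This is a proper subspace, being contained in $U_j$, and it is non-trivial. Since $A_i$ permutes the components, it also permutes the spans $L_r$. Therefore $\VV = \bigcup_r L_r$ is a finite union of proper non-trivial subspaces with $A_i\VV = \VV$ for every $i$, contradicting strong $1$-irreducibility.

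The main obstacle is the invariance step, namely upgrading the forward relation $A_iZ_F \subseteq Z_F$ to an exact permutation of finitely many subspaces. Naive arguments with the cover $\mathcal{F}$ alone fail because the cover need not be invariant. This is why we pass to the Zariski closure $Y$, so that the descending chain argument and the finiteness of irreducible components, as recorded in \cref{sec:zariski-escape} for $\GL(d,\R)$, carry over verbatim to $\RP^{d-1}$. A minor check is that the components are non-empty, which follows from $Z_F \neq \emptyset$.
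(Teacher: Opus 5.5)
Your final argument is correct, but it reaches the invariant family by a different route than the paper.

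\emph{The paper's route.} The paper never forms a Zariski closure. It uses the descending chain condition to choose a set $\mathcal{W}$ that is minimal under inclusion among finite unions of proper linear subspaces containing $Z_F$. Since $Z_F \subseteq \mathcal{W} \cap A_i^{-1}\mathcal{W}$, which is again such a finite union, minimality gives $A_i\mathcal{W} \subseteq \mathcal{W}$. The chain $A_i^n\mathcal{W}$ then upgrades this to $A_i\mathcal{W} = \mathcal{W}$, so $\mathcal{W}$ itself is the forbidden finite union.

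\emph{Your route.} You take the canonical closed set $Y = \overline{Z_F}$, where forward invariance $A_iY \subseteq Y$ is automatic, and the same chain argument gives $A_iY = Y$. The price is that a priori $Y$ is only an algebraic subset of $\bigcup_j U_j$, not a union of linear subspaces. So you need three further facts: the finite decomposition of $Y$ into irreducible components on $\RP^{d-1}$, the fact that an irreducible set inside a finite union of closed sets lies in one of them, and a passage to linear spans.

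\emph{Comparison.} The paper's argument is more economical, needing only the descending chain condition and the intersection trick. Yours is more canonical: it shows that the invariant family can be taken to be the spans of the irreducible components of $\overline{Z_F}$.

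\emph{Clean-up needed.} Delete the normalization paragraph and the abandoned attempts to show that $\mathcal{F}_0$ is permuted, including the family $\mathcal{G}$. They are unused, and the normalization paragraph contains a false claim: the family of subspaces spanned by points of $Z_F$ and contained in members of $\mathcal{F}$ need not be finite. If $Z_F$ has infinitely many points in a plane belonging to $\mathcal{F}$, the lines they span already form an infinite family. Your final argument only needs $Y \subseteq \bigcup_{U \in \mathcal{F}} U$, which holds for $\mathcal{F}$ itself because a finite union of projective linear subspaces is Zariski closed.
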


\begin{proof}
  Suppose to the contrary that $Z_F$ is contained in a finite union of proper linear subspaces of $\R^d$, viewed as a subset of $\RP^{d-1}$. Like the Zariski topology on $\GL(d,\R)$, the Zariski topology on $\RP^{d-1}$, whose closed sets are the common zero sets of finite collections of homogeneous polynomials, satisfies the descending chain condition, so every non-empty family of Zariski closed subsets of $\RP^{d-1}$ has a minimal element with respect to inclusion. Since such finite unions are Zariski closed subsets of $\RP^{d-1}$, there exists a set $\mathcal{W} \subseteq \RP^{d-1}$ that is minimal with respect to inclusion among the finite unions of proper linear subspaces satisfying $Z_F \subseteq \mathcal{W}$. Fix $i \in \II$. Since $A_iZ_F \subseteq Z_F \subseteq \mathcal{W}$, we have $Z_F \subseteq A_i^{-1}\mathcal{W}$, and hence $Z_F \subseteq \mathcal{W} \cap A_i^{-1}\mathcal{W}$. Now $A_i^{-1}\mathcal{W}$ is again a finite union of proper linear subspaces, and so is $\mathcal{W} \cap A_i^{-1}\mathcal{W}$, since the intersection of two linear subspaces is a linear subspace. By the minimality of $\mathcal{W}$, we obtain $\mathcal{W} = \mathcal{W} \cap A_i^{-1}\mathcal{W}$, and hence
  \begin{equation*}
    A_i\mathcal{W} \subseteq \mathcal{W}
  \end{equation*}
  for all $i \in \II$. Therefore, for each $i \in \II$, the sets $A_i^n\mathcal{W}$, $n \geq 0$, form a descending chain of finite unions of proper linear subspaces. By the descending chain condition, there exists $n_i \geq 0$ such that $A_i^{n_i}\mathcal{W} = A_i^{n_i+1}\mathcal{W}$. Since $A_i$ is invertible, applying $A_i^{-n_i}$ yields $\mathcal{W} = A_i\mathcal{W}$ for all $i \in \II$, which contradicts the strong $1$-irreducibility, since $\mathcal{W} \supseteq Z_F$ is non-empty and is therefore a finite union of proper and non-trivial linear subspaces of $\R^d$. This finishes the proof.
\end{proof}

We also record the following elementary contraction property of a matrix with a simple dominant eigenvalue. It will be used in the proof of \cref{prop:dominated-subsystem} and later in the construction of strongly separated subsystems.

\begin{lemma} \label{lem:pinching-projective}
  Let $A \in \GL(d,\R)$ have a simple dominant eigenvalue $\lambda_1$, let $V \in \RP^{d-1}$ be the eigenspace of $A$ corresponding to $\lambda_1$, and let $W \subset \R^d$ be the repelling hyperplane of $A$. Then $W$ is $A$-invariant and $V \not\subset W$. If $K \subset \RP^{d-1}$ is compact and $K \cap W = \emptyset$, then for every open neighborhood $U$ of $V$ there exists $n_0 \ge 1$ such that
  \begin{equation*}
    A^n K \subset U
  \end{equation*}
  for all $n \ge n_0$. In particular, if $\eps > 0$ and $B(V,\eps) \cap W = \emptyset$, then
  \begin{equation*}
    A^n B(V,\eps) \subset B^o(V,\eps)
  \end{equation*}
  for all $n \ge n_0$.
\end{lemma}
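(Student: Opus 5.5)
The plan is to work in a basis adapted to the spectral decomposition of $A$. Let $v$ span $V$, and let $E$ be the sum of the generalized eigenspaces of $A$ for the remaining eigenvalues $\lambda_2,\ldots,\lambda_d$, so that $\R^d = V \oplus E$ with both summands $A$-invariant. The first step is to identify $W$ with $E$. Since $W^\bot$ is the $\lambda_1$-eigenline of $A^\top$, choose $w$ with $A^\top w = \lambda_1 w$. For any $x$ in a generalized eigenspace for $\lambda_j \ne \lambda_1$ we have $(A-\lambda_j)^m x = 0$, and then
\begin{equation*}
  0 = \la w,(A-\lambda_j)^m x\ra = (\lambda_1-\lambda_j)^m\la w,x\ra .
\end{equation*}
This gives $E \subseteq w^\bot = W$, and equality holds since both have dimension $d-1$. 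Consequently $W$ is $A$-invariant. Moreover $V \not\subset W$, because $V \cap E = \{0\}$ by the directness of the sum.

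The second step is the convergence. Write $x = a v + e$ with $e \in E$. The line $\linspan\{x\}$ avoids $W$ exactly when $a \ne 0$. Since $K$ is compact and disjoint from $W$, normalizing $|x| = 1$ for representatives of points of $K$ gives a uniform bound $|e|/|a| \le C_K$. Then $A^n x = a\lambda_1^n v + A^n e$. Since the spectral radius of $A|_E$ is $\max_{j \ge 2}|\lambda_j| < |\lambda_1|$, Gelfand's formula yields $\|A^n|_E\| \le C\theta^n|\lambda_1|^n$ for some $\theta < 1$. It follows that
\begin{equation*}
  d(A^n\linspan\{x\},V) \le \frac{|A^n e|}{|a\lambda_1^n|} \le C C_K\theta^n
\end{equation*}
uniformly on $K$, using $d(L,V) \le |y - v|/|v|$-type estimates for the sine of the angle. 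Hence for any open $U \ni V$, which contains some ball $B^o(V,\delta)$, we get $A^nK \subset U$ for all $n \ge n_0$.

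For the final assertion, take $K = B(V,\eps)$, which is compact and by hypothesis disjoint from $W$, and $U = B^o(V,\eps)$. The only mildly delicate point in the argument is the uniformity constant $C_K$, which follows from compactness together with the continuity of $x \mapsto |e|/|a|$ on the closed set of unit vectors representing $K$, where $a$ is bounded away from $0$. I expect no real obstacle here.
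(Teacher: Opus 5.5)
Your proof is correct. The convergence step uses the same mechanism as the paper. The paper identifies $\RP^{d-1}\setminus W$ with $W$ through the chart $\linspan\{v+w\}\mapsto w$, in which $A$ acts as $T=\lambda_1^{-1}A|W$. It then uses $T^n\to 0$ on the compact image of $K$ together with continuity of the inverse chart. Your quotient $e/a$ is exactly this chart coordinate. Your estimate $d(\linspan\{v+z\},V)\le |z|$ for unit $v$ replaces the continuity step and gives an explicit exponential rate $CC_K\theta^n$.

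The real difference is in the first part. The paper proves $AW\subseteq W$ directly from $A^\top w^\bot=\lambda_1 w^\bot$. It then shows $V\not\subset W$ by a block-triangular conjugation and a multiplicity count, and separately argues that $A|W$ has spectral radius below $|\lambda_1|$. You instead identify $W$ with the complementary invariant subspace $E$ from the primary decomposition, which gives invariance, transversality, and the spectral gap on $W$ all at once.

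One small point of rigour. For non-real $\lambda_j$, the factors $(A-\lambda_j)^m$ and the generalized eigenspaces live in $\C^d$. You can either complexify, using the bilinear extension of the inner product, and take real points. Or, more simply, set $E=\ker p(A)$ with $p(t)=\det(tI-A)/(t-\lambda_1)$. This is a real polynomial with $p(\lambda_1)\neq 0$ by simplicity of $\lambda_1$. Then $\R^d=V\oplus E$ by Cayley--Hamilton, and $0=\la w,p(A)x\ra=p(\lambda_1)\la w,x\ra$ for every $x\in E$.
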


\begin{proof}
  Note that $\lambda_1$ is also a simple dominant eigenvalue of $A^\top$. Let $w^\bot \in W^\bot \setminus \{0\}$. Since
  \begin{equation*}
    \la Aw, w^\bot \ra = \la w, A^\top w^\bot \ra = \lambda_1 \la w, w^\bot \ra = 0
  \end{equation*}
  for all $w \in W$, we see that $AW \subseteq W$.

  We next prove that $V \not\subset W$. Let $v \in V \setminus \{0\}$. Since $Av = \lambda_1 v$, there exist $M \in \GL(d,\R)$, $B \in \GL(d-1,\R)$, and $a \in \R^{d-1}$ such that $M^{-1}(\mathbf{0},x) = v$ for some $x \neq 0$, and
  \begin{equation} \label{eq:block-form}
    MAM^{-1} =
    \begin{pmatrix}
      B & \mathbf{0} \\
      a^\top & \lambda_1
    \end{pmatrix},
  \end{equation}
  where $\mathbf{0} = (0,\ldots,0) \in \R^{d-1}$. Writing $(M^{-1})^\top w^\bot = (u,y)$ with $u \in \R^{d-1}$ and $y \in \R$, the identity $A^\top w^\bot = \lambda_1 w^\bot$ gives
  \begin{equation*}
    (MAM^{-1})^\top (u,y) = \lambda_1 (u,y).
  \end{equation*}
  If $y = 0$, then $(u,y) \neq 0$ implies that $u \neq 0$, and hence $(B^\top u,0) = (\lambda_1 u,0)$. Thus $\lambda_1$ is an eigenvalue of $B^\top$, and therefore also of $B$. By \cref{eq:block-form}, it follows that $\lambda_1$ has algebraic multiplicity at least $2$ as an eigenvalue of $A$, which contradicts the simplicity of $\lambda_1$. Hence $y \neq 0$, and therefore
  \begin{equation*}
    \la v, w^\bot \ra = \la Mv, (M^{-1})^\top w^\bot \ra = \la (\mathbf{0},x), (u,y) \ra = xy \neq 0.
  \end{equation*}
  Thus $v \notin W$, proving that $V \not\subset W$.

  Since $\dim(W) = d-1$ and $v \notin W$, we have $\R^d = \linspan\{v\} \oplus W$. Therefore every element of $\RP^{d-1} \setminus W$ can be written uniquely in the form $\linspan\{v + w\}$ with $w \in W$, and this identifies $\RP^{d-1} \setminus W$ with the affine space $W$. Write $\phi \colon \RP^{d-1} \setminus W \to W$ for the affine chart given by $\phi(\linspan\{v + w\}) = w$. Since $Av = \lambda_1 v$ and $AW \subseteq W$, the induced map in this affine chart is
  \begin{equation*}
    w \mapsto \lambda_1^{-1}Aw.
  \end{equation*}
  Because $W$ is $A$-invariant, the restriction $A|W$ is a linear endomorphism of $W$. After complexifying if necessary, every eigenvalue of $A|W$ is an eigenvalue of $A$. If $\lambda_1$ were an eigenvalue of $A|W$, there would be a non-zero $w \in W$ with $Aw = \lambda_1 w$; since $\lambda_1$ is simple, its eigenspace is $V$, so $\linspan{w} = V \subset W$, contradicting $V \not\subset W$. Therefore every eigenvalue of $A|W$ has modulus strictly smaller than $|\lambda_1|$. It follows that the linear map $T = \lambda_1^{-1}A|W$ has spectral radius strictly smaller than $1$, so $T^n$ converges to $0$ in operator norm. Since $\phi^{-1}$ is continuous and $U$ is an open neighborhood of $V = \phi^{-1}(0)$, there exists an open neighborhood $O \subset W$ of $0$ such that $\phi^{-1}(O) \subset U$. Since $\phi(K)$ is compact in $W$, we have $T^n\phi(K) \subset O$ for all sufficiently large $n$. Therefore $A^n K = \phi^{-1}(T^n\phi(K)) \subset U$ for all sufficiently large $n$. This proves the first claim. The second follows by applying the first claim with $K = B(V,\eps)$ and $U = B^o(V,\eps)$.
\end{proof}

Equipped with \cref{lem:Z_F-infinite,lem:pinching-projective}, we now construct the dominated subsystem.

\begin{proof}[Proof of \cref{prop:dominated-subsystem}]
  Fix $\iii \in \II^*$ such that $A_\iii$ has a simple dominant eigenvalue, and let $\lambda_1$ denote this eigenvalue. The $1$-proximality of $\A$ is assumed only to guarantee the existence of such $\iii$; recall \cref{sec:irred}. Let $V_1 \in \RP^{d-1}$ be the eigenspace of $A_\iii$ corresponding to $\lambda_1$, and let $W$ be the repelling hyperplane of $A_\iii$. By \cref{lem:pinching-projective}, the subspace $W$ is $A_\iii$-invariant and $V_1 \not\subset W$. Since $\A$ is $1$-irreducible, for each $V \in \RP^{d-1}$ there exists $\jjj_V \in \II^*$ such that $A_{\jjj_V} V \not\subset W$. Indeed, if $V \not\subset W$, then we may take $\jjj_V = \varnothing$. Otherwise, if there were $V \subset W$ such that $A_\jjj V \subset W$ for all $\jjj \in \II^*$, then $\linspan\{A_\jjj V : \jjj \in \II^*\} \subseteq W$ would be an invariant proper linear subspace. By continuity, we thus see that for each $V \in \RP^{d-1}$ there is $\eps_V > 0$ such that
  \begin{equation} \label{eq:dominated-subsystem3}
    A_{\jjj_V} B(V,\eps_V) \cap [W]_{\eps_V} = \emptyset,
  \end{equation}
  where the $\eps$-neighborhood $[W]_\eps$ of $W$ in $\RP^{d-1}$ is the closure of $\RP^{d-1} \setminus B(W^\bot,\sqrt{1-\eps^2})$. Equivalently, $[W]_\eps$ is the closed $\eps$-neighborhood of $\{U \in \RP^{d-1} : U \subseteq W\}$ in the metric $d$ on $\RP^{d-1}$ defined in \cref{eq:RP-metric}, so that the bracket notation $[\,\cdot\,]_\eps$ is consistent with the neighborhoods $[\partial Q_i]_\eps$ and $[L^i_j]_\eps$ of boundary subspaces introduced below. In local affine charts, open projective polygons form a basis for the topology of $\RP^{d-1}$. Therefore, for each $V \in \RP^{d-1}$ we may choose an open projective polygon $Q_V$ such that $V \subset Q_V \subset B(V,\eps_V)$. Since $\{Q_V : V \subset W\}$ is an open cover of $\{V \in \RP^{d-1} : V \subset W\}$ which is compact in $\RP^{d-1}$, there are finitely many lines $V_2,\ldots,V_m \in \RP^{d-1}$ such that $W \subset \bigcup_{i=2}^m Q_{V_i}$. To simplify notation, we write $Q_i = Q_{V_i}$ and $\jjj_i = \jjj_{V_i}$ for all $i \in \{2,\ldots,m\}$. It follows that there is $0 < \delta < \min_{i \in \{2,\ldots,m\}} \eps_{V_i}$ such that
  \begin{equation*} 
    [W]_\delta \subset \bigcup_{i = 2}^m Q_{i}
  \end{equation*}
  and, by \cref{eq:dominated-subsystem3},
  \begin{equation} \label{eq:dominated-subsystem5}
    A_{\jjj_{i}}Q_{i} \cap [W]_\delta = \emptyset
  \end{equation}
  for all $i \in \{2,\ldots,m\}$. We also write $\jjj_1 = \varnothing$, in which case $A_{\jjj_1}$ is the identity. Since $[W]_\delta$ is a neighborhood of $W$, the set $K = \overline{\RP^{d-1} \setminus [W]_\delta} \cup \{V_1\}$ is a compact subset of the affine chart $\RP^{d-1} \setminus W$. After identifying this chart with $\R^{d-1}$, we may choose an open Euclidean polytope whose closure is compact and contains $K$. Viewing this polytope in projective coordinates, we obtain a projective polygon $Q_1$ such that $K \subseteq Q_1$ and $\overline{Q_1} \cap W = \emptyset$. Then clearly $\{Q_i\}_{i=1}^m$ covers the whole projective space $\RP^{d-1}$. Since $V_1 \not\subset W$, we may choose numbers $0 < \rho < \eps_1 \le \eps_{V_1}$ so small that $B(V_1,\eps_1) \cap W = \emptyset$. Since $\overline{Q_1}$ is compact and disjoint from $W$, \cref{lem:pinching-projective} implies that, after replacing $\iii$ by a sufficiently long concatenation $\iii\iii\cdots\iii$ in the beginning of the proof if needed, which changes neither the property that $A_\iii$ has a simple dominant eigenvalue nor its leading eigenspace or repelling hyperplane, since $A_\iii^n$ has the simple dominant eigenvalue $\lambda_1^n$, with the same leading eigenspace $V_1$ and repelling hyperplane $W$, we may assume that
  \begin{equation*}
    A_\iii \overline{Q_1} \subseteq B^o(V_1,\rho).
  \end{equation*}
  Therefore, by \cref{eq:dominated-subsystem5}, we have
  \begin{equation} \label{eq:dominated-subsystem6}
    A_{\iii\jjj_i} Q_i \subseteq A_{\iii}(\RP^{d-1} \setminus [W]_\delta) \subseteq A_\iii Q_1 \subseteq B^o(V_1,\rho)
  \end{equation}
  for all $i \in \{2,\ldots,m\}$, while for $i = 1$ we have $A_{\iii\jjj_1} Q_1 = A_\iii Q_1 \subseteq A_\iii \overline{Q_1} \subseteq B^o(V_1,\rho)$ directly. Define
  \begin{equation*}
    \CC_0 = A_\iii \overline{Q_1}.
  \end{equation*}
  Then $\CC_0$ is compact, $\CC_0 \subset B^o(V_1,\rho)$, and $V_1 \in \CC_0^o$. Moreover,
  \begin{equation*}
    A_{\iii\jjj_i} Q_i \subset \CC_0^o
  \end{equation*}
  for all $i \in \{1,\ldots,m\}$. See \cref{fig:Proof} for an illustration in $\RP^2$.

  \definecolor{darkgreen}{rgb}{0.0, 0.4, 0.0}
  \begin{figure}[t]
    \begin{tikzpicture}[scale=3.8, line cap=round, line join=round, >=Latex]

      \def\r{1} 
      \def\ry{0.22} 
      \def\h{0.9} 

      \fill[gray!10] (-1.55,-0.3) -- (1.15,-0.3) -- (1.55,0.3) -- (-1.15,0.3) -- (-1.55,-0.3);

      \node at (-1.3,-0.2) {$W$};

      \draw[dashed, black!60] (0,0) ellipse ({\r} and {\ry});

      \filldraw[thick, draw=darkgreen, fill=darkgreen!30, opacity=0.8] (-\r,0) -- (-\r,0.09) arc (180:355:{\r} and {\ry}) -- (\r,0) arc (360:180:{\r} and {\ry});

      \node[darkgreen, right] at (\r,0) {$[W]_{\delta}$};

      \filldraw[thick, draw=blue!90, fill=blue!30, opacity=0.7] (-0.48\r,-0.19) -- (-0.48\r,-0.08) .. controls ({-0.52*\r}, {-0.0}) and ({-0.56*\r}, {0.05}) .. (-0.6\r, 0.1) .. controls ({-0.67*\r}, {0.12}) and ({-0.73*\r}, {0.12}) .. (-0.8\r,0.12) -- (-0.91\r,0.02) -- (-0.92\r,-0.08) .. controls ({-0.8*\r}, {-0.15}) and ({-0.6*\r}, {-0.18}) .. (-0.48\r,-0.19);

      \node[blue!90] at ({-0.72*\r}, {-0.0}) {$Q_i$};

      \filldraw[thick, draw=blue!90, fill=blue!30, opacity=0.7] (-0.25\r,0.4) -- (-0.24\r,0.48) .. controls ({-0.25*\r}, {0.53}) and ({-0.25*\r}, {0.55}) .. (-0.3\r, 0.6) .. controls ({-0.34*\r}, {0.62}) and ({-0.38*\r}, {0.62}) .. (-0.45\r,0.6) -- (-0.5\r,0.54) .. controls ({-0.52*\r}, {0.51}) and ({-0.52*\r}, {0.48}) .. (-0.5\r,0.45) .. controls ({-0.45*\r}, {0.4}) and ({-0.3*\r}, {0.38}) .. (-0.25\r,0.4);

      \draw[->,thick, blue!90] ({-0.55*\r}, {-0.02}) .. controls ({-0.45*\r}, {0.1}) and ({-0.35*\r}, {0.36}) .. ({-0.35*\r}, {0.48}) node[pos=0.6, right] {$A_{\mathtt{j}_{i}}$};

      \filldraw[thick, draw=blue!90, fill=blue!30, opacity=0.7] (0.5\r,0.5) -- (0.5\r,0.53) .. controls ({0.51*\r}, {0.55}) and ({0.48*\r}, {0.57}) .. (0.45\r, 0.57) .. controls ({0.42*\r}, {0.58}) and ({0.38*\r}, {0.57}) .. (0.35\r,0.55) -- (0.34\r,0.52) .. controls ({0.34*\r}, {0.5}) and ({0.35*\r}, {0.48}) .. (0.36\r,0.46) .. controls ({0.4*\r}, {0.46}) and ({0.45*\r}, {0.46}) .. (0.5\r,0.5);

      \draw[->,thick, blue!90] ({-0.3*\r}, {0.5}) .. controls ({-0.15*\r}, {0.55}) and ({0.3*\r}, {0.55}) .. ({0.43*\r}, {0.52}) node[pos=0.65, below left] {$A_{\mathtt{i}}$};

      \draw[thick,domain=0:180,smooth,variable=\t]
      plot ({\r*cos(\t)}, {\h*sin(\t)});

      \coordinate (O) at (0,0);

      \coordinate (Wperp) at (0,\h);

      \coordinate (V1) at ({0.55*\r}, {0.55*\h});

      \coordinate (V2) at ({0.8*\r}, {-0.13});

      \coordinate (Vm) at ({0.3*\r}, {-0.21});

      \coordinate (Vi) at ({-0.8*\r}, {-0.13});

      \draw[->,thick] (O) -- (Wperp) node[pos=0.98, above=2] {$W^{\perp}$};

      \draw[->,thick] (O) -- (V1) node[pos=0.5, above] {$V_1$\phantom{x}};

      \draw[->,thick] (O) -- (V2) node[pos=0.95,below right] {$V_2$};

      \draw[->,thick] (O) -- (Vm) node[pos=0.98,below] {$V_m$};

      \draw[->,thick] (O) -- (Vi) node[pos=0.93, below left] {$V_i$};

      \node[red] at (-0.7,0.22) {$B(V_i, \varepsilon_{V_i})$};
      \draw[red, thick]
      ({-0.95*\r}, {-0.06}) .. controls ({-0.88*\r}, {0.25}) and ({-0.5*\r}, {0.22}) .. ({-0.43*\r}, {-0.2}) ;
      \draw[red, thick]
      ({-0.55*\r}, {-0.18}) .. controls ({-0.43*\r}, {0.25}) and ({0.05*\r}, {0.23}) .. ({0.15*\r}, {-0.22}) ;
      \draw[red, thick]
      ({-0.05*\r}, {-0.22}) .. controls ({0.08*\r}, {0.2}) and ({0.47*\r}, {0.2}) .. ({0.6*\r}, {-0.17}) ;
      \draw[red, thick]
      ({0.5*\r}, {-0.19}) .. controls ({0.6*\r}, {0.3}) and ({0.84*\r}, {0.25}) .. ({0.94*\r}, {-0.07}) ;
      \draw[red, thick]
      ({0.88*\r}, {-0.1}) .. controls ({0.88*\r}, {0.2}) and ({0.95*\r}, {0.22}) .. ({0.97*\r}, {0.2}) ;
      \draw[red, thick]
      ({-0.98*\r}, {0.15}) .. controls ({-0.96*\r}, {0.23}) and ({-0.92*\r}, {0.23}) .. ({-0.9*\r}, {-0.09}) ;
      \begin{scope}[xshift=14.6,yshift=13.8]
        \draw[red, thick, rotate=60] (0, 0) ellipse (0.15 and 0.2);
      \end{scope}
      \node[red] at (0.35,0.71) {$B(V_1, \varepsilon_{1})$\phantom{x}};

      \draw[thick] (-\r,0) arc (180:360:{\r} and {\ry});
    \end{tikzpicture}
    \caption{Illustration for the proof of \cref{prop:dominated-subsystem} in $\RP^2$.}\label{fig:Proof}
  \end{figure}

  Since the sets $Q_i$ are projective polygons, their sides are formed by $(d-1)$-dimensional linear subspaces $L^i_1,\ldots,L^i_{M_i}$ of $\R^d$. The $\eps$-neighborhood of the boundary subspaces of $Q_i$ is
  \begin{equation*} 
    [\partial Q_i]_\eps \subseteq \bigcup_{j = 1}^{M_i} [L^i_j]_\eps.
  \end{equation*}
  If $D \in \GL(d,\R)$, then we write
  \begin{equation*} 
    Q_i^D = D^{-1}Q_i \cap B(V_1,\rho)
  \end{equation*}
  for all $i \in \{1,\ldots,m\}$. Since the sets $D^{-1}Q_i$ cover $\RP^{d-1}$ and are open, the sets $Q_i^D$ cover $B(V_1,\rho)$ and are open as subsets of $B(V_1,\rho)$. By \cref{eq:dominated-subsystem6},
  \begin{equation} \label{eq:dominated-subsystem9}
    A_{\iii\jjj_i}DQ_i^D \subseteq A_{\iii\jjj_i}Q_i \subseteq B^o(V_1,\rho)
  \end{equation}
  for all $i \in \{1,\ldots,m\}$.

  Define
  \begin{equation*}
    Z_F = \overline{\{A_\jjj V_1 : \jjj \in \II^*\}}
  \end{equation*}
  and notice that $Z_F = \bigcup_{i \in \II} A_i Z_F$. Indeed, since $A_\ppp Z_F \subseteq Z_F$ for every word $\ppp \in \II^*$, we have $\bigcup_{i \in \II} A_i Z_F \subseteq Z_F$. Conversely, every point of $\{A_\jjj V_1 : \jjj \in \II^*\}$ belongs to $\bigcup_{i \in \II} A_i Z_F$. If $\jjj \ne \varnothing$, then $\jjj = i\ppp$ for some $i \in \II$ and $\ppp \in \II^*$, so $A_\jjj V_1 = A_i(A_\ppp V_1) \in A_i Z_F$. If $\jjj = \varnothing$, then $A_\jjj$ is the identity by convention. Writing $\iii = i\ppp$ with $i \in \II$ and $\ppp \in \II^*$, we obtain $A_\jjj V_1 = V_1 = A_\iii V_1 = A_i(A_\ppp V_1) \in A_i Z_F$. Since $\bigcup_{i \in \II} A_i Z_F$ is closed, the claimed equality follows. 
  
  Let us next show that there exists $\eta > 0$ such that for every $D \in \GL(d,\R)$ there is $i \in \{1,\ldots,m\}$ for which
  \begin{equation} \label{eq:dominated-subsystem10}
    Z_F \cap Q_i^D \setminus [\partial D^{-1}Q_i]_\eta \ne \emptyset.
  \end{equation}
  Here $[\partial D^{-1}Q_i]_\eta$ denotes the $\eta$-neighborhood of $\partial D^{-1}Q_i$ in $\RP^{d-1}$. Suppose this is not the case. Then for every $\eta > 0$ there is $D_\eta \in \GL(d,\R)$ such that
  \begin{align*} 
    Z_F \cap B(V_1,\rho) &\subseteq \bigcup_{i=1}^m Q_i^{D_\eta} \cap [\partial D_\eta^{-1}Q_i]_\eta \\
    &\subseteq \bigcup_{i=1}^m \bigcup_{j=1}^{M_i} Q_i^{D_\eta} \cap [D_\eta^{-1}L_j^i]_\eta \subseteq \bigcup_{i=1}^m \bigcup_{j=1}^{M_i} [D_\eta^{-1}L_j^i]_\eta.
  \end{align*}
  By letting $\eta \downarrow 0$ along a countable sequence and relying on the compactness of the Grassmannian manifold, we obtain a convergent subsequence along which $D_\eta^{-1}L_j^i \to \hat L_j^i$ for all $i \in \{1,\ldots,m\}$ and $j \in \{1,\ldots,M_i\}$. Fix $V \in Z_F \cap B(V_1,\rho)$. For every $\eta$ in this subsequence there are $i_\eta \in \{1,\ldots,m\}$ and $j_\eta \in \{1,\ldots,M_{i_\eta}\}$ such that $V \in [D_\eta^{-1}L_{j_\eta}^{i_\eta}]_\eta$. Passing to a further subsequence, we may assume that $i_\eta = i$ and $j_\eta = j$ are constant. Since $D_\eta^{-1}L_j^i \to \hat L_j^i$ in the Grassmannian and $\eta \downarrow 0$, it follows that $\dist(V,\hat L_j^i) = 0$, and hence $V \in \hat L_j^i$. Therefore
  \begin{equation} \label{eq:dominated-subsystem12}
    Z_F \cap B(V_1,\rho) \subseteq \bigcup_{i = 1}^m \bigcup_{j = 1}^{M_i} \hat L_j^i.
  \end{equation}
  Fix $V \in Z_F$. Since $\RP^{d-1} \subseteq \bigcup_{i=1}^m Q_i$, there exists $i \in \{1,\ldots,m\}$ such that $V \in Q_i$. Since $A_\ppp Z_F \subseteq Z_F$ for every word $\ppp \in \II^*$ and $V \in Z_F$, we have $A_{\iii\jjj_i} V \in Z_F$. From \cref{eq:dominated-subsystem6} it follows that $A_{\iii\jjj_i} V \in B^o(V_1,\rho)$ and hence, by \cref{eq:dominated-subsystem12}, that $A_{\iii\jjj_i} V \in \bigcup_{i=1}^m \bigcup_{j=1}^{M_i} \hat L_j^i$. Therefore
  \begin{equation*} 
    Z_F \subseteq \bigcup_{k = 1}^m A_{\iii\jjj_k}^{-1} \biggl( \bigcup_{i = 1}^m \bigcup_{j = 1}^{M_i} \hat L_j^i \biggr).
  \end{equation*}
  Set $\mathcal{F} = \{A_{\iii\jjj_k}^{-1}\hat L_j^i : k,i \in \{1,\ldots,m\}$ and $j \in \{1,\ldots,M_i\}\}$. Since each $A_{\iii\jjj_k}$ is invertible, every element of $\mathcal{F}$ is a proper linear subspace, and the previous inclusion shows that
  \begin{equation*}
    Z_F \subseteq \bigcup_{U \in \mathcal{F}} U.
  \end{equation*}
  This contradicts \cref{lem:Z_F-infinite} and establishes \cref{eq:dominated-subsystem10}.

  Since $B(V_1,\rho)$ is compact, there are points $U_1,\ldots,U_M \in B(V_1,\rho)$ such that
  \begin{equation*}
    B(V_1,\rho) \subseteq \bigcup_{\ell = 1}^M B^o(U_\ell,\eta/2).
  \end{equation*}
  For each $\ell \in \{1,\ldots,M\}$, write $G_\ell = B^o(U_\ell,\eta/2)$ and set $\mathcal{G} = \{G_\ell : \ell \in \{1,\ldots,M\}\}$. If $G \in \mathcal{G}$ and $V \in G$, then $G \subseteq B^o(V,\eta)$ by the triangle inequality. Write $\mathcal{G}_F = \{G \in \mathcal{G} : G \cap Z_F \ne \emptyset\}$. For each $G \in \mathcal{G}_F$, the set $G$ is open and intersects $Z_F = \overline{\{A_\jjj V_1 : \jjj \in \II^*\}}$, so there exists $\jjj_G \in \II^*$ such that $A_{\jjj_G}V_1 \in G$. Applying \cref{lem:pinching-projective} to the compact set $B(V_1,\rho)$ and the open neighborhood $A_{\jjj_G}^{-1}G$ of $V_1$, we obtain $n_G \ge 1$ such that the word $\iii_G = \iii\iii\cdots\iii$, where $\iii$ is concatenated $n_G$ times, satisfies
  \begin{equation} \label{eq:dominated-subsystem14}
    A_{\jjj_G\iii_G} B(V_1,\rho) \subseteq G.
  \end{equation}
  By \cref{eq:dominated-subsystem10}, for every $D \in \GL(d,\R)$ there are $i \in \{1,\ldots,m\}$ and $V \in Z_F \cap Q_i^D \setminus [\partial D^{-1}Q_i]_\eta$. Since $V \notin [\partial D^{-1}Q_i]_\eta$, the open ball $B^o(V,\eta)$ in the metric $d$ of \cref{eq:RP-metric} does not meet $\partial D^{-1}Q_i$; being connected and containing the point $V \in Q_i^D \subseteq D^{-1}Q_i$, it therefore satisfies
  \begin{equation*}
    B^o(V,\eta) \subset D^{-1}Q_i.
  \end{equation*}
  Choose $G \in \mathcal{G}$ so that $V \in G$. Since $G \subseteq B^o(V,\eta)$, we have $G \subseteq D^{-1}Q_i$. As $V \in Z_F \cap G$, we have $G \in \mathcal{G}_F$. Therefore, by \cref{eq:dominated-subsystem14} and \cref{eq:dominated-subsystem9},
  \begin{equation*}
    A_{\iii\jjj_i}DA_{\jjj_G\iii_G} B(V_1,\rho) \subset A_{\iii\jjj_i}DG \subset A_{\iii\jjj_i}Q_i \subset \CC_0^o.
  \end{equation*}
  Since $B(V_1,\rho) \subset B(V_1,\eps_1)$ and $B(V_1,\eps_1) \cap W = \emptyset$, no element of $B(V_1,\rho)$ is contained in $W$, and hence the hyperplane $W$ is transverse to every element of $B(V_1,\rho)$. Define $\KK = \{\iii\jjj_i : i \in \{1,\ldots,m\}\} \cup \{\jjj_G\iii_G : G \in \mathcal{G}_F\}$. Since $\mathcal{G}_F$ is finite, the set $\KK$ is finite. Let $K = \max_{\kkk \in \KK} |\kkk|$. For each $\jjj \in \II^*$, choose words $\overline{\hhh}(\jjj), \overline{\kkk}(\jjj) \in \KK$ such that
  \begin{equation*}
    A_{\overline{\hhh}(\jjj)} A_\jjj A_{\overline{\kkk}(\jjj)} B(V_1,\rho) \subset \CC_0^o.
  \end{equation*}
  Consequently, for every finite set $\DD \subset \II^*$ the ball $B(V_1,\rho)$ is a strongly invariant multicone for the tuple $\A' = (A_{\overline{\hhh}(\jjj)} A_\jjj A_{\overline{\kkk}(\jjj)})_{\jjj \in \DD}$, and hence \cite[Theorem~B]{BochiGourmelon2009} gives the $1$-dominated property. This proves the proposition.
\end{proof}

\subsection{Strongly separated subsystems} \label{sec:strongly-separated-subsystems}

We now prove \cref{prop:ssc-approx}. The dominated-subsystem machinery gives uniform projective control after bounded prefix and suffix words, the packing argument supplies enough disjoint cylinders to keep the affinity dimension close to $\udimm(X)$, and the strong pinching provides a single word whose associated matrix is pinching in every exterior power, for use in the finite replacement step to preserve proximality and force strong irreducibility.

The next lemma applies \cref{prop:dominated-subsystem} to a prescribed word whose associated matrix has a simple dominant eigenvalue in order to produce a strongly separated dominated subsystem with quantitative size control. Roughly speaking, \cref{lem:ssc-dominated} says that, at arbitrarily small scales $r$, one can find at least a constant multiple of $r^{-\udimm(X)+\eps}$ many cylinder maps of comparable size whose images are pairwise disjoint and whose linear parts remain $1$-dominated.

\begin{lemma} \label{lem:ssc-dominated}
  Let $\Phi = (\fii_1,\ldots,\fii_N)$ be an affine iterated function system such that the associated tuple $\A = (A_1,\ldots,A_N) \in \GL(d,\R)^N$ of matrices is $1$-proximal and strongly $1$-irreducible. If $\ppp \in \II^*$ is such that $A_\ppp$ has a simple dominant eigenvalue, if $V_1 \in \RP^{d-1}$ is the leading eigenspace of $A_\ppp$, and if $W \subset \R^d$ is the repelling hyperplane of $A_\ppp$, then there exist $K \ge 1$, $\rho > 0$, a nonempty compact set $\CC_0 \subset B^o(V_1,\rho)$ with $V_1 \in \CC_0^o$, functions $\overline{\hhh}, \overline{\kkk} \colon \II^* \to \bigcup_{m=0}^K \II^m$, and constants $C_\ppp > 0$ and $c_\ppp > 0$ such that
  \begin{equation*}
    B(V_1,\rho) \cap W = \emptyset
  \end{equation*}
  and
  \begin{equation*}
    A_{\overline{\hhh}(\iii)} A_\iii A_{\overline{\kkk}(\iii)} B(V_1,\rho) \subset \CC_0^o
  \end{equation*}
  for all $\iii \in \II^*$. Moreover, for every $\eps > 0$ and every $0 < \tau_0 < \min\{\eps,\frac{1}{e}\}$ there exist $0<r<\tau_0$ and a finite set $\DD \subset \II^*$ such that
  \begin{equation*}
    \#\DD \ge C_\ppp r^{-\udimm(X)+\eps}
  \end{equation*}
  and $\|A_\jjj\| \ge c_\ppp r$ for all $\jjj \in \DD$, the compact sets $\fii_\jjj(X)$, $\jjj \in \DD$, are pairwise disjoint, and
  \begin{equation*}
    A_\jjj B(V_1,\rho) \subset \CC_0^o
  \end{equation*}
  for all $\jjj \in \DD$. In particular, the iterated function system $\Phi' = (\fii_\jjj)_{\jjj \in \DD}$ satisfies the strong separation condition and the tuple $\A' = (A_\jjj)_{\jjj \in \DD}$ is $1$-dominated.
\end{lemma}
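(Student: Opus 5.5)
The first part of the statement is \cref{prop:dominated-subsystem} applied with $\iii = \ppp$. It produces $K$, $\rho$, $\CC_0$, $\overline{\hhh}$ and $\overline{\kkk}$ with $B(V_1,\rho)\cap W=\emptyset$ and
\begin{equation*}
  A_{\overline{\hhh}(\iii)}A_\iii A_{\overline{\kkk}(\iii)}B(V_1,\rho)\subset\CC_0^o
\end{equation*}
for all $\iii\in\II^*$. For the second part, the set $\DD$ will consist of sandwiched words $\overline{\hhh}(\iii)\iii\overline{\kkk}(\iii)$, where $\iii$ runs through a well-separated family of cylinders at scale $r$.

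Fix $\eps>0$ and $\tau_0$. By the definition of $\udimm(X)$, choose $0<r<\tau_0$ with $N_r(X)\ge r^{-\udimm(X)+\eps/2}$, and let $\{B(x_\ell,r)\}_\ell$ be a maximal $r$-packing. Write $\lambda=\min_j\alpha_d(A_j)$ and $D=\diam(X)$. For each $x_\ell$, pick $\iii_\ell\in\pi^{-1}(x_\ell)$ and stop at the first $n$ with $\|A_{\iii_\ell|_n}\|\le r/(2\lambda^{-2K}D')$, where $D'$ is a constant to be fixed. Let $\jjj_\ell=\iii_\ell|_n$. Then $\|A_{\jjj_\ell}\|$ is comparable to $r$, up to constants depending only on $\lambda$, $K$ and $D$.

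Next set $\kkk_\ell=\overline{\hhh}(\jjj_\ell)\jjj_\ell\overline{\kkk}(\jjj_\ell)$ and define the map $\fii_{\overline{\hhh}(\jjj_\ell)}^{-1}$-twisted version. This is where the construction needs care. The prefix $\overline{\hhh}(\jjj_\ell)$ moves $\fii_{\jjj_\ell}(X)$ to a different place, so disjointness is not automatic. Instead, I would first group the $\jjj_\ell$ by the value of $\overline{\hhh}(\jjj_\ell)\in\bigcup_{m\le K}\II^m$. There are at most $(N+1)^K$ possible values, so by pigeonhole one class $\hhh$ contains at least $(N+1)^{-K}$ of the $\ell$'s.

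Within that class, the sets $\fii_{\jjj_\ell\overline{\kkk}(\jjj_\ell)}(X)$ satisfy
\begin{equation*}
  \fii_{\jjj_\ell\overline{\kkk}(\jjj_\ell)}(X)\subset\fii_{\jjj_\ell}(X)\subset B\bigl(x_\ell,\|A_{\jjj_\ell}\|D\bigr)\subset B(x_\ell,r/2),
\end{equation*}
and are therefore pairwise disjoint, since the balls $B(x_\ell,r)$ are disjoint. Applying the injective map $\fii_\hhh$ preserves disjointness. Hence the family $\DD=\{\hhh\jjj_\ell\overline{\kkk}(\jjj_\ell)\}$ has pairwise disjoint images $\fii_\jjj(X)$. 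Its size is at least $(N+1)^{-K}r^{-\udimm(X)+\eps/2}$, which is at least $C_\ppp r^{-\udimm(X)+\eps}$. The norm bound also holds:
\begin{equation*}
  \|A_{\hhh\jjj_\ell\overline{\kkk}}\|\ge\lambda^{2K}\|A_{\jjj_\ell}\|\ge c_\ppp r.
\end{equation*}

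The cone condition $A_\jjj B(V_1,\rho)\subset\CC_0^o$ is exactly the output of \cref{prop:dominated-subsystem} applied to these sandwiched words. So $B(V_1,\rho)$ is a strongly invariant multicone for $\A'=(A_\jjj)_{\jjj\in\DD}$, and \cite[Theorem~B]{BochiGourmelon2009} gives $1$-domination, with $W$ as the transverse hyperplane. Strong separation of $\Phi'$ follows because its attractor $X'\subset X$ and the sets $\fii_\jjj(X)$ are disjoint.

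The main obstacle is getting the constants to come out uniform in $r$. The proposal handles this with the bounded length $K$ and the pigeonhole over prefixes, which together ensure that $C_\ppp$ and $c_\ppp$ depend only on $\ppp$ through $K$, and not on $r$ or $\eps$.
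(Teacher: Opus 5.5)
Your proof is correct and follows essentially the same route as the paper. You apply \cref{prop:dominated-subsystem} with $\iii=\ppp$, take a maximal $r$-packing at a scale where $N_r(X)$ is large, and stop along codings of the centres at cylinders comparable to $r$ lying in disjoint packing balls; then you pigeonhole the prefix and use injectivity of $\fii_\hhh$ together with $\fii_{\jjj\kkk}(X)\subset\fii_\jjj(X)$. The differences are cosmetic: the paper stops on $\diam(\fii_\qqq(X))<r$ and pigeonholes on both $\overline{\hhh}$ and $\overline{\kkk}$, whereas you stop on the norm and rightly observe that fixing the prefix alone already gives disjointness and injectivity.
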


\begin{proof}
  By \cref{prop:dominated-subsystem}, there exist $K \ge 1$, $\rho > 0$, a nonempty compact set $\CC_0 \subset B^o(V_1,\rho)$ with $V_1 \in \CC_0^o$, and functions $\overline{\hhh}, \overline{\kkk} \colon \II^* \to \bigcup_{m=0}^K \II^m$ such that $B(V_1,\rho) \cap W = \emptyset$ and
  \begin{equation*}
    A_{\overline{\hhh}(\iii)} A_\iii A_{\overline{\kkk}(\iii)} B(V_1,\rho) \subset \CC_0^o
  \end{equation*}
  for all $\iii \in \II^*$. Set
  \begin{equation*}
    C_\ppp = \biggl(\sum_{m = 0}^K \#\II^m\biggr)^{-2} \qquad \text{and} \qquad c_\ppp = \Bigl(\min_{i \in \{1,\ldots,N\}}\alpha_d(A_i)\Bigr)^{2K+1} \diam(X)^{-1}.
  \end{equation*}
  Let $\eps > 0$ and $0 < \tau_0 < \min\{\eps,\frac{1}{e}\}$. By the definition of upper Minkowski dimension, there exists $0 < r < \min\{\tau_0,\diam(X)\}$ such that
  \begin{equation} \label{eq:ssc-dominated}
    N_r(X) > r^{-\udimm(X)+\eps}.
  \end{equation}
  Let $\{B_j\}_{j=1}^{N_r(X)}$ be a maximal $r$-packing of $X$. For each $j \in \{1,\ldots,N_r(X)\}$, let $x_j \in X$ be the center of $B_j$. Choose $\iii^{(j)} \in \II^\N$ such that $\pi(\iii^{(j)}) = x_j$. Since $\diam(\fii_{\iii^{(j)}|_n}(X)) \to 0$ as $n \to \infty$, there exists a least integer $n_j \ge 1$ for which $\diam(\fii_{\iii^{(j)}|_{n_j}}(X)) < r$. Write $\qqq_j = \iii^{(j)}|_{n_j}$. Then $x_j \in \fii_{\qqq_j}(X)$, and since $x_j$ is the center of $B_j$ and $\diam(\fii_{\qqq_j}(X)) < r$, we have
  \begin{equation} \label{eq:ssc-dominated0}
    x_j \in \fii_{\qqq_j}(X) \subseteq B_j,
  \end{equation}
  and, by the minimality of $n_j$ and the choice of $r < \diam(X)$,
  \begin{equation} \label{eq:ssc-dominated1}
    \diam(\fii_{\qqq_j}(X)) < r \le \diam(\fii_{\qqq_j^-}(X)).
  \end{equation}
  Let $\PP_r = \{\qqq_j \in \II^* : j \in \{1,\ldots,N_r(X)\}\}$ be the collection of such finite words and note that, since $\{B_j\}_{j=1}^{N_r(X)}$ is an $r$-packing of $X$, \cref{eq:ssc-dominated0} gives
  \begin{equation} \label{eq:ssc-dominated2}
    \fii_\iii(X) \cap \fii_\jjj(X) = \emptyset
  \end{equation}
  for all $\iii, \jjj \in \PP_r$ with $\iii \ne \jjj$. In particular, if $\qqq_j = \qqq_{j'}$, then \cref{eq:ssc-dominated0} implies that $x_j$ and $x_{j'}$ belong to the same set $\fii_{\qqq_j}(X)$, which is contained in both $B_j$ and $B_{j'}$. Since the packing balls are pairwise disjoint, this is possible only when $j = j'$, and hence, $\#\PP_r = N_r(X)$.

  Write $\PP_r(\hhh,\kkk) = \{\qqq \in \PP_r : \overline{\hhh}(\qqq) = \hhh \text{ and } \overline{\kkk}(\qqq) = \kkk\}$ for all $\hhh, \kkk \in \bigcup_{m=0}^K \II^m$ and observe that there are $\hhh, \kkk \in \bigcup_{m=0}^K \II^m$ such that
  \begin{equation} \label{eq:ssc-dominated3}
    \#\PP_r(\hhh,\kkk) \ge C_\ppp\#\PP_r = C_\ppp N_r(X).
  \end{equation}
  Defining $\DD_r = \{\hhh\qqq\kkk \in \II^* : \qqq \in \PP_r(\hhh,\kkk)\}$, the map $\qqq \mapsto \hhh\qqq\kkk$ is injective, and therefore $\#\DD_r = \#\PP_r(\hhh,\kkk)$. Hence, by \cref{eq:ssc-dominated3} and \cref{eq:ssc-dominated}, the desired cardinality estimate holds. Since $\diam(\fii_\iii(X)) \le \|A_\iii\|\diam(X)$ for all $\iii \in \II^*$, \cref{eq:ssc-dominated1} implies that
  \begin{equation*}
    \|A_{\qqq^-}\| \ge \diam(X)^{-1} r
  \end{equation*}
  for all $\qqq \in \PP_r$. Moreover, since $\|A\| = \|ABB^{-1}\| \le \|AB\|\|B^{-1}\|$, we have $\|AB\| \ge \|A\|\|B^{-1}\|^{-1} = \|A\|\alpha_d(B)$ for all $A,B \in \GL(d,\R)$. Applying this to transposes also gives $\|AB\| = \|B^\top A^\top\| \ge \|B^\top\|\alpha_d(A^\top) = \|B\|\alpha_d(A)$ for all $A,B \in \GL(d,\R)$. Write $\alpha = \min_{i \in \{1,\ldots,N\}}\alpha_d(A_i)$, so that $c_\ppp = \alpha^{2K+1}\diam(X)^{-1}$; note that $\alpha < 1$, since $\alpha_d(A_i) \le \|A_i\| < 1$ for every $i$. By supermultiplicativity of $\alpha_d$, together with $\alpha < 1$ and $|\hhh|,|\kkk| \le K$, we have $\alpha_d(A_\hhh)\alpha_d(A_\kkk) \ge \alpha^{|\hhh|+|\kkk|} \ge \alpha^{2K}$; and, writing $\qqq = \qqq^- q$ with $q \in \II$ the last letter of $\qqq$, we have $\|A_\qqq\| = \|A_{\qqq^-}A_q\| \ge \|A_{\qqq^-}\|\alpha_d(A_q) \ge \alpha\|A_{\qqq^-}\|$. Therefore, each $\jjj \in \DD_r$ satisfies
  \begin{align*}
    \|A_\jjj\| = \|A_\hhh A_\qqq A_\kkk\| &\ge \alpha_d(A_\hhh)\|A_\qqq\|\alpha_d(A_\kkk) \ge \alpha^{2K}\|A_\qqq\| \\
    &\ge \alpha^{2K+1}\|A_{\qqq^-}\| \ge c_\ppp r.
  \end{align*}
  If $\jjj_1 = \hhh\qqq_1\kkk$ and $\jjj_2 = \hhh\qqq_2\kkk$ are distinct words in $\DD_r$, then $\qqq_1 \ne \qqq_2$, and therefore \cref{eq:ssc-dominated2} gives $\fii_{\qqq_1}(X) \cap \fii_{\qqq_2}(X) = \emptyset$. Since $\fii_\kkk(X) \subset X$ and $\fii_\hhh$ is injective, it follows that
  \begin{align*}
    \fii_{\jjj_1}(X) &= \fii_\hhh(\fii_{\qqq_1}(\fii_\kkk(X))) \subset \fii_\hhh(\fii_{\qqq_1}(X)), \\
    \fii_{\jjj_2}(X) &= \fii_\hhh(\fii_{\qqq_2}(\fii_\kkk(X))) \subset \fii_\hhh(\fii_{\qqq_2}(X)),
  \end{align*}
  and hence, $\fii_{\jjj_1}(X) \cap \fii_{\jjj_2}(X) = \emptyset$. Let $X' \subseteq X$ be the self-affine set of $\Phi' = (\fii_\jjj)_{\jjj \in \DD_r}$. Since $\fii_{\jjj_1}(X') \subseteq \fii_{\jjj_1}(X)$ and $\fii_{\jjj_2}(X') \subseteq \fii_{\jjj_2}(X)$, it follows that $\fii_{\jjj_1}(X') \cap \fii_{\jjj_2}(X') = \emptyset$. Thus the iterated function system $\Phi'$ satisfies the strong separation condition. Finally, every $\jjj \in \DD_r$ has the form $\jjj = \overline{\hhh}(\qqq)\qqq\overline{\kkk}(\qqq)$ for some $\qqq \in \PP_r$, and therefore
  \begin{equation*}
    A_\jjj B(V_1,\rho) \subset \CC_0^o \subset B^o(V_1,\rho)
  \end{equation*}
  for all $\jjj \in \DD_r$. Since $B(V_1,\rho) \cap W = \emptyset$, the hyperplane $W$ is transverse to every element of $B(V_1,\rho)$. Hence $B(V_1,\rho)$ is a strongly invariant multicone for $\A'$, and therefore $\A' = (A_\jjj)_{\jjj \in \DD_r}$ is $1$-dominated by \cite[Theorem~B]{BochiGourmelon2009}. This proves the lemma.
\end{proof}

With \cref{lem:ssc-dominated} at hand, we complete the proof of \cref{prop:ssc-approx} by refining the construction to enforce $k$-proximality and strong $k$-irreducibility for all $k$ while keeping the dimensional bound.

\begin{proof}[Proof of \cref{prop:ssc-approx}]
  Since $\A$ is strongly pinching, there is $\ppp \in \II^*$ such that $A_\ppp^{\land k}$ is $1$-pinching for all $k \in \{1,\ldots,d-1\}$. In particular, $A_\ppp$ has a simple dominant eigenvalue, and hence $\A$ is $1$-proximal. Let $V_1 \in \RP^{d-1}$ be the leading eigenspace of $A_\ppp$ and let $W \subset \R^d$ be the repelling hyperplane of $A_\ppp$. Since $A_\ppp$ is diagonalizable, the subspace $W$ is spanned by the remaining eigenspaces of $A_\ppp$. Since $\A$ is $1$-proximal and strongly $1$-irreducible, \cref{lem:ssc-dominated} yields $K \ge 1$, $\rho > 0$, a nonempty compact set $\CC_0 \subset B^o(V_1,\rho)$ with $V_1 \in \CC_0^o$, functions $\overline{\hhh}, \overline{\kkk} \colon \II^* \to \bigcup_{m=0}^K \II^m$, and constants $C_\ppp > 0$ and $c_\ppp > 0$ such that
  \begin{equation} \label{eq:dominated-ball-inclusion}
    A_{\overline{\hhh}(\iii)} A_\iii A_{\overline{\kkk}(\iii)} B(V_1,\rho) \subset \CC_0^o
  \end{equation}
  for all $\iii \in \II^*$, and $B(V_1,\rho) \cap W = \emptyset$. By \cref{lem:domination-pressure}, applied with $\CC = B(V_1,\rho)$ and $\CC_0$, there exists a constant $0 < \kappa_\rho \le 1$, depending only on $B(V_1,\rho)$ and $\CC_0$, with the following property: if $\B = (B_1,\ldots,B_M) \in \GL(d,\R)^M$ is a finite tuple satisfying
  \begin{equation*}
    B_i B(V_1,\rho) \subset \CC_0
  \end{equation*}
  for all $i \in \{1,\ldots,M\}$, then
  \begin{equation} \label{eq:pressure-lower-bound}
    P(\B,s) \ge \log\kappa_\rho^{2s} + \log\sum_{i \in \{1,\ldots,M\}} \|B_i\|^s
  \end{equation}
  for all $0 \le s \le 1$. Since $X$ is not a singleton by our standing assumption, \cite[Corollary 1.1]{XieYinSun2003} gives $\dimh(X) > 0$. Therefore $\udimm(X) \ge \dimh(X) > 0$. Fix $\eps > 0$. If $\eps \ge \min\{1,\udimm(X)\}$, then proving the claim for any $0 < \eps' < \min\{1,\udimm(X)\}$ yields the desired conclusion for $\eps$ as well. Therefore we may assume $\eps < \min\{1,\udimm(X)\}$. Fix $0 < \delta < \eps$ and an exponent $s_0$ with
  \begin{equation*}
    \min\{1,\udimm(X)\} - \eps < s_0 < \min\{1,\udimm(X)\} - \delta.
  \end{equation*}
  The construction below produces a strongly separated, $1$-dominated subsystem $\A'$ with $P(\A',s_0) > 0$; since $\dimaff(\A')$ is the zero of the strictly decreasing pressure $s\mapsto P(\A',s)$, this will give $\dimaff(\A') > s_0 > \min\{1,\udimm(X)\} - \eps$.

  Let $v_1^k,\ldots,v_{\binom{d}{k}}^k$ be the linearly independent eigenvectors of $A_{\ppp}^{\land k}$ and
  \begin{align*}
    \VV_k = \{\linspan\{v_{i_1}^k,\ldots,v_{i_\ell}^k\} \subset \wedge^k\R^d : \; &\{i_1,\ldots,i_\ell\} \subset \{1,\ldots,\tbinom{d}{k}\} \\
    &\text{for some }\ell \in \{1,\ldots,\tbinom{d}{k}-1\}\}
  \end{align*}
  be the collection of all non-trivial proper linear subspaces spanned by the eigenvectors. Observe that for each non-trivial proper linear subspace $V \notin \VV_k$ we have
  \begin{equation} \label{eq:V-images-not-same}
    (A_\ppp^{\land k})^m V \ne (A_\ppp^{\land k})^{m'} V
  \end{equation}
  whenever $m \ne m'$. Indeed, if $(A_\ppp^{\land k})^m V = (A_\ppp^{\land k})^{m'} V$ for some $m \ne m'$, then $V$ is invariant under $(A_\ppp^{\land k})^{|m-m'|}$. Since $A_\ppp^{\land k}$ is $1$-pinching, the map $A_\ppp^{\land k}$ is diagonalizable and its eigenvalues have pairwise distinct absolute values. It follows that every invariant subspace of $(A_\ppp^{\land k})^{|m-m'|}$ is spanned by eigenvectors of $A_\ppp^{\land k}$, contradicting the assumption that $V \notin \VV_k$. Let
  \begin{equation*}
    M = \sum_{k=1}^{d-1} \#\VV_k.
  \end{equation*}
  Since each $\VV_k$ consists of subspaces of the exterior power $\wedge^k\R^d$, the collections $\VV_1,\ldots,\VV_{d-1}$ are pairwise disjoint, so the union $\bigcup_{k=1}^{d-1}\VV_k$ has exactly $M$ elements, and each of its members $V$ lies in a unique $\VV_k$; in the conditions imposed on such a $V$ below, $k$ always denotes this uniquely determined index. Choose $0 < \tau_0 < \min\{\delta,\tfrac{1}{e}\}$ so small that $C_\ppp\tau_0^{-\udimm(X)+\delta} > M+1$. Since $s_0 - \udimm(X) + \delta < 0$, we may also assume that
  \begin{equation*}
    \log\kappa_\rho^{2s_0} + \log(C_\ppp r^{-\udimm(X)+\delta}-M-1) + \log c_\ppp^{s_0}r^{s_0} > 0
  \end{equation*}
  for all $0 < r < \tau_0$. Applying \cref{lem:ssc-dominated} with the parameters $\delta$ and $\tau_0$, we obtain a number $0 < r < \tau_0$ and a finite set $\DD' \subset \II^*$ such that
  \begin{equation}
    \#\DD' \ge C_\ppp r^{-\udimm(X)+\delta},
  \end{equation}
  $\|A_\iii\| \ge c_\ppp r$ for all $\iii \in \DD'$, the compact sets $\fii_\iii(X)$, $\iii \in \DD'$, are pairwise disjoint, and
  \begin{equation*}
    A_\iii B(V_1,\rho) \subset \CC_0^o
  \end{equation*}
  for all $\iii \in \DD'$. Since $r < \tau_0$, the choice of $\tau_0$ gives $\#\DD' > M+1$, and hence $\#\DD' \ge M+2$. Moreover, every $\iii \in \DD'$ satisfies $A_\iii B(V_1,\rho) \subset \CC_0^o$, and $B(V_1,\rho) \cap W = \emptyset$. Therefore $(A_\iii)_{\iii \in \DD'}$ is $1$-dominated by the multicone characterization. Since the family $\{\fii_\iii(X) : \iii \in \DD'\}$ is finite and consists of pairwise disjoint compact sets, write
  \begin{equation*}
    \eta = \min\{\dist(\fii_\iii(X),\fii_\jjj(X)) : \iii, \jjj \in \DD' \text{ with } \iii \ne \jjj\} > 0.
  \end{equation*}
  By \cref{lem:pinching-projective}, applied to the compact set $B(V_1,\rho)$ and the open neighborhood $\CC_0^o$ of $V_1$, there exists $n_0 \ge 1$ such that
  \begin{equation*}
    A_{\ppp^n} B(V_1,\rho) \subset \CC_0^o
  \end{equation*}
  for all $n \ge n_0$. Since $A_\ppp$ is contractive, $\|A_{\ppp^n}\| \to 0$ as $n \to \infty$, so by increasing $n_0$ if necessary we may also assume that $\|A_{\ppp^{n_0}}\|\diam(X) < \min\{\eta,r\}$. If there exists $\jjj \in \DD'$ such that $\fii_{\ppp^{n_0}}(X) \cap \fii_\jjj(X) \ne \emptyset$, then this word is unique, because otherwise two distinct sets in the family $\{\fii_\iii(X) : \iii \in \DD'\}$ would both meet $\fii_{\ppp^{n_0}}(X)$, whose diameter is smaller than $\eta$. Otherwise choose any $\jjj \in \DD'$. Defining
  \begin{equation*}
    \DD'' = (\DD' \setminus \{\jjj\}) \cup \{\ppp^{n_0}\},
  \end{equation*}
  we have $\#\DD'' = \#\DD'$. Indeed, if $\ppp^{n_0} \in \DD'$, then the uniqueness clause forces $\jjj = \ppp^{n_0}$ and hence $\DD'' = \DD'$, while if $\ppp^{n_0} \notin \DD'$, then removing $\jjj$ and adjoining the new word $\ppp^{n_0}$ preserves the cardinality. We see from the choice of $n_0$ that $(\fii_\iii)_{\iii \in \DD''}$ satisfies the strong separation condition, and from the inclusion $A_{\ppp^{n_0}} B(V_1,\rho) \subset \CC_0^o$ that
  \begin{equation*}
    A_\iii B(V_1,\rho) \subset \CC_0^o
  \end{equation*}
  for all $\iii \in \DD''$. Define
  \begin{align*}
    \VV_k' &= \biggl\{A_\jjj^{\land k} V : \jjj \in \bigcup_{m=0}^K \II^m \text{ and }V \in \VV_k\biggr\}, \\
    \WW_k &= \biggl\{(A_\jjj^{\land k})^{-1} V : \jjj \in \bigcup_{m=0}^K \II^m \text{ and }V \in \VV_k\biggr\}.
  \end{align*}
  Since $\A$ is strongly $k$-irreducible for all $k \in \{1,\ldots,d-1\}$, \cref{prop:simultaneous-escape} shows that $\A$ has the simultaneous escape property. Applying it to the finite family indexed by all pairs $(k,V)$ with $k \in \{1,\ldots,d-1\}$ and $V \in \VV_k'$, taking the forbidden family to be $\WW_k$ for the corresponding value of $k$, we obtain $\iii \in \II^*$ such that
  \begin{equation*}
    A_\iii^{\wedge k}V \notin \WW_k
  \end{equation*}
  for every $k \in \{1,\ldots,d-1\}$ and every $V \in \VV_k'$. Writing $\qqq = \overline{\hhh}(\iii) \iii \overline{\kkk}(\iii)$, we claim that
  \begin{equation*}
    A_\qqq^{\wedge k}V \notin \VV_k
  \end{equation*}
  for all $V \in \VV_k$. Indeed, if $A_\qqq^{\wedge k}V \in \VV_k$, then $A_\iii^{\wedge k} A_{\overline{\kkk}(\iii)}^{\wedge k} V \in (A_{\overline{\hhh}(\iii)}^{\wedge k})^{-1}\VV_k \subset \WW_k$. Since $A_{\overline{\kkk}(\iii)}^{\wedge k}V \in \VV_k'$, this contradicts the choice of $\iii$. Moreover, \cref{eq:dominated-ball-inclusion} gives $A_\qqq B(V_1,\rho) \subset \CC_0^o$.

  For each $V \in \bigcup_{k=1}^{d-1} \VV_k$, choose the word $\jjj_V \in \DD' \setminus \{\jjj\}$ injectively. This is possible because $\bigcup_{k=1}^{d-1} \VV_k$ has $M$ elements while $\#(\DD' \setminus \{\jjj\}) \ge M+1$. Since the family is finite, each map $A_{\jjj_V}$ is continuous, and $A_{\jjj_V}V_1 \in \CC_0^o$ for every $V \in \bigcup_{k=1}^{d-1} \VV_k$, there is an open neighborhood $U \subset B(V_1,\rho)$ of $V_1$ such that
  \begin{equation*}
    A_{\jjj_V} U \subset \CC_0^o
  \end{equation*}
  for all $V \in \bigcup_{k=1}^{d-1} \VV_k$. For each such $V$, choose $n_V \ge 1$ so large that
  \begin{equation*}
    A_{\jjj_V}^{\land k} (A_{\ppp}^{\land k})^{n_V} A_{\qqq}^{\land k} V \notin \VV_k
  \end{equation*}
  and
  \begin{equation*}
    A_{\ppp^{n_V}} A_\qqq B(V_1,\rho) \subset U.
  \end{equation*}
  Indeed, the second inclusion holds for all sufficiently large $n$ by \cref{lem:pinching-projective}. If the first inclusion failed for all sufficiently large $n$, then, as $\#\VV_k < \infty$, there would exist $m \ne m'$ such that
  \begin{equation*}
    A_{\jjj_V}^{\land k} (A_{\ppp}^{\land k})^m A_{\qqq}^{\land k} V = A_{\jjj_V}^{\land k} (A_{\ppp}^{\land k})^{m'} A_{\qqq}^{\land k} V,
  \end{equation*}
  contradicting \cref{eq:V-images-not-same} because $A_\qqq^{\land k}V \notin \VV_k$ and $A_{\jjj_V}^{\land k}$ is invertible. Define
  \begin{equation*}
    \JJ = \biggl(\DD'' \setminus \biggl\{\jjj_V : V \in \bigcup_{k=1}^{d-1} \VV_k\biggr\}\biggr) \cup \biggl\{\jjj_V\ppp^{n_V}\qqq : V \in \bigcup_{k=1}^{d-1} \VV_k\biggr\}.
  \end{equation*}
  The word $\ppp^{n_0}$ belongs to $\JJ$: it lies in $\DD''$, and it is not among the removed words $\jjj_V$, which are chosen from $\DD' \setminus \{\jjj\}$. Indeed, if $\ppp^{n_0} \notin \DD'$, this is immediate, while if $\ppp^{n_0} \in \DD'$, then the uniqueness clause forces $\jjj = \ppp^{n_0}$, so that $\ppp^{n_0} \notin \DD' \setminus \{\jjj\}$. Every word in $\JJ$ maps $B(V_1,\rho)$ into $\CC_0^o$: this is true for words in $\DD''$ by construction, and for the replacement words it follows from the choice of $U$. Consequently, $\A' = (A_\iii)_{\iii \in \JJ}$ is $1$-dominated. By the characterization of strong $k$-irreducibility from the notation section, it is enough to rule out finite families $\mathcal{F}$ of proper non-trivial subspaces of $\wedge^k\R^d$ such that
  \begin{equation*}
    A_\iii^{\land k} \mathcal{F} = \mathcal{F}
  \end{equation*}
  for all $\iii \in \JJ$. Since $\ppp^{n_0} \in \JJ$, the orbit $\{(A_\ppp^{\land k})^{mn_0}V : m \ge 0\}$ is finite for every $V \in \mathcal{F}$, and hence \cref{eq:V-images-not-same} implies that every $V \in \mathcal{F}$ belongs to $\VV_k$. Choosing the replacement word attached to such a subspace $V$, we obtain
  \begin{equation*}
    A_{\jjj_V\ppp^{n_V}\qqq}^{\land k} V \notin \VV_k,
  \end{equation*}
  contradicting the invariance of $\mathcal{F}$. Hence $\A'$ is strongly $k$-irreducible for all $k \in \{1,\ldots,d-1\}$. Since $\ppp^{n_0} \in \JJ$ and $A_\ppp^{\land k}$ is $1$-pinching, the tuple $\A'$ is also $k$-proximal for all $k \in \{1,\ldots,d-1\}$. Finally, since the sets $\fii_\iii(X)$ with $\iii \in \DD''$ are pairwise disjoint and $\fii_{\jjj_V\ppp^{n_V}\qqq}(X) \subseteq \fii_{\jjj_V}(X)$ for all $V \in \bigcup_{k=1}^{d-1} \VV_k$, the iterated function system $\Phi' = (\fii_\iii)_{\iii \in \JJ}$ satisfies the strong separation condition.

  Define
  \begin{equation*}
    \DD = \DD'' \setminus \biggl(\biggl\{\jjj_V : V \in \bigcup_{k=1}^{d-1} \VV_k\biggr\} \cup \{\ppp^{n_0}\}\biggr) \subset \JJ.
  \end{equation*}
  Then $\A'' = (A_\iii)_{\iii \in \DD}$ is a finite tuple whose generators are indexed by $\DD$, and every generator of $\A''$ maps $B(V_1,\rho)$ into $\CC_0^o$. Since $\DD \subset \JJ$, we have $P(\A',s_0) \ge P(\A'',s_0)$, and since $\varphi^{s_0}(A) = \|A\|^{s_0}$, the pressure lower bound \cref{eq:pressure-lower-bound} gives
  \begin{equation*}
    P(\A',s_0) \ge P(\A'',s_0) \ge \log\kappa_\rho^{2s_0} + \log\sum_{\iii \in \DD} \|A_\iii\|^{s_0}.
  \end{equation*}
  Recalling that $\|A_\iii\| \ge c_\ppp r$ for all $\iii \in \DD$, we obtain
  \begin{align*}
    P(\A',s_0) &\ge \log\kappa_\rho^{2s_0} + \log \#\DD + \log c_\ppp^{s_0}r^{s_0} \\
    &\ge \log\kappa_\rho^{2s_0} + \log(\#\DD''-M-1) + \log c_\ppp^{s_0}r^{s_0} \\
    &\ge \log\kappa_\rho^{2s_0} + \log(C_\ppp r^{-\udimm(X)+\delta}-M-1) + \log c_\ppp^{s_0}r^{s_0} > 0.
  \end{align*}
  Moreover, every $\iii \in \JJ$ is a finite word over the original alphabet, so $A_\iii$ is a product of contractive matrices and hence $\|A_\iii\| < 1$. Therefore the pressure is strictly decreasing, and thus
  \begin{equation*}
    \dimaff(\A') > s_0 > \min\{1,\udimm(X)\} - \eps.
  \end{equation*}
  To prove that $\dimaff(\A') < d$, let $X' \subseteq X$ be the self-affine set of $\Phi'$. Since $\Phi'$ satisfies the strong separation condition, we have
  \begin{equation*}
    \eta' = \min\{\dist(\fii_\iii(X'),\fii_\jjj(X')) : \iii, \jjj \in \JJ \text{ with } \iii \ne \jjj\} > 0.
  \end{equation*}
  Choose $0 < \tau < \eta'/3$ and write $O = \{x \in \R^d : \dist(x,X') < \tau\}$ and $O_\iii = \{x \in \R^d : \dist(x,\fii_\iii(X')) < \tau\}$ for all $\iii \in \JJ$. Since $X' = \bigcup_{\iii \in \JJ} \fii_\iii(X')$, we have
  \begin{equation*}
    O = \bigcup_{\iii \in \JJ} O_\iii,
  \end{equation*}
  and the sets $O_\iii$ are pairwise disjoint by the choice of $\tau$. Moreover, if $\lambda = \max_{\iii \in \JJ}\|A_\iii\| < 1$, then $\fii_\iii(O) \subset \{x \in \R^d : \dist(x,\fii_\iii(X')) < \lambda\tau\} \subset O_\iii$ for all $\iii \in \JJ$. Since $\fii_\iii(X')$ is non-empty and compact, the distance function to $\fii_\iii(X')$ takes every value in $[0,\tau]$ along a segment joining a point of $\fii_\iii(X')$ to a point outside $O_\iii$. Hence
  \begin{equation*}
    S_\iii = \{x \in O_\iii : \lambda\tau < \dist(x,\fii_\iii(X')) < \tau\}
  \end{equation*}
  is a non-empty open subset of $O_\iii \setminus \fii_\iii(O)$ for every $\iii \in \JJ$. In particular, $\LL^d(\fii_\iii(O)) < \LL^d(O_\iii)$ for all $\iii \in \JJ$. Since the sets $O_\iii$ are pairwise disjoint and $\fii_\iii(O) \subset O_\iii$, the sets $\fii_\iii(O)$ are pairwise disjoint, and therefore
  \begin{equation*}
    \sum_{\iii \in \JJ} |\det(A_\iii)| \LL^d(O) = \sum_{\iii \in \JJ} \LL^d(\fii_\iii(O)) < \sum_{\iii \in \JJ} \LL^d(O_\iii) = \LL^d(O).
  \end{equation*}
  Hence $\sum_{\iii \in \JJ} |\det(A_\iii)| < 1$. Since $\varphi^d(A) = |\det(A)|$ for all $A \in \GL(d,\R)$, it follows that
  \begin{equation*}
    P(\A',d) = \log\sum_{\iii \in \JJ} |\det(A_\iii)| < 0.
  \end{equation*}
  Therefore $\dimaff(\A') < d$. This completes the proof.
\end{proof}

\section{Projections of self-affine sets} \label{sec:proof-main-result}

It remains to deduce the two set projection theorems stated in the introduction from the subsystem construction and the measure theorem. We first prove \cref{thm:projection}: we pass to a strongly separated subsystem given by \cref{prop:ssc-approx}, choose on it a Bernoulli measure whose Lyapunov dimension is close to $\udimm(X)$, and then apply \cref{thm:projection-bernoulli}. We then specialize to the plane and prove \cref{thm:projection-two-dimensional}, where the algebraic input from \cref{sec:planar-converse} lets us dispense with Zariski density.

\begin{theorem} \label{thm:proj}
  Let $\Phi = (\fii_1,\ldots,\fii_N)$ be an affine iterated function system such that the associated tuple $\A = (A_1,\ldots,A_N) \in \GL(d,\R)^N$ of matrices is strongly pinching and strongly $k$-irreducible for all $k \in \{1,\ldots,d-1\}$, and let $X \subset \R^d$ be the self-affine set associated with $\Phi$. Then
  \begin{equation*}
    \dimh(\proj_V(X)) = \min\{1,\dimh(X)\} = \min\{1,\udimm(X)\}
  \end{equation*}
  for all $V \in \RP^{d-1}$.
\end{theorem}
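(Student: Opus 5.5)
The plan is to squeeze everything between two chains of inequalities. Fix $V \in \RP^{d-1}$. The upper bounds are immediate. By \cref{eq:hausdorff-projection-lipschitz-estimate} we have $\dimh(\proj_V(X)) \le \min\{1,\dimh(X)\}$, and since $\dimh(X) \le \ldimm(X) \le \udimm(X)$, also $\min\{1,\dimh(X)\} \le \min\{1,\udimm(X)\}$. So it suffices to prove
\begin{equation*}
  \dimh(\proj_V(X)) \ge \min\{1,\udimm(X)\} - \eps
\end{equation*}
for every $\eps>0$; letting $\eps \downarrow 0$ then closes the chain and gives both equalities.

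Fix $\eps>0$ and apply \cref{prop:ssc-approx} to obtain a finite set $\JJ \subset \II^*$. The subsystem $\Phi' = (\fii_\iii)_{\iii\in\JJ}$ satisfies the strong separation condition, and $\A' = (A_\iii)_{\iii\in\JJ}$ is $1$-dominated, $k$-proximal and strongly $k$-irreducible for all $k$, with $\min\{1,\udimm(X)\}-\eps \le \dimaff(\A') < d$. Let $X' \subseteq X$ be its attractor; it is not a singleton, since strong irreducibility rules out a common fixed point. Strong separation implies exponential separation by \cite[\S 6.2]{BaranyHochmanRapaport2019}, so \cref{thm:projection-bernoulli} applies to any fully supported Bernoulli measure $\nu$ on $\JJ^\N$ with $\mu=\pi'_*\nu$:
\begin{equation*}
  \dim((\proj_V)_*\mu)=\min\{1,\diml(\nu)\}.
\end{equation*}
Because $(\proj_V)_*\mu$ is supported on $\proj_V(X') \subseteq \proj_V(X)$, we obtain $\dimh(\proj_V(X)) \ge \min\{1,\diml(\nu)\}$.

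It remains to choose $\nu$ with $\diml(\nu)$ close to $\dimaff(\A')$, or at least $\min\{1,\diml(\nu)\} \ge \min\{1,\dimaff(\A')\} - \eps$. This is the step I expect to be the main obstacle, since in general equilibrium states of $\fii^s$ are not Bernoulli. Here domination helps in the relevant range $s\le 1$. By \cref{lem:domination-pressure}, $\|A_{\iii\jjj}\| \ge \kappa^2\|A_\iii\|\|A_\jjj\|$, so $\fii^s = \|\cdot\|^s$ is quasi-multiplicative up to constants. Take $s<\min\{1,\dimaff(\A')\}$, so that $P(\A',s)>0$. Pass to level $n$ words: set $\JJ_n = \JJ^n$ and $p_\iii = \|A_\iii\|^s / \sum_{\jjj \in \JJ^n}\|A_\jjj\|^s$. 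Let $\nu$ be the resulting Bernoulli measure on $(\JJ^n)^\N$, which still codes a strongly separated subsystem of $X$, and whose linear parts, being $\A'^{\,n}$, retain domination, proximality and strong irreducibility. Quasi-multiplicativity gives
\begin{equation*}
  h(\nu)+\Lambda(\nu,s) \ge \log\sum_{\jjj \in \JJ^n}\|A_\jjj\|^s - O(1) \ge nP(\A',s) - O(1) > 0
\end{equation*}
for $n$ large. Hence $\diml(\nu)\ge s$, where the Lyapunov dimension is computed for the level-$n$ system, whose pressure and dimensions scale consistently. Letting $s \uparrow \min\{1,\dimaff(\A')\}$ yields
\begin{equation*}
  \dimh(\proj_V(X)) \ge \min\{1,\udimm(X)\}-\eps.
\end{equation*}

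If needed, the strong $k$-irreducibility and $k$-proximality of the power tuple $\A'^{\,n}$ should be checked separately. Proximality follows from the same pinching word raised to a power. For strong irreducibility I would invoke the simultaneous escape characterization \cref{prop:simultaneous-escape}, or simply apply the replacement construction of \cref{prop:ssc-approx} after passing to level $n$.
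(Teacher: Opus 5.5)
Your proposal is correct and follows the paper's proof essentially step for step: pass to the strongly separated, dominated subsystem of \cref{prop:ssc-approx}; take level-$n$ words $\JJ^n$ with Bernoulli weights proportional to $\|A_\iii\|^s$; use the quasi-multiplicativity from \cref{lem:domination-pressure} to get $h(\nu)+\Lambda(\nu,s)\ge\log\bigl(\kappa^{2s}\sum_{\iii\in\JJ^n}\|A_\iii\|^s\bigr)>0$ and hence $\diml(\nu)\ge s$; then apply \cref{thm:projection-bernoulli}. For the strong $k$-irreducibility of the level-$n$ tuple, which you only sketched, the paper uses a direct descending-chain argument on $\bigcup_{m<n}\bigcup_{\iii\in\JJ^m}A_\iii^{\wedge k}\mathcal{F}$, and your simultaneous-escape route also works after padding by words of length less than $n$. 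One justification is wrong, though: strong irreducibility of the linear parts does not rule out a common fixed point (purely linear maps all fix $0$); the correct reason $X'$ is not a singleton is that $\dimaff(\A')>0$ forces $\#\JJ\ge 2$, and strong separation then excludes a common fixed point.
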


\begin{proof}
  By \cref{eq:hausdorff-projection-lipschitz-estimate} and the general bound $\dimh(X) \le \udimm(X)$, it suffices to show that
  \begin{equation} \label{eq:main-claim}
    \dimh(\proj_V(X)) \ge \min\{1, \udimm(X)\}
  \end{equation}
  for all $V \in \RP^{d-1}$. There is nothing to prove if $\min\{1,\udimm(X)\} = 0$, so fix $0 < \eps < \frac{1}{2}\min\{1,\udimm(X)\}$. By \cref{prop:ssc-approx}, there exists a finite set $\JJ \subset \II^*$ for which $\A' = (A_\iii)_{\iii \in \JJ}$ is $1$-dominated, and $k$-proximal and strongly $k$-irreducible for all $k \in \{1,\ldots,d-1\}$ with
  \begin{equation} \label{eq:proof-main1}
    0 < \min\{1, \udimm(X)\} - \eps \le \dimaff(\A') < d,
  \end{equation}
  and $\Phi' = (\fii_\iii)_{\iii \in \JJ}$ satisfies the strong separation condition. Let $X' \subseteq X$ be the self-affine set of $\Phi'$ and set
  \begin{equation*}
    s = \min\{1,\dimaff(\A')\} - \eps.
  \end{equation*}
  Then $0 < s < 1$. Since every generator of $\A'$ is contractive and $s < \dimaff(\A')$, the pressure is strictly decreasing, and hence $P(\A',s) > 0$. By \cref{lem:domination-pressure}, there exists $0 < \kappa \le 1$ such that $\|A_{\iii\jjj}\| \ge \kappa^2\|A_\iii\|\|A_\jjj\|$ for all $\iii,\jjj \in \JJ^*$. Therefore, by the definition of the pressure, there exists $n \ge 1$ such that
  \begin{equation*}
    \kappa^{2s}\sum_{\iii \in \JJ^n} \|A_\iii\|^s > 1.
  \end{equation*}
  Let $\KK = \JJ^n$, write $\A'' = (A_\iii)_{\iii \in \KK}$ and $\Phi'' = (\fii_\iii)_{\iii \in \KK}$, and define
  \begin{equation*}
    p_\iii = \biggl(\sum_{\jjj \in \KK} \|A_\jjj\|^s\biggr)^{-1}\|A_\iii\|^s
  \end{equation*}
  for all $\iii \in \KK$. Let $\nu$ be the associated fully supported Bernoulli measure and let $\mu$ be the self-affine measure of $\Phi''$ associated with $\nu$. Since $\KK = \JJ^n$, the self-affine set of $\Phi''$ is $X'$, and $\Phi''$ satisfies the strong separation condition. Furthermore, $\A''$ is $k$-proximal and strongly $k$-irreducible for all $k \in \{1,\ldots,d-1\}$. Indeed, since $\A'$ is $k$-proximal and $k$-irreducible, there exists a word $\www$ over $\JJ$ such that $A_\www^{\wedge k}$ has a simple dominant eigenvalue; see \cite[Lemma~4.1]{BenoistQuint2016}. The $n$-fold concatenation $\www^n$ is a word over $\KK$, and $A_{\www^n}^{\wedge k} = (A_\www^{\wedge k})^n$ again has a simple dominant eigenvalue, so $\A''$ is $k$-proximal. If a finite union $\mathcal{F}$ of proper subspaces of $\wedge^k\R^d$ were invariant under all words in $\JJ^n$, then, writing
  \begin{equation*}
    \mathcal{G} = \bigcup_{\iii \in \bigcup_{m=0}^{n-1}\JJ^m} A_\iii^{\wedge k}\mathcal{F},
  \end{equation*}
  we would have $A_\jjj^{\wedge k}\mathcal{G} \subseteq \mathcal{G}$ for all $\jjj \in \JJ$. As each $A_\jjj^{\wedge k}$ is invertible, the descending chain $\mathcal{G} \supseteq A_\jjj^{\wedge k}\mathcal{G} \supseteq (A_\jjj^{\wedge k})^2\mathcal{G} \supseteq \cdots$ of finite unions of proper subspaces stabilizes, so that $A_\jjj^{\wedge k}\mathcal{G} = \mathcal{G}$ for every $\jjj \in \JJ$, contradicting the strong $k$-irreducibility of $\A'$. Since all probabilities $p_\iii$ are positive, the Bernoulli measure $\nu$ is fully supported, and therefore the self-affine measure $\mu$ is fully supported on $X'$. Since $0 < s < 1$, we have $\varphi^s(A) = \|A\|^s$ for all $A \in \GL(d,\R)$. We claim that
  \begin{equation} \label{eq:proof-main2}
    \diml(\nu) \ge s.
  \end{equation}
  Indeed, for all $m \ge 1$ and $\iii_1,\ldots,\iii_m \in \KK$,
  \begin{equation*}
    \|A_{\iii_1\cdots\iii_m}\| \ge \kappa^{2(m-1)}\|A_{\iii_1}\| \cdots \|A_{\iii_m}\|,
  \end{equation*}
  and therefore
  \begin{align*}
    h(\nu) + \Lambda(\A'',\nu,s) &\ge -\sum_{\iii \in \KK} p_\iii \log p_\iii + \log \kappa^{2s} + \sum_{\iii \in \KK} p_\iii \log \|A_\iii\|^s \\
    &= \log\biggl(\kappa^{2s}\sum_{\iii \in \KK} \|A_\iii\|^s\biggr) > 0.
  \end{align*}
  By the definition of the Lyapunov dimension, this proves \cref{eq:proof-main2}. Since $\dimaff(\A') > 0$, the set $\JJ$ has at least two elements, and therefore so does $\KK$. Thus the strong separation condition for $\Phi''$ implies that the maps in $\Phi''$ do not have a common fixed point, and it also implies the exponential separation condition. Hence \cref{thm:proj-dim} gives
  \begin{equation*}
    \dim((\proj_V)_*\mu) = \min\{1,\diml(\nu)\}
  \end{equation*}
  for all $V \in \RP^{d-1}$. Since $(\proj_V)_*\mu$ is supported on $\proj_V(X')$, the monotonicity of the Hausdorff dimension and \cref{eq:proof-main2} and \cref{eq:proof-main1} imply
  \begin{align*}
    \dimh(\proj_V(X)) &\ge \dimh(\proj_V(X')) \ge \dim((\proj_V)_*\mu) \\
    &= \min\{1,\diml(\nu)\} \ge \min\{1,\dimaff(\A')\}-\eps \\
    &\ge \min\{1,\udimm(X)\}-2\eps.
  \end{align*}
  As $\eps > 0$ is arbitrary, \cref{eq:main-claim} follows. This proves the theorem.
\end{proof}

The proof of \cref{thm:projection-two-dimensional} splits according to whether the tuple is $1$-proximal, and the two-dimensional structure theory of \cref{sec:planar-converse} reduces each case to a projection theorem already at our disposal. In the proximal case, a proximal strongly irreducible planar tuple is strongly pinching, so the conclusion follows from \cref{thm:projection}. In the non-proximal case, \cref{lem:planar-nonproximal-conformal} conjugates the system to one generated by contracting similarities whose orthogonal parts generate an infinite group, and a projection theorem for self-similar sets due to Farkas \cite{Farkas2016} yields the result.

\begin{theorem} \label{thm:proj-two-dimensional}
  Let $\Phi = (\fii_1,\ldots,\fii_N)$ be an affine iterated function system such that the associated tuple $\A = (A_1,\ldots,A_N) \in \GL(2,\R)^N$ of matrices is strongly irreducible, and let $X \subset \R^2$ be the self-affine set associated with $\Phi$. Then
  \begin{equation*}
    \dimh(\proj_V(X)) = \min\{1, \dimh(X)\} = \min\{1, \udimm(X)\}
  \end{equation*}
  for all $V \in \RP^1$.
\end{theorem}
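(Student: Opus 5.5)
We split according to whether $\A$ is $1$-proximal. In both cases the upper bound is immediate: by \cref{eq:hausdorff-projection-lipschitz-estimate} and $\dimh(X)\le\udimm(X)$ it suffices to prove $\dimh(\proj_V(X))\ge\min\{1,\udimm(X)\}$ for every $V\in\RP^1$.

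\emph{Proximal case.} Suppose $\A$ is $1$-proximal. Strong irreducibility gives irreducibility, and in $\GL(2,\R)$ a proximal irreducible tuple is pinching, as recalled in \cref{sec:irred}. So some $A_\iii$ is diagonalizable with eigenvalues of distinct moduli. For $d=2$ the only exterior power in question is $k=1$, so this is exactly strong pinching, and strong $k$-irreducibility for all $k\in\{1\}$ is the hypothesis. Then \cref{thm:proj} applies verbatim and gives the claim.

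\emph{Non-proximal case.} Suppose $\A$ is not $1$-proximal. By \cref{lem:planar-nonproximal-conformal} there is $M\in\GL(2,\R)$ with $MA_iM^{-1}=\lambda_iO_i$, where $0<|\lambda_i|<1$ and $O_i\in\GO(2)$, and the group $\mathcal T$ generated by the $O_i$ is infinite with dense orbits on $\RP^1$.

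The conjugated system $\psi_i=M\circ\fii_i\circ M^{-1}$ consists of contracting similarities, and its attractor is $MX$. Since $\mathcal T$ is infinite, the rotation parts do not form a finite group. In this situation Farkas's projection theorem \cite{Farkas2016} states that for a self-similar set $Y\subset\R^2$ whose rotation group is infinite, $\dimh(\proj_L Y)=\min\{1,\dimh Y\}$ for every line $L$. No separation condition is needed, because Farkas's theorem already passes to subsystems itself. For self-similar sets we also have $\dimh(MX)=\udimm(MX)$ by Falconer \cite{Falconer1989}.

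We then transfer back to $X$. For $V\in\RP^1$, the map $\proj_V$ restricted to $X$ has the same kernel direction as the linear functional $x\mapsto\la u_V,x\ra$. Writing $\la u_V,x\ra=\la (M^{-1})^\top u_V,Mx\ra$ shows that $\proj_V(X)$ is a linear, hence bi-Lipschitz, image of $\proj_{V'}(MX)$ with $V'=\linspan\{(M^{-1})^\top u_V\}$. Therefore
\[
\dimh(\proj_V X)=\dimh(\proj_{V'}MX)=\min\{1,\dimh MX\}=\min\{1,\dimh X\}.
\]
Finally, $\udimm$ and $\dimh$ are invariant under the bi-Lipschitz map $M$, so $\dimh X=\udimm X$ and both equalities hold.

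\emph{Main obstacle.} The step needing care is matching the hypothesis of Farkas's theorem. His condition is phrased via the rotation parts generating a dense subgroup of $\SO(2)$, possibly after discarding reflections. This follows from $\mathcal T$ being infinite: $\mathcal T_0=\mathcal T\cap\SO(2)$ is infinite, and an infinite subgroup of the circle is dense. If his statement is only for orientation-preserving similarities, we would pass to the second iterate $\{\psi_{ij}\}$, which has the same attractor. Its linear parts lie in $\R\cdot\SO(2)$, and its rotation group still contains the squares and products $O_iO_j$. These generate an infinite group because $\mathcal T_0$ has index at most two in $\mathcal T$.
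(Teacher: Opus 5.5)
Your proof is correct and follows essentially the same route as the paper. The proximal case reduces to \cref{thm:proj} because proximal irreducible planar tuples are pinching, and the non-proximal case combines \cref{lem:planar-nonproximal-conformal}, Farkas's theorem applied to $M(X)$, and Falconer's $\dimh=\udimm$ for self-similar sets; your change of direction $V\mapsto V'$ plays the role of the paper's rank-one map $\proj_V\circ M^{-1}$.

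Two small remarks on details. First, when some $\lambda_i<0$ the orthogonal part of $\psi_i$ is $-O_i$ rather than $O_i$. The generated group then differs from $\mathcal T$ only by the central element $-I$, so it is still infinite, but this should be said. Second, your contingency claim is false: the second iterate need not have all linear parts in $\R\cdot\SO(2)$, since a rotation $O_i$ times a reflection $O_j$ is a reflection. The fallback is unnecessary anyway, because the paper applies Farkas's theorem directly to similarities with orthogonal parts in $\GO(2)$.
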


\begin{proof}
  Since $d = 2$, strong irreducibility coincides with strong $1$-irreducibility. If $\A$ is $1$-proximal, then, being also strongly $1$-irreducible, it is strongly pinching by \cref{sec:irred} and strongly $k$-irreducible for all $k \in \{1,\ldots,d-1\}$, and the claim follows from \cref{thm:proj}.

  Suppose now that $\A$ is not $1$-proximal. By \cref{lem:planar-nonproximal-conformal} there is $M \in \GL(2,\R)$ such that $MA_iM^{-1} = \lambda_iO_i$ with $\lambda_i \in \R \setminus \{0\}$, $0 < |\lambda_i| < 1$, and $O_i \in \GO(2)$, and the group $\mathcal{T}$ generated by $O_1,\ldots,O_N$ is infinite. Recall that a \emph{similarity} of $\R^2$ is a map $y \mapsto \lambda Oy + b$ for some $\lambda \in \R \setminus \{0\}$ and $O \in \GO(2)$, and that the attractor of an iterated function system of contracting similarities is a \emph{self-similar set}. Each $\lambda_iO_i$ has $\|\lambda_iO_i\| = |\lambda_i| < 1$, so the maps $\psi_i(y) = \lambda_iO_iy + Mt_i$ are contracting similarities. Since $\psi_i = M \circ \fii_i \circ M^{-1}$, the attractor of $(\psi_1,\ldots,\psi_N)$ is $Y = M(X)$, a self-similar set whose group of orthogonal parts agrees with the infinite group $\mathcal{T}$ up to the signs of the $\lambda_i$, which are invisible on $\RP^1$.

  By the final assertion of \cref{lem:planar-nonproximal-conformal}, the orbit of every line under $\mathcal{T}$ is dense in $\RP^1$. Fix $V \in \RP^1$ and let $Q_V = \proj_V \circ M^{-1}$, a rank-one linear map, which we regard as a map into $\R$ by identifying the line $V$ isometrically with $\R$. By \cite[Theorem~1.6]{Farkas2016},
  \begin{equation*}
    \dimh(Q_V(Y)) = \min\{1,\dimh(Y)\}.
  \end{equation*}
  The linear isomorphism $M$ is bi-Lipschitz, so it preserves both Hausdorff and upper Minkowski dimension; in particular $\dimh(Y) = \dimh(X)$ and $\udimm(Y) = \udimm(X)$. We thus have
  \begin{equation*}
    \dimh(\proj_V(X)) = \dimh(Q_V(Y)) = \min\{1,\dimh(Y)\} = \min\{1,\dimh(X)\}
  \end{equation*}
  for all $V \in \RP^1$. As $Y$ is self-similar, Falconer \cite[Theorem~4]{Falconer1989} gives $\dimh(Y) = \udimm(Y)$, and therefore $\dimh(X) = \dimh(Y) = \udimm(Y) = \udimm(X)$. The desired equality follows.
\end{proof}

The strong irreducibility assumption cannot be dropped from \cref{thm:projection-two-dimensional}. For a counterexample, one may take $X$ to be a Bedford--McMullen carpet; see \cite{Bedford1984,McMullen1984}. Here the two coordinate axes are invariant, and the projection onto a coordinate axis is a self-similar subset of that axis whose dimension may be strictly smaller than $\min\{1,\dimh(X)\}$; see \cite{BaranyKaenmakiPyoralaWu2023-preprint,Pyorala2025} for a description of the set of exceptional directions for planar carpets. More generally, by the proof of \cite[Lemma~2.2]{BaranyKaenmakiRossi2021}, a proximal planar tuple that fails to be strongly irreducible is, after a change of basis, either reducible, that is, simultaneously upper triangular, or composed of diagonal and antidiagonal matrices. In each of these two classes there are self-affine sets possessing an \emph{exceptional direction}, namely a line $V \in \RP^1$ with $\dimh(\proj_V(X)) < \min\{1,\dimh(X)\}$. The following example exhibits such a set that is irreducible but not strongly irreducible.

\begin{example} \label{ex:sharpness}
  The irreducible but not strongly irreducible case is more delicate, because the antidiagonal maps interchange the two coordinate axes and thereby force the two coordinate projections of $X$ to have a common dimension, so that no exceptional direction can arise from a mere asymmetry between the axes. We exhibit an example in which this common value is nonetheless deficient. Consider the affine maps $\fii_1,\ldots,\fii_4 \colon \R^2 \to \R^2$,
  \begin{align*}
    \fii_1(x,y) &= (\tfrac18 x, \tfrac14 y + \tfrac14), & \fii_2(x,y) &= (\tfrac14 x + \tfrac14, \tfrac18 y), \\
    \fii_3(x,y) &= (\tfrac18 y, \tfrac14 x + \tfrac34), & \fii_4(x,y) &= (\tfrac14 y + \tfrac34, \tfrac18 x),
  \end{align*}
  and let $X \subset \R^2$ be the associated self-affine set. The linear parts of $\fii_1,\fii_2$ are diagonal and those of $\fii_3,\fii_4$ are antidiagonal. The diagonal matrix of $\fii_1$ has distinct entries, so it is pinching and its only invariant lines are the coordinate axes, while the antidiagonal part of $\fii_3$ interchanges these axes. Hence $\A$ is proximal, no line is invariant under all four matrices, and the pair of coordinate axes is invariant; that is, $\A$ is irreducible but not strongly irreducible. A direct computation shows that the four images of the square $Q = [0,1]^2$ under $\fii_1,\ldots,\fii_4$ are the pairwise disjoint subsets
  \begin{equation*}
    [0,\tfrac18] \times [\tfrac14,\tfrac12], \quad [\tfrac14,\tfrac12] \times [0,\tfrac18], \quad [0,\tfrac18] \times [\tfrac34,1], \quad [\tfrac34,1] \times [0,\tfrac18]
  \end{equation*}
  of $Q$. Hence $X \subset Q$, and the sets $\fii_1(X),\ldots,\fii_4(X)$ are pairwise disjoint, so $\Phi$ satisfies the strong separation condition.

  The system is invariant under the reflection $\iota(x,y) = (y,x)$, in the sense that $\iota\fii_1\iota = \fii_2$ and $\iota\fii_3\iota = \fii_4$; consequently $\iota(X) = X$ and the two coordinate projections coincide, $\proj_{V_1}(X) = \proj_{V_2}(X) = C$, where $V_1$ and $V_2$ denote the $x$-axis and the $y$-axis. Projecting $X = \bigcup_{i=1}^4 \fii_i(X)$ onto the $x$-axis and using $\proj_{V_1}(X) = \proj_{V_2}(X) = C$ gives
  \begin{equation} \label{eq:sharp-overlap}
    C = \tfrac18 C \cup (\tfrac14 C + \tfrac14) \cup \tfrac18 C \cup (\tfrac14 C + \tfrac34).
  \end{equation}
  The contributions of $\fii_1$ and $\fii_3$ are one and the same similarity $u \mapsto \tfrac18 u$, that is, an \emph{exact overlap}, and in particular the self-similar system generating $C$ does not satisfy the exponential separation condition. After discarding the repetition in \cref{eq:sharp-overlap}, the set $C$ is the attractor of the three maps $u \mapsto \tfrac18 u$, $u \mapsto \tfrac14 u + \tfrac14$, and $u \mapsto \tfrac14 u + \tfrac34$, whose images $[0,\tfrac18]$, $[\tfrac14,\tfrac12]$, and $[\tfrac34,1]$ are disjoint. This system satisfies the strong separation condition, and therefore
  \begin{equation} \label{eq:sharp-proj-dim}
    \dimh(\proj_{V_1}(X)) = \dimh(\proj_{V_2}(X)) = t,
  \end{equation}
  where $8^{-t} + 2 \cdot 4^{-t} = 1$, so that $t = \log_2(\frac{1+\sqrt{5}}{2}) = 0.6942\ldots < 1$.

  It remains to bound $\dimh(X)$ from below. For a word $\iii \in \{1,2,3,4\}^n$, the product $A_\iii$ is either diagonal or antidiagonal. Write its two non-zero entries as $2^{-m_1(\iii)}$ and $2^{-m_2(\iii)}$, where the first is the non-zero entry in the first row and the second is the non-zero entry in the second row. Since $|\det A_\iii| = 2^{-5n}$, we have $m_1(\iii) + m_2(\iii) = 5n$. We call $\iii$ balanced if $m_1(\iii) = m_2(\iii) = 5n/2$, which can occur only when $n$ is even. For a balanced word, $A_\iii$ is a similarity with contraction ratio $2^{-5n/2}$.

  It remains to count balanced words. Let $\Delta(\iii) = m_1(\iii)-m_2(\iii)$, and let $b_n(q)$ be the number of words $\iii \in \{1,2,3,4\}^n$ such that $\Delta(\iii) = q$. The reflection interchanging the two coordinate axes conjugates $A_1,A_3$ to $A_2,A_4$, respectively, and hence $b_n(q)=b_n(-q)$. If $\Delta(\iii)=q$, then prefixing $\iii$ by the symbols $1,2,3,4$ gives exponent differences $q+1,q-1,1-q,-1-q$, respectively. Therefore, using the symmetry $b_n(q)=b_n(-q)$,
  \begin{align*}
    b_{n+1}(q) &= b_n(q-1) + b_n(q+1) + b_n(1-q) + b_n(-1-q) \\
    &= 2b_n(q-1) + 2b_n(q+1).
  \end{align*}
  Since $b_0(0)=1$, it follows by induction that $b_n(q)=2^n s_n(q)$, where $s_n(q)$ is the number of nearest-neighbour walks on $\Z$ from $0$ to $q$ in $n$ steps; explicitly, $s_n(q)=\binom{n}{(n+q)/2}$. A word is balanced precisely when $\Delta(\iii)=0$, so for even $n$ the set $\BB_n$ of balanced words of length $n$ satisfies
  \begin{equation*}
    \# \BB_n = b_n(0) = 2^n s_n(0) = 2^n \binom{n}{n/2}.
  \end{equation*}
  The subsystem formed by the maps $\fii_\iii$ with $\iii \in \BB_n$ satisfies the strong separation condition, since its cylinder sets are contained in distinct level-$n$ cylinder sets of the original system. Its attractor $X_n$ is contained in $X$, and all its maps are similarities with contraction ratio $2^{-5n/2}$. Hence the Moran formula gives
  \begin{equation*}
    \dimh(X) \ge \dimh(X_n) = \frac{\log \#\BB_n}{(5n/2)\log 2} = \frac{\log\bigl(2^n \binom{n}{n/2}\bigr)}{(5n/2)\log 2}.
  \end{equation*}
  The central binomial coefficient $\binom{n}{n/2}$ is the largest of the $n+1$ binomial coefficients whose sum is $2^n$, and hence $2^n/(n+1) \le \binom{n}{n/2} \le 2^n$. Thus $\binom{n}{n/2}$ differs from $2^n$ only by a factor that grows at most polynomially in $n$, so $\log\binom{n}{n/2} = (1+o(1))n\log 2$. Consequently, the right-hand side above equals
  \begin{equation*}
    \frac{n\log 2 + \log\binom{n}{n/2}}{(5n/2)\log 2} = \frac{(2+o(1))n\log 2}{(5n/2)\log 2} = \frac45 + o(1),
  \end{equation*}
  and letting $n \to \infty$ through even integers gives $\dimh(X) \ge \tfrac45$.

  Since $8^{-4/5} + 2 \cdot 4^{-4/5} < 1$ and $t \mapsto 8^{-t} + 2 \cdot 4^{-t}$ is strictly decreasing, the number $t$ in \cref{eq:sharp-proj-dim} satisfies $t < \tfrac45$, and hence
  \begin{equation*}
    \dimh(\proj_{V_1}(X)) = \dimh(\proj_{V_2}(X)) = t < \tfrac45 \le \min\{1,\dimh(X)\}.
  \end{equation*}
  Therefore both coordinate axes are exceptional directions and the conclusion of \cref{thm:projection-two-dimensional} fails for $X$. In particular, strong irreducibility cannot be weakened to irreducibility, even under the strong separation condition.
\end{example}

\bibliographystyle{abbrv}
\bibliography{Bibliography}

\end{document}